\documentclass[10pt]{amsart}
\usepackage[a4paper]{geometry}

\usepackage{graphicx} 
\usepackage{amsmath}
\usepackage{amsthm}
\usepackage{amssymb}
\usepackage{mathtools}
\usepackage{tikz-cd}
\usepackage{tikz}
\usepackage{tabularx}
\usepackage{footnote}
\usepackage[linktocpage=true]{hyperref}
\usepackage{pdflscape}
\usepackage{caption}
\usepackage{multirow} 

\usepackage{longtable}
\usepackage{etoolbox}
\usepackage[dvipsnames]{xcolor}

\usetikzlibrary{shapes,positioning}
\usetikzlibrary{arrows,chains,matrix,positioning,scopes}

 \usepackage[UKenglish]{babel}
\usepackage{graphicx}	
\usepackage{amsthm}
\usepackage{amsmath}
\usepackage{amssymb}
\usepackage{mathtools}
\usepackage{enumerate}
\usepackage{mathrsfs}
\usepackage{thmtools, thm-restate}
\usepackage[table]{xcolor}
\usepackage{tabularx, booktabs}
\usepackage{tikz}
\usepackage{pgfplots}

\usepackage{tikz-cd} 
\usetikzlibrary {arrows.meta,bending,positioning,patterns,decorations}
\usepackage{pbox}
\usepackage{tkz-graph}
\usetikzlibrary{snakes,fit,positioning,calc,shapes}
\usepackage{graphicx}
\usepackage{cleveref,tikz-cd,pbox,wrapfig}
\definecolor{amethyst}{rgb}{0.6, 0.4, 0.8}
\definecolor{atomictangerine}{rgb}{1.0, 0.6, 0.4}
\definecolor{deeppeach}{rgb}{1.0, 0.8, 0.64}
\definecolor{eggshell}{rgb}{0.94, 0.92, 0.84}
\definecolor{lightapricot}{rgb}{0.99, 0.84, 0.69}
\definecolor{lemonchiffon}{rgb}{1.0, 0.98, 0.8}
\definecolor{roundabout}{rgb}{1.0, 0.91, 0.75}
\definecolor{atomictangerine}{rgb}{1.0, 0.6, 0.4}
\definecolor{ruby}{rgb}{0.88, 0.07, 0.37}
\definecolor{sapphire}{rgb}{0.03, 0.15, 0.4}

\def\rootsep{0.03}               
\def\clustersep{0.06}            
\def\cnamescale{0.4}             
\def\cdepthscale{0.4}            
\def\cltopskip{1pt}              
\def\clbottomskip{1pt}           

\def\rootscale{0.5}   \def\rootcolor{gray}
\def\rootscaleA{0.7}  \def\rootcolorA{yellow}
\def\rootscaleB{0.5}  \def\rootcolorB{green}
\def\rootscaleC{0.4}  \def\rootcolorC{sapphire}
\def\rootscaleD{0.45}  \def\rootcolorD{ruby}
\tikzset{
  clA/.style = {very thick,black},
  clB/.style = {thick,purple}
}

\def\graphdslabelscale{0.6}

\def\GraphScale{0.6}

\tikzset{
  root/.style = {circle,scale=\rootscale,fill=\rootcolor},
    rc/.style 2 args = {right=#1*1.5*\clustersep of {#2.east|-first},root}, rr/.style = {right=\rootsep of {#1.east|-first},root},
  roott/.style = {circle,inner sep=-2pt,minimum size=5pt,black,font=\ttfamily\footnotesize},
    rct/.style 2 args = {right=#1*1.5*\clustersep of {#2.east|-first},roott}, rrt/.style = {right=\rootsep of {#1.east|-first},roott},
  rootA/.style = {circle,scale=\rootscaleA,ball color=\rootcolorA},
    rcA/.style 2 args = {right=#1*1.5*\clustersep of {#2.east|-first},rootA}, rrA/.style = {right=\rootsep of {#1.east|-first},rootA},
  rootB/.style = {circle,scale=\rootscaleB,ball color=\rootcolorB},
    rcB/.style 2 args = {right=#1*1.5*\clustersep of {#2.east|-first},rootB}, rrB/.style = {right=\rootsep of {#1.east|-first},rootB},
  rootC/.style = {diamond,scale=\rootscaleC,ball color=\rootcolorC},
    rcC/.style 2 args = {right=#1*1.5*\clustersep of {#2.east|-first},rootC}, rrC/.style = {right=\rootsep of {#1.east|-first},rootC},
  rootD/.style = {circle,scale=\rootscaleD,ball color=\rootcolorD},
    rcD/.style 2 args = {right=#1*1.5*\clustersep of {#2.east|-first},rootD}, rrD/.style = {right=\rootsep of {#1.east|-first},rootD},
  cluster/.style = {draw=black!90,thick,rounded corners,inner sep=22*\clustersep,outer xsep=22*\clustersep,fit=#1},
  clabel/.style  = {anchor=west,scale=\cdepthscale,black,inner sep=0,outer xsep=1,outer ysep=0},
  clabelL/.style = {above right=-\clustersep of #1t.north east,clabel},
  clabelD/.style = {below right=-\clustersep of #1t.south east,clabel},
  clouter/.style = {inner sep=0,outer sep=0,fit=#1}
}

\def\Cluster #1 = #2;{\node[cluster=#2] (#1) {};}
\def\ClusterL #1[#2] = #3;{
  \node[cluster=#3] (#1t) {}; \node[clabelL=#1] (#1l) {$#2$}; \node[clouter=(#1t)(#1l)] (#1) {};}
\def\ClusterD #1[#2] = #3;{
  \node[cluster=#3] (#1t) {}; \node[clabelD=#1] (#1d) {$#2$}; \node[clouter=(#1t)(#1d)] (#1) {};}
\def\ClusterLD #1[#2][#3] = #4;{
  \node[cluster=#4] (#1t) {}; \node[clabelL=#1] (#1l) {$#2$}; 
  \node[clabelD=#1] (#1d) {$#3$}; \node[clouter=(#1t)(#1l)(#1d)] (#1) {};}
\def\ClusterLDName #1[#2][#3][#4] = #5;{
  \node[cluster=#5] (#1t) {}; \node[clabelL=#1] (#1l) {$#2$}; 
  \node[clabelD=#1] (#1d) {$#3$}; 
  \node[scale=\cnamescale,above=\clustersep/3 of #1t,inner sep=0, outer sep=0] (#1n) {$#4$}; 
  \node[clouter=(#1l)(#1d)(#1t)] (#1) {};}

\newcommand{\Root}[4][]{
  \ifx\relax#2\relax\node[rr#1=#3] (#4) {};\else\node[rc#1={#2}{#3}] (#4) {};\fi}
\newcommand{\RootT}[5][]{
  \ifx\relax#2\relax\node[rrt#1=#3] (#4) {#5};\else\node[rct#1={#2}{#3}] (#4) {#5};\fi}

\def\frob(#1)(#2){\path[draw,thick,shorten <=-22*\clustersep,shorten >=-22*\clustersep](#1.east)--(#2.west|-#1){};}

\usepackage{pbox}
\def\pb#1{\pbox[c]{\textwidth}{\hfil #1\hfil}}

\long\def\clusterpicture#1\endclusterpicture{\pb{\vbox to \cltopskip{\vfill}\\%
  \begin{tikzpicture}\node[coordinate] (first) {};#1\end{tikzpicture}\\[-11pt]\vbox to \clbottomskip{\vfill}}}   
\long\def\clusterpictureopt#1#2\endclusterpicture{\pb{\vbox to \cltopskip{\vfill}\\%
  \begin{tikzpicture}[#1]\node[coordinate] (first) {};#2\end{tikzpicture}\\[-11pt]\vbox to \clbottomskip{\vfill}}}

\def\pb#1{\pbox[c]{\textwidth}{\hfil #1\hfil}}

\def\GraphVertices{\SetVertexNormal[Shape=circle, FillColor=blue!50, LineColor=blue!50, LineWidth=0.8pt]
  \tikzset{VertexStyle/.append style = {inner sep=0.5pt,minimum size=0.3em,font = \tiny\bfseries}}}

\def\BlueEdges{  \SetUpEdge[lw=0.8pt,color=blue!70]
   \tikzset{EdgeStyle/.append style = {shorten <=0.5pt,shorten >=0.5pt}}}

\def\LoopW(#1){
  \path[draw,-,thick,color=blue!70] (#1) edge[out=155,in=90] ($(#1)-(1.3,0)$);
  \path[draw,-,thick,color=blue!70] (#1) edge[out=210,in=270] ($(#1)-(1.3,0)$);
}
\def\LoopE(#1){
  \path[draw,-,thick,color=blue!70] (#1) edge[out=25,in=90] ($(#1)+(1.3,0)$);
  \path[draw,-,thick,color=blue!70] (#1) edge[out=-25,in=270] ($(#1)+(1.3,0)$);
}
\def\LoopS(#1){
  \path[draw,-,thick,color=blue!70] (#1) edge[out=115,in=180] ($(#1)+(0,1.2)$);
  \path[draw,-,thick,color=blue!70] (#1) edge[out=65,in=0] ($(#1)+(0,1.2)$);
}
\def\LoopN(#1){
  \path[draw,-,thick,color=blue!70] (#1) edge[out=-115,in=180] ($(#1)-(0,1.2)$);
  \path[draw,-,thick,color=blue!70] (#1) edge[out=-65,in=0] ($(#1)-(0,1.2)$);
}
\def\EdgeW(#1){
  \path[draw,-,thick,color=blue!70] (#1+) edge[out=180,in=90] ($(#1+)-(1.3,0.3)$);
  \path[draw,-,thick,color=blue!70] (#1-) edge[out=180,in=270] ($(#1+)-(1.3,0.3)$);
}
\def\EdgeE(#1){
  \path[draw,-,thick,color=blue!70] (#1+) edge[out=0,in=90] ($(#1-)+(1.3,0.3)$);
  \path[draw,-,thick,color=blue!70] (#1-) edge[out=0,in=270] ($(#1-)+(1.3,0.3)$);
}
\def\EdgeS(#1){
  \path[draw,-,thick,color=blue!70] (#1+) edge[out=90,in=0] ($(#1-)+(0.3,1.3)$);
  \path[draw,-,thick,color=blue!70] (#1-) edge[out=90,in=180] ($(#1-)+(0.3,1.3)$);
}
\def\EdgeN(#1){
  \path[draw,-,thick,color=blue!70] (#1+) edge[out=270,in=0] ($(#1+)-(0.3,1.3)$);
  \path[draw,-,thick,color=blue!70] (#1-) edge[out=270,in=180] ($(#1+)-(0.3,1.3)$);
}
\def\GCircle(#1,#2)(#3,#4){
  \path(#1,#2) node[coordinate] (1) {};
  \path(#3,#4) node[coordinate] (2) {};
  \path[draw,-,thick,color=blue!70] (1) edge[out=90,in=90] (2);
  \path[draw,-,thick,color=blue!70] (2) edge[out=270,in=270] (1);
}

\def\EdgeSign(#1)(#2)#3(#4)#5{
  \node at ($(#1)!#3!(#2) + (#4)$) [color=black, scale=\graphdslabelscale] {$\scriptstyle #5$};
}

\def\GraphEdgeSignDist{0.55}

\def\GraphEdgeSignS(#1)(#2)#3#4{\EdgeSign(#1)(#2)#3(0,-\GraphEdgeSignDist){#4}}

\def\VSwap#1#2#3#4{\path[draw](#1) edge[<->,#3,shorten >=#4pt,shorten <=#4pt] (#2){};}
\def\VArr#1#2#3#4{\path[draw](#1) edge[->,#3,shorten >=#4pt,shorten <=#4pt] (#2){};}

\def\ESwapOfs#1#2#3#4#5#6#7#8{\VSwap{$(#1)!0.5!(#2) + (#6)$}{$(#3)!0.5!(#4) + (#7)$}{#5}{#8}}

\def\EArrOfs#1#2#3#4#5#6#7#8{\VArr{$(#1)!0.5!(#2) + (#6)$}{$(#3)!0.5!(#4) + (#7)$}{#5}{#8}}

\def\tgrGB{\raise-7pt\hbox{\begin{tikzpicture}[scale=\GraphScale]
  \GraphVertices
  \Vertex[x=1.50,y=0.000,L=1]{1};
  \coordinate (2) at (0.000,0.000);
  \BlueEdges
  \LoopW(1)
\GraphEdgeSignS(1)(2){0.5}{n}\end{tikzpicture}}}

\def\tgrGBex{\raise-7pt\hbox{\begin{tikzpicture}[scale=\GraphScale]
  \GraphVertices
  \Vertex[x=1.50,y=0.000,L=1]{1};
  \coordinate (2) at (0.000,0.000);
  \BlueEdges
  \LoopW(1)
\GraphEdgeSignS(1)(2){0.5}{1}\end{tikzpicture}}}

\def\tgrGC{\raise-7pt\hbox{\begin{tikzpicture}[scale=\GraphScale]
  \GraphVertices
  \Vertex[x=1.50,y=0.000,L=1]{1};
  \coordinate (2) at (0.000,0.000);
  \BlueEdges
  \LoopW(1)
\GraphEdgeSignS(1)(2){0.5}{n}\ESwapOfs1212{}{0,-0.25}{0,0.25}{0.5}\end{tikzpicture}}}

\def\tgrGD{\raise-7pt\hbox{\begin{tikzpicture}[scale=\GraphScale]
  \GraphVertices
  \Vertex[x=1.50,y=0.000,L=\relax]{1};
  \coordinate (2) at (3.00,0.000);
  \coordinate (3) at (0.000,0.000);
  \BlueEdges
  \LoopE(1)
  \LoopW(1)
\GraphEdgeSignS(1)(3){0.5}{n}\GraphEdgeSignS(1)(2){0.5}{n}\end{tikzpicture}}}

\def\tgrGE{\raise-7pt\hbox{\begin{tikzpicture}[scale=\GraphScale]
  \GraphVertices
  \Vertex[x=1.50,y=0.000,L=\relax]{1};
  \coordinate (2) at (3.00,0.000);
  \coordinate (3) at (0.000,0.000);
  \BlueEdges
  \LoopE(1)
  \LoopW(1)
\GraphEdgeSignS(1)(3){0.5}{n}\GraphEdgeSignS(1)(2){0.5}{n}\ESwapOfs1212{}{0,-0.25}{0,0.25}{0.5}\end{tikzpicture}}}

\def\tgrGF{\raise-7pt\hbox{\begin{tikzpicture}[scale=\GraphScale]
  \GraphVertices
  \Vertex[x=1.50,y=0.000,L=\relax]{1};
  \coordinate (2) at (3.00,0.000);
  \coordinate (3) at (0.000,0.000);
  \BlueEdges
  \LoopE(1)
  \LoopW(1)
\GraphEdgeSignS(1)(3){0.5}{n}\GraphEdgeSignS(1)(2){0.5}{n}\ESwapOfs1313{}{0,-0.25}{0,0.25}{0.5}\ESwapOfs1212{}{0,-0.25}{0,0.25}{0.5}\end{tikzpicture}}}

\def\tgrGG{\raise-7pt\hbox{\begin{tikzpicture}[scale=\GraphScale]
  \GraphVertices
  \Vertex[x=1.50,y=0.000,L=\relax]{1};
  \coordinate (2) at (3.00,0.000);
  \coordinate (3) at (0.000,0.000);
  \BlueEdges
  \LoopE(1)
  \LoopW(1)
\GraphEdgeSignS(1)(3){0.5}{n}\GraphEdgeSignS(1)(2){0.5}{n}\ESwapOfs1312{in=160,out=20}{0.2,0.3}{-0.2,0.3}{0.5}\end{tikzpicture}}}

\def\tgrGH{\raise-7pt\hbox{\begin{tikzpicture}[scale=\GraphScale]
  \GraphVertices
  \Vertex[x=1.50,y=0.000,L=\relax]{1};
  \coordinate (2) at (3.00,0.000);
  \coordinate (3) at (0.000,0.000);
  \BlueEdges
  \LoopE(1)
  \LoopW(1)
\GraphEdgeSignS(1)(3){0.5}{n}\GraphEdgeSignS(1)(2){0.5}{n}\EArrOfs1312{in=150,out=30}{0.1,0.29}{0,0.35}{0.5}\EArrOfs1213{in=-60,out=-60}{0,0.2}{0.3,-0.25}{0.5}\end{tikzpicture}}}

\def\tgrGA{\raise-3pt\hbox{\begin{tikzpicture}[scale=\GraphScale]
  \GraphVertices
  \Vertex[x=0.000,y=0.000,L=2]{1};
  \BlueEdges
\end{tikzpicture}}}

\makeatletter
\tikzset{join/.code=\tikzset{after node path={%
\ifx\tikzchainprevious\pgfutil@empty\else(\tikzchainprevious)%
edge[every join]#1(\tikzchaincurrent)\fi}}}
\makeatother

\tikzset{>=stealth',every on chain/.append style={join},
         every join/.style={->}}

\def\pLBpgA{{\scalebox{1.5}{
\clusterpicture           
  \Root[D] {1} {first} {r1};
  \Root[D] {} {r1} {r2};
  \Root[D] {} {r2} {r3};
  \Root[D] {} {r3} {r4};
    \Root[D] {} {r4} {r5};
    \Root[D] {} {r5} {r6};
  \ClusterD c4[] = (r1)(r2)(r3)(r4)(r5)(r6);
\endclusterpicture}}}

\def\pLBpgB{{\scalebox{1.5}{
\clusterpicture           
  \Root[D] {1} {first} {r1};
  \Root[D] {} {r1} {r2};
  \Root[D] {} {r2} {r3};
  \Root[D] {} {r3} {r4};
    \Root[D] {} {r4} {r5};
    \Root[D] {1} {r5} {r6};
  \ClusterLD c1[][] = (r1)(r2)(r3)(r4)(r5);
  \ClusterD c4[] = (c1)(r6);
\endclusterpicture}}}

\def\pLBemA{{
\scalebox{1.5}{
\clusterpicture           
  \Root[D] {1} {first} {r1};
  \Root[D] {} {r1} {r2};
  \Root[D] {1} {r2} {r3};
  \Root[D] {2} {r3} {r4};
    \Root[D] {} {r4} {r5};
    \Root[D] {} {r5} {r6};
  \ClusterLD c1[][] = (r1)(r2);
  \ClusterLD c2[][] = (c1)(r3);
  \ClusterLD c3[][] = (r4)(r5)(r6);
  \ClusterD c4[] = (c2)(c3);
\endclusterpicture}
}}

\def\pLBemB{{
\scalebox{1.5}{
\clusterpicture           
  \Root[D] {1} {first} {r1};
  \Root[D] {1} {r1} {r2};
  \Root[D] {} {r2} {r3};
  \Root[D] {1.5} {r3} {r4};
    \Root[D] {} {r4} {r5};
    \Root[D] {} {r5} {r6};
  \ClusterLD c1[][] = (r2)(r3);
  \ClusterLD c3[][] = (r4)(r5)(r6);
  \ClusterD c4[] = (r1)(c1)(c3);
\endclusterpicture}
}}

\def\pLBemC{{
\scalebox{1.5}{
\clusterpicture           
  \Root[D] {1} {first} {r1};
  \Root[D] {} {r1} {r2};
  \Root[D] {} {r2} {r3};
  \Root[D] {1} {r3} {r4};
    \Root[D] {1} {r4} {r5};
    \Root[D] {} {r5} {r6};
  \ClusterLD c1[][] = (r5)(r6);
  \ClusterLD c3[][] = (r4)(c1);R
  \ClusterD c4[] = (r1)(c1)(c3);
\endclusterpicture}
}}

\def\pLBemD{{
\scalebox{1.5}{
\clusterpicture           
  \Root[D] {1} {first} {r1};
  \Root[D] {} {r1} {r2};
  \Root[D] {1} {r2} {r3};
  \Root[D] {1} {r3} {r4};
    \Root[D] {} {r4} {r5};
    \Root[D] {} {r5} {r6};
  \ClusterLD c1[][] = (r4)(r5)(r6);
  \ClusterLD c2[][] = (r3)(c1);
  \ClusterD c4[] = (r1)(r2)(c2);
\endclusterpicture}
}}

\def\pLBemE{{
\scalebox{1.5}{
\clusterpicture           
  \Root[D] {1} {first} {r1};
  \Root[D] {} {r1} {r2};
  \Root[D] {2} {r2} {r3};
  \Root[D] {1} {r3} {r4};
    \Root[D] {} {r4} {r5};
    \Root[D] {} {r5} {r6};
  \ClusterLD c1[][] = (r4)(r5)(r6);
  \ClusterLD c2[][] = (r3)(c1);
  \ClusterLD c3[][] = (r1)(r2);
  \ClusterD c4[] = (c3)(c2);
\endclusterpicture}
}}

\def\pLBemFa{{
\scalebox{1.5}{
\clusterpicture           
  \Root[D] {1} {first} {r1};
  \Root[D] {1} {r1} {r2};
  \Root[D] {} {r2} {r3};
  \Root[D] {1} {r3} {r4};
    \Root[D] {1} {r4} {r5};
    \Root[D] {} {r5} {r6};
  \ClusterLD c1[][] = (r5)(r6);
  \ClusterLD c2[][] = (r4)(c1);
  \ClusterLD c3[][] = (r2)(r3)(c2);
  \ClusterD c4[] = (r1)(c3);
\endclusterpicture}
}}

\def\pLBemG{{
\scalebox{1.5}{
\clusterpicture           
  \Root[D] {1} {first} {r1};
  \Root[D] {2} {r1} {r2};
  \Root[D] {} {r2} {r3};
  \Root[D] {1.5} {r3} {r4};
    \Root[D] {} {r4} {r5};
    \Root[D] {} {r5} {r6};
  \ClusterLD c1[][] = (r2)(r3);
  \ClusterLD c2[][] = (r4)(r5)(r6);
  \ClusterLD c3[][] = (c1)(c2);
  \ClusterD c4[] = (r1)(c3);
\endclusterpicture}
}}

\def\pLBemH{{
\scalebox{1.5}{
\clusterpicture           
  \Root[D] {1} {first} {r1};
  \Root[D] {1} {r1} {r2};
  \Root[D] {1} {r2} {r3};
  \Root[D] {1} {r3} {r4};
    \Root[D] {} {r4} {r5};
    \Root[D] {} {r5} {r6};
  \ClusterLD c1[][] = (r4)(r5)(r6);
  \ClusterLD c2[][] = (r3)(c1);
  \ClusterLD c3[][] = (r2)(c2);
  \ClusterD c4[] = (r1)(c3);
\endclusterpicture}
}}

\def\pLBeeB{{
\scalebox{1.5}{
\clusterpicture           
  \Root[D] {} {first} {r1};
  \Root[D] {} {r1} {r2};
  \Root[D] {} {r2} {r3};
  \Root[D] {2} {r3} {r4};
    \Root[D] {} {r4} {r5};
    \Root[D] {} {r5} {r6};
  \ClusterLD c2[][] = (r1)(r2)(r3);
  \ClusterLD c3[][] = (r4)(r5)(r6);
  \ClusterD c4[] = (c2)(c3);
\endclusterpicture}
}}

\def\pLBeeCb{{
\scalebox{1.5}{
\clusterpicture           
  \Root[D] {} {first} {r1};
  \Root[D] {} {r1} {r2};
  \Root[D] {} {r2} {r3};
  \Root[D] {1} {r3} {r4};
    \Root[D] {} {r4} {r5};
    \Root[D] {} {r5} {r6};
  \ClusterLD c3[][] = (r4)(r5)(r6);
  \ClusterD c4[] = (r1)(r2)(r3)(c3);
\endclusterpicture}
}}

\def\pLBeeDb{{
\scalebox{1.5}{
\clusterpicture           
  \Root[D] {1} {first} {r1};
  \Root[D] {1} {r1} {r2};
  \Root[D] {} {r2} {r3};
  \Root[D] {1} {r3} {r4};
    \Root[D] {} {r4} {r5};
    \Root[D] {} {r5} {r6};
  \ClusterLD c1[][] = (r4)(r5)(r6);
  \ClusterLD c2[][] = (r2)(r3)(c1);
  \ClusterD c4[] = (r1)(c2);
\endclusterpicture}
}}

\def\pLBonA{{
\scalebox{1.5}{
\clusterpicture           
  \Root[D] {1} {first} {r1};
  \Root[D] {} {r1} {r2};
  \Root[D] {1} {r2} {r3};
  \Root[D] {} {r3} {r4};
    \Root[D] {} {r4} {r5};
    \Root[D] {} {r5} {r6};
  \ClusterLD c1[][] = (r1)(r2);
  \ClusterLD c2[][] = (c1)(r3)(r4)(r5)(r6);
\endclusterpicture}
}}

\def\pLBonB{{
\scalebox{1.5}{
\clusterpicture           
  \Root[D] {} {first} {r1};
  \Root[D] {} {r1} {r2};
  \Root[D] {1} {r2} {r3};
  \Root[D] {} {r3} {r4};
    \Root[D] {} {r4} {r5};
    \Root[D] {} {r5} {r6};
  \ClusterLD c1[][] = (r3)(r4)(r5)(r6);
  \ClusterLD c2[][] = (r1)(r2)(c1);
\endclusterpicture}
}}

\def\pLBonC{{
\scalebox{1.5}{
\clusterpicture           
  \Root[D] {} {first} {r1};
  \Root[D] {} {r1} {r2};
  \Root[D] {2} {r2} {r3};
  \Root[D] {} {r3} {r4};
    \Root[D] {} {r4} {r5};
    \Root[D] {} {r5} {r6};
  \ClusterLD c1[][] = (r1)(r2); 
  \ClusterLD c2[][] = (r3)(r4)(r5)(r6);
  \ClusterLD c3[][] = (c1)(c2);
\endclusterpicture}
}}

\def\pLBonD{{
\scalebox{1.5}{
\clusterpicture           
  \Root[D] {} {first} {r1};
  \Root[D] {1} {r1} {r2};
  \Root[D] {} {r2} {r3};
  \Root[D] {} {r3} {r4};
    \Root[D] {1} {r4} {r5};
    \Root[D] {} {r5} {r6};
  \ClusterLD c1[][] = (r5)(r6); 
  \ClusterLD c2[][] = (r2)(r3)(r4)(c1);
  \ClusterLD c3[][] = (r1)(c2);
\endclusterpicture}
}}

\def\pLBonE{{
\scalebox{1.5}{
\clusterpicture           
  \Root[D] {} {first} {r1};
  \Root[D] {1} {r1} {r2};
  \Root[D] {1} {r2} {r3};
  \Root[D] {} {r3} {r4};
    \Root[D] {} {r4} {r5};
    \Root[D] {} {r5} {r6};
  \ClusterLD c1[][] = (r3)(r4)(r5)(r6); 
  \ClusterLD c2[][] = (r2)(c1);
  \ClusterLD c3[][] = (r1)(c2);
\endclusterpicture}
}}

\def\pLBpmpmA{{
\scalebox{1.5}{
\clusterpicture           
  \Root[D] {1} {first} {r1};
  \Root[D] {} {r1} {r2};
  \Root[D] {1} {r2} {r3};
  \Root[D] {3} {r3} {r4};
    \Root[D] {} {r4} {r5};
    \Root[D] {1} {r5} {r6};
  \ClusterLD c1[][] = (r1)(r2);
  \ClusterLD c2[][] = (c1)(r3);
  \ClusterLD c3[][] = (r4)(r5);
  \ClusterLD c4[][] = (c3)(r6);
  \ClusterLD c5[][] = (c2)(c4);
\endclusterpicture}
}}

\def\pLBpmpmC{{
\scalebox{1.5}{
\clusterpicture           
  \Root[D] {1} {first} {r1};
  \Root[D] {} {r1} {r2};
  \Root[D] {1} {r2} {r3};
  \Root[D] {1.5} {r3} {r4};
    \Root[D] {} {r4} {r5};
    \Root[D] {1} {r5} {r6};
  \ClusterLD c1[][] = (r1)(r2);
  \ClusterLD c2[][] = (c1)(r3);
  \ClusterLD c3[][] = (r4)(r5);
  \ClusterLD c5[][] = (c2)(c3)(r6);
\endclusterpicture}
}}

\def\pLBpmpmD{{
\scalebox{1.5}{
\clusterpicture           
  \Root[D] {1} {first} {r1};
  \Root[D] {} {r1} {r2};
  \Root[D] {1} {r2} {r3};
  \Root[D] {1} {r3} {r4};
    \Root[D] {1} {r4} {r5};
    \Root[D] {} {r5} {r6};
  \ClusterLD c1[][] = (r1)(r2);
  \ClusterLD c2[][] = (c1)(r3);
  \ClusterLD c3[][] = (c2)(r4);
  \ClusterLD c5[][] = (c3)(r5)(r6);
\endclusterpicture}
}}

\def\pLBpmpmE{{
\scalebox{1.5}{
\clusterpicture           
  \Root[D] {1} {first} {r1};
  \Root[D] {} {r1} {r2};
  \Root[D] {1} {r2} {r3};
  \Root[D] {1} {r3} {r4};
    \Root[D] {1} {r4} {r5};
    \Root[D] {1} {r5} {r6};
  \ClusterLD c1[][] = (r1)(r2);
  \ClusterLD c2[][] = (c1)(r3);
  \ClusterLD c3[][] = (c2)(r4);
  \ClusterLD c4[][] = (c3)(r5);
  \ClusterLD c5[][] = (c4)(r6);
\endclusterpicture}
}}

\def\pLBpmpmF{{
\scalebox{1.5}{
\clusterpicture           
  \Root[D] {1} {first} {r1};
  \Root[D] {} {r1} {r2};
  \Root[D] {1} {r2} {r3};
  \Root[D] {2} {r3} {r4};
    \Root[D] {} {r4} {r5};
    \Root[D] {2} {r5} {r6};
  \ClusterLD c1[][] = (r1)(r2);
  \ClusterLD c2[][] = (c1)(r3);
  \ClusterLD c3[][] = (r4)(r5);
  \ClusterLD c4[][] = (c2)(c3)(c3);
  \ClusterLD c5[][] = (c4)(r6);
\endclusterpicture}
}}

\def\pLBpmpmG{{
\scalebox{1.5}{
\clusterpicture           
  \Root[D] {1} {first} {r1};
  \Root[D] {} {r1} {r2};
  \Root[D] {1} {r2} {r3};
  \Root[D] {1} {r3} {r4};
    \Root[D] {2} {r4} {r5};
    \Root[D] {} {r5} {r6};
  \ClusterLD c1[][] = (r1)(r2);
  \ClusterLD c2[][] = (c1)(r3);
  \ClusterLD c3[][] = (c2)(r4);
  \ClusterLD c5[][] = (r5)(r6);
  \ClusterLD c6[][] = (c3)(c5);
\endclusterpicture}
}}

\def\pLBtwonAa{{
\scalebox{1.5}{
\clusterpicture           
  \Root[D] {} {first} {r1};
  \Root[D] {} {r1} {r2};
  \Root[D] {2} {r2} {r3};
  \Root[D] {} {r3} {r4};
    \Root[D] {1} {r4} {r5};
    \Root[D] {} {r5} {r6};
  \ClusterLD c1[][] = (r1)(r2); 
  \ClusterLD c2[][] = (r3)(r4);
  \ClusterLD c3[][] = (c1)(c2)(r5)(r6);
\endclusterpicture}
}}

\def\pLBtwonBa{{
\scalebox{1.5}{
\clusterpicture           
  \Root[D] {} {first} {r1};
  \Root[D] {} {r1} {r2};
  \Root[D] {1} {r2} {r3};
  \Root[D] {} {r3} {r4};
    \Root[D] {2} {r4} {r5};
    \Root[D] {} {r5} {r6};
  \ClusterLD c1[][] = (r3)(r4); 
  \ClusterLD c2[][] = (r1)(r2)(c1);
  \ClusterLD c3[][] = (c1)(c2)(r5)(r6);
\endclusterpicture}
}}

\def\pLBtwonBb{{
\scalebox{1.5}{
\clusterpicture           
  \Root[D] {} {first} {r1};
  \Root[D] {} {r1} {r2};
  \Root[D] {1} {r2} {r3};
  \Root[D] {} {r3} {r4};
    \Root[D] {2} {r4} {r5};
    \Root[D] {} {r5} {r6};
  \ClusterLD c1[][] = (r3)(r4); 
  \ClusterLD c2[][] = (r1)(r2)(c1);
  \ClusterLD c3[][] = (c1)(c2)(r5)(r6);
\endclusterpicture}
}}

\def\pLBtwonCa{{
\scalebox{1.5}{
\clusterpicture           
  \Root[D] {} {first} {r1};
  \Root[D] {} {r1} {r2};
  \Root[D] {2} {r2} {r3};
  \Root[D] {} {r3} {r4};
    \Root[D] {1} {r4} {r5};
    \Root[D] {} {r5} {r6};
  \ClusterLD c1[][] = (r1)(r2); 
  \ClusterLD c2[][] = (r5)(r6);
  \ClusterLD c3[][] = (r3)(r4)(c2);
   \ClusterLD c4[][] = (c1)(c3);
\endclusterpicture}
}}

\def\pLBtwonDa{{
\scalebox{1.5}{
\clusterpicture           
  \Root[D] {} {first} {r1};
  \Root[D] {1} {r1} {r2};
  \Root[D] {1} {r2} {r3};
  \Root[D] {} {r3} {r4};
    \Root[D] {2} {r4} {r5};
    \Root[D] {} {r5} {r6};
  \ClusterLD c1[][] = (r3)(r4); 
  \ClusterLD c2[][] = (r5)(r6);
  \ClusterLD c3[][] = (r2)(c1)(c2);
   \ClusterLD c4[][] = (r1)(c3);
\endclusterpicture}
}}

\def\pLBtwonE{{
\scalebox{1.5}{
\clusterpicture           
  \Root[D] {} {first} {r1};
  \Root[D] {1} {r1} {r2};
  \Root[D] {1} {r2} {r3};
  \Root[D] {} {r3} {r4};
    \Root[D] {1} {r4} {r5};
    \Root[D] {} {r5} {r6};
  \ClusterLD c1[][] = (r5)(r6); 
  \ClusterLD c2[][] = (r3)(r4)(c1);
  \ClusterLD c3[][] = (r2)(c2);
   \ClusterLD c4[][] = (r1)(c3);
\endclusterpicture}
}}

\def\pLBthreenAa{{
\scalebox{1.5}{
\clusterpicture           
  \Root[D] {1} {first} {r1};
  \Root[D] {} {r1} {r2};
  \Root[D] {2} {r2} {r3};
  \Root[D] {} {r3} {r4};
    \Root[D] {2} {r4} {r5};
    \Root[D] {} {r5} {r6};
  \ClusterLD c1[][] = (r1)(r2); 
  \ClusterLD c2[][] = (r3)(r4);
  \ClusterLD c3[][] = (r5)(r6);
    \ClusterLD c4[][] = (c1)(c2)(c3);
\endclusterpicture}
}}

\def\pLBthreenBa{{
\scalebox{1.5}{
\clusterpicture           
  \Root[D] {} {first} {r1};
  \Root[D] {} {r1} {r2};
  \Root[D] {2} {r2} {r3};
  \Root[D] {} {r3} {r4};
    \Root[D] {1.5} {r4} {r5};
    \Root[D] {} {r5} {r6};
  \ClusterLD c1[][] = (r3)(r4); 
  \ClusterLD c2[][] = (r5)(r6);
  \ClusterLD c3[][] = (c1)(c2);
  \ClusterLD c4[][] = (r1)(r2)(c3);
\endclusterpicture}
}}

\def\pLBthreenCa{{
\scalebox{1.5}{
\clusterpicture           
  \Root[D] {} {first} {r1};
  \Root[D] {} {r1} {r2};
  \Root[D] {3} {r2} {r3};
  \Root[D] {} {r3} {r4};
    \Root[D] {2.5} {r4} {r5};
    \Root[D] {} {r5} {r6};
  \ClusterLD c1[][] = (r3)(r4); 
  \ClusterLD c2[][] = (r5)(r6);
  \ClusterLD c3[][] = (c1)(c2);
  \ClusterLD c4[][] = (r1)(r2);
  \ClusterLD c4[][] = (c4)(c3);
\endclusterpicture}
}}

\def\pLBthreenDa{{
\scalebox{1.5}{
\clusterpicture           
  \Root[D] {} {first} {r1};
  \Root[D] {1} {r1} {r2};
  \Root[D] {2} {r2} {r3};
  \Root[D] {} {r3} {r4};
    \Root[D] {1.5} {r4} {r5};
    \Root[D] {} {r5} {r6};
  \ClusterLD c1[][] = (r3)(r4); 
  \ClusterLD c2[][] = (r5)(r6);
  \ClusterLD c3[][] = (c1)(c2);
  \ClusterLD c3[][] = (r2)(c3);  
   \ClusterLD c4[][] = (r1)(c3);
\endclusterpicture}
}}

\newtheorem{theorem}{Theorem}[section]
\newtheorem{corollary}[theorem]{Corollary}
\newtheorem{lemma}[theorem]{Lemma}
\newtheorem{proposition}[theorem]{Proposition}

\theoremstyle{definition}
\newtheorem{defn}[theorem]{Definition}
\newtheorem{remark}[theorem]{Remark}
\newtheorem{example}[theorem]{Example}
\newtheorem{notation}[theorem]{Notation}

\def\Z{\mathbb{Z}}
\def\Q{\mathbb{Q}}
\def\C{\mathbb{C}}
\def\D{\mathrm{D}}

\def\fs{\mathfrak{s}}
\def\ft{\mathfrak{t}}

\DeclareMathOperator{\Gal}{Gal}
\DeclareMathOperator{\Aut}{Aut}

\DeclareMathOperator{\PGL}{PGL}
\DeclareMathOperator{\disc}{disc}
\DeclareMathOperator{\Jac}{Jac}

\def\cR{{\mathcal{R}}}

\def\sstar{{\mathrlap{\star}}}
\def\aast{{\mathrlap{\diamond}}}

\newcommand{\matr}[9]{\def\matrMore##1##2##3##4##5##6##7{\left(\begin{smallmatrix}#1&#2&#3&#4\\#5&#6&#7&#8\\#9&##1&##2&##3\\##4&##5&##6&##7\end{smallmatrix}\right)}\matrMore}

\def\LBpgA{{\scalebox{1.5}{
\clusterpicture           
  \Root[D] {1} {first} {r1};
  \Root[D] {} {r1} {r2};
  \Root[D] {} {r2} {r3};
  \Root[D] {} {r3} {r4};
    \Root[D] {} {r4} {r5};
    \Root[D] {} {r5} {r6};
  \ClusterD c4[\delta] = (r1)(r2)(r3)(r4)(r5)(r6);
\endclusterpicture}}}

\def\LBpgB{{\scalebox{1.5}{
\clusterpicture           
  \Root[D] {1} {first} {r1};
  \Root[D] {} {r1} {r2};
  \Root[D] {} {r2} {r3};
  \Root[D] {} {r3} {r4};
    \Root[D] {} {r4} {r5};
    \Root[D] {5} {r5} {r6};
  \ClusterLD c1[][s+\delta] = (r1)(r2)(r3)(r4)(r5);
  \ClusterD c4[0] = (c1)(r6);
\endclusterpicture}}}

\def\LBemA{{
\scalebox{1.5}{
\clusterpicture           
  \Root[D] {1} {first} {r1};
  \Root[D] {} {r1} {r2};
  \Root[D] {3} {r2} {r3};
  \Root[D] {3} {r3} {r4};
    \Root[D] {} {r4} {r5};
    \Root[D] {} {r5} {r6};
  \ClusterLD c1[][\frac{l}{2}] = (r1)(r2);
  \ClusterLD c2[][2a\!+\!\delta] = (c1)(r3);
  \ClusterLD c3[][2b\!+\!\epsilon] = (r4)(r5)(r6);
  \ClusterD c4[0] = (c2)(c3);
\endclusterpicture}
}}

\def\LBemB{{
\scalebox{1.5}{
\clusterpicture           
  \Root[D] {1} {first} {r1};
  \Root[D] {1} {r1} {r2};
  \Root[D] {} {r2} {r3};
  \Root[D] {4} {r3} {r4};
    \Root[D] {} {r4} {r5};
    \Root[D] {} {r5} {r6};
  \ClusterLD c1[][\frac{l}{2}] = (r2)(r3);
  \ClusterLD c3[][2t\!+\!\gamma] = (r4)(r5)(r6);
  \ClusterD c4[0] = (r1)(c1)(c3);
\endclusterpicture}
}}

\def\LBemC{{
\scalebox{1.5}{
\clusterpicture           
  \Root[D] {1} {first} {r1};
  \Root[D] {} {r1} {r2};
  \Root[D] {} {r2} {r3};
  \Root[D] {1} {r3} {r4};
    \Root[D] {1} {r4} {r5};
    \Root[D] {} {r5} {r6};
  \ClusterLD c1[][\frac{l}{2}] = (r5)(r6);
  \ClusterLD c3[][2t\!+\!\gamma] = (r4)(c1);R
  \ClusterD c4[\eta] = (r1)(c1)(c3);
\endclusterpicture}
}}

\def\LBemD{{
\scalebox{1.5}{
\clusterpicture           
  \Root[D] {1} {first} {r1};
  \Root[D] {} {r1} {r2};
  \Root[D] {1} {r2} {r3};
  \Root[D] {1} {r3} {r4};
    \Root[D] {} {r4} {r5};
    \Root[D] {} {r5} {r6};
  \ClusterLD c1[][2t\!+\!\gamma] = (r4)(r5)(r6);
  \ClusterLD c2[][\frac{l}{2}] = (r3)(c1);
  \ClusterD c4[\frac{l}{2}] = (r1)(r2)(c2);
\endclusterpicture}
}}

\def\LBemE{{
\scalebox{1.5}{
\clusterpicture           
  \Root[D] {1} {first} {r1};
  \Root[D] {} {r1} {r2};
  \Root[D] {6} {r2} {r3};
  \Root[D] {1} {r3} {r4};
    \Root[D] {} {r4} {r5};
    \Root[D] {} {r5} {r6};
  \ClusterLD c1[][2t\!+\!\gamma] = (r4)(r5)(r6);
  \ClusterLD c2[][j] = (r3)(c1);
  \ClusterLD c3[][\frac{l}{2}\!-\!j] = (r1)(r2);
  \ClusterD c4[0] = (c3)(c2);
\endclusterpicture}
}}

\def\LBemFb{{
\scalebox{1.5}{
\clusterpicture           
  \Root[D] {1} {first} {r1};
  \Root[D] {1} {r1} {r2};
  \Root[D] {} {r2} {r3};
  \Root[D] {1} {r3} {r4};
    \Root[D] {1} {r4} {r5};
    \Root[D] {} {r5} {r6};
  \ClusterLD c1[][\frac{l}{2}] = (r5)(r6);
  \ClusterLD c2[][2t\!+\!\gamma] = (r4)(c1);
  \ClusterLD c3[][2s\!+\!\eta] = (r2)(r3)(c2);
  \ClusterD c4[0] = (r1)(c3);
\endclusterpicture}
}}

\def\LBemG{{
\scalebox{1.5}{
\clusterpicture           
  \Root[D] {1} {first} {r1};
  \Root[D] {2} {r1} {r2};
  \Root[D] {} {r2} {r3};
  \Root[D] {3} {r3} {r4};
    \Root[D] {} {r4} {r5};
    \Root[D] {} {r5} {r6};
  \ClusterLD c1[][\frac{l}{2}] = (r2)(r3);
  \ClusterLD c2[][2t\!+\!\gamma] = (r4)(r5)(r6);
  \ClusterLD c3[][s] = (c1)(c2);
  \ClusterD c4[0] = (r1)(c3);
\endclusterpicture}
}}

\def\LBemH{{
\scalebox{1.5}{
\clusterpicture           
  \Root[D] {1} {first} {r1};
  \Root[D] {1} {r1} {r2};
  \Root[D] {1} {r2} {r3};
  \Root[D] {1} {r3} {r4};
    \Root[D] {} {r4} {r5};
    \Root[D] {} {r5} {r6};
  \ClusterLD c1[][2t\!+\!\gamma] = (r4)(r5)(r6);
  \ClusterLD c2[][\frac{l}{2}] = (r3)(c1);
  \ClusterLD c3[][s] = (r2)(c2);
  \ClusterD c4[0] = (r1)(c3);
\endclusterpicture}
}}

\def\LBeeA{{
\scalebox{1.5}{
\clusterpicture           
  \Root[D] {1} {first} {r1};
  \Root[D] {} {r1} {r2};
  \Root[D] {} {r2} {r3};
  \Root[D] {5} {r3} {r4};
    \Root[D] {} {r4} {r5};
    \Root[D] {} {r5} {r6};
  \ClusterLD c2[][t+\delta] = (r1)(r2)(r3);
  \ClusterLD c3[][t+\delta] = (r4)(r5)(r6);
  \ClusterD c4[\frac{1}{2}] = (c2)(c3);
\endclusterpicture}
}}

\def\LBeeB{{
\scalebox{1.5}{
\clusterpicture           
  \Root[D] {} {first} {r1};
  \Root[D] {} {r1} {r2};
  \Root[D] {} {r2} {r3};
  \Root[D] {5} {r3} {r4};
    \Root[D] {} {r4} {r5};
    \Root[D] {} {r5} {r6};
  \ClusterLD c2[][2a\!+\!\delta] = (r1)(r2)(r3);
  \ClusterLD c3[][2b\!+\!\epsilon] = (r4)(r5)(r6);
  \ClusterD c4[0] = (c2)(c3);
\endclusterpicture}
}}

\def\LBeeCb{{
\scalebox{1.5}{
\clusterpicture           
  \Root[D] {} {first} {r1};
  \Root[D] {} {r1} {r2};
  \Root[D] {} {r2} {r3};
  \Root[D] {1} {r3} {r4};
    \Root[D] {} {r4} {r5};
    \Root[D] {} {r5} {r6};
  \ClusterLD c3[][2t\!+\gamma] = (r4)(r5)(r6);
  \ClusterD c4[\eta] = (r1)(r2)(r3)(c3);
\endclusterpicture}
}}

\def\LBeeDb{{
\scalebox{1.5}{
\clusterpicture           
  \Root[D] {1} {first} {r1};
  \Root[D] {1} {r1} {r2};
  \Root[D] {} {r2} {r3};
  \Root[D] {1} {r3} {r4};
    \Root[D] {} {r4} {r5};
    \Root[D] {} {r5} {r6};
  \ClusterLD c1[][2t\!+\!\gamma] = (r4)(r5)(r6);
  \ClusterLD c2[][2s\!+\!\eta] = (r2)(r3)(c1);
  \ClusterD c4[0] = (r1)(c2);
\endclusterpicture}
}}

\def\LBonA{{
\scalebox{1.5}{
\clusterpicture           
  \Root[D] {1} {first} {r1};
  \Root[D] {} {r1} {r2};
  \Root[D] {5} {r2} {r3};
  \Root[D] {} {r3} {r4};
    \Root[D] {} {r4} {r5};
    \Root[D] {} {r5} {r6};
  \ClusterLD c1[][\frac{l}{2}+\delta] = (r1)(r2);
  \ClusterLD c2[][-\delta] = (c1)(r3)(r4)(r5)(r6);
\endclusterpicture}
}}

\def\LBonB{{
\scalebox{1.5}{
\clusterpicture           
  \Root[D] {} {first} {r1};
  \Root[D] {} {r1} {r2};
  \Root[D] {1} {r2} {r3};
  \Root[D] {} {r3} {r4};
    \Root[D] {} {r4} {r5};
    \Root[D] {} {r5} {r6};
  \ClusterLD c1[][\frac{l}{2}+\delta] = (r3)(r4)(r5)(r6);
  \ClusterLD c2[][\frac{l}{2}] = (r1)(r2)(c1);
\endclusterpicture}
}}

\def\LBonC{{
\scalebox{1.5}{
\clusterpicture           
  \Root[D] {} {first} {r1};
  \Root[D] {} {r1} {r2};
  \Root[D] {5} {r2} {r3};
  \Root[D] {} {r3} {r4};
    \Root[D] {} {r4} {r5};
    \Root[D] {} {r5} {r6};
  \ClusterLD c1[][\frac{l}{2}-j] = (r1)(r2); 
  \ClusterLD c2[][j + \delta] = (r3)(r4)(r5)(r6);
  \ClusterLD c3[][0] = (c1)(c2);
\endclusterpicture}
}}

\def\LBonD{{
\scalebox{1.5}{
\clusterpicture           
  \Root[D] {} {first} {r1};
  \Root[D] {1} {r1} {r2};
  \Root[D] {} {r2} {r3};
  \Root[D] {} {r3} {r4};
    \Root[D] {1} {r4} {r5};
    \Root[D] {} {r5} {r6};
  \ClusterLD c1[][\frac{l}{2}+\delta] = (r5)(r6); 
  \ClusterLD c2[][s-\delta] = (r2)(r3)(r4)(c1);
  \ClusterLD c3[][0] = (r1)(c2);
\endclusterpicture}
}}

\def\LBonE{{
\scalebox{1.5}{
\clusterpicture           
  \Root[D] {} {first} {r1};
  \Root[D] {1} {r1} {r2};
  \Root[D] {1} {r2} {r3};
  \Root[D] {} {r3} {r4};
    \Root[D] {} {r4} {r5};
    \Root[D] {} {r5} {r6};
  \ClusterLD c1[][\frac{l}{2}+\delta] = (r3)(r4)(r5)(r6); 
  \ClusterLD c2[][s] = (r2)(c1);
  \ClusterLD c3[][0] = (r1)(c2);
\endclusterpicture}
}}

\def\LBpmpmA{{
\scalebox{1.5}{
\clusterpicture           
  \Root[D] {1} {first} {r1};
  \Root[D] {} {r1} {r2};
  \Root[D] {2} {r2} {r3};
  \Root[D] {7} {r3} {r4};
    \Root[D] {} {r4} {r5};
    \Root[D] {2} {r5} {r6};
  \ClusterLD c1[][\frac{l}{2}] = (r1)(r2);
  \ClusterLD c2[][2a+\delta] = (c1)(r3);
  \ClusterLD c3[][\frac{m}{2}] = (r4)(r5);
  \ClusterLD c4[][2b+\epsilon] = (c3)(r6);
  \ClusterLD c5[][0] = (c2)(c4);
\endclusterpicture}
}}

\def\LBpmpmB{{
\scalebox{1.5}{
\clusterpicture           
  \Root[D] {1} {first} {r1};
  \Root[D] {} {r1} {r2};
  \Root[D] {2} {r2} {r3};
  \Root[D] {6} {r3} {r4};
    \Root[D] {} {r4} {r5};
    \Root[D] {2} {r5} {r6};
  \ClusterLD c1[][\frac{l}{4}] = (r1)(r2);
  \ClusterLD c2[][t+\delta] = (c1)(r3);
  \ClusterLD c3[][\frac{l}{4}] = (r4)(r5);
  \ClusterLD c4[][t+\delta] = (c3)(r6);
  \ClusterLD c5[][\frac{1}{2}] = (c2)(c4);
\endclusterpicture}
}}

\def\LBpmpmBb{{
\scalebox{1.5}{
\clusterpicture           
  \Root[D] {1} {first} {r1};
  \Root[D] {} {r1} {r2};
  \Root[D] {2} {r2} {r3};
  \Root[D] {6} {r3} {r4};
    \Root[D] {} {r4} {r5};
    \Root[D] {2} {r5} {r6};
  \ClusterLD c1[][\frac{l}{4}] = (r1)(r2);
  \ClusterLD c2[][t+\delta] = (c1)(r3);
  \ClusterLD c3[][\frac{l}{4}] = (r4)(r5);
  \ClusterLD c4[][t+\delta] = (c3)(r6);
  \ClusterLD c5[][-k + \frac{1}{2}] = (c2)(c4);
\endclusterpicture}
}}

\def\LBpmpmC{{
\scalebox{1.5}{
\clusterpicture           
  \Root[D] {1} {first} {r1};
  \Root[D] {} {r1} {r2};
  \Root[D] {2} {r2} {r3};
  \Root[D] {3} {r3} {r4};
    \Root[D] {} {r4} {r5};
    \Root[D] {2} {r5} {r6};
  \ClusterLD c1[][\frac{l}{2}] = (r1)(r2);
  \ClusterLD c2[][2t + \gamma] = (c1)(r3);
  \ClusterLD c3[][\frac{m}{2}] = (r4)(r5);
  \ClusterLD c5[][0] = (c2)(c3)(r6);
\endclusterpicture}
}}

\def\LBpmpmD{{
\scalebox{1.5}{
\clusterpicture           
  \Root[D] {1} {first} {r1};
  \Root[D] {} {r1} {r2};
  \Root[D] {2} {r2} {r3};
  \Root[D] {2} {r3} {r4};
    \Root[D] {2} {r4} {r5};
    \Root[D] {} {r5} {r6};
  \ClusterLD c1[][\frac{l}{2}] = (r1)(r2);
  \ClusterLD c2[][2t+\gamma] = (c1)(r3);
  \ClusterLD c3[][\frac{m}{2}] = (c2)(r4);
  \ClusterLD c5[][\frac{m}{2}] = (c3)(r5)(r6);
\endclusterpicture}
}}

\def\LBpmpmE{{
\scalebox{1.5}{
\clusterpicture           
  \Root[D] {1} {first} {r1};
  \Root[D] {} {r1} {r2};
  \Root[D] {2} {r2} {r3};
  \Root[D] {2} {r3} {r4};
    \Root[D] {2} {r4} {r5};
    \Root[D] {2} {r5} {r6};
  \ClusterLD c1[][\frac{l}{2}] = (r1)(r2);
  \ClusterLD c2[][2t+\gamma] = (c1)(r3);
  \ClusterLD c3[][\frac{m}{2}] = (c2)(r4);
  \ClusterLD c4[][s] = (c3)(r5);
  \ClusterLD c5[][0] = (c4)(r6);
\endclusterpicture}
}}

\def\LBpmpmF{{
\scalebox{1.5}{
\clusterpicture           
  \Root[D] {1} {first} {r1};
  \Root[D] {} {r1} {r2};
  \Root[D] {2} {r2} {r3};
  \Root[D] {3} {r3} {r4};
    \Root[D] {} {r4} {r5};
    \Root[D] {4} {r5} {r6};
  \ClusterLD c1[][\frac{l}{2}] = (r1)(r2);
  \ClusterLD c2[][2t + \gamma] = (c1)(r3);
  \ClusterLD c3[][\frac{m}{2}] = (r4)(r5);
  \ClusterLD c4[][s] = (c2)(c3)(c3);
  \ClusterLD c5[][0] = (c4)(r6);
\endclusterpicture}
}}

\def\LBpmpmG{{
\scalebox{1.5}{
\clusterpicture           
  \Root[D] {1} {first} {r1};
  \Root[D] {} {r1} {r2};
  \Root[D] {2} {r2} {r3};
  \Root[D] {2} {r3} {r4};
    \Root[D] {4} {r4} {r5};
    \Root[D] {} {r5} {r6};
  \ClusterLD c1[][\frac{l}{2}] = (r1)(r2);
  \ClusterLD c2[][2t+\gamma] = (c1)(r3);
  \ClusterLD c3[][j] = (c2)(r4);
  \ClusterLD c5[][\frac{m}{2}-j] = (r5)(r6);
  \ClusterLD c6[][0] = (c3)(c5);
\endclusterpicture}
}}

\def\LBtwonAa{{
\scalebox{1.5}{
\clusterpicture           
  \Root[D] {} {first} {r1};
  \Root[D] {} {r1} {r2};
  \Root[D] {3} {r2} {r3};
  \Root[D] {} {r3} {r4};
    \Root[D] {2} {r4} {r5};
    \Root[D] {} {r5} {r6};
  \ClusterLD c1[][\frac{l}{2}] = (r1)(r2); 
  \ClusterLD c2[][\frac{m}{2}] = (r3)(r4);
  \ClusterLD c3[][0] = (c1)(c2)(r5)(r6);
\endclusterpicture}
}}

\def\LBtwonAb{{
\scalebox{1.5}{
\clusterpicture           
  \Root[D] {} {first} {r1};
  \Root[D] {} {r1} {r2};
  \Root[D] {3} {r2} {r3};
  \Root[D] {} {r3} {r4};
    \Root[D] {2} {r4} {r5};
    \Root[D] {} {r5} {r6};
  \ClusterLD c1[][\frac{l}{4}] = (r1)(r2); 
  \ClusterLD c2[][\frac{l}{4}] = (r3)(r4);
  \ClusterLD c3[][\frac{1}{2}] = (c1)(c2)(r5)(r6);
\endclusterpicture}
}}

\def\LBtwonBa{{
\scalebox{1.5}{
\clusterpicture           
  \Root[D] {} {first} {r1};
  \Root[D] {} {r1} {r2};
  \Root[D] {1} {r2} {r3};
  \Root[D] {} {r3} {r4};
    \Root[D] {4} {r4} {r5};
    \Root[D] {} {r5} {r6};
  \ClusterLD c1[][\frac{l}{2}] = (r3)(r4); 
  \ClusterLD c2[][\frac{m}{2}] = (r1)(r2)(c1);
  \ClusterLD c3[][0] = (c1)(c2)(r5)(r6);
\endclusterpicture}
}}

\def\LBtwonBb{{
\scalebox{1.5}{
\clusterpicture           
  \Root[D] {} {first} {r1};
  \Root[D] {} {r1} {r2};
  \Root[D] {1} {r2} {r3};
  \Root[D] {} {r3} {r4};
    \Root[D] {4} {r4} {r5};
    \Root[D] {} {r5} {r6};
  \ClusterLD c1[][\frac{l}{2}] = (r3)(r4); 
  \ClusterLD c2[][\frac{m}{2}] = (r1)(r2)(c1);
  \ClusterLD c3[][\frac{1}{2}] = (c1)(c2)(r5)(r6);
\endclusterpicture}
}}

\def\LBtwonCa{{
\scalebox{1.5}{
\clusterpicture           
  \Root[D] {} {first} {r1};
  \Root[D] {} {r1} {r2};
  \Root[D] {6} {r2} {r3};
  \Root[D] {} {r3} {r4};
    \Root[D] {1} {r4} {r5};
    \Root[D] {} {r5} {r6};
  \ClusterLD c1[][\frac{m}{2} - j] = (r1)(r2); 
  \ClusterLD c2[][\frac{l}{2}] = (r5)(r6);
  \ClusterLD c3[][j] = (r3)(r4)(c2);
   \ClusterLD c4[][0] = (c1)(c3);
\endclusterpicture}
}}

\def\LBtwonCb{{
\scalebox{1.5}{
\clusterpicture           
  \Root[D] {} {first} {r1};
  \Root[D] {} {r1} {r2};
  \Root[D] {9} {r2} {r3};
  \Root[D] {} {r3} {r4};
    \Root[D] {1} {r4} {r5};
    \Root[D] {} {r5} {r6};
  \ClusterLD c1[][\frac{m}{2} - j-\frac{1}{2}] = (r1)(r2); 
  \ClusterLD c2[][\frac{l}{2}] = (r5)(r6);
  \ClusterLD c3[][j+\frac{1}{2}] = (r3)(r4)(c2);
   \ClusterLD c4[][0] = (c1)(c3);
\endclusterpicture}
}}

\def\LBtwonDa{{
\scalebox{1.5}{
\clusterpicture           
  \Root[D] {} {first} {r1};
  \Root[D] {1} {r1} {r2};
  \Root[D] {1} {r2} {r3};
  \Root[D] {} {r3} {r4};
    \Root[D] {3} {r4} {r5};
    \Root[D] {} {r5} {r6};
  \ClusterLD c1[][\frac{l}{2}] = (r3)(r4); 
  \ClusterLD c2[][\frac{m}{2}] = (r5)(r6);
  \ClusterLD c3[][s] = (r2)(c1)(c2);
   \ClusterLD c4[][0] = (r1)(c3);
\endclusterpicture}
}}

\def\LBtwonDb{{
\scalebox{1.5}{
\clusterpicture           
  \Root[D] {} {first} {r1};
  \Root[D] {1} {r1} {r2};
  \Root[D] {1} {r2} {r3};
  \Root[D] {} {r3} {r4};
    \Root[D] {3} {r4} {r5};
    \Root[D] {} {r5} {r6};
  \ClusterLD c1[][\frac{l}{4}] = (r3)(r4); 
  \ClusterLD c2[][\frac{l}{4}] = (r5)(r6);
  \ClusterLD c3[][s+\frac{1}{2}] = (r2)(c1)(c2);
   \ClusterLD c4[][0] = (r1)(c3);
\endclusterpicture}
}}

\def\LBtwonE{{
\scalebox{1.5}{
\clusterpicture           
  \Root[D] {} {first} {r1};
  \Root[D] {1} {r1} {r2};
  \Root[D] {1} {r2} {r3};
  \Root[D] {} {r3} {r4};
    \Root[D] {1} {r4} {r5};
    \Root[D] {} {r5} {r6};
  \ClusterLD c1[][\frac{l}{2}] = (r5)(r6); 
  \ClusterLD c2[][\frac{m}{2}] = (r3)(r4)(c1);
  \ClusterLD c3[][s] = (r2)(c2);
   \ClusterLD c4[][0] = (r1)(c3);
\endclusterpicture}
}}

\def\LBthreenAa{{
\scalebox{1.5}{
\clusterpicture           
  \Root[D] {1} {first} {r1};
  \Root[D] {} {r1} {r2};
  \Root[D] {3} {r2} {r3};
  \Root[D] {} {r3} {r4};
    \Root[D] {3} {r4} {r5};
    \Root[D] {} {r5} {r6};
  \ClusterLD c1[][\frac{l}{2}] = (r1)(r2); 
  \ClusterLD c2[][\frac{m}{2}] = (r3)(r4);
  \ClusterLD c3[][\frac{n}{2}] = (r5)(r6);
    \ClusterLD c4[][0] = (c1)(c2)(c3);
\endclusterpicture}
}}

\def\LBthreenAb{{
\scalebox{1.5}{
\clusterpicture           
  \Root[D] {1} {first} {r1};
  \Root[D] {} {r1} {r2};
  \Root[D] {3} {r2} {r3};
  \Root[D] {} {r3} {r4};
    \Root[D] {3} {r4} {r5};
    \Root[D] {} {r5} {r6};
  \ClusterLD c1[][\frac{m}{4}] = (r1)(r2); 
  \ClusterLD c2[][\frac{m}{4}] = (r3)(r4);
  \ClusterLD c3[][\frac{l}{2}] = (r5)(r6);
    \ClusterLD c4[][\frac{1}{2}] = (c1)(c2)(c3);
\endclusterpicture}
}}

\def\LBthreenAc{{
\scalebox{1.5}{
\clusterpicture           
  \Root[D] {1} {first} {r1};
  \Root[D] {} {r1} {r2};
  \Root[D] {3} {r2} {r3};
  \Root[D] {} {r3} {r4};
    \Root[D] {3} {r4} {r5};
    \Root[D] {} {r5} {r6};
  \ClusterLD c1[][\frac{l}{6}] = (r1)(r2); 
  \ClusterLD c2[][\frac{l}{6}] = (r3)(r4);
  \ClusterLD c3[][\frac{l}{6}] = (r5)(r6);
    \ClusterLD c4[][\frac{1}{3}\textup{ or }\frac{2}{3}] = (c1)(c2)(c3);
\endclusterpicture}
}}

\def\LBthreenBa{{
\scalebox{1.5}{
\clusterpicture           
  \Root[D] {} {first} {r1};
  \Root[D] {} {r1} {r2};
  \Root[D] {2} {r2} {r3};
  \Root[D] {} {r3} {r4};
    \Root[D] {2.5} {r4} {r5};
    \Root[D] {} {r5} {r6};
  \ClusterLD c1[][\frac{m}{2}] = (r3)(r4); 
  \ClusterLD c2[][\frac{n}{2}] = (r5)(r6);
  \ClusterLD c3[][\frac{l}{2}] = (c1)(c2);
  \ClusterLD c4[][-\frac{l}{2}] = (r1)(r2)(c3);
\endclusterpicture}
}}

\def\LBthreenBb{{
\scalebox{1.5}{
\clusterpicture           
  \Root[D] {} {first} {r1};
  \Root[D] {} {r1} {r2};
  \Root[D] {2} {r2} {r3};
  \Root[D] {} {r3} {r4};
    \Root[D] {2.5} {r4} {r5};
    \Root[D] {} {r5} {r6};
  \ClusterLD c1[][\frac{m}{4}] = (r3)(r4); 
  \ClusterLD c2[][\frac{m}{4}] = (r5)(r6);
  \ClusterLD c3[][\frac{l}{2}] = (c1)(c2);
  \ClusterLD c4[][-\frac{l}{2}-\frac{1}{2}] = (r1)(r2)(c3);
\endclusterpicture}
}}

\def\LBthreenCa{{
\scalebox{1.5}{
\clusterpicture           
  \Root[D] {} {first} {r1};
  \Root[D] {} {r1} {r2};
  \Root[D] {7} {r2} {r3};
  \Root[D] {} {r3} {r4};
    \Root[D] {2.5} {r4} {r5};
    \Root[D] {} {r5} {r6};
  \ClusterLD c1[][\frac{m}{2}] = (r3)(r4); 
  \ClusterLD c2[][\frac{n}{2}] = (r5)(r6);
  \ClusterLD c3[][j] = (c1)(c2);
  \ClusterLD c4[][\frac{l}{2}-j] = (r1)(r2);
  \ClusterLD c4[][0] = (c4)(c3);
\endclusterpicture}
}}

\def\LBthreenCb{{
\scalebox{1.5}{
\clusterpicture           
  \Root[D] {} {first} {r1};
  \Root[D] {} {r1} {r2};
  \Root[D] {9} {r2} {r3};
  \Root[D] {} {r3} {r4};
    \Root[D] {2.5} {r4} {r5};
    \Root[D] {} {r5} {r6};
  \ClusterLD c1[][\frac{m}{4}] = (r3)(r4); 
  \ClusterLD c2[][\frac{m}{4}] = (r5)(r6);
  \ClusterLD c3[][j+\frac{1}{2}] = (c1)(c2);
  \ClusterLD c4[][\frac{l}{2}-j-\frac{1}{2}] = (r1)(r2);
  \ClusterLD c4[][0] = (c4)(c3);
\endclusterpicture}
}}

\def\LBthreenDa{{
\scalebox{1.5}{
\clusterpicture           
  \Root[D] {} {first} {r1};
  \Root[D] {1} {r1} {r2};
  \Root[D] {2} {r2} {r3};
  \Root[D] {} {r3} {r4};
    \Root[D] {2.5} {r4} {r5};
    \Root[D] {} {r5} {r6};
  \ClusterLD c1[][\frac{m}{2}] = (r3)(r4); 
  \ClusterLD c2[][\frac{n}{2}] = (r5)(r6);
  \ClusterLD c3[][\frac{l}{2}] = (c1)(c2);
  \ClusterLD c3[][s] = (r2)(c3);  
   \ClusterLD c4[][0] = (r1)(c3);
\endclusterpicture}
}}

\def\LBthreenDb{{
\scalebox{1.5}{
\clusterpicture           
  \Root[D] {} {first} {r1};
  \Root[D] {1} {r1} {r2};
  \Root[D] {2} {r2} {r3};
  \Root[D] {} {r3} {r4};
    \Root[D] {2.5} {r4} {r5};
    \Root[D] {} {r5} {r6};
  \ClusterLD c1[][\frac{m}{4}] = (r3)(r4); 
  \ClusterLD c2[][\frac{m}{4}] = (r5)(r6);
  \ClusterLD c3[][\frac{l}{2}] = (c1)(c2);
  \ClusterLD c3[][s] = (r2)(c3);  
   \ClusterLD c4[][0] = (r1)(c3);
\endclusterpicture}
}}

\makesavenoteenv{table}
\makesavenoteenv{tabularx}

\title{Reduction types of genus $2$ curves}

\DeclareRobustCommand{\potgoodpicture}{\begin{tikzpicture}
\draw (0,0) -- (2,0);

\node[scale=0.6] at (1.5,0.1) {\footnotesize{$g_2$}};
\end{tikzpicture}}

\DeclareRobustCommand{\potonenodepicture}{\begin{tikzpicture}
  \draw (0, 0) to[out=0, in=180] (0.75,0) to[out=0, in=-90] (1.125,0.15) to[out=90, in=0] (1, 0.25) to[out=180, in=90] (0.875,0.15) to[out=-90,in=180] (1.25,0) to[out=0,in=180] (2,0);

  \node[scale=0.6] at (1.5,0.1) {\footnotesize{$g_1$}};
\end{tikzpicture}}

\DeclareRobustCommand{\pottwonodespicture}{\begin{tikzpicture}
  \draw (0, 0) to[out=0, in=180] (0.45,0) to[out=0, in=-90] (0.825,0.15) to[out=90, in=0] (0.7, 0.25) to[out=180, in=90] (0.575,0.15) to[out=-90,in=180] (0.95,0) to[out=0,in=180] (1,0) 
  
  to[out=0, in=180] (1.05,0) to[out=0, in=-90] (1.425,0.15) to[out=90, in=0] (1.3, 0.25) to[out=180, in=90] (1.175,0.15) to[out=-90,in=180] (1.55,0) to[out=0,in=180] (2,0);
\end{tikzpicture}}

\DeclareRobustCommand{\potthreenodespicture}{\begin{tikzpicture}

\draw 
    (0,0.15) to[out=0, in=180] (0.66666,-0.15) to[out=0, in=180] (1.333333,0.15) to[out=0, in=180] (2,-0.15);

\draw 
    (0,-0.15) to[out=0, in=180] (0.66666,0.15) to[out=0, in=180] (1.333333,-0.15) to[out=0, in=180] (2,0.15);

\end{tikzpicture}}

\DeclareRobustCommand{\potgoodtimestwopicture}{\begin{tikzpicture}

\draw (0, 0.15) to[out=0, in=180] (0.9,0.15) to[out=0,in=180] (1.1,0.25); 

\draw (0.9,0.25) to[out=0,in=180] (1.1,0.15) to[out=0,in=180] (2,0.15);

\node[scale=0.6] at (1.5,0.25) {\footnotesize{$g_1$}};

\node[scale=0.6] at (0.5,0.25) {\footnotesize{$g_1$}};

\end{tikzpicture}}

\DeclareRobustCommand{\potgoodtimespotmultpicture}{\begin{tikzpicture}

\draw (0, 0.15) to[out=0, in=180] (0.35,0.15) to[out=0, in=-90] (0.725,0.3) to[out=90, in=0] (0.6, 0.4) to[out=180, in=90] (0.475,0.3) to[out=-90,in=180] (0.85,0.15) to[out=0,in=180] (0.9,0.15) to[out=0,in=180] (1.1,0.25); 

\draw (0.9,0.25) to[out=0,in=180] (1.1,0.15) to[out=0,in=180] (2,0.15);

\node[scale=0.6] at (1.5,0.25) {\footnotesize{$g_1$}};

\end{tikzpicture}
}

\DeclareRobustCommand{\potmulttimestwopicture}{\begin{tikzpicture}

\draw (0, 0.15) to[out=0, in=180] (0.35,0.15) to[out=0, in=-90] (0.725,0.3) to[out=90, in=0] (0.6, 0.4) to[out=180, in=90] (0.475,0.3) to[out=-90,in=180] (0.85,0.15) to[out=0,in=180] (0.9,0.15) to[out=0,in=180] (1.1,0.25); 

\draw (0.9,0.25) to[out=0,in=180] (1.1,0.15) to[out=0,in=180] (1.15,0.15) to[out=0, in=-90] (1.525,0.3) to[out=90, in=0] (1.4, 0.4) to[out=180, in=90] (1.275,0.3) to[out=-90,in=180] (1.65,0.15) to[out=0,in=180] (2,0.15);

\end{tikzpicture}
}

\begin{document}
\author{Edwina Aylward}
\author{Lilybelle Cowland Kellock}
\author{Vladimir Dokchitser}
\author{Elvira Lupoian}
\address{Department of Mathematics, University College London, London, WC1H 0AY, United Kingdom}
\email{edwina.aylward.23@ucl.ac.uk}
\email{v.dokchitser@ucl.ac.uk}
\email{e.lupoian@ucl.ac.uk}
\address{Department of Mathematics, University of Manchester, Manchester, M13 9PL, United Kingdom}
\email{lilybelle.cowlandkellock@manchester.ac.uk}
\date{}

\subjclass[2020]{Primary: 11G20, Secondary: 14D10, 14G20, 14H45}


\begin{abstract}
Tate produced a table for elliptic curves over local fields that beautifully summarises their arithmetic invariants in terms of the Kodaira type. We present an analogous set of tables for curves of genus 2. The invariants addressed include: reduction type of the minimal regular model, reduction type of the minimal regular model with normal crossings, Néron component group of the Jacobian, conductor exponent, valuation of the discriminant of a minimal Weierstrass model, and cluster pictures. 
\end{abstract}

\maketitle

\vspace{-0.5cm}

\tableofcontents


\addtocontents{toc}{\protect\setcounter{tocdepth}{1}}

\section{Introduction}

The aim of the present paper is to classify the main arithmetic invariants of genus 2 curves over local fields in terms of their reduction type. Our presentation is modelled on Tate's classical table for elliptic curves in \cite{tate}, which was reproduced by Silverman in his books \cite{sil1,sil2}.
The Kodaira symbols used in the elliptic curves classification are replaced by 106 families of ``reduction types'', which are essentially the labels introduced by Namikawa and Ueno \cite{namikawaueno}.
We have split these in seven tables --- one for each (expected) potential stable type, which are the genus 2 analogues of ``potentially good'' and ``potentially multiplicative'' reduction. For the convenience of the reader, the data in these tables is also available on the accompanying web-page: \url{https://elviralupoian.github.io/genus2reductiontypes/}.

As for elliptic curves, a key feature is that this classification is essentially independent of the underlying local field or DVR, except for some cases of residue characteristic $\leq 5$. Reduction types can be thought of as being purely combinatorial objects. The associated curve invariants are given in terms of the types and the parameters of the families (analogously to how an elliptic curve with Kodaira type I${}_n$ has N{\'e}ron component group $\Z/n\Z$ for every $n\ge 1$), rather than through an algorithm that relies on the equation of the curve.

Our setting is as follows.
Let $R$ be a complete discrete valuation ring with valuation $v$, fraction field $K$ and an algebraically closed residue field $k$ of characteristic $p\ge 0$. Throughout this work, $C/K$ denotes a smooth, projective and geometrically connected curve. The terminology used in the following theorem is introduced in \S \ref{ss:notation} and thereafter. 

\begin{theorem}\label{thm:main}
Tables \ref{table:potentiallygood}--\ref{table:potentiallymultiplicativex2} list all possible reduction types of the minimal regular model of genus $2$ curves over $K$, and how the reduction type determines
\begin{enumerate}
\item Reduction type of the minimal regular normal crossings model;
\item N{\'e}ron component group of the Jacobian;
\item Tame part of the conductor exponent;
\item Whether the curve has tame reduction,
 in terms of the value of $p$;
 \item Birch--Swinnerton-Dyer ``fudge factor'' $v(\omega_{\text{min}}/\omega^0)$ provided that $p \neq 2$.
\end{enumerate}
When $p\neq 2$, for curves of genus 2 with tame reduction the tables\footnote{In the exceptional case of Namikawa--Ueno labels II${}_{t-m}^*$ and 2I${}_m$-$t$, which have identical special fibres, we also require the potential stable type as an input; this also applies to II${}_{t-0}^*$ and 2I${}_0$-$t$} also list their:
\begin{enumerate}
\item[(6)] Potential stable type;
\item[(7)] Valuation of the discriminant of a minimal Weierstrass model; 
\item[(8)] Complete list of cluster pictures of curves giving rise to this reduction type. Each cluster picture gives rise to exactly one reduction type. Moreover, for a given curve each cluster picture in this list is realised by some Weierstrass equation. 
\end{enumerate}
\end{theorem}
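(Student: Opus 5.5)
The proof goes type by type, using three inputs: the Namikawa--Ueno classification of special fibres of minimal regular models of genus $2$ curves, the combinatorial invariants a special fibre determines, and the cluster picture machinery for $p\neq 2$.

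\textbf{Step 1: the list of types.} I would start from the Namikawa--Ueno list of all possible special fibres of minimal regular models of genus $2$ fibrations. I would regroup it according to the potential stable type, obtained from the dual graph and the semistable reduction of the Jacobian. This gives seven families, one per table. To see that each entry occurs over every $K$ (for the relevant $p$), I would exhibit an explicit curve, for instance via a cluster picture when $p\ne2$. Conversely, every minimal regular model appears in their list, since the classification is purely combinatorial on the special fibre: intersection matrix, multiplicities and genera, with the adjunction constraints.

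\textbf{Step 2: invariants (1)--(5), read off the special fibre.}
\begin{itemize}
\item (1) The minimal regular normal crossings model is obtained by blowing up the finitely many non-normal-crossing points, such as tangencies and triple points. These can be read off the configuration, and the blow-ups followed by contracting $(-1)$-curves are determined combinatorially.
\item (2) By Raynaud's theorem, the component group of the Néron model is the torsion of $\mathrm{coker}$ of the intersection matrix restricted to $\Z^{\text{components}}$ modulo the fibre. This is a Smith normal form computation in the family parameters.
\item (3) The tame conductor equals $2g - 2a - t = 4-2a-t$, where $a$ is the abelian rank and $t$ the toric rank of the Néron special fibre. Both come from the genera of components and the first Betti number of the dual graph of the normal crossings model.
\item (4) Tameness, in the sense of Saito/Halle, holds iff all multiplicities of components of the mrnc model are prime to $p$ (together with a condition on principal components). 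So it is read off from the multiplicities.
\item (5) The fudge factor for $p\ne2$ comes from comparing the minimal Weierstrass differentials with the Néron differentials via the canonical sheaf of the regular model. Saito's formula relates $v(\Delta_{\min})$ to the Artin conductor, which is $-\chi(\text{special fibre})+1-$(wild part), and the special fibre determines $\chi$.
\end{itemize}

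\textbf{Step 3: the cluster data (6)--(8), for tame $p\neq2$.} I would use the result of Dokchitser--Dokchitser--Maistret--Morgan that the cluster picture with depths and the Frobenius/inertia action determines the minimal regular (snc) model. For each genus $2$ cluster picture with inertia data, I would compute the resulting model and match it against Step 1. This proves that each cluster picture yields exactly one type. Listing, per type, all pictures producing it gives (8). Realising every listed picture by some Weierstrass equation of a given curve amounts to checking that the Möbius transformations moving roots to infinity, together with rescaling, connect all pictures in the orbit. This is a finite analysis of which cluster can be sent to "infinity". The potential stable type (6) is the semistable cluster picture obtained by forgetting depths. For (7), use $v(\Delta)=v(c)\cdot 10+\sum_{r\ne r'}v(r-r')$ for a minimal equation, computed from depths.

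\textbf{Main obstacle.} The hardest part is the bookkeeping in Step 3. All $\approx106$ families must be matched exactly with cluster pictures, including parity conditions on depths and the two cases of inertia action. The exceptional coincidence II$^*_{t-m}$ versus 2I$_m$-$t$, where the special fibre is the same, must be resolved by the potential stable type. I would first treat the semistable pictures, then reduce the others by base change to the tame extension and take the quotient by the inertia action on the semistable model.
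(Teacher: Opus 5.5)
You have two genuine gaps.

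\textbf{Completeness and (1).} Your Step~1 completeness argument and your route to (1) do not work over an arbitrary $K$. The Namikawa--Ueno list classifies fibres of genus $2$ pencils over $\C$. Calling it ``purely combinatorial'' does not rule out further minimal regular models in positive or mixed residue characteristic, in particular with wild reduction, because you have not shown that your numerical constraints cut out exactly their list.

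For (1), blowing up the non-normal-crossing points needs their analytic type: which kind of cusp, the local intersection multiplicity at a tangency, and so on. The MRM reduction type of Definition~\ref{def:reductiontype} does not record this. It records only which components pass through a point, how many times, and global intersection numbers.

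The paper argues in the opposite direction:
\begin{enumerate}
\item Blowing down is genuinely combinatorial (Lemma~\ref{lem:contraction}), so the MRNC type determines the MRM type over any DVR.
\item T.~Dokchitser's characteristic-free list of MRNC types \cite{timreduction} is exhausted by Namikawa--Ueno examples (Proposition~\ref{prop:NUclusterpictures}).
\item Distinct MRNC types give distinct MRM types.
\end{enumerate}
Together these give completeness in every residue characteristic and the bijection between MRM and MRNC types. You also do not need to realise every type over every $K$, and this is not known for $p=2$.

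\textbf{The fudge factor (5).} Step~2(5) does not produce $v(\omega_{\min}/\omega^0)$. Saito's formula identifies $-\mathrm{Art}(C/K)=n_C+m-1$ with the valuation of Deligne's discriminant, not with $v(\Delta_{\min})$. For genus $2$ these differ: by Liu (Theorem~\ref{oggformula}), $v(\Delta_{\min})=n_C+m-1-11\,v(\omega_{\min}/\omega^0)$. So the discrepancy is exactly the quantity you want, and knowing $\chi$ of the special fibre gives you only $-\mathrm{Art}$. Comparing differentials on the regular model does not help unless you control the singularities of the minimal Weierstrass model, and nothing in your plan does that.

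For tame curves with $p\neq 2$ you could instead compute $v(\Delta_{\min})$ from cluster pictures and solve Ogg's formula. Even then your (7) is missing a step. Before minimising $10v_c+\sum v(r-r')$ over the equivalence class, you must decide which pictures come from integral equations (Theorem~\ref{thm:M2D2_13}, which depends on the $G_K$-stable clusters).

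More seriously, (5) is claimed for all $p\neq 2$, including the wild types at $p=3,5$. There $v(\Delta_{\min})$ is not a function of the type, because it absorbs the wild conductor. You need the external input that Liu's correction term $c(C)=-v(\omega_{\min}/\omega^0)$ depends only on the type (Ueno's classification; see \cite{Liu} Remarque~6). The paper invokes exactly this and then evaluates $c(C)$ on tame representatives.

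\textbf{Smaller points.} The tameness criterion in (4) constrains only the multiplicities of principal components, not all multiplicities. For example, type III, with MRNC type $3^{1,1,2,2}$, has components of multiplicity $2$ yet is tame for $p=2$. For (2), working directly with Raynaud's intersection matrix is a valid alternative to the paper's $4\times4$ monodromy matrices (Theorem~\ref{compgroupthm}). The cost is computing Smith normal forms of matrices whose size grows with the family parameters.
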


For want of space, we have not included the pictures of special fibres in the tables. Those for minimal regular models can be found in \cite{namikawaueno}, and for minimal regular models with normal crossings at the end of \cite{timreduction}. Both sets of pictures are included in the online version of the tables.
We should also mention that Ueno \cite{ueno} has tabulated the valuation of the discriminant associated to  a N\'eron Weierstrass model (see Definition \ref{def:neronminimal}). When $p\neq 2$, for curves with tame reduction, Bisatt \cite{bisatt} has classified the inertia actions on the $\ell$-adic Galois representation (for $\ell\neq p$) in terms of the underlying cluster pictures, which, using our tables, can be related to the reduction type.

Beyond the sheer number of different reduction types, genus 2 curves present some new features and challenges when compared to elliptic curves. We would like to highlight and briefly discuss three of these: (i) what we mean by a {\em reduction type}, (ii) our approach to computing N{\'e}ron component groups, and (iii) cluster pictures, which serve as a replacement for the valuations of the $j$-invariant and discriminant. 

\subsection{Reduction types}\label{s:introreduction}

Formally, by the {\em reduction type} of a 
regular model $\mathcal{C}/R$ we mean the following data (see Definition \ref{def:reductiontype}):
\begin{itemize}
\item The list of components of the special fibre $\mathcal{C}_k$ with their multiplicity and geometric genus;
\item The intersection pairing on components; and
\item The list of singular points of $\mathcal{C}_k$, whether they are normal crossings singularities, and the number of times that each component passes through each singular point.
\end{itemize}
The {\em MRM} (respectively, {\em MRNC})
{\em reduction type} is the reduction type of the minimal regular model of~$C$ (respectively, minimal regular model with normal crossings).

For semistable and MRNC models, this data is equivalent to the dual graph of the special fibre (with genera and multiplicities). Our definition of the MRM reduction type is fairly crude for general curves, as one could keep better track of the types of singularities that appear. However, it recovers the Kodaira types for elliptic curves and the description of special fibres given by the Namikawa--Ueno classification for genus 2 curves (Lemma \ref{lem:MRMtoNU}).

As there appears to be a shift from using MRM to MRNC reduction types (e.g. \cite{timreduction,Faraggi-Nowell}) we have decided to include both in our tables. The special fibre of the minimal regular model and of the minimal regular model with normal crossings do not in general agree. However, there is a 1-to-1 correspondence between them.

\begin{theorem}[see Theorem \ref{thm:SNCvsMRM} and Corollary \ref{cor:genus2MRNCtoMRM}] \label{MRNCtoMRMintro}
The MRNC reduction type of a curve determines its MRM reduction type. For curves of genus 2 this is a 1-to-1 correspondence.
\end{theorem}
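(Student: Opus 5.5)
The plan has two parts. First, show that the MRNC reduction type determines the MRM reduction type for any curve of genus $g\ge 1$. Second, show that for genus 2 the resulting map is injective.

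\emph{MRNC determines MRM.} The minimal regular model $\mathcal{C}_{\min}$ is obtained from the MRNC model $\mathcal{C}_{nc}$ by successively contracting $(-1)$-curves, that is, smooth rational components $E$ with $E^2=-1$ (Castelnuovo). This process is governed entirely by the reduction-type data.

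\begin{enumerate}
\item From the multiplicities $m_i$ and the intersection matrix, the self-intersections satisfy $\Gamma_i^2 = -\frac{1}{m_i}\sum_{j\ne i} m_j\,\Gamma_i\cdot\Gamma_j$, since $\mathcal{C}_k\cdot\Gamma_i=0$. Together with the geometric genera and the singular-point data, this tells us which components are smooth rational $(-1)$-curves.
\item Contracting such an $E$ changes the data in a way that depends only on the data. For the remaining components the multiplicities are unchanged. The new intersection numbers are $\Gamma_i\cdot\Gamma_j + (\Gamma_i\cdot E)(\Gamma_j\cdot E)$, because $\pi^*\Gamma_i=\Gamma_i'+(\Gamma_i\cdot E)E$. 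Geometric genera are preserved. The new singular point is the image of $E$; the components through it, and the number of branches each contributes (namely $\Gamma_i\cdot E$), are read off from the intersection numbers with $E$. Existing singular points not on $E$ are unchanged. Whether the new point is a normal crossing is determined by this branch data: it is normal crossings exactly when two branches pass through it, each meeting $E$ transversally.
\item Iterate until no $(-1)$-curves remain. Since $g\ge1$ the minimal regular model is unique, so the order of contractions does not matter. The output is therefore a well-defined function of the MRNC type.
\end{enumerate}

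\emph{Injectivity in genus 2.} Here I would use the classification: the MRNC types from Tables \ref{table:potentiallygood}--\ref{table:potentiallymultiplicativex2} (or the list in \cite{timreduction}) together with the Namikawa--Ueno MRM types via Lemma \ref{lem:MRMtoNU}. The cleanest route is to construct an inverse. Starting from the minimal regular model, the MRNC model is the minimal sequence of blow-ups at points where the special fibre fails to be normal crossings, followed by contraction of any $(-1)$-curves that are no longer needed. I would aim to show that this process is determined by the MRM reduction-type data, namely the multiplicities of components through each singular point, their branch counts, and whether the point is normal crossings. If the singular points of a given type can be resolved in a canonical way from this data, that yields a map $\text{MRM}\to\text{MRNC}$ inverse to the first one.

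\emph{Main obstacle.} The hard step is this inverse. In general, crude MRM data does not pin down the singularity type: a tacnode and a node with the same intersection number could both occur. So I would not expect a uniform argument. Instead I would fall back on a finite check over the genus 2 list, computing the contraction image of every MRNC type and verifying that no two distinct types have the same image. The families are parametrised, for instance $\mathrm{I}_n$-chains with lengths; I would handle them by showing that the parameters are recoverable from the intersection matrix (chain lengths, multiplicities) after contraction. I expect the delicate cases to be those where $(-1)$-contractions create tacnodes or triple points, such as the II${}^*_{t-m}$ versus 2I${}_m$-$t$ phenomenon. There I would verify by hand that the branch-count data of the singular point distinguishes them.
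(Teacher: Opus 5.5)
Your first half is the paper's proof of Theorem \ref{thm:SNCvsMRM}: contract exceptional curves one at a time, and observe (Lemma \ref{lem:contraction}) that each contraction changes the reduction-type data in a way determined by that data. One correction to your bookkeeping. $\Gamma_i\cdot E$ is the multiplicity of $\Gamma_i'$ at the new point, not in general its number of branches: a smooth component simply tangent to $E$ becomes cuspidal, and such tangencies can arise after earlier contractions. Also, the normal-crossings test needs the two branches to meet $E$ at distinct points of $E$. Tracking multiplicities is what lets you recognise smooth rational curves at later steps.

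For genus 2, you and the paper both end up with a finite verification over the list of MRNC types in \cite[Table G2]{timreduction}. The difference is that you would compute the map by running the contraction algorithm on every MRNC family. The paper avoids that labour as follows:
\begin{itemize}
\item since the MRNC type already determines the MRM type, one realising curve per MRNC type suffices;
\item Proposition \ref{prop:NUclusterpictures} shows, via cluster pictures and the Faraggi--Nowell recipe, that the Namikawa--Ueno example families realise every MRNC type;
\item the MRM special fibres of those examples are already tabulated in \cite{namikawaueno}, so injectivity is read off from the tables.
\end{itemize}
Your route is self-contained and purely combinatorial but long. The paper's is short but rests on \cite{namikawaueno}, \cite{Faraggi-Nowell} and \cite{timreduction}.

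Your expected ``delicate case'' is misidentified. II${}^*_{t-m}$ and 2I${}_m$-$t$ have the same MRM type \emph{and} the same MRNC type $[2]\mathrm{I}_{m,t\D}$ (compare Tables \ref{table:potentialthreenodes} and \ref{table:potentiallymultiplicativex2}). They differ only in potential stable type. No branch-count data separates them, and none needs to, because injectivity is a statement about distinct MRNC types. If you tried to verify that they are distinguished, that step would fail and could mislead you into thinking the correspondence breaks down.
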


Schrettner \cite{Jakab} has recently proposed a refinement for the definition of MRM reduction type, and showed that these are in 1-to-1 correspondence with MRNC reduction types in general.

We caution the reader that, in spite of Tate's table for elliptic curves and Theorem \ref{thm:main}, the MRM reduction type can miss some crucial arithmetic data, such as the potential stable type and the potential toric rank of the Jacobian. For elliptic curves this is visible in residue characteristic $p\!=\!2$: an elliptic curve with Kodaira type I${}_n^*$ can have either potentially good or potentially multiplicative reduction. For genus 2 curves this is an issue in {\em{every}} residue characteristic: there is an MRM family of reduction types (Namikawa–Ueno labels II${}_{t-m}^*$ and 2I${}_m$-$t$) that can have either stable  stable type IV (two genus 0 curves meeting at three points) or type VII (two genus 0 curves with a node each intersecting at one point); and for $m\!=\!0$ they are either type II (elliptic curve with a node) or type V (two elliptic curves meeting at a point), which do not even have the same potential toric rank. However, as Theorem \ref{thm:main} shows, these are the only problematic MRM reduction types, at least for tame curves and $p\neq 2$. 

\subsection{N{\'e}ron component groups}

The N{\'e}ron component group of the Jacobian of a curve can be explicitly computed from the intersection pairing matrix on the special fibre of a regular model of the curve due to Raynaud's work \cite[Thm. 1.1]{boschliu} and is therefore a natural invariant of the reduction type. Unfortunately, manipulating these types of matrices is troublesome for whole families of reduction types, as the dimension of the matrix grows with the underlying parameters, for example the Kodaira type I${}_n$ results in an $n\times n$ matrix. We propose the following alternative approach, which keeps to $2g\times 2g$ matrices, where $g$ is the genus.

\begin{theorem}[see \Cref{compgroupthm}]\label{thm:introcomponentgroups}
Let $C/K$ be a curve, and suppose that $C'/\C[t]$ is a curve with the same MRM reduction type at $t\!=\!0$ as $C/K$. Then the N{\'e}ron component group of $C/K$ is isomorphic to the torsion subgroup of the co-invariants of the endomorphism of $H_1(C',\Z)$ induced by the monodromy operator at $t\!=\!0$.
\end{theorem}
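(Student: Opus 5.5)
The plan is to compare two computations of the same finite abelian group, both determined only by the combinatorics of the special fibre. On the arithmetic side, by Raynaud's theorem \cite[Thm. 1.1]{boschliu}, the Néron component group $\Phi$ of $\Jac C$ is computed from the special fibre of the minimal regular model $\mathcal{C}/R$. Write $\Gamma_i$ for its components, $d_i$ for their multiplicities and $M=(\Gamma_i\cdot\Gamma_j)$ for the intersection matrix. Consider
\[
\alpha\colon \Z^{I}\to\Z^{I},\ e_i\mapsto \textstyle\sum_j (\Gamma_i\cdot\Gamma_j)e_j, \qquad \beta\colon \Z^I\to\Z,\ e_i\mapsto d_i.
\]
Then $\Phi\cong \ker\beta/\operatorname{im}\alpha$. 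This depends only on the multiplicities and the intersection pairing, hence only on the MRM reduction type. Because $k$ is algebraically closed, no Galois twisting is needed.

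Next I apply the same recipe to $C'/\C[t]$, localised and completed at $t=0$. By hypothesis its minimal regular model has the same reduction type, so its Néron component group $\Phi'$, computed by Raynaud's formula, is isomorphic to $\Phi$. It then suffices to prove the statement for $C'$ over $\C[[t]]$, or equivalently over a small disc $\Delta\subset\C$. That is, we must show that $\Phi'$ is the torsion of the coinvariants $H_1(C'_s,\Z)/(T-1)H_1(C'_s,\Z)$, where $T$ is the monodromy operator on a nearby fibre $C'_s$.

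For this I use the topological description of the Néron model over $\Delta$. The Néron model of $J=\Jac C'$ is the quotient of $\operatorname{Lie}$ of the connected part by the monodromy-invariant lattice: the fibres of the analytic family are $H^0(\omega)^\vee/H_1(C'_s,\Z)$. The Néron model over the central point has group of components equal to the torsion of $H_1(C'_s,\Z)_T$ (the coinvariants). This is the classical result identifying $\Phi$ with $\big(H_1(X_s,\Z)_{T}\big)_{\mathrm{tors}}$, in the form of Grothendieck's orthogonality theorem (SGA7, IX) and its complex-analytic counterpart via Clemens–Schmid.

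To make the argument self-contained, I would instead compare directly with Raynaud's presentation. Take the total space $X\to\Delta$ of the regular model. It retracts onto the special fibre $X_0$, and the nearby fibre $X_s$ sits in $X$. Grothendieck's theorem identifies $\Phi'\otimes\Z_\ell$ with the cokernel of the monodromy pairing on $\ell$-adic Tate modules, namely $\big(T_\ell J\big)_I/\text{(torsion-free part)}$. The integral comparison is made through the Picard–Lefschetz/Clemens–Schmid exact sequence
\[
H_2(X_0)\xrightarrow{\ M\ } H^2(X_0)\to H_1(X_s)\to H_1(X_0)\to 0.
\]
Here $H_2(X_0)=\Z^I$ and $H^2(X_0)=\Z^I$ via the components. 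Tracing the specialisation map $H_1(X_s)\to H_1(X_0)$ together with the kernel of $T-1$ recovers $\ker\beta/\operatorname{im}\alpha$ as the torsion of the coinvariants. Doing this for all $\ell$ simultaneously, or integrally, gives the isomorphism.

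The main obstacle is this integral (not merely $\ell$-adic) identification of the torsion of the coinvariants with $\ker\beta/\operatorname{im}\alpha$ when the fibre is non-reduced. In that case the monodromy $T$ is only quasi-unipotent, and the naive Clemens–Schmid sequence needs modification. I would handle it by invoking Grothendieck's orthogonality theorem for each prime $\ell$, using $\Phi'\otimes\Z_\ell\cong\big(T_\ell J_{\bar K}\big)_{I,\mathrm{tors}}$ with the tame inertia generator acting as $T$ under the comparison $T_\ell J\cong H_1(C'_s,\Z)\otimes\Z_\ell$. Assembling over all $\ell$ then gives the integral statement, since both groups are finite.
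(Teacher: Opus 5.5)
Your final route is essentially the paper's proof of Theorem~\ref{compgroupthm}: you use Raynaud's theorem to pass from $C$ to $C'$, then the identification $\Phi'[\ell^\infty]\cong((T_\ell J)_I)_{\mathrm{tors}}$ for each $\ell$ (the paper's Lemma~\ref{genoftame}, due to Lorenzini; this is an elementary inertia-cohomology fact, not Grothendieck's orthogonality theorem proper), then the matching of topological monodromy with a tame inertia generator under Artin's comparison, and finally you assemble over all $\ell$. The only difference is that the paper proves the monodromy/inertia compatibility (Theorem~\ref{monodromyinertia} and Corollary~\ref{mondromyandtatemod}), by comparing the \'etale fundamental group of the punctured disc with the profinite completion of the topological one, whereas you take it as known; your Clemens--Schmid and analytic N\'eron model discussion is not needed and can be dropped.
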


As the N{\'e}ron component group only depends on the intersection pairing matrix, the auxiliary curve~$C'$ in fact only needs to have the same intersection pairing rather than MRM reduction type. Such a curve always exists by a result of Winters \cite{winters}.  
However, for our classification in genus 2 we need to work with families of reduction types, rather than individual ones. The corresponding monodromy matrices, in terms of the family parameters, were already computed by Namikawa and Ueno in \cite{namikawaueno}, and turn out to be suitable for our purposes.

\subsection{Cluster pictures as an analogue of $v(j)$ and $v(\Delta)$}\label{subsec:intro_cpics}

Classically, Kodaira types of elliptic curves can be obtained from the valuation of the discriminant and $j$-invariant of the curve when $p\neq 2, 3$ (see \cite[p.46]{tate}).  
Unfortunately, there is no direct analogue of such invariants in the genus $2$ case
\cite[Thm.~1.2]{lilybelle}. 

As all genus $2$ curves are hyperelliptic, following \cite{M2D2} we instead use cluster pictures, which requires us to restrict to residue characteristic $p \not=2$. 

For a hyperelliptic curve $y^2\!=\!f(x)$ its cluster picture is simply the data consisting of the matrix of $v$-adic distances between the roots of $f(x)$ in $K^{\text{sep}}$ and the valuation of its leading coefficient (note that including the valuation of the leading coefficient in the data of a cluster picture strays from the usual convention in \cite{bisatt, M2D2, Faraggi-Nowell}).
In the context of tame curves and $p\neq 2$ cluster pictures are arguably better than MRM types: the cluster picture recovers both the MRM reduction type and the $\ell$-adic Galois representation (Theorem \ref{thm:CP1} and \cite[Thm.~1.20]{M2D2}). 

Moreover, just like the valuation of the $j$-invariant and the discriminant for elliptic curves, cluster pictures change in a very simple and predictable manner in field extensions. In particular, we get for free the following consequence of Theorem \ref{thm:main}. For an explicit illustration, see Example \ref{ex:introram}.

\begin{corollary}\label{cor:introram}
Suppose $p\neq 2$, and $C/K$ is a tame curve of genus 2. Then the MRM reduction type of~$C$ over a finite extension $L/K$ is determined by its MRM reduction type over $K$, its potential stable type, and the degree of $L/K$.
\end{corollary}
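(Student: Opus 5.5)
The idea is to pass through cluster pictures, using Theorem \ref{thm:main}(8) in both directions.

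First, start from the MRM reduction type of $C/K$ together with its potential stable type. Since $C$ is tame and $p\neq 2$, Theorem \ref{thm:main}(8) gives a finite list of cluster pictures realising this type. Every one of them is realised by some Weierstrass equation of $C$. Reading off the relevant table row, this list is indexed by the family parameters of the reduction type. The potential stable type is needed only to separate the exceptional II${}^*_{t-m}$/2I${}_m$-$t$ cases, as in the footnote to Theorem \ref{thm:main}. Fix one such cluster picture $\Sigma$, coming from an equation $y^2=f(x)$ over $K$.

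Second, base change to $L$. The same equation $y^2=f(x)$ defines $C/L$. The roots of $f$ are unchanged, while the normalised valuation is multiplied by the ramification index $e=e(L/K)$. Since $k$ is algebraically closed, $e=[L:K]$. So the cluster picture of $C/L$ is $\Sigma$ with every relative depth and the leading coefficient valuation scaled by $e$. Tameness of $C/L$ also needs a short justification. Tame reduction means the curve becomes semistable over a tamely ramified extension of $K$. The compositum of such an extension with $L$ is tamely ramified over $L$, because $p\neq 2$ and the splitting field of $f$ is tame over $K$ in the tame case. Hence $C/L$ is also tame with $p\neq 2$. Applying the ``each cluster picture gives rise to exactly one reduction type'' part of Theorem \ref{thm:main}(8) over $L$, the scaled picture $e\Sigma$ determines the MRM type of $C/L$.

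Third, check independence of the choice of $\Sigma$. This is the main obstacle: different cluster pictures in the list for the $K$-type must scale to pictures giving the same $L$-type. I would handle it in one of two ways. One option is to observe that any two cluster pictures of the same curve are related by Möbius transformations defined over $K$, which remain valid over $L$. Hence $e\Sigma$ and $e\Sigma'$ are both cluster pictures of the same curve $C/L$, and so they give the same reduction type by uniqueness. Since $\Sigma$ depends on $C$ only through the curve itself, and the MRM type over $L$ is a property of $C/L$, this settles well-definedness.

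The remaining point is that the answer depends only on the listed data and not on $C$. For this, note that the set of possible $\Sigma$ is determined by the MRM type and potential stable type, by Theorem \ref{thm:main}(8). Any $\Sigma$ from that set, scaled by $e$, yields a type that depends only on $(\Sigma, e)$. Different curves with the same input data may realise different $\Sigma$ from the list. To see that they still give the same $L$-type, I would do a case check over the table families: scaling by $e$ acts on the family parameters, for instance $l\mapsto el$ and depths $\delta\mapsto e\delta$, uniformly across all pictures in a given row, so they land in a single $L$-row. This verification is the expected bulk of the work.
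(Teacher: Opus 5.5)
Your Steps 1 and 2 are the paper's argument, and together they already prove the corollary. The paper gets it ``for free'' from Theorem~\ref{thm:main}(8), as in Example~\ref{ex:introram}: take a cluster picture from the list for the given type, note that $C$ has a Weierstrass model realising it, scale all depths and $v_c$ by $e=[L:K]$, and read off the type over $L$.

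The ``main obstacle'' in your Step 3 does not exist. Your premise that different curves with the same input data may realise different $\Sigma$ from the list is false. It also contradicts what you quote yourself in Step 1. Theorem~\ref{thm:main}(8), more precisely Theorem~\ref{thm:CP2}(1) combined with the bijection in Theorem~\ref{thm:CP2}(2), says that \emph{every} tame curve with the given MRM type and potential stable type admits Weierstrass equations realising \emph{every} picture in the corresponding equivalence class.

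So fix one representative $\Sigma_0$, chosen as a function of the data alone. Every such $C$ has a model over $K$ with picture $\Sigma_0$, hence a model over $L$ with picture $e\Sigma_0$. Since $C/L$ is tame, its MRM type is the one attached to $e\Sigma_0$. This depends only on the MRM type, the potential stable type and $e$.

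Hence no table-by-table case check is needed. The M\"obius compatibility argument is not needed either, because the MRM type of $C/L$ is intrinsic to $C/L$ and any cluster picture of any model over $L$ recovers it. The case check you sketch is also mis-described: equivalence classes are the columns of the tables, not the rows.

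Two minor points:
\begin{itemize}
\item The list of pictures attached to a type is not finite, because of integral depth shifts, $v_c$ changing by even integers, and free parameters such as $a,b,j,s$. This is harmless.
\item Tameness of $C/L$ needs neither $p\neq 2$ nor splitting fields. If $C$ is semistable over a tame extension $F/K$, it is semistable over $FL$, and $FL/L$ is tame.
\end{itemize}
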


The obvious weakness of cluster pictures is that they are model dependent, although this is, of course, also true of the valuation of the discriminant in the elliptic curve case.
In the genus 2 case, we introduce a purely combinatorial equivalence relation on cluster pictures, with the property that a curve admits a $K$-model with a specific cluster picture if and only if it admits $K$-models for all the cluster pictures in the equivalence class. It turns out that there are 106 (families of) equivalence classes, and they correspond precisely to MRM reduction types together with the potential stable type. (They are the ``lists'' referred to in Theorem \ref{thm:main}(8).) 

For elliptic curves, it is very easy to pass between a model with discriminant of valuation $m$ and one with valuation $m\!+\!12n$. In the genus 2 case, the passage between the different cluster pictures in an equivalence class is more delicate, but also fully explicit. We have summarised it in Definition \ref{def:equiv-cpics} and Table \ref{ss:changeofmodel}, which, in particular, lets one find minimal Weierstrass equations; see also Example \ref{ex:changemodel}.

Cluster pictures have the further added benefit that one can easily read off the potential stable type (Theorem \ref{thm:CP1}): this is summarised in Table \ref{ss:changeofmodel}. The table is also a handy tool for finding the MRM reduction type from a cluster picture, as the potential stable type indicates which of the Tables \ref{table:potentiallygood}--\ref{table:potentiallymultiplicativex2} to use.

\subsection{Examples and applications}\label{subsec:examples}
We illustrate some applications our tables to the study of the local arithmetic of genus $2$ curves. For the reader's convenience we review the definition of cluster pictures (see \cite{M2D2}, especially Definitions 1.1, 1.5).

Let $C$ be a genus $2$ curve  over $K$ with Weierstrass equation $y^2 = f(x)$.  Write $\mathcal{R}$ for the set of roots of $f(x)$ in $K^{\text{sep}}$ and $c_f$ for its leading coefficient, that is, 
$$
    f(x) = c_f \prod_{r \in \cR} (x - r).
$$

A \textit{cluster} is a non-empty subset $\mathfrak{s}\subseteq\mathcal{R}$ cut out by a disc, that is $\mathfrak{s}=D\cap\mathcal{R}$ for some disc $D = \{x \in K^{\text{sep}} \mid v(x- z) \geq d\}$ with $z \in K^{\text{sep}}$ and $d \in \mathbb{Q}$. 
(We have here extended the valuation $v$ to $K^{\text{sep}}$, normalised so that $v(\pi) = 1$ for a uniformiser $\pi$ of $K$.)

A cluster $\mathfrak{s}$ with $|\mathfrak{s}| > 1$ is called \emph{proper}; for such a cluster, its \emph{(absolute) depth} $d_{\mathfrak{s}}$ is essentially the radius of its defining disc,
$$
    d_\mathfrak{s} = \textup{{min}}_{r,r'\in\mathfrak{s}} v(r-r').
$$
If $\mathfrak{s}\neq \mathcal{R}$, then its \emph{relative
depth} is $\delta_\mathfrak{s}=d_\mathfrak{s}-d_{P(\mathfrak{s})}$, where $P(\mathfrak{s})$ is the smallest cluster with $\mathfrak{s}\subsetneq P(\mathfrak{s})$. The \textit{cluster picture} of $y^2 = f(x)$ is the valuation $v_c:= v(c_f)$ of the leading coefficient and the collection of all clusters, with their depths, associated to $f$.

Following \cite{M2D2}, we draw cluster pictures by drawing the roots $r\in\mathcal{R}$ as red dots {\smash{\raise4pt\hbox{\clusterpicture 
\Root[D] {}{first}{r1} \endclusterpicture}}}, drawing ovals around the dots to represent clusters of size $>1$, and labelling the clusters $\fs \subsetneq \cR$ with their relative depth $\delta_{\mathfrak{s}}$ and $\cR$ with its absolute depth $d_{\cR}$. 

We refer to specific MRM reduction types by the labels in the Namikawa--Ueno classification, as these describe the special fibres of the curves.

\begin{example}[Computation with cluster pictures]
Cluster pictures are easy to work with for numerical examples. Consider the curve 
$$
C:y^2=17\cdot(x-1)(x-17^2)(x-17^4)(x-17^7)(x^2-17^{19})
$$ 
over $\mathbb{Q}_{17}^{\textup{unr}}$. Its cluster picture has five proper clusters:
$$
 \{ {17}^{\frac{19}{2}},-{17}^{\frac{19}{2}} \}
 \subset
 \{ {17}^{\frac{19}{2}},-{17}^{\frac{19}{2}}, 17^7 \}
 \subset
 \{ {17}^{\frac{19}{2}},-{17}^{\frac{19}{2}}, 17^7, 17^4 \}
 \subset
 \{ {17}^{\frac{19}{2}},-{17}^{\frac{19}{2}}, 17^7, 17^4, 17^2 \}
 \subset {\text{all roots}},
$$
with depths $\frac{19}{2}, 7, 4, 2$ and 0, respectively. It thus has cluster picture (the indices are the relative depths)
\begin{center}
     \raisebox{8pt}{\scalebox{1.5}{
\clusterpicture           
  \Root[D] {1} {first} {r1};
  \Root[D] {} {r1} {r2};
  \Root[D] {2} {r2} {r3};
  \Root[D] {2} {r3} {r4};
  \Root[D] {2} {r4} {r5};
  \Root[D] {1} {r5} {r6};
  \ClusterD c1[\frac{5}{2}] = (r1)(r2);
   \ClusterD c2[3] = (c1)(r3);
    \ClusterD c3[2] = (c2)(r4);
     \ClusterD c4[2] = (c3)(r5);
  \ClusterD c5[0] = (c4)(r6);
\endclusterpicture}}  \raisebox{8pt}{,\qquad $v_c = 1$.} 
\end{center}
Table \ref{ss:changeofmodel} shows that the curve has potential stable type VII, so its detailed information can be found in Table \ref{table:potentiallymultiplicativex2}. 
Note that as $p\!=\!17\neq 2,3,5$ the curve is automatically tame; see Lemma \ref{lem:cluster_pic_tame} for a general criterion.
In the notation of the latter table, we obtain $\frac{l}{2}\!=\!\frac{5}{2}$, $2t\!+\!\gamma\!=\!3$ (so $t\!=\!1, \gamma\!=\!1$), $\frac{m}{2}\!=\!2, s\!=\!2$. Checking the row ``$v_c\not\equiv s$'' the entry for $\gamma\!=\! 1$ (the boxed parameter) shows that the curve has MRM reduction type I$_5$-I$_4^*$-$1$, its minimal discriminant has valuation 27, its Jacobian has Néron component group $\Z/5 \Z \times \Z/ 2 \Z \times \Z / 2 \Z$, conductor exponent $3$, etc. \end{example}

\begin{example}[Realising reduction types]
Cluster pictures make it very easy to write down explicit genus~2 curves for any given reduction type, away from residue characteristic 2 and, in the wild cases, 3 or~5. 
For example, to construct a curve with MRM reduction type I$_l^*$-I$_q$-$t$, Table \ref{table:potentiallymultiplicativex2} tells us that we can take the curve $y^2\!=\!f(x)$ whose cluster picture is

\begin{center}
    \raisebox{8pt}{\scalebox{1.5}{
\clusterpicture           
  \Root[D] {1} {first} {r1};
  \Root[D] {} {r1} {r2};
  \Root[D] {2} {r2} {r3};
  \Root[D] {3} {r3} {r4};
  \Root[D] {} {r4} {r5};
  \Root[D] {2} {r5} {r6};
  \ClusterD c1[\frac{l}{2}] = (r1)(r2);
   \ClusterD c2[2t+1] = (c1)(r3);
    \ClusterD c3[\frac{q}{2}] = (r4)(r5);
  \ClusterD c5[0] = (c3)(c2)(r6);
\endclusterpicture}, $\qquad v_c \equiv 0 \mod 2$,}
\end{center}
so long as $p\neq 2$. We can take 
$$
C:y^2=((x-1)^2-\pi^{l+4t+2})(x-1-\pi^{2t+1})(x^2-\pi^q)(x-2),
$$ 
where $\pi$ is a uniformiser. (For a general recipe of producing polynomials for cluster pictures, see \cite[Construction 2.5]{bisatt}).
Constructing curves with a given reduction type in arbitrary residue characteristic is a difficult problem in general; for instance, to the best of our knowledge it is not yet known if all MRM reduction types of genus 2 curves can be realised in residue characteristic 2.
\end{example}

\begin{example}[BSD data]\label{ex:bsd}
Suppose we would like to investigate the Birch--Swinnerton-Dyer formula for the leading term at $s\!=\!1$ of the $L$-series of the Jacobian of $C:y^2\! =\! p\cdot x (x^5\! -\! p^3)$ over $\Q$, for some prime $p>5$. We can examine the local contribution from the prime $p$ using Theorem \ref{thm:main} and Table \ref{table:potentiallygood}.

The cluster picture is ${\scalebox{1.5}{
\clusterpicture           
  \Root[D] {1} {first} {r1};
  \Root[D] {} {r1} {r2};
  \Root[D] {} {r2} {r3};
  \Root[D] {} {r3} {r4};
    \Root[D] {} {r4} {r5};
    \Root[D] {} {r5} {r6};
  \ClusterD c4[{3}/{5}] = (r1)(r2)(r3)(r4)(r5)(r6);
\endclusterpicture}}$
with $v_c=1$, so the curve has MRM reduction type VIII-3. The N{\'e}ron component group $\Phi$ is trivial, so the local Tamagawa number is 1. Note that our tables do not list the Frobenius action on $\Phi$ (they assume that the residue field is algebraically closed), which would be needed to obtain the Tamagawa number in general.

The other contribution comes from the choice of periods, which are usually computed with respect to the standard exterior form $\omega_C=\frac{dx}{y}\wedge\frac{xdx}{y}$. The term appearing in the BSD formula is $|\omega_C/\omega_p^0|_p$, where $\omega_p^0$ is the N{\'e}ron exterior form at $p$ (see \cite{tate-bsd}; the fraction $\omega/\omega'$ is shorthand for $\lambda\!\in\! K^\times$ such that $\omega\!=\!\lambda\omega'$). As $\omega^0_p$ remains unchanged is unramified extensions, we can use Table \ref{table:potentiallygood} to determine $v(\omega_C/\omega_p^0)$ as follows. The valuation of the discriminant of our model is 28, and that of the minimal Weierstrass model is 18, resulting in $v(\omega_C/\omega_p^0)=-\frac{1}{10}(28-18)+v(\omega_{\text{min}}/\omega_p^0)=-2$ (see Lemma~\ref{relativevaluation}). Thus $|\omega_C/\omega_p^0|_p = p^2$.

Let us note in passing the genus 2 version of Ogg's formula (see Theorem  \ref{oggformula}): $\#$components on special fibre $+$ conductor exponent $-11v(\omega_{\text{min}}/\omega_p^0)$ = $v(\Delta_{\text{min}})+1$, which in this example reads 4+4+11=18+1. The formula is sensitive to using the minimal regular model (as opposed to MRNC) and the discriminant of the minimal Weierstrass equation. 
\end{example}

\begin{example}[Finding a minimal Weierstrass model]\label{ex:changemodel}
Consider the curve in Example \ref{ex:bsd}. Table \ref{table:potentiallygood} tells us that the curve can be represented by Weierstrass equations with cluster pictures
$$
 {\scalebox{1.5}{
\clusterpicture           
  \Root[D] {1} {first} {r1};
  \Root[D] {} {r1} {r2};
  \Root[D] {} {r2} {r3};
  \Root[D] {} {r3} {r4};
    \Root[D] {} {r4} {r5};
    \Root[D] {} {r5} {r6};
  \ClusterD c4[\frac{3}{5} + n] = (r1)(r2)(r3)(r4)(r5)(r6);
\endclusterpicture}} \quad {\text{with }}2|v_c, \qquad
 {\text{and}} \qquad 
 {\scalebox{1.5}{
\clusterpicture           
  \Root[D] {1} {first} {r1};
  \Root[D] {} {r1} {r2};
  \Root[D] {} {r2} {r3};
  \Root[D] {} {r3} {r4};
    \Root[D] {} {r4} {r5};
    \Root[D] {5} {r5} {r6};
  \ClusterLD c1[][s+\frac{2}{5}] = (r1)(r2)(r3)(r4)(r5);
  \ClusterD c4[m] = (c1)(r6);
\endclusterpicture}} \quad {\text{with }}2\nmid v_c,
$$
for all integers $n,m, s\in \Z$ and $s\ge 0$. The one corresponding to a minimal Weierstrass equation is ${\scalebox{1.5}{
\clusterpicture           
  \Root[D] {1} {first} {r1};
  \Root[D] {} {r1} {r2};
  \Root[D] {} {r2} {r3};
  \Root[D] {} {r3} {r4};
    \Root[D] {} {r4} {r5};
    \Root[D] {5} {r5} {r6};
  \ClusterLD c1[][2 / 5] = (r1)(r2)(r3)(r4)(r5);
  \ClusterD c4[0] = (c1)(r6);
\endclusterpicture}}$ with $v_c = 1$, indicated by a $\star$ in the table. 

As indicated in Table \ref{ss:changeofmodel}, to go from the cluster picture of $C \colon y^2 = p \cdot x(x^5 - p^3)$ to the one corresponding to a minimal Weierstrass equation, we apply the M\"{o}bius transformation in Proposition \ref{prop:cpic-ABC}(5A) with cluster $\fs = \{ 0 \}$. The M\"{o}bius transformation $\phi(x) = \frac{p}{x + p}$ gives rise to a change of model via $X = \phi(x)$ and $Y = p^{-2}\cdot X^3y$ so that $C' : Y^2 = p\cdot(X-1)(X^5 - p^2(1-X)^5)$ is a minimal Weierstrass equation.

\end{example}

\begin{example}[Change of MRM type in extensions]\label{ex:introram}
Let us illustrate how one can easily trace the change of the MRM reduction type in ramified extensions using Tables \ref{table:potentiallygood}--\ref{table:potentiallymultiplicativex2}, as noted in Corollary \ref{cor:introram}. An analogous result can be obtained for every reduction type of genus 2 curves. 
We remind the reader that this is not a property of curves in general, as the MRM reduction type might not even determine the potential stable type. 

Suppose $p\neq 2,5$. We claim that a curve with MRM reduction type VIII-1 obtains the following MRM reduction type over an extension of degree $e$: 
$$
\begin{array}{|c|c|c|c|c|c|c|c|c|c|c|}
\hline
e {\textnormal{ mod }} 10&1&2&3&4&\>\>\>5\>\>\>\>&6&7&8&9&\>\>\>0\>\>\>\> \cr\hline
{\textnormal{MRM reduction type}}&{\textnormal{VIII-1}}&{\textnormal{IX-1}}&{\textnormal{VIII-2}}&{\textnormal{IX-2}}&{\textnormal{I}}_{0-0-0}^*&{\textnormal{IX-3}}&{\textnormal{VIII-3}}&{\textnormal{IX-4}}&{\textnormal{VIII-4}}&{\textnormal{I}}_{0-0-0}\cr\hline
\end{array}
$$
To see this, consider Theorem \ref{thm:main} and Table \ref{table:potentiallygood}.
The curve is tame and admits a Weierstrass model with cluster picture ${\scalebox{1.5}{
\clusterpicture           
  \Root[D] {1} {first} {r1};
  \Root[D] {} {r1} {r2};
  \Root[D] {} {r2} {r3};
  \Root[D] {} {r3} {r4};
    \Root[D] {} {r4} {r5};
    \Root[D] {} {r5} {r6};
  \ClusterD c4[4 / 5] = (r1)(r2)(r3)(r4)(r5)(r6);
\endclusterpicture}}$ and $v_c=1$. Over an extension of ramification degree $e$ the cluster picture of this model becomes ${\scalebox{1.5}{
\clusterpicture           
  \Root[D] {1} {first} {r1};
  \Root[D] {} {r1} {r2};
  \Root[D] {} {r2} {r3};
  \Root[D] {} {r3} {r4};
    \Root[D] {} {r4} {r5};
    \Root[D] {} {r5} {r6};
  \ClusterD c4[{4e}/{5}] = (r1)(r2)(r3)(r4)(r5)(r6);
\endclusterpicture}}$ and $v_c=e$. The result now follows by looking through the table, since the depth of the cluster containing all six roots is taken mod 1 and the valuation of the leading coefficient mod~2. 

\end{example}

The final example is a theoretical application of our work. In \cite[Cor.\ 3.25 p.113]{lorenzini}, Lorenzini classified the possible N{\'e}ron component groups of principally polarised abelian surfaces with purely additive reduction, with the sole remaining question of this work being whether $\mathbb{Z}/2\mathbb{Z}\oplus \mathbb{Z}/4\mathbb{Z}$ is realised as the prime-to-$p$ part of the component group of such a surface with potentially good reduction,  where $p$ is the residue characteristic.

\begin{theorem}
The group $\mathbb{Z}/2\mathbb{Z}\oplus \mathbb{Z}/4\mathbb{Z}$ does not appear as the prime-to-$p$ part of the N{\'e}ron component group of a principally polarised abelian surface with potentially good reduction over any complete DVR of residue characteristic $p$.
\end{theorem}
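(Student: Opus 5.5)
We reduce the statement to a finite inspection of the potentially good table, splitting according to whether the surface is a Jacobian. Let $A/K$ be a principally polarised abelian surface with potentially good reduction. Since the prime-to-$p$ part of the component group is unchanged by unramified base change, we may work over the completion of the maximal unramified extension. This is the setting of Theorem \ref{thm:main}, with algebraically closed residue field $k$.

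By the classical Weil/Oort--Ueno classification, $A$ is, after possibly a quadratic twist-free descent issue is handled, one of two things:
\begin{enumerate}
\item[(a)] the Jacobian of a genus 2 curve $C/K$; or
\item[(b)] a product $E_1\times E_2$ of elliptic curves with the product polarisation, or the Weil restriction $\mathrm{Res}_{L/K}E$ of an elliptic curve over a quadratic extension $L$.
\end{enumerate}
In case (b) with $L/K$ unramified, $L=K$ since $k$ is algebraically closed. If $L/K$ is ramified, the relevant principally polarised surface over $K$ is still a twisted product. Its N\'eron model is controlled by $E$ over $L$ via the norm/restriction of N\'eron models.

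In case (b) the potentially good hypothesis forces each $E_i$ to have additive potentially good reduction or good reduction. By Tate's table, each $E_i$ then has prime-to-$p$ component group in $\{0,\Z/2,\Z/3,(\Z/2)^2\}$. A product of two such groups is never $\Z/2\oplus\Z/4$, since no element of order $4$ can occur. For the Weil restriction, the component group of $\mathrm{Res}_{L/K}E$ is a subquotient-type extension built from $\Phi_{E/L}$ (Edixhoven/Bosch--Lorenzini). Here I would check that it is killed by the exponent of $\Phi_{E/L}$ times $2$ and has rank at most $2$. The danger is precisely a $\Z/4$ appearing from a ramified quadratic twist, so this subcase needs a short explicit computation.

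In case (a), $A=\Jac C$ with $C$ of genus 2 and potential stable type I, meaning potentially good reduction of $C$ itself. The other possibility, potential stable type V (two elliptic curves meeting at a point), has the Jacobian potentially good as well, and corresponds to Table \ref{table:potentiallygoodx2}, or whatever the second potentially good table is called. In both cases Theorem \ref{thm:main}(2) states that the N\'eron component group is determined by the MRM reduction type, and is listed in Tables \ref{table:potentiallygood} and the $g_1\times g_1$ table. These tables list all possible MRM reduction types in every residue characteristic, because items (1)--(5) do not require tameness. So it suffices to run through every row of these two tables and observe that no entry has prime-to-$p$ part $\Z/2\oplus\Z/4$.

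For families with parameters, the group is given as a formula in those parameters, such as the $\Z/n$-type contributions in the $g_1\times g_1$ families. For these I would verify directly that the $2$-primary part of each formula is never $\Z/2\oplus\Z/4$. This is done by looking at the monodromy co-invariants of Theorem \ref{thm:introcomponentgroups}, whose torsion has order dividing a product of finite-order contributions.

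The main obstacle is twofold. First, we must make sure the parametric families in the potentially good tables genuinely exclude $\Z/2\oplus\Z/4$ for all parameter values, including the $p$-part subtleties when $p\in\{2,3,5\}$. Second, we must handle the non-Jacobian principally polarised surfaces that are Weil restrictions. For the first I would rely on the tables' completeness. For the second I would compute $\Phi$ of $\mathrm{Res}_{L/K}E$ explicitly from Tate's table for $E/L$.
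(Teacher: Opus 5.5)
\textbf{There is a genuine gap in case (a), and a second one in the Weil restriction case.}

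\textbf{Case (a).} You pass from ``$\Jac C$ is potentially good'' to ``the MRM type of $C$ occurs in Table~\ref{table:potentiallygood} or Table~\ref{table:potentiallygoodx2Table2}''. You justify this by noting that items (1)--(5) of Theorem~\ref{thm:main} need no tameness. But those items only say that the invariants in a column hold for \emph{every} curve with that MRM type. The placement of a type in a particular table, i.e.\ its potential stable type, is item (6). That is asserted only for tame curves with $p\neq 2$, which is why every table is headed ``if tame and $p\neq 2$''. Even in the tame case the MRM type of II$^*_{t-m}$ and 2I$_m$-$t$ admits two stable types.

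For $p=3$ or $5$ there are wild curves with potentially good Jacobian (e.g.\ types III, VIII, IX). Nothing in your argument prevents such a curve from having an MRM type listed in Tables 2--4, 6 or 7. Several of those types do have component group $\Z/2\Z\oplus\Z/4\Z$, for instance I$_{l-m-0}$ with $l=2$, $m=4$. So checking only the two potentially good tables proves nothing in the wild case. Your concern about parametric families is also misplaced: no group in those two tables depends on a parameter. The parametric groups sit exactly in the tables you discarded.

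\textbf{How to close case (a).} Run the argument the other way round, as the paper does.
\begin{itemize}
\item First dispose of $p=2$: a prime-to-$2$ part has odd order, so it cannot be $\Z/2\Z\oplus\Z/4\Z$.
\item For $p\neq 2$, scan all seven tables for MRM types whose prime-to-$p$ component group is $\Z/2\Z\oplus\Z/4\Z$. These are among I$_{l-m-0}$, II$_{l-m}$, I$_{l-m-n}$, III-I$_l$-$t$, III$^*$-I$^*_l$-$t$, III-I$^*_l$-$t$, III$^*$-I$_l$-$t$, I$_l$-I$_m$-$t$ and I$^*_l$-I$_m$-$t$.
\item None of these is wild for $p\neq 2$. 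Tameness depends only on the MRNC, hence MRM, type (Saito's criterion, Theorem~\ref{thm:main}(4)). So a curve with one of these types is automatically tame.
\item Theorem~\ref{thm:main}(6) then forces potential stable type III, IV, VI or VII. All of these have positive potential toric rank, contradicting potentially good reduction.
\end{itemize}

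\textbf{Weil restriction case.} This is not closed either. Your proposed bound (exponent dividing $2\exp\Phi_{E/L}$, at most two generators) does not exclude $\Z/2\Z\oplus\Z/4\Z$. For example, if $E/L$ has type I$_0^*$ then $\Phi_{E/L}=(\Z/2\Z)^2$, and $\Z/2\Z\oplus\Z/4\Z$ satisfies both bounds. What you need is that Weil restriction along $L/K$, which is necessarily totally ramified, does not change the component group \cite[Prop.~3.19]{LorenziniTT}. Concretely, the special fibre of the restricted N\'eron model surjects onto $\mathcal{E}_k$ with connected vector-group kernel. Hence $\Phi=\Phi_{E/L}$, which is cyclic or $(\Z/2\Z)^2$ and never $\Z/2\Z\oplus\Z/4\Z$. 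No potential-goodness hypothesis and no twist computation are needed, and the feared $\Z/4\Z$ cannot arise.
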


\begin{proof}
Since $\mathbb{Z}/2\mathbb{Z}\oplus \mathbb{Z}/4\mathbb{Z}$ cannot be the prime-to-$2$ part, we can assume that the residue characteristic of $K$ is $\neq 2$. 

A principally polarised abelian surface is either isomorphic to (i) a product of two elliptic curves, (ii) the Weil restriction of an elliptic curve from a quadratic extension, or (iii) the Jacobian of a curve of genus 2 (see \cite[Thm.~3.1]{GGR}). In case (i), one of the elliptic curves would have to have reduction type I${}_4$ or I${}_n^*$ ($n$ odd), and hence the surface would not have potentially good reduction. Case (ii) cannot have N{\'e}ron component group $\mathbb{Z}/2\mathbb{Z}\oplus \mathbb{Z}/4\mathbb{Z}$ as N{\'e}ron component groups are unchanged by Weil's restriction of scalars \cite[Prop.~3.19]{LorenziniTT} and this group does not occur for elliptic curves. 

Finally, in case (iii) we use Theorem \ref{thm:main} as follows. Checking through Tables \ref{table:potentiallygood}--\ref{table:potentiallymultiplicativex2} we see that to have this N{\'e}ron component group the underlying curve would have to have MRM type 
I${}_{l-m-0}$, 
II${}_{l-m}$,
I${}_{l-m-n}$,
III-I${}_l$-t, 
III${}^*$-I${}^*_l$-t, 
III-I${}^*_l$-t, 
III${}^*$-I${}_l$-t, 
I${}_l$-I${}_m$-$t$ or
I${}^*_l$-I${}_m$-$t$.
Moreover, such a curve has tame reduction whenever $p\neq 2$, and hence its potential stable type must be either III, IV, VI or VII. In particular, the Jacobian does not have potentially good reduction (types I and V).
\end{proof}

\subsection{Outline of the paper}

In \S\ref{s:reduction}, we formally define the reduction type of a model of a curve and we describe how the MRNC reduction type determines the MRM reduction type (\Cref{MRNCtoMRMintro}), and vice versa in the genus $2$ case. This covers the proof of part (1) of Theorem \ref{thm:main}.

In \S\ref{s:clusters}, we prove part (8) of Theorem \ref{thm:main}, that our tables give the complete list of cluster pictures that gives rise to each reduction type in the tame case. We also detail the explicit $K$-isomorphisms to go between models of curves with different cluster pictures (Proposition \ref{prop:cpic-ABC}), and show that, for a curve with a given reduction type, each cluster picture is realised by some Weierstrass equation of the curve (Theorem \ref{thm:CP2}). 

In \S\ref{s:componentgroups}, we prove Theorem \ref{thm:introcomponentgroups}, which relates the Néron component group of the Jacobian of a general curve $C/K$ to the homology of a curve $C'$ defined over $\C[t]$ with the same reduction type as $C/K$. We use this to calculate the  Néron component groups associated to the reduction type families in our tables, giving part (2) of Theorem \ref{thm:main}.

In \S\ref{s:discriminant} we review results from literature used to determine the tame part of the conductor exponent, BSD fudge factor $v (\omega_{\min}/ \omega^0)$ and the valuation of the minimal discriminant.  This proves parts (3), (5) and (7) of Theorem \ref{thm:main}.

In \S\ref{mainthmproofsection} we complete the proof of \Cref{thm:main} and in \S\ref{s:tables}, we present the tables listing all possible reduction types of the minimal regular model of genus $2$ curves over complete discrete valuation fields, along with their associated invariants.

\begin{remark}
There are packages in SageMath and Magma for calculating cluster pictures and reduction types of genus $2$ curves \cite{bestvanbommel,magma,timredlib, sage}, which we have used throughout this project to check calculations done by hand. The website \cite{timredlib} of T. Dokchitser also gives a pictorial library of MRNC reduction types of genus $2$ curves, which, since we adopt his labelling for MRNC reduction types, can be used alongside our tables to obtain an explicit description of the special fibre. 
\end{remark}

\subsection{Notation, terminology and conventions}\label{ss:notation}

We use the following notation. 
\begin{align*}
& K && \textup{a complete discrete valuation field with an algebraically closed residue field;} \\
& R && \textup{the valuation ring of }K;  \\
& \pi && \textup{a uniformiser of }K; \\
& k && \textup{the residue field of }K; \\
& p && \textup{the characteristic of }k; \\
& v && \textup{the normalised valuation extended to } K^{\textup{sep}}\textup{ so that } v(\pi)=1; \\
& G_K && \textup{the absolute Galois group of }K; \\
& C && \textup{a smooth, projective, geometrically connected curve of genus }g\textup{ over }K; \\
& \textup{Jac}(C) && \textup{the Jacobian variety of }C; \\
&y^2=f(x) && \textup{a Weierstrass equation for } C,\textup{when it is a hyperelliptic curve}; \\
& v_c && v(c_f),\textup{ where }c_f\textup{ is the leading coefficient of }f(x);\\
& \Delta_{\min} && \textup{the discriminant of a minimal Weierstrass equation for a hyperelliptic curve } C \textup{ (Definition \ref{def:mindisc})};\\
& \omega^0 && \textup{the N\'eron exterior form when }C \textup{ has genus 2}; \\
& \omega_{\text{min}} && \frac{dx}{y}\wedge\frac{xdx}{y} \textup{ for a minimal Weierstrass model } y^2=f(x) 
\textup{ of a genus 2 curve } C.
\end{align*}

We use the phrase \textit{cluster picture} to refer to the classically defined cluster picture in \cite{M2D2} along with $v_c$ (see \S\ref{subsec:examples}).

We say a curve $C/K$ has \textit{tame reduction} if it obtains semistable reduction over a tamely ramified extension of $K$. 

By a \textit{Weierstrass equation} or a \textit{Weierstrass model} we always mean one in the shortened form $y^2=f(x)$.

We use the abbreviation \textit{MRM} for the minimal regular model and \textit{MRNC} for the minimal regular normal crossings model. For the terms \textit{MRM reduction type} and \textit{MRNC reduction type} see \S\ref{s:introreduction} or Definition \ref{def:reductiontype}.
(See \cite[\S3]{saito} or \cite[\S\S1--2]{timreduction} for the definition and an overview of MRNC models.)

By the \emph{potential stable type} of a genus 2 curve we mean the type of the special fibre of the stable model of $C/F$ (in the sense of \cite[Thm.\ 1]{liu-stabletypes}, see also Table \ref{ss:changeofmodel}), where $F/K$ is a finite extension over which $C$ becomes semistable.

We refer to MRM reduction types of genus 2 curves by the labels given in the Namikawa--Ueno classification. Lemma \ref{lem:MRMtoNU} shows that the MRM type determines the description of the special fibre in the Namikawa--Ueno classification, and hence the label except for two cases. For the two pairs of labels that have the same special fibre see Remark \ref{rmk:MRM-NU}(2) below.

\begin{remark}\label{rmk:MRM-NU}
We will often pass between the description of special fibres of minimal regular models, MRNC models, cluster pictures, the Namikawa--Ueno labels and our own Tables \ref{table:potentiallygood}--\ref{table:potentiallymultiplicativex2}. For clarity, we give a brief summary that compares these notions for genus 2 curves: 
\begin{enumerate}
\item
MRM and MRNC reduction types are in 1-to-1 correspondence (Theorem \ref{thm:SNCvsMRM},  Corollary \ref{cor:genus2MRNCtoMRM}). There are 104 families of MRNC types defined in \cite{timreduction}.

\item Tables \ref{table:potentiallygood}--\ref{table:potentiallymultiplicativex2} contain 106 families. They represent MRM reduction types together with the potential stable type {\em provided} $p\neq 2$. In practice, they are exactly the MRM reduction types, except that two families have been duplicated: II${}_{t-m}^*$ and 2I${}_m$-$t$ have the same MRM type but different potential stable type, and similarly for II${}_{t-0}^*$ and 2I${}_0$-$t$. For $p=2$ this splitting has no meaning: the data in the tables for $p=2$ is identical for each pair. These 106 families match the 106 families of equivalence classes of cluster pictures, although we will not attempt to axiomatise the latter concept of family (\Cref{thm:CP2}).

\item The Namikawa--Ueno classification contains 105 families, after one replaces their use of ``$\alpha$'' in labels by $-1$ for the corresponding parameter. 
These are precisely the 106 families in (2), except that 2I${}_l$-$t$ ($t>0$) and 2I${}_l$-$0$ are treated as one family. From the point of view of (2) they must be kept separate, as they have different potential stable types.
\end{enumerate}
\end{remark}

\begin{remark}\label{fieldofdefnremark}
Several references on cluster pictures, including \cite{bisatt, M2D2}, are stated for curves over local fields, where one can also keep track of Frobenius actions. In this paper, we work over a complete discrete valuation field with algebraically closed residue field. The results we use only involve the valuation, the configuration of discs containing the roots, and the tame inertia action; they do not use finiteness of the residue field or Frobenius. Accordingly, the relevant cluster picture arguments carry over unchanged to our setting. Any statements we reference that involve Frobenius are used only after forgetting the Frobenius action.
\end{remark}

\begin{remark}
 Throughout this paper we take $K$ to be a complete discrete valuation field. As pointed out to us by Qing Liu, this assumption can in fact be relaxed and \Cref{thm:main} holds for general discrete valuation fields with algebraically closed residue field. Indeed, although our proofs use completeness, the invariants classified are unchanged under taking completions. For MRM and MRNC models see \cite[Prop.\ 9.3.28 and Lem.\ 9.3.35]{Liu_book}; for minimal discriminants see \cite[Lem.\ 1.29]{liu-2026}; for N{\'e}ron model (and hence N{\'e}ron component groups) see \cite[\S7.2 Thm.\ 1(ii)]{neronmodels}; conductor exponents are defined by taking the completion; for $v(\omega_{\text{min}}/\omega^0)$ this follows from the analogue of Ogg's formula for genus 2 curves due to Ueno, Saito and Liu \cite{Liu} (see Theorem~\ref{oggformula}); for cluster pictures see \cite[Thm.\ 1.1]{lilybelle2}.
\end{remark}

\vspace{0.5em}

\noindent {\bf{Acknowledgements.}}
This project began at the {\em{Workshop on Arithmetic and Algebra of Rational Points}}, we thank Julian Lyczak and Ross Paterson for organising it.  We would also like to thank Peter Jossen, Qing Liu, Tim Dokchitser, Jakab Schrettner, Raymond van Bommel, Adam Morgan and Padma Srinivasan for helpful conversations and comments, and the Heilbronn Institute for funding the {\em{Tamagawa Numbers of Curves}} research meeting, where some of the work on this project took place. 

The first author was supported by the Engineering and Physical Sciences Research Council [EP/S021590/1], the EPSRC Centre for Doctoral Training in Geometry and Number Theory (The London School of Geometry and Number Theory), University College London.
The second author was supported by EPSRC Postdoctoral Pathway Fellowship [EP/W524347/1].  The fourth author was supported by the EPSRC Doctoral Prize fellowship EP/W524335/1 and the extension UKRI3030.

\section{Reduction types}
\label{s:reduction}

In this section we formalise what we mean by MRM and MRNC reduction types, show that they are in 1-to-1 correspondence for genus 2 curves, and that they are essentially equivalent to the Namikawa--Ueno labels.

Kodaira and N\'eron developed a classification of reduction types of elliptic curves over DVRs \cite{kodaira,neron}. For genus 2 curves, the standard classification has become that of Namikawa and Ueno \cite{namikawaueno}. There are a few subtleties, however, as they worked over $\C[t]$ and the precise objects that they classified rely on this. We take a slightly different approach: for us the reduction type should be described purely combinatorially in terms of the special fibre and apply over any DVR. In view of the general usage of the Namikawa--Ueno classification, we have chosen a definition that as closely as possible recovers it (Lemma~\ref{lem:MRMtoNU}).

\begin{defn}\label{def:reductiontype}
For a curve $C/K$ with a regular model $\mathcal{C}/R$, by the {\em reduction type} of $\mathcal{C}$ we mean the following data:
\begin{itemize}
\item A set $X$ that enumerates the components of the special fibre $\mathcal{C}_k$, and functions that record their multiplicities $m:X\to  \Z_{\ge 1}$ and geometric genera $g: X\to \Z_{\ge 0}$;
\item The intersection pairing $X\times X\to \Z$;
\item A set $S$ that enumerates the singular points of $\mathcal{C}_k$, together with a function that records the number of times each component passes through this point $f:S\to \Z[X]$, and another that records whether the singularities are normal crossings.
\end{itemize}
The {\em MRNC reduction type} of $C/K$ is the reduction type of the minimal MRNC model of $C/R$. The {\em MRM reduction type} is the reduction type of the minimal regular model.
\end{defn}

\begin{remark}
The MRNC reduction type of a curve determines its N\'eron component group (Raynaud \cite{raynaud}), the tame part of its conductor exponent (\Cref{tameconductorformula}) and whether the curve has tame reduction (Saito \cite[Thm.\ 3.11]{saito}, \cite[Thm.\ 10.4.47]{Liu_book}), i.e. the invariants in part (2)--(4) of Theorem \ref{thm:main}. 
\end{remark}

\begin{theorem}\label{thm:SNCvsMRM}
The reduction type of any regular model of a curve determines its MRM reduction type.
\end{theorem}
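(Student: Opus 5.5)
Let $\mathcal{C}/R$ be a regular model of $C$. The plan is to recover the MRM from $\mathcal{C}$ by a sequence of contractions of $(-1)$-curves, using Castelnuovo's criterion. We then check that each step, and the final reduction type, can be read off from the reduction type alone. Every regular model is obtained from the minimal regular model by a finite sequence of blow-ups at closed points. Conversely, when $g\ge 1$ the minimal regular model is unique and is reached from any regular model by repeatedly contracting exceptional curves. If $g=0$ we handle it separately or note the statement is vacuous for our purposes; we assume $g\ge1$.

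\textbf{Step 1: detecting $(-1)$-curves.} A component $E$ is an exceptional curve of the first kind precisely when $E\cong\mathbb{P}^1_k$ and $E^2=-1$ (here $k$ is algebraically closed). The reduction type records $E^2$ and the geometric genus. Smoothness of $E$ is also visible: $E$ is smooth if and only if it has geometric genus $0$ and is not counted with multiplicity $\ge 2$ at any singular point of $\mathcal{C}_k$. The function $f:S\to\Z[X]$ records exactly this branch count. Equivalently, by adjunction, $E$ is exceptional iff $E^2<0$ and $K\cdot E=-1$. Here $K\cdot E$ is computable from the intersection matrix, since $K\cdot E_i$ is determined by $m_i$, the pairing, and $2p_a(E_i)-2=E_i^2+K\cdot E_i$ via $K\cdot\mathcal{C}_k=2g-2$ and $\mathcal{C}_k\cdot E_i=0$. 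Either way, the set of exceptional components is determined by the reduction type.

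\textbf{Step 2: the effect of one contraction on the data.} Contract an exceptional $E$ to a point $P$. We must show the reduction type of the new model $\mathcal{C}'$ is a function of the old reduction type and of $E$. Components of $\mathcal{C}'$ are those of $X\setminus\{E\}$, with the same multiplicities. The pairing is given by the standard formula $E_i'\cdot E_j'=E_i\cdot E_j+(E_i\cdot E)(E_j\cdot E)$. Geometric genera are unchanged.

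For the singular points, those away from $E$ are unchanged, and every singular point of $\mathcal{C}'_k$ other than $P$ is of this form. The point $P$ is singular unless exactly one component passes through it once, transversally. The branches of $E_i'$ through $P$ are in bijection with the points of $E_i\cap E$ counted with branches. We therefore need the number of branches of $E_i$ meeting $E$. This is the step I expect to be the main obstacle, because the recorded data gives only the total $E_i\cdot E$ and the per-point branch counts in $f$, not local intersection multiplicities of branches.

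To get around this, I would use the structure near $E$. Each point $Q\in E$ is either a smooth point of $\mathcal{C}_k$ lying on $E$ and one other component transversally (it then contributes one branch), or a singular point in $S$. In the latter case $f(Q)$ records how many branches of each $E_i$ pass through $Q$, and the branch count at $P$ is the sum over such $Q$. Whether $P$ is normal crossings is then decided as follows. It is normal crossings iff it has exactly two branches, meeting transversally and smoothly. This is equivalent to $\sum_Q(\text{branches})=2$ together with $E_i'\cdot E_j'$ (or $E_i'^2$ contributions) matching $1$, which is computable from Step 2's pairing formula. Concretely, for two distinct branches $E_i', E_j'$, transversality at $P$ means their local intersection at $P$ equals $1$. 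This local intersection is $E_i'\cdot E_j'$ minus contributions at the other recorded points, and those are known when they are normal crossings and bounded otherwise. If this bookkeeping turns out insufficient in degenerate cases, I would fall back on refining the argument via the MRNC model: first show that the reduction type determines the MRNC reduction type by blowing down/up normal crossings data, which is purely combinatorial on dual graphs, and then contract from the MRNC.

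\textbf{Step 3: iteration and independence.} Repeating the contraction terminates in the minimal regular model. Its reduction type is independent of the choices made, since the minimal model is unique for $g\ge1$. Hence the output depends only on the initial reduction type.
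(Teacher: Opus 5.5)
Your overall strategy matches the paper's. You contract $(-1)$-curves one at a time and show that the reduction type after one contraction is a function of the one before (Lemma~\ref{lem:contraction}). Your pairing formula $E_i'\cdot E_j'=E_i\cdot E_j+(E_i\cdot E)(E_j\cdot E)$ and your branch count at $P$ (summing the coefficients of $f$ over the points of $S$ on $E$) also agree with it.

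The gap is the step you flag yourself: deciding whether $P$ is non-singular, a normal crossing, or a bad singularity. Your proposed test does not work, for four reasons.
\begin{enumerate}
\item The condition $E_i'\cdot E_j'=1$ is global, so it says nothing about $P$ once $E_i',E_j'$ also meet elsewhere.
\item Computing the local intersection at $P$ as $E_i'\cdot E_j'$ minus the contributions at the other common points breaks down exactly when one of those points is not a normal crossing. Its local intersection number is not part of the data, and knowing only a bound there does not pin down the number at $P$.
\item Two branches of a single component at $P$ are not handled, since $E_i'^2$ carries no local information.
\item You give no way to decide whether $P$ is smooth when only one branch passes through it.
\end{enumerate}
The fallback via the MRNC model is not available either. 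Passing from an arbitrary regular model to a normal crossings one requires resolving its bad singular points, and that depends on local information the reduction type does not record. For instance, an ordinary cusp and a $(2,5)$-cusp on a component look identical in it. This is why the paper only argues in the contracting direction, and obtains MRM $\Rightarrow$ MRNC in genus $2$ only by an explicit check.

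The missing fact is the following. Let $\pi\colon\mathcal{C}\to\mathcal{C}'$ be the contraction, i.e.\ the blow-up of $\mathcal{C}'$ at $P$. For a component $\Gamma\neq E$ with image $\Gamma'$, one has $\pi^*\Gamma'=\Gamma+\mu_P(\Gamma')E$, hence $\mu_P(\Gamma')=\Gamma\cdot E$. One also has $i_P(\Gamma_1',\Gamma_2')=(\Gamma_1\cdot E)(\Gamma_2\cdot E)+\sum_{Q\in E}i_Q(\Gamma_1,\Gamma_2)$. These make everything local to $E$, since the reduced special fibre of $\mathcal{C}'$ has multiplicity $\sum_{\Gamma\neq E}\Gamma\cdot E$ at $P$:
\begin{enumerate}
\item $P$ is non-singular iff exactly one other component $\Gamma$ meets $E$ and $\Gamma\cdot E=1$.
\item $P$ is a transversal intersection of $\Gamma_1',\Gamma_2'$ iff these are the only components meeting $E$, $\Gamma_1\cdot E=\Gamma_2\cdot E=1$, and $\Gamma_1,\Gamma_2$ pass through no common point of $E$.
\item $P$ is a node of a single $\Gamma'$ iff $\Gamma$ is the only component meeting $E$, $\Gamma\cdot E=2$, and $\Gamma$ meets $E$ in two distinct points.
\end{enumerate}
All of this is read off from the pairing and $f$, because every point where $E$ meets another component lies in $S$. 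That includes the transversal ones, contrary to the dichotomy in your Step~2. This is precisely the paper's proof of Lemma~\ref{lem:contraction}.

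Separately, both justifications in your Step~1 are faulty. First, $K\cdot E_i$ is not computable from the pairing: adjunction needs the arithmetic genus $p_a(E_i)$, which is not recorded, and $K\cdot\mathcal{C}_k=2g-2$ is only a single relation. Second, the branch-count test for smoothness misses unibranch singularities such as cusps, through which a component passes only once. The paper does not attempt this and simply contracts the genus $0$ components of self-intersection $-1$.
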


\begin{proof}
The minimal regular model is obtained from a regular model by repeatedly blowing down genus 0 curves on the special fibre with self-intersection $-1$ (\cite[Thm.~9.3.8, Prop.~9.3.19]{Liu_book}). The reduction type of the regular model then determines the MRM reduction type by Lemma \ref{lem:contraction} below.

\end{proof}

\begin{lemma}\label{lem:contraction}
Suppose $C$ is given by a regular model $\mathcal{C}$ whose special fibre contains a genus 0 curve $E$ with self-intersection $-1$. Let $\mathcal{C}'$ be the regular model for $C$ obtained by contracting $E$. Then the reduction type of $\mathcal{C'}$ is determined by the reduction type of $\mathcal{C}$.
\end{lemma}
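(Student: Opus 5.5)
We have to show that each of the three pieces of data in Definition \ref{def:reductiontype} for $\mathcal{C}'$ can be read off from the corresponding data for $\mathcal{C}$. Let $\phi:\mathcal{C}\to\mathcal{C}'$ be the contraction. It is an isomorphism away from $E$ and sends $E$ to a single point $P\in\mathcal{C}'_k$.

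\emph{Components, multiplicities and genera.} The components of $\mathcal{C}'_k$ are the images $\phi(\Gamma)$ of the components $\Gamma\neq E$. This gives the indexing set $X'=X\setminus\{E\}$. Since $\phi$ is an isomorphism at the generic point of each such $\Gamma$, multiplicities are preserved. Geometric genera are preserved too, because $\phi|_\Gamma:\Gamma\to\phi(\Gamma)$ is birational.

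\emph{Intersection pairing.} Write $\phi^*$ for pullback of divisors. For $\Gamma\neq E$ we have
\[
\phi^*\phi(\Gamma)=\Gamma+(\Gamma\cdot E)\,E ,
\]
where the coefficient is forced by the requirement that $\phi^*\phi(\Gamma)\cdot E=0$ together with $E^2=-1$. The projection formula then gives
\[
\phi(\Gamma_1)\cdot\phi(\Gamma_2)=\Gamma_1\cdot\Gamma_2+(\Gamma_1\cdot E)(\Gamma_2\cdot E).
\]
This is expressed entirely in terms of the old pairing.

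\emph{Singular points.} Away from $P$, $\phi$ is an isomorphism, so the singular points of $\mathcal{C}'_k$ other than $P$ are exactly the singular points of $\mathcal{C}_k$ not lying on $E$. Whether such a point lies on $E$ is part of the data: the function $f$ records which components pass through it. These points keep their $f$-values, with the $E$-entry dropped, and keep their normal crossings status.

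It remains to handle $P$. I will decide whether $P$ is singular on $\mathcal{C}'_k$, compute $f'(P)$, and decide whether $P$ is a normal crossing. This is the main obstacle. The plan is to use the local structure at $P$: $\mathcal{C}'$ is regular at $P$, so $\widehat{\mathcal{O}}_{\mathcal{C}',P}$ is a two-dimensional regular local ring, and $\mathcal{C}$ is its blow-up at the closed point. For a component $\Gamma\neq E$, the strict transform satisfies
\[
\Gamma\cdot E=\operatorname{mult}_P\phi(\Gamma),
\]
and the branches of $\phi(\Gamma)$ at $P$ correspond to the points of $\Gamma\cap E$.

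I would then argue as follows.
\begin{itemize}
\item $P$ is a smooth point of $(\mathcal{C}'_k)_{\mathrm{red}}$ exactly when exactly one component $\Gamma\neq E$ meets $E$ and $\Gamma\cdot E=1$, and moreover $\mathcal{C}'_k$ is smooth there only if that component also has multiplicity $1$. Both conditions are read off from the pairing and $m$. In all other cases $P$ is singular.
\item The number of times each component passes through $P$ is the number of branches of $\phi(\Gamma)$ at $P$, equivalently the number of local branches of $\Gamma$ through points of $\Gamma\cap E$. I would take this to be $\Gamma\cdot E$ when all intersections with $E$ are transversal, and otherwise extract it from the values $f(Q)$ at the singular points $Q\in E$.
\item $P$ is a normal crossing exactly when two distinct branches pass through it, each with $\operatorname{mult}_P=1$, meeting transversally. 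That is, $\sum_{\Gamma\neq E}\Gamma\cdot E=2$, realised by two transversal intersections at distinct points of $E$ (or two branches of one component), each with intersection multiplicity $1$ at $P$. These conditions are read off from the pairing together with $f$ and the normal crossings data at the points of $E$.
\end{itemize}

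I expect the bookkeeping of branch counts at $P$ to be the delicate part. Specifically, I am unsure how to guarantee that the local branch data at points of $E$ is fully recorded by $f$. I would first try to show it using the fact that $f$ counts passages of components through each point, which is exactly the number of local branches. Should this fail, I would fall back on noting that, once the MRM type is reached, only the multiplicities and the normal crossings flag are needed.
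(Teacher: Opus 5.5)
Your proposal is correct and follows essentially the same route as the paper: components, genera and multiplicities carry over with $E$ removed, and the pairing changes by $(\Gamma_1\cdot E)(\Gamma_2\cdot E)$. The paper cites Liu's exercise for this formula, whereas you derive it from $\phi^*\phi(\Gamma)=\Gamma+(\Gamma\cdot E)E$. The points of $S$ on $E$ then merge into a single point $P$, whose smoothness and normal-crossings status are governed by $\operatorname{mult}_P\phi(\Gamma)=\Gamma\cdot E$ and by how many distinct points of $E$ are hit.

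The step you were unsure about is immediate. Since $E\cong\mathbb{P}^1$ is smooth, every point of $E$ lying on another component $\Gamma$ is an intersection of two components and so lies in $S$. Hence $f$ records the branches there, and $\phi(\Gamma)$ passes through $P$ exactly $\sum_{Q\in S,\,Q\in E} f(Q)_\Gamma$ times, as in the paper. Your normal-crossings criterion allows the two points of $E$ to lie on a single component, which gives a node of $\phi(\Gamma)$; this makes it slightly more complete than the paper's ``exactly two other components''.
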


\begin{proof}
The set of components of $\mathcal{C}'_k$ (together with genera and multiplicities) is exactly the same as that of $\mathcal{C}_k$ with $E$ removed. The intersection pairing is determined by \cite[\S9 Exc.~2.9(d)]{Liu_book}.

The set of singular points is the same, except for those that lay on $E$: these points $x_1,... x_n$ are all merged into one, possibly non-singular, point $x$, and the number of times a component $\Gamma$ of $\mathcal{C'}$ passes through $x$ is given by the sum of the times it passed through the $x_i$ on $\mathcal{C}$.

The intersection pairing between two components $\Gamma_1$ and $\Gamma_2$ that intersect $E$ is related to that of their images under the contraction $\Gamma_1', \Gamma_2'$ by the formula $\Gamma_1\cdot\Gamma_2=\Gamma_1'\cdot\Gamma_2' - \mu_x(\Gamma_1')\mu_x(\Gamma_2')$ \cite[\S9 Exc.~2.9(d)]{Liu_book}. Here $\mu_x(\Gamma')$ is the multiplicity of the point $x$ on $\Gamma'$; it is 1 if an only of $x$ is smooth on $\Gamma'$. In particular, the point $x$ is a transversal intersection on $\mathcal{C}_k'$ if and only if $E$ intersects exactly two other components $\Gamma_1, \Gamma_2$ with $E\cdot \Gamma_i\!=\!1$ (so that $\mu_x(\Gamma_1)=\mu_x(\Gamma_2)=1$) and that do not intersect at the same point of $E$.

The point $x$ is non-singular on $\mathcal{C}_k'$ if and only if $E$ intersects exactly one other component $\Gamma$ and $E\cdot \Gamma\!=\!1$, as follows from the formula $\Gamma\cdot E = \mu_x(\Gamma')$ \cite[\S9 Exc.~2.9(d)]{Liu_book}.

\end{proof}

We now turn to curves of genus 2. We begin by showing that the MRM reduction type, in as far as it is possible, determines the Namikawa--Ueno label of the curve.

\begin{lemma}\label{lem:MRMtoNU}
Let $C/K$ be a genus 2 curve whose special fibre is described by one of the labels in the Namikawa--Ueno classification. This description is uniquely recovered from the MRM reduction type of $C$ by the following process. Here we refer singularities that are not normal crossings as ``bad''.
\begin{enumerate}
\item[(1)] A bad singularity that appears on a single component is always a cusp, except for 
  \begin{itemize}
  \item a (2,5) cusp when there is a unique component, which has geometric genus 0, a unique singular point and passes once through it (type {\rm{VIII-1}});
  \item a point with intersection multiplicity 2 when there is a unique component, which has geometric genus 0, a unique singular point and passes twice through it (type {\rm{III-II}}${}_0$);
  \end{itemize}
\item[(2)] A bad singularity that appears on two components is always a point with intersection multiplicity 2, except for 
  \begin{itemize}
  \item a point with intersection multiplicity 3 when there are exactly two components and these have intersection number 3 (type {\rm{V}});
  \item a cusp intersecting another component when there are exactly two components, intersection number 2, and they pass once each through the unique singular point (type {\rm{VII}});
  \item a node intersecting another component when there are exactly two components, intersection number 2, and one of them passes twice through the unique singular point (type {\rm{IV-II}}${}_0$);
  \end{itemize}
\item[(3)] A bad singularity that appears on three components always corresponds to three pairwise transversal intersections, except if there are exactly three components and these intersect with multiplicities 1, 1 and 2, in which case one component passes transversally through a multiplicity 2 intersection (type {\rm{IX-2}});
\item[(4)] ``elliptic'' and ``regular'' labels correspond to (geometric) genus 1 and 2, respectively;
\item[(5)] A,B,D labels correspond to self-intersection $-1$,$-3$ and $-4$, respectively; 
label C corresponds to a multiplicity 1 component with self-intersection $-2$ that is either 
genus 1 (type {\rm{I}}${}_0$-{\rm{I}}${}_0^*$-$m$), or 
genus 0 that has two nodes (type {\rm{I}}${}_{n-p-0}$), or 
genus 0 carrying the cusp/node at the bad singularity in the configurations for type {\rm{VII}} and {\rm{IV-II}}${}_0$ described in (2) above.
\end{enumerate}
\end{lemma}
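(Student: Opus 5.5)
The proof will be a case analysis against the finite list of special fibres in the Namikawa--Ueno classification \cite{namikawaueno}. No general structure theory of singularities is needed. Every genus 2 minimal regular model over a DVR whose special fibre is one of their labels has one of the described configurations. So it suffices to check two things. First, the MRM reduction type (Definition \ref{def:reductiontype}) recovers the list of components with genera and multiplicities, the intersection matrix, and the incidence data of singular points. Second, the only remaining ambiguity in the Namikawa--Ueno description is the \emph{analytic type} of the non-normal-crossings singularities and the letter labels A--D. The plan is to go through the Namikawa--Ueno pictures and list every bad singularity that occurs. We sort them by the number of components passing through the point, which is $\sum_{\Gamma} f(x)_\Gamma$ counted with the number of branches recorded by $f$. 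We then verify that the stated rules pick out the correct one in every case.

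\textbf{Single component.} In the list, such singularities are ordinary cusps on a component (elliptic or rational), except in the two degenerate one-component fibres. These are VIII-1, a rational curve with a $(2,5)$-cusp, and III-II${}_0$, a rational curve with a tacnode (two branches of contact order 2). They are distinguished from each other by $f(x)=1$ versus $f(x)=2$. They are distinguished from every cuspidal case by the conditions ``unique component, geometric genus 0, unique singular point''. Indeed, an integral genus 0 curve of arithmetic genus 2 with one ordinary cusp is impossible. The arithmetic genus is $2$ by adjunction, since $\Gamma^2=0$ and $K\cdot\Gamma=2$. So a unique unibranch singular point must have $\delta=2$, and the only such singularity appearing in the list is the $(2,5)$ cusp.

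\textbf{Two and three components.} For two components we use the intersection number $\Gamma_1\cdot\Gamma_2$ and the local incidence. A bad point lying on two smooth branches is a tangency, and its local intersection multiplicity equals the contribution to $\Gamma_1\cdot\Gamma_2$. In the list this is 2, except in type V, where the whole fibre is two curves meeting with multiplicity 3. When one branch is singular, we again use the two-component fibres with intersection number 2 (types VII and IV-II${}_0$). There, a cusp or a node of one component lies on the other. These are separated by whether that component passes once or twice through the point. For three components the list only contains triple points with pairwise transversal branches, apart from IX-2, where the data $1,1,2$ forces one curve to pass transversally through a tangency of the other two. Part (4) is immediate from the recorded geometric genera. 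Part (5) follows by reading off the self-intersections in the Namikawa--Ueno conventions. For the letter C, we check that in each of the three listed configurations the self-intersection $-2$ component is the one described, while the other $-2$ curves are not labelled.

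\textbf{Main obstacle.} The step I expect to be hardest is the exhaustiveness check. We must make sure that no two Namikawa--Ueno labels with different singularity types share the same combinatorial data outside the listed exceptions. The real risk is an overlooked configuration in which a tacnode and a transversal pair, or a cusp and a node, carry identical intersection data. I would handle this by tabulating, for each of the 105 families, only the fibres containing a bad point (relatively few, since most fibres are normal crossings). For each such point I would record the triple (number of components, multiplicities of passage, local intersection numbers), and compare the triples directly. Pairs that coincide, notably II${}^*_{t-m}$ versus 2I${}_m$-$t$, have identical fibres and need no distinction.
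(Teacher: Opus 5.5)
Your proposal is correct and takes the same route as the paper, whose proof is exactly a case-by-case check of the Namikawa--Ueno list; your adjunction/$\delta$-invariant argument for the one-component case is a pleasant extra justification, but the list check alone suffices. One small slip: the lemma groups bad points by the number of \emph{distinct} components through them (the support of $f(x)$), not by the branch count $\sum_\Gamma f(x)_\Gamma$, since otherwise III-II${}_0$ and IV-II${}_0$ would fall into the wrong cases, although your later analysis does use the correct grouping.
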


\begin{proof}
This is a simple case-by-case check through the list of Namikawa and Ueno in \cite{namikawaueno}.
\end{proof}

It is an important feature of MRM and MRNC reduction types that they are purely combinatorial objects and do not require specifying a curve or the underlying DVR. Since the MRNC reduction type determines the MRM type, to establish an explicit 1-to-1 correspondence it suffices to take one numerical example for each MRNC reduction type and check the resulting MRM type. In \cite{namikawaueno}, Namikawa and Ueno have provided an explicit example of a curve over $\C[t]$ for each of their labels, which we can use for this purpose.
(We will use this technique again in the context of cluster pictures.)

\begin{proposition}\label{prop:NUclusterpictures}
For each family of examples of genus 2 curves over $\C[t]$ (at $t\!=\!0$) in the Namikawa--Ueno classification, its
\begin{itemize}
\item MRM reduction type,
\item MRNC reduction type, and
\item cluster picture
\end{itemize}
match under the explicit correspondence given in Tables \ref{table:potentiallygood}--\ref{table:potentiallymultiplicativex2}.
This list of examples exhausts all possible MRNC reduction types.
\end{proposition}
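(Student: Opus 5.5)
The verification goes family by family through the Namikawa--Ueno list in \cite{namikawaueno}. For each family they give an explicit equation $y^2=f_t(x)$ over $\C[t]$, depending on integer parameters. For each such equation I would compute three things at $t\!=\!0$ and compare them with the corresponding row of Tables \ref{table:potentiallygood}--\ref{table:potentiallymultiplicativex2}.

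\textbf{Step 1: cluster picture.} Since the residue characteristic is $0$, every root of $f_t$ lies in $\C((t^{1/N}))$ for some $N$. The $t$-adic distances between roots, and $v(c_f)$, can be read off from Newton polygons or from a Puiseux expansion of the roots in terms of the family parameters. This gives the cluster picture as a function of the parameters.

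\textbf{Step 2: reduction types.} Characteristic $0$ means everything is tame, so the results of \S\ref{s:clusters} apply (Theorem \ref{thm:CP1} and the construction of regular models from cluster pictures in \cite{M2D2}, equivalently \cite{timreduction}). The cluster picture then determines the MRNC reduction type; in practice it is enough to check that this is the type listed in the table row containing that cluster picture. Theorem \ref{thm:SNCvsMRM} then determines the MRM reduction type from the MRNC type. Lemma \ref{lem:MRMtoNU} gives the final check: this MRM type recovers exactly the special fibre that Namikawa--Ueno attach to their label. This is the consistency check that the correspondence is the one in the tables.

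\textbf{Step 3: exhaustiveness.} I would compare the resulting list of MRNC types with the 104 families of MRNC types enumerated by T.~Dokchitser in \cite{timreduction}, which is a complete classification of genus 2 MRNC types. The two points to watch are as follows.
\begin{itemize}
\item Each Namikawa--Ueno family must hit every member of its MRNC family, including the degenerate boundary values of the parameters, such as $m\!=\!0$ or $t\!=\!0$.
\item The duplicated families II${}^*_{t-m}$/2I${}_m$-$t$ must both be realised, each with its own potential stable type.
\end{itemize}
Since both lists are organised by potential stable type, I would do the matching inside each of the seven tables.

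\textbf{Main obstacle.} The hard part is Step 1 done uniformly in the parameters. The Namikawa--Ueno equations often have coefficients involving several powers of $t$, and roots can collide at different rates depending on inequalities between the parameters. The cluster picture of one family can therefore split into several subcases, and each subcase has to land in the correct row with the correct parity conditions on $v_c$ and depths. I would handle this by computing the Newton polygon of $f_t$ and of its translates centred at the relevant clusters symbolically, splitting into the inequality cases. For small parameter values I would cross-check with the SageMath/Magma packages \cite{bestvanbommel,timredlib}.
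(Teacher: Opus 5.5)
Your proposal is correct and follows essentially the paper's route. Like the paper, you compute the cluster picture of each Namikawa--Ueno example from its roots, obtain the MRNC type from that cluster picture via the Faraggi--Nowell recipe underlying Theorem \ref{thm:CP1} (the regular-model construction of \cite{M2D2} on its own only covers the semistable examples), and check exhaustiveness against \cite[Table G2]{timreduction}. The one difference is that the paper reads the MRM type directly off the special fibres listed by Namikawa--Ueno, whereas you recompute it from the MRNC type via Theorem \ref{thm:SNCvsMRM} and then cross-check against their list; this is extra work, but a reasonable safeguard given the known errata \cite{Liu-errata}.
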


\begin{proof}
This is done by an explicit verification.
The original paper by Namikawa and Ueno \cite{namikawaueno} lists the special fibre, and hence the MRM reduction type, for each family.
We have explicitly computed the cluster picture in each case: this is elementary, as it only requires one to determine the roots of the underlying polynomials.
The MRNC reduction type is obtained from the cluster picture by the recipe given in \cite{Faraggi-Nowell} Thms. 1.6, 1.12; the list for genus 2 curves was already worked out in the Appendix of \cite{SarahThesis}.
This resulting list exhausts all possible MRNC reduction types, as given in \cite[Table G2]{timreduction}. 
(Both \cite{SarahThesis} and \cite{timreduction} associate a Namikawa--Ueno label to each MRNC reduction type, but neither gives a justification.) 
\end{proof}

\begin{corollary}\label{cor:genus2MRNCtoMRM}
For curves $C/K$ of genus 2, the MRM reduction type determines the MRNC reduction type. The explicit correspondence is given in Tables \ref{table:potentiallygood}--\ref{table:potentiallymultiplicativex2}. 
\end{corollary}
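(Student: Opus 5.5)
The plan is to combine Theorem \ref{thm:SNCvsMRM} with the finite explicit check in Proposition \ref{prop:NUclusterpictures}. Theorem \ref{thm:SNCvsMRM}, applied to the minimal regular normal crossings model (which is regular), gives a well-defined map $\Psi$ from MRNC reduction types of genus 2 curves to MRM reduction types. This map is purely combinatorial: by Lemma \ref{lem:contraction}, it is computed by repeatedly contracting $(-1)$-curves of genus $0$ on the special fibre. So $\Psi$ does not depend on the curve or on the DVR. The claim of the corollary is that $\Psi$ is injective on the set of MRNC types that actually occur for genus 2 curves. Once that is known, the MRM type determines the MRNC type via $\Psi^{-1}$, and the explicit inverse is what is recorded in Tables \ref{table:potentiallygood}--\ref{table:potentiallymultiplicativex2}.

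To compute $\Psi$ on every possible MRNC type, I will use Proposition \ref{prop:NUclusterpictures}. Every MRNC reduction type of a genus 2 curve occurs among the Namikawa--Ueno example families over $\C[t]$, and for each family the proposition records both its MRNC type and its MRM type in the tables. Since $\Psi$ is independent of the base, the MRM type of the $\C[t]$-example is exactly $\Psi$ of its MRNC type. This holds for any curve over any $K$ with that MRNC type. Hence the tables give the full graph of $\Psi$ on the set of MRNC types.

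It then remains to read off from the tables that distinct MRNC types have distinct MRM types, so that $\Psi$ is injective. One could try to do this by comparing Namikawa--Ueno labels via Lemma \ref{lem:MRMtoNU}. However, two label pairs (II${}_{t-m}^*$ / 2I${}_m$-$t$, and II${}_{t-0}^*$ / 2I${}_0$-$t$) share the same special fibre, so the comparison has to be made at the level of MRNC types themselves. For these two pairs I will check directly, from the MRNC data of \cite{timreduction}, that they give the same MRNC type. They are duplicated in the tables only because of the potential stable type, so this duplication does not break injectivity.

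For the remaining families, I will compare combinatorial invariants of the MRM type as parameters vary. The invariants I would use are the number of components with their multiplicities and genera, the self-intersections, and the chain lengths encoded by the parameters $l,m,n,t$. These should separate the families, and within a family the parameters should be recovered from the lengths of chains of $(-2)$-curves.

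The main obstacle is this last step. It is a large but finite-in-shape case check over 104 MRNC families with parameters, and one has to be sure that no two parametrised families collapse to the same MRM type for special parameter values, for example at $m=0$ or $t=0$. I would handle it by showing that the parameters are recoverable from the MRM data (chain lengths and the multiplicities on them), and by treating the boundary parameter values separately.
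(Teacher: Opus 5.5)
Your proposal is correct and is essentially the paper's argument. Theorem~\ref{thm:SNCvsMRM} makes the MRNC type determine the MRM type. Proposition~\ref{prop:NUclusterpictures} shows that every MRNC type is realised by a Namikawa--Ueno example whose MRM and MRNC types are the ones recorded in the tables. Injectivity is then read off the tables, and you are right that the pairs II${}^*_{t-m}$/2I${}_m$-$t$ and II${}^*_{t-0}$/2I${}_0$-$t$ share the MRNC types $[2]\mathrm{I}_{m,t\mathrm{D}}$ and $[2]\mathrm{I}_{\mathrm{g}1,t\mathrm{D}}$ respectively.

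The step you flag as the main obstacle is unnecessary. Separating the remaining families invariant by invariant, including boundary parameter values, is already done by Lemma~\ref{lem:MRMtoNU}: it recovers the Namikawa--Ueno special-fibre description from the MRM type, and hence the label up to exactly those two pairs.
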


\begin{proof}
This follows from Theorem \ref{thm:SNCvsMRM} and Proposition \ref{prop:NUclusterpictures}.
\end{proof}

\begin{remark}
It follows from the above corollary and the classification of MRNC types at the end of \cite{timreduction} that the Namikawa--Ueno classification covers all possible MRM types of genus 2 curves in all characteristics and over all complete DVRs, at least with our crude definition of MRM reduction type. (We do not claim that each reduction type occurs in every residue characteristic, as this does not appear to be currently known for MRNC types.)
\end{remark}

\section{Cluster pictures}
\label{s:clusters}

For a hyperelliptic curve $C\colon y^2=f(x)$, the cluster picture, introduced by Dokchitser, Dokchitser, Maistret and Morgan in \cite{M2D2}, records the $v$-adic configuration of the differences of roots of $f(x)$. Cluster pictures were used in \cite{M2D2} to compute the MRM reduction type of semistable hyperelliptic curves and many other local invariants. Faraggi and Nowell later proved that the  cluster picture also determines the MRNC reduction type for hyperelliptic curves with tame reduction \cite{Faraggi-Nowell}. We note that, although some of the cluster picture results cited in this section are stated for local fields, the results from \cite{bisatt,M2D2,Faraggi-Nowell} that we use apply in our setting by Remark~\ref{fieldofdefnremark}.

Throughout this section we assume that the residue characteristic $p$ of $K$ satisfies $p\ne 2$.

\begin{notation}
Recall from \S\ref{subsec:examples} that the \emph{cluster picture} associated to a Weierstrass equation $C \colon y^2 = f(x)$ of a hyperelliptic curve is the valuation $v_c$ of the leading coefficient of $f(x)$ together with the collection of all \emph{clusters} associated to the roots $\cR$ of $f$. 
We write $\Sigma_f$, or $(\Sigma_f, v_c)$, to refer to the cluster picture of $f$, and write $\fs \in \Sigma_f$ when $\fs \subset \cR$ is a cluster.  The absolute depth of a proper cluster $\fs$ is denoted $d_{\fs}$, and its relative depth, when $\fs\ne\cR$, is denoted $\delta_{\fs}=d_{\fs}-d_{P(\fs)}$.
\end{notation}

It will be useful to separate the combinatorial content of a cluster picture from any particular polynomial realising it. We therefore also use \emph{abstract cluster pictures}, introduced in \cite[Def.~13.1]{M2D2}, where the roots are replaced by an abstract finite set and the cluster picture is recorded as a collection of subsets, together with their depths and the valuation of the leading coefficient. This point of view makes clear that the results below depend only on the combinatorial data, rather than on the choice of field or on a particular Weierstrass equation. The formal definition is given in Definition~\ref{def:abstract_cpic}. The cluster pictures displayed in Tables~\ref{table:potentiallygood}--\ref{table:potentiallymultiplicativex2} are to be understood in this abstract sense.

The main results of this section are as follows.

\begin{theorem}\label{thm:CP1}
Suppose $K$ has residue characteristic $p \ne 2$, and let $C: y^2 = f(x)$ be a hyperelliptic curve over $K$. Let $\Sigma_f$ be the associated abstract cluster picture. 
\begin{enumerate}
    \item If $C / K$ has tame reduction, then the cluster picture $\Sigma_f$ determines the MRNC and MRM reduction types of $C / K$.
    \item If $C$ is a  genus $2$ curve, then the underlying cluster shape, obtained from $\Sigma_f$ by forgetting all depth labels and the valuation of the leading coefficient, determines the potential stable type of $C/K$. This assignment is given in Table~\ref{ss:changeofmodel}; for quintic models, we use the associated degree $6$ cluster shape obtained as in Definition~\ref{def:equiv-cpics}(3).
\end{enumerate}
\end{theorem}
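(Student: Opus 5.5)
For part (1), I would reduce directly to Faraggi--Nowell. Under tame reduction with $p\neq 2$, \cite[Thms.~1.6, 1.12]{Faraggi-Nowell} build the MRNC model of $y^2=f(x)$ explicitly from the cluster picture. Its components, their multiplicities and genera, and the chains joining them are read off from the depths, the sizes of the clusters (in particular their parities), $v_c$, and the tame inertia action on the clusters.

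The subtle point is that the tame inertia action is not visibly part of the abstract cluster picture. I would argue that, for tame curves, the inertia orbit structure on clusters and roots is forced by the combinatorial data. Tame inertia is procyclic and acts through a quotient of order equal to the denominator needed to make all depths integral. On each cluster it acts by rotating the children according to $\lambda$- and $\epsilon$-type invariants, and these can be expressed in terms of the relative depths, $v_c$ and the sizes of the clusters. This is the content of the "abstract cluster picture" remark, Remark~\ref{fieldofdefnremark}, and \cite{bisatt}, where the tame inertia action is given by a formula in the depths. I therefore expect this step to be the main obstacle: checking that the Faraggi--Nowell recipe really only consumes data determined by $(\Sigma_f,v_c)$.

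Once the MRNC reduction type is determined, Theorem~\ref{thm:SNCvsMRM} gives the MRM reduction type, since the MRNC model is a regular model.

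For part (2), I would first pass to a finite extension $F/K$ over which $C$ is semistable. Base change scales all depths and $v_c$ by $e(F/K)$ but leaves the cluster shape unchanged. We may also arrange that all depths become integers and that $v_c$ becomes even. By \cite{M2D2}, the stable model and its dual graph (with genera) are then determined by the semistable cluster picture. In genus $2$, the stable type depends only on which proper clusters exist and whether they are even or odd (principal, übereven, twins), not on the actual depths. The potential stable type is therefore a function of the shape alone.

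Finally, I would run through the finitely many genus $2$ cluster shapes on six roots, checking each against Liu's list of stable types to produce Table~\ref{ss:changeofmodel}. For quintic models, I would first move a root to infinity via the degree-$6$ normalisation of Definition~\ref{def:equiv-cpics}(3) and then apply the same argument.
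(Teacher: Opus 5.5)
Your proposal is correct and follows essentially the same route as the paper: for (1), Faraggi--Nowell's recipe, with the tame inertia action supplied by Bisatt's theorem and the signs $\epsilon_{\fs}$ read off from the depths and $v_c \bmod 2$, then Theorem~\ref{thm:SNCvsMRM}; for (2), base change to a semistable extension, where the depths only affect chain/loop lengths, followed by a finite check over the shapes. One small slip: Definition~\ref{def:equiv-cpics}(3) does not move a root to infinity; it moves the root at infinity of a quintic model to a finite point (adding a root), after an integral shift of depths so that $d_{\cR}>0$.
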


Among abstract cluster pictures on sets of size $2g+1$ or $2g+2$, we single out those which arise as cluster pictures of genus $g$ hyperelliptic curves with tame reduction. This subset is independent of the choice of the ground field; see Remark~\ref{rem:K-indep}. We call its elements \emph{admissible genus $g$ cluster pictures} (Definition~\ref{def:cpic-from-curve}). In Definition~\ref{def:equiv-cpics}, we define an equivalence relation on the set of admissible genus $g$ cluster pictures. This equivalence relation accounts for the dependence of cluster pictures on the choice of Weierstrass equation, and captures how cluster pictures change when one replaces a Weierstrass
equation by a $K$-isomorphic one.

\begin{theorem}\label{thm:CP2}
    Definition \ref{def:equiv-cpics} defines an equivalence relation on the set of admissible genus $g$ cluster pictures. This equivalence relation has the following properties. 
    \begin{enumerate}
        \item Let $K$ have residue characteristic $p\ne 2$, and let $C/K$ be a genus $g$ hyperelliptic curve with tame reduction and cluster picture $(\Sigma_f,v_c)$. Then $C$ admits Weierstrass equations over $K$ realising every admissible genus $g$ cluster picture in the equivalence class of $(\Sigma_f,v_c)$.
        \item When $g = 2$, the equivalence classes are in one-to-one correspondence with pairs (MRM reduction type, potential stable type). The correspondence is defined by sending the class of a cluster picture $(\Sigma,v_c)$, arising from a Weierstrass equation $C\colon y^2=f(x)$ with tame reduction, to the MRM reduction type and potential stable type of $C / K$. This does not depend on the representative cluster picture, on the choice of curve realising it, or on the choice of $K$. 
    \end{enumerate}
\end{theorem}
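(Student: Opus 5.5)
The proof has three parts: the relation is an equivalence relation, part (1), and part (2).

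\emph{Equivalence relation.} Definition~\ref{def:equiv-cpics} (not reproduced here) generates the relation from elementary moves: integral shifts of depths and of $v_c$, and the M\"obius moves of Proposition~\ref{prop:cpic-ABC}. Each move comes from an explicit $K$-isomorphism of Weierstrass equations. Its inverse is again such an isomorphism and produces one of the listed moves. So I will define the relation as the equivalence relation generated by these moves. Reflexivity and transitivity are then formal. For symmetry, I check that every move in Proposition~\ref{prop:cpic-ABC} has an inverse in the list; this is a finite check on the shapes. Admissibility is preserved because the moves are realised by isomorphisms of tame curves (Remark~\ref{rem:K-indep}).

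\emph{Part (1).} For a tame curve $C$ with cluster picture $(\Sigma_f,v_c)$, apply the moves one at a time. By Proposition~\ref{prop:cpic-ABC}, each move is realised by a concrete change of variables over $K$: $x\mapsto \pi^a x + b$, an inversion centred at a root or cluster centre, or $y\mapsto \pi^k y$. Such a move exists over $K$ exactly when the relevant cluster (or its centre) is Galois-stable, or when the required depth condition holds. I expect this step to be the main obstacle. Some moves require a centre in $K$ for a cluster $\fs$, whereas in general only a centre in $K^{\mathrm{sep}}$ exists. I would first try to use tameness: the inertia action on $\fs$ factors through a cyclic tame quotient, and averaging the roots of $\fs$ over this action gives a $K$-rational centre at the correct distance, since $p\ne 2$ and $|\fs|\le 6$. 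Where that fails (residue characteristic $3$ or $5$ with $|\fs|$ divisible by $p$), I would instead use the fact that a move is only listed when its target picture is admissible. The denominators in that picture then force the existence of a $K$-point in the disc.

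\emph{Part (2).} I take the class of a tame realisation $y^2=f(x)$ to (MRM reduction type, potential stable type). This is well defined: by part (1) equivalent pictures come from $K$-isomorphic curves, and by Theorem~\ref{thm:CP1} the picture determines the MRM type (via MRNC) and the potential stable type. It is independent of $K$ because Theorem~\ref{thm:CP1} depends only on the abstract picture.
\begin{itemize}
\item \emph{Surjectivity.} Proposition~\ref{prop:NUclusterpictures} gives a curve with a given cluster picture for each pair in the tables.
\item \emph{Injectivity.} I enumerate the admissible genus $2$ pictures (finitely many shapes, each with parametrised depths). Using the moves, each picture is reduced to a normal form, for example a minimal one marked $\star$. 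I then check that distinct normal forms give distinct table entries.
\end{itemize}
This is a finite case check organised by potential stable type, following Table~\ref{ss:changeofmodel}.
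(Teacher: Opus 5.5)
Your architecture matches the paper's. Symmetry comes from undoing each elementary move (Lemma~\ref{lem:equiv-is-equivalence}). Part (1) is composition of the realisations in Proposition~\ref{prop:cpic-ABC}. Part (2) is well-definedness from (1) and Theorem~\ref{thm:CP1}, plus a normal-form enumeration. That enumeration is exactly Theorem~\ref{thm:CP3}: completeness from Bisatt's classification \cite[App.~C]{bisatt}, connectivity from Table~\ref{ss:changeofmodel}, and identification of each column via the Namikawa--Ueno examples.

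However, the step you single out as the main obstacle in (1) is not an obstacle, and your proposed resolution is wrong. Averaging over all roots of $\fs$ divides by $|\fs|$, which is why you run into $p\mid|\fs|$. Your fallback (``the denominators force a $K$-point in the disc'') is not an argument. The right tool is Lemma~\ref{lem:rational-centre}:
\begin{itemize}
\item for $r\in\fs$ set $z=\frac{1}{|G|}\sum_{\sigma\in G}\sigma(r)$, which lies in $K$;
\item since $\fs$ is $G$-stable, each $v(\sigma(r)-r)\ge d_{\fs}$;
\item by tameness $|G|$ is prime to $p$, so $v(z-r)\ge d_{\fs}$.
\end{itemize}
This needs no case split. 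Your appeal to $|\fs|\le 6$ would only cover $g\le 2$, while (1) and the equivalence-relation claim are stated for every $g$. The other ingredients you would need are also already inside Proposition~\ref{prop:cpic-ABC}: this lemma, the no-orphan arguments giving $v(r-z_{\fs})=d_{\fs}$ exactly, and the choices of $\lambda$ and $z$ avoiding cancellation. So part (1) is simply composition.

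For symmetry, it is not true in general that the inverse of a move ``is one of the listed moves''. The inverse change of variables being an isomorphism does not make it so. Passing to the generated equivalence relation does not help either: part (1) needs every step of a path to be a forward move applied to the current model of the given curve. The paper writes each inverse as a composite of forward moves and checks their hypotheses in the transformed picture:
\begin{itemize}
\item (4) is undone by (2), (3), (2), (1);
\item if $d_{\cR}\in\Z$, (5A) is undone by (5A) on $\fs^c$ followed by (2) and (1);
\item if $d_{\cR}\notin\Z$, (5A) is undone by (5B) on $\fs^c$ followed by (2); this uses that $\fs$ is the orphan of $\cR$ \cite[Thm.~1.3(iii)]{bisatt}, so that $\fs^c$ has no orphan after the move;
\item (5B) and (5C) need the same case split.
\end{itemize}
This verification is done move type by move type and is uniform in $g$; it is not a finite check over shapes.
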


Finally, we explain how the cluster-picture entries in Tables~\ref{table:potentiallygood}--\ref{table:potentiallymultiplicativex2} are to be read, and verify that they are complete. The tables display only degree $6$ cluster pictures. This is not a restriction: after applying an integral shift to all depths, any quintic cluster picture is equivalent to a degree $6$ cluster picture by Definition~\ref{def:equiv-cpics}(3). The tables are also read up to shifting all depths by an integer (which preserves relative depths), as in Definition~\ref{def:equiv-cpics}(2).

\begin{theorem}\label{thm:CP3}
The cluster picture entries in Tables~\ref{table:potentiallygood}--\ref{table:potentiallymultiplicativex2} have the following interpretation.
\begin{enumerate}

\item Suppose $K$ has residue characteristic $p\ne 2$. Let $C/K$ be a genus $2$ curve with tame reduction and Weierstrass equation $C\colon y^2=f(x)$. If necessary, first replace its cluster picture by the equivalent degree $6$ cluster picture using Definition~\ref{def:equiv-cpics}(3).
Then, up to an integral shift of all depths, this cluster picture
appears in Tables~\ref{table:potentiallygood}--\ref{table:potentiallymultiplicativex2}.

\item If a genus $2$ curve over a field $K$ with residue characteristic $p \not=2$ has one of the displayed cluster pictures, then its potential stable type is the stable type of the table containing that entry. If the curve has tame reduction, then its MRM reduction type is the type heading the column containing that entry.

\item After specialising the parameters in a column to allowed values, the cluster pictures displayed in that column are precisely the degree $6$ cluster pictures in a single equivalence class of admissible genus $2$ cluster pictures, after identifying pictures which differ by an integral shift of all depths.
\end{enumerate}

\end{theorem}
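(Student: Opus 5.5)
The argument builds on Theorems~\ref{thm:CP1} and~\ref{thm:CP2}, together with a finite combinatorial check of the genus~2 admissible cluster pictures. We start with part~(3), since (1) and (2) follow from it.

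Using Definition~\ref{def:equiv-cpics}(3), we first reduce every quintic picture to a degree~$6$ one. Using Definition~\ref{def:equiv-cpics}(2), we normalise by integral shifts of all depths, so only relative depths, the depth of $\cR$ modulo $1$, and $v_c$ matter. We then enumerate the finitely many genus~2 cluster shapes on six roots. For each shape, we write down the tameness constraints on depths from Lemma~\ref{lem:cluster_pic_tame}, which cut out the admissible pictures.

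For each shape, we compute the equivalence class of an arbitrary admissible picture by applying the moves of Definition~\ref{def:equiv-cpics}, namely the M\"obius transformations of Proposition~\ref{prop:cpic-ABC} recorded in Table~\ref{ss:changeofmodel}. This amounts to re-rooting the cluster tree at a chosen cluster and adjusting depths and $v_c$ accordingly. We then check, family by family, that the orbit of a representative coincides with the set of pictures in one column of the tables once parameters are specialised. This requires two inclusions. Closure means every move sends a displayed picture in the column to another displayed picture in the same column. Transitivity means every displayed picture in the column is reached from a fixed representative, such as the starred minimal one.

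Part~(1) then follows. Let $C$ be tame with $p\neq 2$. Its (degree~$6$, shifted) cluster picture is admissible, so it lies in some equivalence class. Theorem~\ref{thm:CP2}(2) matches this class with a pair (MRM type, potential stable type). Proposition~\ref{prop:NUclusterpictures} guarantees that every such pair is represented by a Namikawa--Ueno example whose cluster picture appears in the tables. By part~(3), the whole class, and in particular our picture, is displayed. For completeness we must also show that the admissible pictures are exhausted by the union of the columns. We do this by checking, shape by shape, that the depth and $v_c$ parameter ranges in the columns cover all admissible values. The parity conditions on $v_c$ and the congruence conditions on depths are the delicate part here.

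For part~(2), the potential stable type depends only on the cluster shape by Theorem~\ref{thm:CP1}(2). The tables are arranged so that each table contains only shapes assigned the corresponding stable type in Table~\ref{ss:changeofmodel}, which we verify directly. In the tame case, the MRM type is determined by the picture by Theorem~\ref{thm:CP1}(1). Since every column is an equivalence class containing the picture of a Namikawa--Ueno example of the stated type (Proposition~\ref{prop:NUclusterpictures}), Theorem~\ref{thm:CP2}(2) shows that the MRM type is the column heading. The main obstacle is the bookkeeping in part~(3). Some moves change the shape, for example turning a cluster of size $5$ into the complement picture with a singleton, and then shift depths and $v_c$ by amounts depending on the sizes of the clusters. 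Matching these exactly with the parametrisations in the columns, especially the $\frac{l}{4}$, $\frac{1}{2}$ and $\frac13$ variants and the $j$-indexed families, is where errors are most likely. I would organise this check by the re-rooting description and carry it out systematically, verified with the computer packages mentioned in the paper.
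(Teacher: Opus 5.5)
There is a genuine gap in your proof of part (1). To pass from (3) to (1) you use Theorem~\ref{thm:CP2}(2) to identify the equivalence class of your curve's picture with the class of the Namikawa--Ueno example that has the same MRM reduction type and potential stable type. That identification is the injectivity half of Theorem~\ref{thm:CP2}(2), and in the paper this half is deduced from the very theorem you are proving: its proof invokes Theorem~\ref{thm:CP3}(1)--(3). The half that is available independently is well-definedness, which comes from Theorem~\ref{thm:CP2}(1) and Theorem~\ref{thm:CP1}. It does not exclude an undisplayed equivalence class that shares its invariants with a displayed one. So this step is circular.

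What actually proves (1) is your ``for completeness'' step: an exhaustive comparison of the tables with the set of all admissible degree~$6$ pictures. That is exactly the paper's proof, but you define the target set incorrectly. Lemma~\ref{lem:cluster_pic_tame} only decides whether a \emph{given} curve is tame; it says nothing about which abstract pictures are realisable. For $p\ge 7$ it imposes no condition at all. Yet six roots forming a single cluster of depth $\frac17$ never occur for a tame curve. The top cluster has a $K$-rational centre, so all but at most one root would generate extensions of ramification index divisible by $7$, which is impossible with six roots. The missing input is Bisatt's realisability criterion and his classification of tame degree~$6$ pictures, \cite[Thm.~2.4, App.~C]{bisatt}. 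The paper makes its finite comparison against that list.

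The rest is sound but organised differently from the paper. For (2), your shape-based argument for the potential stable type via Theorem~\ref{thm:CP1}(2) and Table~\ref{ss:changeofmodel} is more direct than the paper's route through the Namikawa--Ueno example and equivalence, and it needs no tameness. For the MRM type you only need the well-definedness half of Theorem~\ref{thm:CP2}(2).

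For (3), your closure-plus-connectivity check is a valid but much heavier, purely combinatorial route. Closure must also track moves that pass through quintic pictures (Definition~\ref{def:equiv-cpics}(4) followed by (3)), and the moves with unbounded integer parameters $j$ and $h$ in (5A) and (5C). The paper avoids closure altogether and only checks connectivity within a specialised column, via Table~\ref{ss:changeofmodel}. It then obtains the converse from (1), (2) and Theorem~\ref{thm:CP2}(1). An equivalent picture is realised by a $K$-isomorphic curve, so it has the same MRM type and potential stable type. By (1) it appears somewhere in the tables, and by (2) it must then appear in the same column. Thus the paper proves (1) and (2) first and deduces (3), the reverse of your order.
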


We use abstract cluster pictures, following \cite[Def.~13.1]{M2D2} (where we also include the leading coefficient). 

\begin{defn}\label{def:abstract_cpic}
Let $\cR$ be a finite set and let $\Sigma$ be a collection of non-empty subsets of $\cR$.
The elements of $\Sigma$ are called \emph{clusters}. Attach a depth $d_{\fs}\in\mathbb{Q}$ to every
cluster $\fs\in\Sigma$ with $|\fs|>1$ (such clusters are called \emph{proper}). Let $v_c \in \Z$. Then $(\Sigma, v_c)$, or $(\Sigma,\cR,d, v_c)$, is called an \emph{abstract
cluster picture} if:
\begin{enumerate}
    \item every singleton $\{x\}$, for $x\in \cR$, lies in $\Sigma$, and $\cR\in\Sigma$;
    \item any two clusters are either disjoint or one contains the other;
    \item if $\ft\subsetneq\fs$ and both are proper, then $d_{\ft}>d_{\fs}$.
\end{enumerate}
The \emph{parent} $P(\fs)$ of a cluster $\fs \not= \cR$ is the smallest cluster such that $\fs \subsetneq P(\fs)$, a maximal subcluster $\fs' \subsetneq \fs$ of $\fs$ is referred to as a \emph{child} of $\fs$. The \emph{relative depth} of a proper cluster $\fs\ne\cR$ is $\delta_{\fs}:=d_{\fs}-d_{P(\fs)}$. 

Two cluster pictures $(\Sigma,\cR,d, v_c)$ and $(\Sigma',\cR',d', v_c')$ are isomorphic if $v_c = v_c'$ and there is a bijection $\iota:\cR\to \cR'$ that induces a bijection $\Sigma\to\Sigma'$ and such that $d_{\fs}=d'_{\iota(\fs)}$ for every cluster $\fs\in\Sigma$ with $|\fs|>1$.
\end{defn}

Given a model $C\colon y^2=f(x)$, the associated abstract cluster picture is obtained from the cluster picture described in the beginning of \S\ref{subsec:examples} by forgetting the particular labels of the roots.

\subsection{Cluster pictures determine reduction types}

We first prove that the cluster picture determines the reduction types appearing
in \Cref{thm:CP1}.

\begin{proof}[Proof of \Cref{thm:CP1}]

\begin{enumerate}[(1)]
\item That the abstract cluster picture determines the MRNC reduction type follows from \cite[Thm.~1.2]{Faraggi-Nowell}. Faraggi and Nowell phrase their result in
terms of the \emph{minimal strict normal crossings} reduction type. However, their convention does not require the irreducible components of the special fibre to be smooth. In practice, what they actually compute is the MRNC reduction type in the sense of \Cref{def:reductiontype}.
Faraggi and Nowell also include the $G_K$-action on $\Sigma_f$ and signs $\epsilon_{\fs} \in \{\pm 1\}$ for clusters $\fs \in \Sigma_f$ (see \cite[Def.\ 3.9]{Faraggi-Nowell}) as part of the input data to determine the reduction type. For curves with tame reduction over $K$, the $G_K$-action consists of a cyclic inertia action, and this action is determined by the cluster picture depths, see \cite[Thm.\ 1.3]{bisatt}. The signs $\epsilon_{\fs}$ are computed from the depths in the cluster picture and $v(c_f)\bmod 2$. 
The MRM reduction type follows from \Cref{thm:SNCvsMRM}.

\item Now assume that $C / K$ has genus $2$. Choose a finite extension $F/K$ over which $C$ has semistable reduction.
The stable model is independent of the choice of such an extension
$F$, since stable models commute with base change over fields where $C$ is
semistable. Passing from $K$ to $F$ does not change the underlying cluster shape; it only multiplies all depths by $[F \colon K]$. Thus it suffices to prove the assertion after replacing $K$ by a finite extension over which $C$ is semistable. 

The stable model over $F$ is obtained from the semistable minimal regular model by contracting the genus $0$ components of self-intersection $-2$ in the special fibre \cite[Prop.\ ~9.4.8, Thm.\ 10.3.34]{Liu_book}.
The assignment of the underlying cluster shapes in Table~\ref{ss:changeofmodel} to potential stable types is obtained as follows. For each underlying cluster shape of size $6$, choose semistable depth data for that shape, apply \Cref{thm:CP1}(1) or \cite[Thm.~18.7,~18.8]{M2D2} to obtain the MRM reduction type, and compute the stable model. 
The depth parameters determine only the lengths of chains and loops in the semistable MRM reduction type. These lengths do not affect the stable model after the above contractions. Hence the resulting potential stable type depends only on the underlying cluster shape.
\end{enumerate}
\end{proof}

\newpage

\subsection{Change of models}

\begin{defn}\label{def:cpic-from-curve} 
An abstract cluster picture $(\Sigma,\cR,d,v_c)$ is an \emph{admissible genus $g$ cluster picture} if there exists a hyperelliptic curve $C/K$ of genus $g$ with tame reduction and a Weierstrass equation $C \colon y^2=f(x)$ such that, writing  $\cR_f$ for the roots of $f$ and $c_f$ for its leading coefficient, $(\Sigma, \cR,d,v_c)\cong (\Sigma_f,\cR_f,d_f,v(c_f))$. In particular, $|\cR| = 2g + 1$ or $2g + 2$. Here $K$ is any complete discrete valuation field with algebraically closed residue field of characteristic $p \not= 2$. 
\end{defn}

\begin{remark}\label{rem:K-indep}
The property of being admissible is independent of the choice of $K$. Indeed, by \cite[Thm.~2.4]{bisatt}, this property is determined by the abstract cluster picture itself. When an abstract cluster picture is admissible, it can be realised by a hyperelliptic curve over any such $K$; see \cite[Thm.~1.7]{bisatt}.

Let $(\Sigma,\cR,d,v_c)$ be an admissible genus $g$ cluster picture. If $C\colon y^2=f(x)$ is a realisation as above, then $K(\cR_f)/K$ is tame, hence cyclic, and $\Gal(K(\cR_f)/K)$ acts by automorphisms on the associated abstract cluster picture. After choosing an isomorphism $(\Sigma,\cR,d,v_c)\cong(\Sigma_f,\cR_f,d_f,v(c_f))$,
we can consider this action on $(\Sigma,\cR,d,v_c)$. By Bisatt's description of the tame Galois action on clusters, the resulting conjugacy class of cyclic subgroups of $\Aut(\Sigma,\cR,d,v_c)$ is determined by the depth data; see \cite[Thm.~1.3]{bisatt}. In what follows, we write $G$ for a representative of this conjugacy class. All constructions below are understood up to isomorphism of abstract cluster pictures, and hence do not depend on this choice.
\end{remark}

\begin{defn}[{\cite[Def.~1.2]{bisatt}}]
Let $(\Sigma,\cR,d,v_c)$ be an admissible cluster picture, with associated cyclic action $G$ as in \Cref{rem:K-indep}. Let $\fs$ be a cluster. Write $G_{\fs}=\{\sigma\in G:\sigma(\fs)=\fs\}$ for the stabiliser of $\fs$. A child $\fs'$ of $\fs$ is called an \emph{orphan} of $\fs$ if $\fs'$ is the unique child of $\fs$ that is fixed by $G_{\fs}$.
\end{defn}

The following definition is an adaptation of the equivalence relation of \cite[Def.~14.1]{M2D2} to the tame genus $2$ setting. The moves below are motivated by elementary changes of Weierstrass equation: scaling $x$ and $y$, adding or removing a root at infinity, and applying an inversion on the $x$-line. Applying these transformations to a Weierstrass equation gives explicit formulae for the new depths and leading coefficient; the definition below records these formulae abstractly. For genus $2$ curves with tame reduction, these moves are sufficient to establish the one-to-one correspondence between equivalence classes and the possible
combinations of MRM reduction type and potential stable type. We do not claim that this equivalence relation is the correct one for such a statement in arbitrary genus.

\begin{defn}
\label{def:equiv-cpics}
Let $(\Sigma,\cR,d,v_c)$ and $(\Sigma',\cR',d',v_c')$ be admissible genus $g$ cluster pictures, where $(\Sigma, \cR, d, v_c)$ has associated cyclic action $G$ as in \Cref{rem:K-indep}.
We say that they are \emph{equivalent} if $(\Sigma',\cR',d',v_c')$ is isomorphic to a cluster picture obtained from $(\Sigma,\cR,d,v_c)$ by a finite sequence of the following moves.

\begin{enumerate}[(1)]
    \item \emph{Changing $v_c$ by an even integer:}  Set $\cR' = \cR$, $\Sigma' = \Sigma$, $d_{\ft}' = d_{\ft}$ for all proper clusters $\ft \in \Sigma$, and $v_c' = v_c + 2m$ for some $m \in \Z$.
    
    \item \label{item:shift_depths} \emph{Shifting depths by $n \in \Z$:} Set $\cR' = \cR$, $\Sigma' = \Sigma$, $d_{\ft}' = d_{\ft} + n$ for all proper clusters $\ft \in \Sigma$, and $v_c' = v_c - |\cR|n$.

    \item \label{item:add_root} \emph{Adding a root when $|\cR|$ is odd and $d_{\cR} > 0$:}
    Set $\cR'=\cR\cup\{r_0\}$ where $r_0 \not\in \cR$ and set
    $\Sigma'=\Sigma\cup\{\{r_0\},\cR'\}$. Set $d'_{\cR'} = 0$, $d'_{\ft} = d_{\ft}$ for all proper clusters $\ft \in \Sigma$, $v_c' = v_c$.

    \item \label{item:remove_root} \emph{Removing a $G$-fixed root $r \in \cR$ when $|\cR|$ is even, $d_{\cR} \in \Z$, $P(\{r\}) = \cR$, and $\cR \setminus \{r\} \in \Sigma$:} Set $\cR'=\cR\setminus\{r\}$. Set $\Sigma'=\Sigma\setminus\{\cR,\{r\}\}$. Set $d'_{\cR'} = d_{\cR \setminus \{r\}}$, $d'_{\ft} = d_{\ft}$ for all proper clusters $\ft \subseteq \cR'$, and $v_c' = v_c +(|\cR| + 1)d_{\cR}$. 

    \item \emph{Redistributing depth when $|\cR|$ is even:}
    Here, we use the convention that $d_{\ft}=+\infty$ if $\ft$ is a singleton, and that clusters of relative depth $0$ are omitted from the resulting cluster picture.
    In each case below, the move is allowed only if the displayed depths define an abstract cluster picture and the displayed value of $v_c'$ is an integer.

    For (A), (B) and (C), let $\fs$ be a $G$-stable cluster of $\Sigma$ with $P(\fs)=\cR$, and set $\fs^c=\cR\setminus\fs$. We also use the convention that $\delta_{\fs^c}=0$ if $\fs^c\notin\Sigma$ (i.e. $d_{\fs^c}=d_{\cR}$).
    \begin{enumerate}[(A)]
        \item \label{item:moveA} \emph{Redistributing integral depth:} 
         Let $j\in\Z$ satisfy $d_{\cR}\le j\le d_{\fs}$. Set $\cR' = \cR$, $\Sigma' = \Sigma \cup \{\fs^c \}$. Set $d_{\cR'}' = 0$, $d'_{\ft} = d_{\ft} - j$ for proper clusters $\ft \subseteq \fs$ and $d'_{\ft} = d_{\ft} + j - 2d_{\cR}$ for proper clusters $\ft \subseteq \fs^c$. Set $ v_c' =  v_c+|\fs|j+|\fs^c|d_{\cR}.$ 

        \item \label{item:moveB} \emph{Deleting $\fs$ from $\Sigma$ when $|\fs| > 1$ and $\fs$ has no orphan:}
        Set $\cR' = \cR$, $\Sigma' = (\Sigma\setminus \{\fs\}) \cup \{\fs^c\}$. Set $d_{\cR'}' = -d_{\fs}$, $d_{\ft}' = d_{\ft} - 2d_{\fs}$ for proper clusters $\ft \subseteq \fs$, and $d_{\ft}' = d_{\ft} - 2d_{\cR}$ for proper clusters $\ft \subseteq \fs^c$. Set $v_c' = v_c+|\fs|d_{\fs}+|\fs^c|d_{\cR}$.

        \item \emph{Deleting $\fs$ from $\Sigma$ when $\fs$ has an orphan $\fs'$, and if $\fs'$ is proper, $\fs'$ has no orphan:} Let $h\in\Z$ satisfy $h\geq d_{\fs}$, and set $ m=\min(h,d_{\fs'})$. Set $\cR'=\cR$, $\Sigma'=(\Sigma\setminus\{\fs\})\cup \{\cR'\setminus\fs',\fs^c\}$. Set $d'_{\cR'}=h-m$,  $d'_{\cR'\setminus\fs'}=h-d_{\fs}$. Set $d'_{\ft}=d_{\ft}+h-2m$ for proper clusters $\ft\subseteq\fs'$, $d'_{\ft}=d_{\ft}+h-2d_{\fs}$ for proper clusters $\ft\subseteq\fs\setminus\fs'$, and $d'_{\ft}=d_{\ft}+h-2d_{\cR}$ for proper clusters  $\ft\subseteq\fs^c$.
        Set  $v_c'=v_c+|\fs'|m+|\fs\setminus\fs'|d_{\fs}+|\fs^c|d_{\cR}$.

        \item \label{item:moveD} \emph{Inverting the top cluster when $\cR$ has no orphan:} Set $\cR'=\cR$, $\Sigma'=\Sigma$. Set $d'_{\cR'}=-d_{\cR}$, and for every proper cluster $\ft\subsetneq\cR$, set $d'_{\ft}=d_{\ft}-2d_{\cR}$. Set $v_c'=v_c+|\cR|d_{\cR}$.
    \end{enumerate}
\end{enumerate}
\end{defn}

One can check, using \cite[Thm.~2.4]{bisatt}, that under the hypotheses of each elementary move, the displayed data again define an admissible cluster picture. This also follows from the explicit changes of Weierstrass equation in \Cref{prop:cpic-ABC}.

\begin{lemma}\label{lem:equiv-is-equivalence}
    Definition \ref{def:equiv-cpics} defines an equivalence relation on the set of admissible genus $g$ cluster pictures.
\end{lemma}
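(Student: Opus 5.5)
The relation is defined as ``isomorphic to a cluster picture obtained from $(\Sigma,\cR,d,v_c)$ by a finite sequence of moves''. Reflexivity (empty sequence) and transitivity (concatenate the sequences, using that each move commutes with isomorphism of abstract cluster pictures) are immediate. The substance is symmetry. For that it suffices to show each elementary move $M$ has an inverse expressible as a finite sequence of moves, and that the move produces an admissible cluster picture so the relation stays on the admissible set. Admissibility of the output I would take from the remark following Definition~\ref{def:equiv-cpics}: either check it via \cite[Thm.~2.4]{bisatt}, or, more cleanly, realise every move by an explicit change of Weierstrass equation.

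The cleanest route to symmetry is geometric. Choose a realisation $C\colon y^2=f(x)$ over some $K$ (Remark~\ref{rem:K-indep}). Each move corresponds to a $K$-isomorphism of Weierstrass models:
\begin{itemize}
\item scaling $y$ for (1);
\item scaling $x$ by $\pi^n$ for (2);
\item a translation-plus-inversion sending a point of $\mathbb{P}^1(K)$ outside all roots to $\infty$, or a $K$-rational root to $\infty$, for (3) and (4);
\item M\"obius transformations centred at a $G$-stable disc for (5A)--(5D).
\end{itemize}
The inverse isomorphism is again a change of Weierstrass equation, and the resulting cluster picture is the original one. So symmetry amounts to exhibiting the inverse combinatorially, which I would do move by move:
\begin{itemize}
\item (1) and (2) are inverted by themselves with $-m$ and $-n$.
\item (3) and (4) are mutually inverse. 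After (3) the new root $r_0$ is $G$-fixed, $d'_{\cR'}=0\in\Z$, $P(\{r_0\})=\cR'$ and $\cR\in\Sigma'$, so (4) applies. Conversely, after (4) we have $d'_{\cR'}=d_{\cR\setminus\{r\}}>d_\cR$. Following (4) with (3) and then (2) to renormalise depths recovers $v_c$, since $v_c+(|\cR|+1)d_\cR$ is compensated by a depth shift by $d_\cR$ together with a move (1).
\item (5D) is an involution. Applying it twice gives $d''_\cR=d_\cR$, $d''_\ft=d_\ft-2d_\cR+2d_\cR$, and $v_c''=v_c+|\cR|d_\cR-|\cR|d_\cR$.
\item (5A) with parameter $j$ is inverted by (5A) applied to $\fs$ in the new picture with parameter $-j$ (or to $\fs^c$), composed with (2). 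I would verify this by direct substitution into the depth formulas.
\item (5B) on $\fs$ is inverted by (5B) on the new top-level cluster $\fs^c$, possibly composed with (5D) and (2).
\end{itemize}

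The main obstacle is (5C). Its output has a new nested pair $\fs^c\subset\cR'\setminus\fs'$, and the required inverse is presumably (5C) applied to $\fs^c$ with orphan $\fs'$ in the role of the new structure, or (5A)/(5B) depending on whether $h$ exceeds $d_{\fs'}$. Here I would split into the cases $h\le d_{\fs'}$ and $h>d_{\fs'}$, which determine $m$. I would also need to check the orphan hypotheses in the new picture using Bisatt's description of $G$ \cite[Thm.~1.3]{bisatt}. If this direct bookkeeping gets unwieldy, the fallback is the geometric argument above: the inverse M\"obius map centred at the appropriate disc of the new model is itself of the form underlying one of the moves (5A)--(5D), up to (1) and (2). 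That reduces symmetry to identifying the type of that inverse transformation, which follows from where it sends the relevant $G$-stable discs.
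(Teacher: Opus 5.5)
Your skeleton (reflexivity, transitivity, symmetry by inverting each elementary move) is the paper's. Your treatment of (1), (2), (3) and (5D) is fine. In (4) you must shift by $n=-d_\cR$ \emph{before} re-adding the root, because (3) needs positive top depth; this is where $d_{\cR\setminus\{r\}}>d_\cR$ enters. Then shift back and correct $v_c$ by the even amount $|\cR|d_\cR$. The real content is in (5A)--(5C), and there the proposal is wrong or missing.

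\emph{(5A).} After the move the top depth is $0$, so (5A) on $\fs$ needs a parameter $\ge 0$. Substituting shows the only parameter returning a shift of the original is $d_\cR-j$, which is admissible only if $j=d_\cR$. The actual inverse is (5A) on $\fs^c$ with parameter $j-d_\cR$, then (2) with $n=d_\cR$ and (1); this requires $d_\cR\in\Z$. If $d_\cR\notin\Z$, no combination of (5A), (1) and (2) can restore a non-integral top depth. Instead, $G$ acts non-trivially on the children of $\cR$, so $\fs$ is the orphan of $\cR$ and $\fs^c$ is orphan-free after the move. The inverse is then (5B) on $\fs^c$ followed by (2) with $n=j$.

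\emph{(5B).} A second (5B) on $\fs^c$ undoes the move only when $\fs^c\notin\Sigma$, which is automatic if $d_\cR\notin\Z$. If $\fs^c$ is a genuine cluster, deleting it transfers its relative depth onto $\fs$, and neither (5D) nor (2) can repair that. The inverse is (5A) on $\fs^c$ with parameter $-d_\cR$, plus a shift.

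\emph{(5C).} The inverse is (5C) on the new cluster $U=\cR\setminus\fs'$ with orphan $\fs^c$, not on $\fs^c$ with orphan $\fs'$. Use parameter $h-d_\cR$ if $d_\cR\in\Z$. Otherwise use an integer $H>h-d_\cR$, followed by (2) with $n=h-H$ and (1). In all three cases the relevant split is whether $d_\cR\in\Z$, not $h$ versus $d_{\fs'}$.

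The geometric fallback does not close the gap. The relation is defined by sequences of moves, so knowing that the inverse M\"obius map is a $K$-isomorphism of models proves nothing until that map is identified with a composite of moves. The general statement of that kind, \Cref{cor:isom-implies-equivalent}, is deduced from \Cref{thm:CP2}, which uses this lemma, so appealing to it would be circular. The identification is also not a literal match: the inverse of $x\mapsto\pi^h/(x-z)$ corresponds to parameter $h-d_\cR$, which is non-integral when $d_\cR\notin\Z$. That is why the paper uses the auxiliary $H$.

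Your intention to check orphan hypotheses is well placed, and the check can fail. In the (5C) inverse, $\fs^c$ plays the role of $\fs'$, and (5C) as worded requires it to be orphan-free if proper. Take $\cR=\fs\sqcup\fs^c$ with both clusters of size $3$, depths in $\tfrac12+\Z$, and $d_\cR\in\Z$ (as for type III-III-$t$). After the move, $U$ has orphan $\fs^c$, and $\fs^c$ still has an orphan. So neither (5B) nor (5C) applies to $U$, every available move preserves the nested shape, and the original picture is never recovered. The paper's inverse goes through once the orphan condition in (5C) is imposed only when $h\ge d_{\fs'}$, which is all \Cref{prop:cpic-ABC}(5C) uses. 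With that reading: if $\fs^c\in\Sigma$, the inverse has $m'=h-d_\cR<d'_{\fs^c}$; if $\fs^c\notin\Sigma$, then $\fs^c$ is orphan-free after the move.
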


\begin{proof}
Reflexivity follows from the empty sequence of moves, and transitivity follows by concatenating sequences. It remains to prove symmetry. It is enough to show that each elementary move is reversible, and that the reverse sequence consists of moves whose hypotheses are satisfied. 

\begin{enumerate}[(1)]
    \item \Cref{def:equiv-cpics}(1) is reversed by applying \Cref{def:equiv-cpics}(1) with $-m$.
    \item \Cref{def:equiv-cpics}(2) is reversed by applying \Cref{def:equiv-cpics}(2) with $-n$. 
    \item \Cref{def:equiv-cpics}(3) is reversed by \Cref{def:equiv-cpics}(4) applied to the newly added $G$-fixed root. 
    \item Suppose \Cref{def:equiv-cpics}(4) removes a $G$-fixed root $r$ from a top cluster $\cR$ of even size. Let $\cR_0=\cR\setminus\{r\}$. After the removal, first apply \Cref{def:equiv-cpics}(2) with $n = -d_{\cR}$. Then the top depth is $d_{\cR_0}-d_{\cR}>0$, so \Cref{def:equiv-cpics}(3) may be applied to add back the fixed root $r$. Finally apply \Cref{def:equiv-cpics}(2) again with $n = d_{\cR}$. This restores all depths. The resulting value of $v_c$ differs from the original one by $|\cR|d_{\cR}$, which is even since $|\cR|$ is even and $d_{\cR}\in\Z$; hence \Cref{def:equiv-cpics}(1) restores $v_c$.
    \item 
    Note first that if $\fs$ is a $G$-stable cluster with $P(\fs) = \cR$, then its complement $\fs^c = \cR \setminus \fs$ is also $G$-stable. 
    \begin{enumerate}[(A)]
    \item The inverse of \Cref{def:equiv-cpics}\textup{(5A)} is as follows.
    If $d_{\cR}\in\Z$, apply \Cref{def:equiv-cpics}(5A) to $\fs^c$ with parameter $j-d_{\cR}$, then apply Definition~\ref{def:equiv-cpics}(2) with $n = d_{\cR}$, and finally apply Definition~\ref{def:equiv-cpics}(1) if necessary.
    If $d_{\cR} \notin\Z$, then $G$ acts non-trivially on the children of $\cR$ by \cite[Thm.~1.3(iii)]{bisatt}, and so $\fs$ is the orphan of $\cR$ (it is $G$-fixed and hence unique by \cite[Rmk. A.8]{bisatt}). After the move, $\fs^c$ has no orphan. Apply \Cref{def:equiv-cpics}(5B) to $\fs^c$ and then \Cref{def:equiv-cpics}(2) with $n = j$.
    
    \item The inverse of \Cref{def:equiv-cpics}\textup{(5B)} is as follows. 
    If $d_{\cR}\in\Z$, apply \Cref{def:equiv-cpics}(5A) to $\fs^c$ with parameter $-d_{\cR}$, then apply \Cref{def:equiv-cpics}(2) with $n = d_{\cR}$ and use \Cref{def:equiv-cpics}(1) if necessary.
    If $d_{\cR}\notin\Z$, then  $\fs$ is the orphan of $\cR$ (as in (A)), and the inverse is \Cref{def:equiv-cpics}(5B) applied to $\fs^c$ (which has no orphan).
    
    \item The inverse of \Cref{def:equiv-cpics}\textup{(5C)} is as follows. Suppose the move is applied to $\fs$ with orphan $\fs'$ and parameter $h$. Let $ U=\cR\setminus \fs'$ be the cluster added by the move. In the transformed cluster picture, $\fs^c$ is the orphan of $U$. If $d_{\cR}\in\Z$, apply \Cref{def:equiv-cpics}(5C) to $U$, with orphan $\fs^c$, and with parameter $h-d_{\cR}$. Then apply \Cref{def:equiv-cpics}(2) with $n=d_{\cR}$. This restores all depths, and \Cref{def:equiv-cpics}(1) restores $v_c$ if necessary. Suppose now that $d_{\cR}\notin\Z$. Then $\fs$ is the orphan of $\cR$. In particular, in the transformed picture, $\fs^c$ has no orphan. Choose an integer $H$ with $H>h-d_{\cR}$. Apply \Cref{def:equiv-cpics}$(5C)$ to $U$, with orphan $\fs^c$, and with parameter $H$. Then apply \Cref{def:equiv-cpics}(2) with $n=h-H$. This restores all depths. The resulting value of $v_c$ differs from the original one by $|\cR|H$, which is even as $|\cR|$ is even; hence \Cref{def:equiv-cpics}(1) restores $v_c$.
    
    \item  \Cref{def:equiv-cpics}(5D) is self-inverse. 
    \end{enumerate}
\end{enumerate}
\end{proof}

We now show that, when $C\colon y^2=f(x)$ is a hyperelliptic curve over $K$ with tame reduction, the moves that can be applied to the associated admissible cluster picture, as in Definition~\ref{def:equiv-cpics}, are realised by $K$-rational changes of variables between Weierstrass equations. Thus every admissible cluster picture obtained from the one associated to $f$ by one of these moves is realised by a Weierstrass equation of a curve $K$-isomorphic to $C$.

Before realising the moves geometrically, we first recall \emph{centres} of clusters. 

\begin{defn}[{\cite[Def.\ 1.10]{M2D2}}]
Let $\Sigma_f$ be the cluster picture associated to a Weierstrass equation $C \colon y^2 = f(x)$ of a hyperelliptic curve $C  / K$. A \emph{centre} $z_{\fs}$ of a cluster $\fs \in \Sigma_f$ with $|\fs| > 1$ is any element $z_{\fs} \in K^{\text{sep}}$ such that $v(z_{\fs} - r) \geq d_{\fs}$ for all $r \in \fs$. If $\fs = \{r\}$ is a singleton, $z_{\fs} = r$. 
\end{defn}

When $|\fs| > 1$, $\fs$ is the intersection of $\cR$ with the closed disc $D = \{ x \in K^{\text{sep}} \mid v(x - z_{\fs}) \geq d_{\fs} \}$. In practice, a centre can often be found by taking an average of the
roots in $\fs$.

\begin{lemma}[{\cite[Lem.\ B.1]{M2D2}}]\label{lem:rational-centre}
    Let $\Sigma_f$ be the cluster picture arising from a Weierstrass equation $C \colon y^2 = f(x)$ of a hyperelliptic curve $C  / K$. 
    Assume $K(\cR) / K$ is tame, where $K(\cR)$ is the splitting field of $f$, and let $G = \Gal(K(\cR) / K)$. If $\fs$ is $G$-stable then there exists $z_{\fs} \in K$. 
\end{lemma}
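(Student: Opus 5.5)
The plan is to produce a $K$-rational centre by averaging over a $G$-orbit, and then use tameness to control how far this average can drift from the roots of $\fs$. Write $L=K(\cR)$ and $G=\Gal(L/K)$. Since $L/K$ is tame and the residue field $k$ is algebraically closed, $G$ is cyclic of order $e$ with $p\nmid e$ (or $p=0$). In particular $e$ is invertible in $R$ and $v(e)=0$.

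Fix any centre $z\in L$ of $\fs$. When $|\fs|>1$ one can take $z=r$ for any $r\in\fs$, since $v(r-r')\ge d_{\fs}$ for all $r'\in\fs$. When $\fs=\{r\}$ is a singleton, $G$-stability forces $r\in L^G=K$, and we are done immediately. So assume $|\fs|>1$ and define
\[
z_{\fs}=\frac1e\sum_{\sigma\in G}\sigma(z).
\]
This element is $G$-invariant, so it lies in $L^G=K$.

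We now check that $z_{\fs}$ is a centre. Because $\fs$ is $G$-stable and $v$ is $G$-invariant (the extension of $v$ to $L$ is unique, $K$ being complete), each $\sigma(z)$ is again a centre of $\fs$. Indeed, for $r\in\fs$ we have $\sigma^{-1}(r)\in\fs$, so $v(\sigma(z)-r)=v(z-\sigma^{-1}(r))\ge d_{\fs}$.

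Now fix $r\in\fs$ and write
\[
z_{\fs}-r=\frac1e\sum_{\sigma\in G}(\sigma(z)-r).
\]
Each summand has valuation at least $d_{\fs}$, and $v(1/e)=0$. By the ultrametric inequality, $v(z_{\fs}-r)\ge d_{\fs}$ for every $r\in\fs$, which is exactly the centre condition.

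The only step requiring care is the division by $e$. This is precisely where tameness enters: in wild situations $v(e)>0$ would push the average outside the disc. Beyond that, the only inputs are the $G$-invariance of $v$ and the stability of $\fs$. Alternatively, one could average over the orbit of a single root $r\in\fs$, dividing by the orbit size, which also divides $e$ and is therefore prime to $p$. This gives the "average of roots" centre mentioned before the lemma.
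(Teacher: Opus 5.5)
Your proof is correct and complete. The paper gives no argument of its own for this lemma, only the citation \cite[Lem.~B.1]{M2D2}, so what you have written is a self-contained proof in the paper's setting. Each step holds: every $\sigma(z)$ is again a centre because $\fs$ is $G$-stable and $v\circ\sigma=v$, and the average lies in $L^G=K$. Moreover $v(1/e)=0$, because with $k$ algebraically closed $|G|=[L:K]$ is the ramification index, and tameness makes it prime to $p$.

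This last step is where the algebraically closed residue field is used. Over a local field with finite residue field, which is the setting of \cite{M2D2}, $|G|$ can be divisible by $p$ through an unramified part even when $L/K$ is tame, and uniform averaging can then leave the disc. Your argument still works there if you replace the weights $1/e$ by $\sigma(w)$ for some $w\in\mathcal{O}_L$ with $\mathrm{Tr}_{L/K}(w)=1$; such a $w$ exists because $L/K$ is tame. Then $z_{\fs}=\mathrm{Tr}_{L/K}(wz)\in K$, and $z_{\fs}-r=\sum_{\sigma\in G}\sigma(w)(\sigma(z)-r)$ has valuation at least $d_{\fs}$.

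One small correction to your closing remark: the $G$-orbit average of a root is not the same as the average of all roots of $\fs$ mentioned just before the lemma. The all-roots average needs $p\nmid|\fs|$, which tameness does not provide; for example, take $p=3$ and $\fs$ a cluster of three $K$-rational roots, where $3$ is not a unit in $R$. So the orbit version is the one that always works.
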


The following lemma follows by direct computation. 

\begin{lemma}[M\"obius changes of Weierstrass equation]
\label{lem:mobius-change}
Let $C\colon y^2=f(x)$ be a hyperelliptic curve over $K$, with
$f(x)=c_f\prod_{r\in\cR}(x-r)$
and $|\cR|$ even.

\begin{enumerate}[(1)]
\item Suppose that $\phi(x)=\dfrac{ax+b}{cx+d}\in \PGL_2(K)$ has no pole on $\cR$. After the change of variables
$X=\phi(x)$, $Y=\mu (a-cX)^{|\cR|/2}y$, with $\mu\in K^\times$,
we obtain a $K$-isomorphic curve with Weierstrass equation
$$C'\colon Y^2=g(X), \qquad g(X)=\mu^2 c_f\prod_{r\in\cR}(cr+d)(X-\phi(r)).$$
The roots are $\phi(\cR)$, and the leading coefficient $c_g$ of $g$ satisfies
$v(c_g)=v(c_f)+2v(\mu)+\sum_{r\in\cR}v(cr+d)$.
\item Suppose that $r\in\cR\cap K$ and $\phi(x)=\dfrac{\pi^k}{x-r}$, with $k\in\Z$. Then $\phi$ sends $r$ to infinity. After the change of variables $X=\phi(x),  Y=\mu X^{|\cR|/2}y$ with  $\mu\in K^\times$,
we obtain a $K$-isomorphic curve with odd degree Weierstrass equation
$$C'\colon Y^2=g(X),\qquad g(X)=\mu^2 c_f\pi^k \prod_{r'\in\cR\setminus\{r\}}(r-r')(X-\phi(r')).$$
The roots are $\phi(\cR\setminus\{r\})$, and the leading coefficient $c_g$ of $g$ satisfies $v(c_g)=v(c_f)+2v(\mu)+k+ \sum_{r'\in\cR\setminus\{r\}}v(r-r').$
\end{enumerate}
\end{lemma}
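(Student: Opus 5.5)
Both parts are a direct substitution, and I would verify them separately.

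For (1), write $\phi(x)=\frac{ax+b}{cx+d}$ with $ad-bc\neq 0$. Inverting $X=\phi(x)$ gives $x=\frac{dX-b}{a-cX}$, and since $\phi$ has no pole on $\cR$ we have $cr+d\ne 0$ for every $r\in\cR$. For each root $r$ I use the identity
$$x-r=\frac{dX-b-r(a-cX)}{a-cX}=\frac{(cr+d)X-(ar+b)}{a-cX}=\frac{(cr+d)\bigl(X-\phi(r)\bigr)}{a-cX}.$$
Multiplying over $r\in\cR$ gives $f(x)=c_f\prod_r(cr+d)(X-\phi(r))\cdot(a-cX)^{-|\cR|}$. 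Because $|\cR|$ is even, setting $Y=\mu(a-cX)^{|\cR|/2}y$ gives $Y^2=\mu^2(a-cX)^{|\cR|}f(x)=g(X)$. The polynomial $g$ has the stated form, its degree is $|\cR|$ since all $cr+d\neq0$, and its roots are $\phi(\cR)$. The change of variables is invertible over $K$, so the two curves are $K$-isomorphic. The valuation of the leading coefficient follows by reading it off: it is $\mu^2c_f\prod_r(cr+d)$.

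For (2), take $\phi(x)=\pi^k/(x-r)$, so $x-r=\pi^k/X$. For $r'\neq r$ we have
$$x-r'=(x-r)+(r-r')=\frac{\pi^k+(r-r')X}{X}=\frac{(r-r')\bigl(X-\phi(r')\bigr)}{X},$$
using $\phi(r')=\pi^k/(r'-r)$. Multiplying these over $r'\ne r$, together with the factor $x-r=\pi^k/X$, gives
$$f(x)=c_f\pi^k\prod_{r'\neq r}(r-r')\bigl(X-\phi(r')\bigr)\cdot X^{-|\cR|}.$$
Setting $Y=\mu X^{|\cR|/2}y$ then gives $Y^2=g(X)$, which has odd degree $|\cR|-1$, roots $\phi(\cR\setminus\{r\})$, and the stated leading coefficient. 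The hypothesis $r\in K$ makes $\phi$ defined over $K$, so the transformation is $K$-rational.

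Neither part has a real obstacle. The only point needing care is the bookkeeping of the prefactor $(a-cX)^{|\cR|}$, respectively $X^{|\cR|}$, and this is where evenness of $|\cR|$ is used to take the square root in defining $Y$.
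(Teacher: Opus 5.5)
Your proposal is correct and is the same direct computation the paper has in mind; the paper offers no argument beyond the remark that the lemma ``follows by direct computation''. The identities $x-r=(cr+d)(X-\phi(r))/(a-cX)$ and $x-r'=(r-r')(X-\phi(r'))/X$ are all that is needed, together with the evenness of $|\cR|$, which lets the prefactor be absorbed into $Y^2$.
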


In the following proposition, saying that an operation in Definition~\ref{def:equiv-cpics} is \emph{realised} by a change of variables means the following. Starting from a Weierstrass equation $C\colon y^2=f(x)$ whose associated admissible cluster picture satisfies the hypotheses of the operation, the displayed $K$-rational change of variables gives a $K$-isomorphic curve $C'$ with Weierstrass equation $C'\colon Y^2=g(X)$ whose associated admissible cluster picture is isomorphic to the one prescribed by that operation. Since the change of variables is defined over $K$, the isomorphism of cluster pictures is compatible with the relevant Galois actions.

Although Definition~\ref{def:equiv-cpics}(5) only allows a move when the displayed value of $v_c'$ is an integer, this integrality condition is automatic for the moves realised below. In each case, the formula for $v_c'$ is obtained from the valuation of the leading coefficient of the transformed Weierstrass equation. Since the transformed equation is defined over $K$, this valuation is an integer.

\begin{proposition}\label{prop:cpic-ABC}
Let $C\colon y^2=f(x)$ be a hyperelliptic curve with tame reduction over $K$.
Let $\cR$ be the set of roots of $f$, let $v_c=v(c_f)$, and set $G=\Gal(K(\cR)/K)$. 
The operations in Definition~\ref{def:equiv-cpics} that can be applied to the associated admissible cluster picture are realised by the following $K$-rational changes of variables between Weierstrass equations.

\begin{enumerate}[(1)]
\item \emph{Changing $v_c$ by an even integer:}
The operation in Definition~\ref{def:equiv-cpics}(1) is realised by
$X=x, Y=\pi^m y$, for some $m\in\Z$.

\item \emph{Shifting depths by $n\in\Z$:}
The operation in Definition~\ref{def:equiv-cpics}(2) is realised by $X=\pi^n x, Y=y$.

\item \emph{Adding a root when $|\cR|$ is odd and $d_{\cR}>0$:}
There exists $r_0\in K$ such that $v(r_0-r)=0$ for all $r\in\cR$, and set $\phi(x)=r_0+\frac{1}{x-r_0}$.
Then the operation in Definition~\ref{def:equiv-cpics}(3) is realised by $X=\phi(x)$, $Y=(X-r_0)^{(|\cR|+1)/2}y$.

\item \emph{Removing a root:}
If $r\in K$ is a root of $f(x)$ satisfying the hypotheses of Definition~\ref{def:equiv-cpics}(4), then the operation is realised by $X=\phi(x)$, $Y=X^{|\cR|/2}y$, where $\phi(x)=\dfrac{\pi^{2d_{\cR}}}{x-r}$.

\item \emph{Redistributing depth when $|\cR|$ is even:}
The operations in Definition~\ref{def:equiv-cpics}(5) are realised as follows. In cases (A), (B) and (C), let $\fs$ be a $G$-stable cluster with $P(\fs)=\cR$, and set  $\fs^c=\cR\setminus\fs$.

\begin{enumerate}[(A)]
\item \emph{Redistributing integral depth:}
Let $j\in\Z$ satisfy $d_{\cR}\le j\le d_{\fs}$, and choose a centre $z_{\fs}\in K$. There exists $\lambda\in K$ with $v(\lambda)=j$ such that $v(r-z_{\fs}+\lambda)=j$ for $r\in\fs$ and $v(r-z_{\fs}+\lambda)=d_{\cR}$ for $r\in\fs^c$.
For any such $\lambda$, the operation in Definition~\ref{def:equiv-cpics}(5A) is realised by $X=\phi(x)$, $Y=X^{|\cR|/2}y$, where $\phi(x)=\dfrac{\pi^j}{x-z_{\fs}+\lambda}$.

\item \emph{Deleting $\fs$ from $\Sigma_f$ when $|\fs|>1$ and $\fs$ has no orphan:}
Choose a centre $z_{\fs}\in K$. Then the operation in Definition~\ref{def:equiv-cpics}(5B) is realised by $X=\phi(x)$, $Y=X^{|\cR|/2}y$, where $\phi(x)=\dfrac{1}{x-z_{\fs}}$.

\item \emph{Deleting $\fs$ from $\Sigma_f$ when $\fs$ has an orphan $\fs'$, and if $\fs'$ is proper, $\fs'$ has no orphan:}
Let $h\in\Z$ satisfy $h\ge d_{\fs}$, and set $m=\min(h,d_{\fs'})$. Then there exists $z\in K$ such that $v(r-z)= \begin{cases} m, & r\in\fs',\\[-0.2em] d_{\fs}, & r\in\fs\setminus\fs',\\[-0.2em]
d_{\cR}, & r\in\fs^c. \end{cases}$
For any such $z$, the operation in Definition~\ref{def:equiv-cpics}(5C) is realised by $X=\phi(x)$, $Y=X^{|\cR|/2}y$, where $\phi(x)=\dfrac{\pi^h}{x-z}$.

\item \emph{Inverting the top cluster when $\cR$ has no orphan:}
Choose a centre $z_{\cR}\in K$. Then the operation in Definition~\ref{def:equiv-cpics}(5D) is realised by $X=\phi(x)$, $Y=X^{|\cR|/2}y$, where $\phi(x)=\dfrac{1}{x-z_{\cR}}$.

\end{enumerate}
\end{enumerate}
\end{proposition}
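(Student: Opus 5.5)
The plan is to verify each move separately. In every case the change of variables is an explicit Möbius transformation, so Lemma~\ref{lem:mobius-change} does the work. Part~(1) of that lemma gives the even-degree cases, and part~(2) gives the cases where a root is sent to infinity. Two things then need checking for each move: that the new roots $\phi(\cR)$ have the pairwise valuations prescribed in Definition~\ref{def:equiv-cpics}, and that $v(c_g)$ equals the displayed $v_c'$. The tool for the first is the identity
\[
\phi(r)-\phi(r')=\frac{\pi^k (r'-r)}{(r-z)(r'-z)},
\]
valid for $\phi(x)=\pi^k/(x-z)$. It gives $v(\phi(r)-\phi(r'))=k+v(r-r')-v(r-z)-v(r'-z)$, so the whole cluster picture of $g$ is determined by the numbers $v(r-z)$ for $r\in\cR$ together with the depths of $\Sigma_f$. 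Moves (1) and (2) are immediate: scaling $y$ by $\pi^m$ changes $v_c$ by $2m$, and scaling $x$ by $\pi^n$ shifts every root difference by $n$ and multiplies $c_f$ by $\pi^{-|\cR| n}$, up to the square factor.

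\emph{Moves (3) and (4).} For (3), $d_{\cR}>0$ means all roots lie in a single residue disc. Since $k$ is algebraically closed, and in particular infinite, we may choose $r_0\in K$ in a different residue class, so that $v(r-r_0)=0$ for all $r\in\cR$. Writing $\phi(x)=r_0+1/(x-r_0)$ and applying the displayed identity (shifted by $r_0$), root differences are unchanged. The point at infinity goes to $r_0$, which lies at distance $0$ from all $\phi(r)$. The valuation of the leading coefficient is unchanged because every factor $v(r-r_0)$ vanishes.

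For (4), $r$ is $G$-fixed, hence $r\in K$, and $2d_{\cR}\in\Z$. Applying Lemma~\ref{lem:mobius-change}(2) with $k=2d_{\cR}$: for $r'\in\cR\setminus\{r\}$ we have $v(r-r')=d_{\cR}$, because $P(\{r\})=\cR$. This gives $v(\phi(r')-\phi(r''))=v(r'-r'')$, so the depths are preserved. It also gives $v(c_g)=v_c+2d_{\cR}+(|\cR|-1)d_{\cR}$, which is the displayed value.

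\emph{Moves (5A)--(5D).} By Lemma~\ref{lem:rational-centre}, $G$-stable clusters have centres in $K$. Each of these moves reduces to the identity above with $z$ chosen suitably, followed by reading off which sets become discs.

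For (5B) and (5D), take $z=z_{\fs}$ (respectively $z_{\cR}$). Then $v(r-z)$ equals $d_{\fs}$ on $\fs$ and $d_{\cR}$ on $\fs^c$; this uses that $\fs$ has no orphan. A direct computation then shows that the roots from $\fs^c$ now form a cluster of the stated depth, $\fs$ dissolves into the top cluster, and inner depths shift by $-2d_{\fs}$ or $-2d_{\cR}$. The leading coefficient valuation is $v_c+\sum_r v(r-z)$, as displayed.

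The no-orphan hypothesis is needed to guarantee $v(r-z_{\fs})=d_{\fs}$ exactly for every $r\in\fs$, rather than a larger value. The plan is to argue this via Bisatt's description: if every child of $\fs$ is moved by $G_{\fs}$, then no child can contain a $K$-rational point closer than $d_{\fs}$. Otherwise that child would be $G$-stable, and hence an orphan. Case (5D) is the same argument with $\fs=\cR$.

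\emph{Existence of $\lambda$ and $z$.} For (5A) and (5C) one must first produce $\lambda$ and $z$ in $K$. This uses that the residue field is infinite and that $j$ and $h$ are integers, so that $\pi^j$ and $\pi^h$ are available.

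For (5A), take $\lambda=u\pi^j$ with $u$ a unit chosen generically. If $j<d_{\fs}$, then $v(r-z_{\fs}+\lambda)=j$ for $r\in\fs$. If $j=d_{\fs}$, we choose $\bar u$ to avoid the finitely many residues of $(z_{\fs}-r)/\pi^j$. For $r\in\fs^c$ the valuation is $d_{\cR}$, provided $j>d_{\cR}$ or $\bar u$ is generic.

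For (5C), take $z=z_{\fs'}+u\pi^h$ when $h<d_{\fs'}$, and $z=z_{\fs'}$ otherwise. Since $\fs'$ is the orphan, $z_{\fs'}$ can be chosen in $K$, and then the three displayed valuations follow.

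After that, the depth bookkeeping is again the identity above. This produces the new clusters $\fs^c$ and $\cR\setminus\fs'$ with the stated depths.

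I expect the main obstacle to be (5C) with the case split $h\ge d_{\fs'}$ versus $h<d_{\fs'}$. Care is needed there to confirm that the resulting collection is exactly $(\Sigma\setminus\{\fs\})\cup\{\cR\setminus\fs',\fs^c\}$, with relative-depth-zero clusters omitted. This requires tracking $v(\phi(r)-\phi(r'))$ for $r$ and $r'$ in each pair among $\fs'$, $\fs\setminus\fs'$ and $\fs^c$.
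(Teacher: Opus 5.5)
Your route is the paper's: Lemma~\ref{lem:mobius-change}, the identity $v(\phi(r)-\phi(r'))=k+v(r-r')-v(r-z)-v(r'-z)$, $K$-rational centres from Lemma~\ref{lem:rational-centre}, generic units to produce $\lambda$ and $z$, and the no-orphan argument for exact distances. Two points need repair.

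The first is in move (3). ``Choose $r_0\in K$ in a different residue class'' does not by itself give $v(r_0-r)=0$. What you need is a point of $K$ in the closed disc $\{x: v(x-r)\ge 0\}$ that contains $\cR$. If the roots have negative valuation, no $r_0\in R$ works. In general, a Galois-stable disc of integral radius need not contain any point of $K$. Such a point exists here only because $K(\cR)/K$ is tame, and this is exactly where the tameness hypothesis enters. The paper takes a centre $z_{\cR}\in K$ from Lemma~\ref{lem:rational-centre} and sets $r_0=z_{\cR}+1$.

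The second concerns (5B), (5D), and (5C) with $h\ge d_{\fs'}$, where you treat ``no orphan'' as ``$G_{\fs}$ moves every child''. That is right only when $d_{\fs}\notin\Z$. Then $G$ multiplies the residue of $(r-z_{\fs})/\pi^{d_{\fs}}$ by a non-trivial root of unity. So the only child that can be fixed is $\{r\in\fs: v(r-z_{\fs})>d_{\fs}\}$, and your argument works for every centre $z_{\fs}\in K$.

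If $d_{\fs}\in\Z$, then $G$ preserves these residues, since $\pi^{d_{\fs}}\in K$ and $G$ acts trivially on $k$. Hence every child is fixed, and $\fs$ has no orphan simply because no fixed child is unique. A $K$-rational centre may then lie inside a child. For instance, it may be a $K$-rational root, so that $1/(x-z_{\fs})$ has a pole on $\cR$. In this case you must choose the centre generically, as $z_{\fs}+u\pi^{d_{\fs}}$ with $\bar u$ avoiding the finitely many residues of $(r-z_{\fs})/\pi^{d_{\fs}}$, just as for $\lambda$ in (5A). The paper's proof has the same lacuna: its appeal to uniqueness of a $G$-stable child is only valid for $d_{\fs}\notin\Z$.

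Finally, in (5C) with $h\ge d_{\fs'}$, the equality $v(r-z_{\fs'})=d_{\fs'}$ on $\fs'$ comes from the hypothesis that $\fs'$ itself has no orphan, applied to $\fs'$ as above. That $\fs'$ is the orphan of $\fs$ only gives $z_{\fs'}\in K$.
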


\begin{proof}
The assertions about the Galois actions and the integrality of the displayed values of $v_c'$ follow from the discussion preceding the proposition. It remains to check the clusters, depths and leading coefficient valuations.

(1),(2) Clear. 

(3) By Lemma~\ref{lem:rational-centre} we can choose a centre $z_{\cR}\in K$. As $d_{\cR}>0$, we have $v(z_{\cR}-r)>0$ for all $r\in\cR$, so taking $r_0=z_{\cR}+1$ (for instance) gives $v(r_0-r)=0$ for all $r\in\cR$.
Let $\phi(x)=r_0+\frac{1}{x-r_0}$. The change of variables $X=\phi(x), Y=(X-r_0)^{(|\cR|+1)/2}y$ gives a Weierstrass equation $Y^2 = g(X) = c_f (X-r_0)\prod_{a\in\cR}(r_0-a)(X-\phi(a))$.
Thus the new roots are $\{r_0\}\cup\phi(\cR)$. For distinct $a,b\in\cR$, $v(\phi(a)-\phi(b)) = v(a-b)-v(a-r_0)-v(b-r_0) = v(a-b)$, while $v(\phi(a)-r_0)=0$ for all $a\in\cR$. Hence $\phi$ preserves all depths among the old roots, and adds a new root $r_0$ outside the cluster $\phi(\cR)$, with top depth $0$. This is the cluster picture prescribed by Definition~\ref{def:equiv-cpics}(3). Finally, $v(c_g)=v(c_f)+\sum_{a\in\cR}v(r_0-a)=v(c_f)$.

(4) Since $P(\{r\})=\cR$ and $\cR\setminus\{r\}$ is a cluster, we have $v(a-r)=d_{\cR}$ for all $a\in\cR\setminus\{r\}$. Hence for $a,b\in\cR\setminus\{r\}$,
$v(\phi(a)-\phi(b))=2d_{\cR}+v(a-b)-v(a-r)-v(b-r)=v(a-b)$.
Thus the cluster picture on the remaining roots is unchanged, while $r$ is sent
to infinity. By Lemma~\ref{lem:mobius-change},
$v_c' =  v_c+2d_{\cR}+\sum_{a\in\cR\setminus\{r\}}v(a-r) = v_c+(|\cR|+1)d_{\cR}$. We now prove (5).
	\begin{enumerate}[(A)]
	\item By Lemma \ref{lem:rational-centre}, we can choose $z_{\fs} \in K$. We prove the existence of such a $\lambda$. Take $\lambda = \pi^j u$ with $u \in K$ such that $v(u) = 0$. For $r \in \fs$,  $v(r - z_{\fs}) \geq d_{\fs} \geq j$, so $v(r-z_{\fs} + \lambda) = j$, except possibly when $v(r-z_{\fs}) = j$ and cancellation occurs. Similarly for $r \in \fs^c$, $v(r - z_{\fs}) = d_{\cR} \leq j$, so $v(r - z_{\fs} + \lambda) = d_{\cR}$, except possibly if cancellation occurs. Since only finitely many residue classes of $u$ can cause cancellation, we choose $u$ avoiding all of them. The valuation of the leading coefficient for the model $C'$ follows from Lemma \ref{lem:mobius-change}, and the depths of the cluster picture follow from the fact that $v(\phi(r) - \phi(r')) = j + v(r - r') - v(r - z_{\fs} + \lambda) - v(r' - z_{\fs} + \lambda)$ for distinct $r, r' \in \cR$. 
    
	\item  By Lemma \ref{lem:rational-centre}, we can choose $z_{\fs} \in K$. The centre $z_{\fs}$ satisfies $v(r-z_{\fs})= d_{\fs}$ for $r \in \fs$ and $v(r - z_{\fs})= d_{\cR}$ for $r \in \fs^c$. Indeed, suppose that $v(r-z_{\fs})>d_{\fs}$ for some $r\in\fs$. Then the set $ \ft=\{a\in\fs: v(a-z_{\fs})>d_{\fs}\}$ is the unique child of $\fs$ containing $r$. Since $z_{\fs}\in K$, $\ft$ is also $G$-stable and hence an orphan, a contradiction (if a cluster has a $G$-stable child, it has exactly one, see \cite[Rmk.~A.8]{bisatt}).  The rest follows from Lemma \ref{lem:mobius-change} and the fact that $v(\phi(r) - \phi(r')) = v(r - r') - v(r - z_{\fs}) - v(r' -z_{\fs})$ for distinct $r, r' \in \cR$.
 
	\item As $\fs'$ is $G$-stable, we may choose a centre $z_{\fs'}\in K$ by Lemma \ref{lem:rational-centre}. If $h<d_{\fs'}$, let $z=z_{\fs'}+\pi^h u$, where $u \in K$ with $v(u) = 0$ is chosen to avoid the finitely many residue classes which cause cancellation, as in (A). Then $v(r-z)=h=m$ for $r\in\fs'$, while $v(r-z)=d_{\fs}$ for $r\in\fs\setminus\fs'$ and $v(r-z)=d_{\cR}$ for $r\in\fs^c$.

    Now suppose $h\geq d_{\fs'}$ (then $d_{\fs'} < \infty$ and $\fs'$ is proper). As $\fs'$ has no orphan, the same argument as in (B) shows that $v(r-z_{\fs'})=d_{\fs'}=m$ for all $r\in\fs'$. Taking $z=z_{\fs'}$, we also have $v(r-z)=d_{\fs}$ for $r\in\fs\setminus\fs'$ and $v(r-z)=d_{\cR}$ for $r\in\fs^c$.
    
    The rest follows from Lemma \ref{lem:mobius-change} and the fact that $v(\phi(r) - \phi(r')) = h + v(r - r') - v(r - z) - v(r' -z)$ for distinct $r, r' \in \cR$.

    \item  By Lemma \ref{lem:rational-centre}, we can choose $z_{\cR} \in K$. Since $\cR$ has no orphan, the same argument as in (B) shows that $v(r-z_{\cR})=d_{\cR}$ for every $r\in\cR$. The rest follows from Lemma \ref{lem:mobius-change} and the fact that $v(\phi(r)-\phi(r')) = v(r-r')-2d_{\cR}$ for distinct $r, r' \in \cR$. 
\end{enumerate}
\end{proof}

We now prove the remaining main results of this section.

\begin{proof}[Proof of \Cref{thm:CP2}(1)]
That \Cref{def:equiv-cpics} defines an equivalence relation follows from \Cref{lem:equiv-is-equivalence}. 

Let $(\Sigma',\cR',d',v_c')$ be an admissible genus $g$ cluster picture in the equivalence class of $(\Sigma_f,\cR_f,d_f,v_c)$. By definition, there is a finite sequence of elementary moves in Definition~\ref{def:equiv-cpics} taking $(\Sigma_f,\cR_f,d_f,v_c)$ to $(\Sigma',\cR',d',v_c')$, up to isomorphism of admissible cluster pictures. Each elementary move is realised by a $K$-rational change of Weierstrass equation by Proposition~\ref{prop:cpic-ABC}. Applying these changes successively gives a Weierstrass equation of $C$ over $K$ whose associated abstract cluster picture is isomorphic to $(\Sigma',\cR',d',v_c')$.
\end{proof}

\begin{proof}[Proof of \Cref{thm:CP3}]
\begin{enumerate}[(1)]
    \item Bisatt's classification lists all admissible tame cluster pictures of size $6$; see \cite[App.~C]{bisatt}. A finite comparison of the conditions in \cite[App.~C]{bisatt} with the conditions displayed in Tables~\ref{table:potentiallygood}--\ref{table:potentiallymultiplicativex2} shows that the degree $6$ pictures in the tables are exactly the admissible degree $6$ genus $2$ cluster pictures with tame reduction. Thus every admissible degree $6$ picture appears in the tables, and every displayed picture is admissible.
    If one starts with a quintic model, then after shifting all depths by an integer if necessary, Definition~\ref{def:equiv-cpics}(3) replaces its cluster picture by the associated equivalent degree $6$ picture.

    \item We first note a finite column-by-column verification. Fix a column of the tables and specialise its parameters to allowed values. The cluster pictures displayed in this specialised column are connected by the moves of Definition~\ref{def:equiv-cpics}; this is recorded schematically in Table~\ref{ss:changeofmodel} (\Cref{s:tables} explains how to read this table). The arrows and loops indicate the relevant moves of Definition~\ref{def:equiv-cpics}(5), possibly composed with integral shifts of all depths as in Definition~\ref{def:equiv-cpics}(2).
    Substituting the specialised parameters into the displayed depth data verifies the hypotheses of the indicated moves. Hence the pictures in a fixed specialised column lie in a single equivalence class.

    Each specialised column contains the cluster picture of a Namikawa--Ueno example from \Cref{prop:NUclusterpictures}. By \Cref{prop:NUclusterpictures}, this example has the MRM reduction type heading the column. Its potential stable type is the stable type of the table containing the column, by \Cref{thm:CP1}(2). Since every other picture in the column is equivalent to the Namikawa--Ueno picture, Theorem~\ref{thm:CP2}\textup{(1)} shows that the latter is realised by a Weierstrass equation of a curve $K$-isomorphic to $C$. Thus every curve with a cluster picture displayed in the column has the potential stable type and MRM reduction type indicated by the position of that entry.

    \item We have already shown that the pictures in a fixed specialised column lie in one equivalence class. Conversely, let $(\Sigma,\cR,d,v_c)$ be a degree $6$ admissible genus $2$ cluster picture equivalent to one of the pictures in that column. By (1), after identifying pictures which differ by an integral shift of all depths, it appears somewhere in the tables. By \Cref{thm:CP2}(1), equivalent admissible cluster pictures are realised by Weierstrass equations of $K$-isomorphic curves, so they have the same MRM reduction type and potential stable type. By (2), the table and column in which $(\Sigma,\cR,d,v_c)$ appears must therefore be the chosen specialised column. Hence the pictures displayed in the column are precisely the degree $6$ cluster pictures in a single equivalence class, up to integral shifts of all depths.
\end{enumerate}
\end{proof}

\begin{proof}[Proof of Theorem~\ref{thm:CP2}(2)]
It remains to prove (2). Let $\mathcal E$ be an equivalence class of admissible genus $2$ cluster pictures. Choose a representative $(\Sigma,\cR,d,v_c)$ of $\mathcal E$, realised by a tame genus $2$ curve $C\colon y^2=f(x)$. We send $\mathcal E$ to the pair consisting of the MRM reduction type and the potential stable type of $C/K$.

This is well-defined. If we replace $(\Sigma,\cR,d,v_c)$ by another representative of the same equivalence class, then by
Theorem~\ref{thm:CP2}(1) this new representative is realised by a Weierstrass equation of a curve $K$-isomorphic to $C$. Hence the MRM reduction type and potential stable type are unchanged. If we replace $C$ by another curve realising the same abstract cluster picture, then the MRM reduction type and potential stable type are unchanged by Theorem~\ref{thm:CP1}. The construction is independent of the choice of $K$, since admissibility is independent of $K$ by Remark~\ref{rem:K-indep}.

By Theorem~\ref{thm:CP3}(1), every admissible genus $2$ cluster picture is represented in the tables after the stated degree $6$ replacement and depth-shift convention. By Theorem~\ref{thm:CP3}(3), after specialising the parameters in a column, the displayed cluster pictures in that column are precisely the degree $6$ representatives in one equivalence class. By Theorem~\ref{thm:CP3}(2), the table containing the column gives the potential stable type, and the column itself gives the MRM reduction type. Hence equivalence classes of admissible genus $2$ cluster pictures are in one-to-one correspondence with pairs consisting of an MRM reduction type and a potential stable type.
\end{proof}

\begin{corollary}\label{cor:isom-implies-equivalent}
Suppose $K$ has residue characteristic $p \ne 2$. If two tame genus $2$ curves are $K$-isomorphic, then the cluster pictures associated to any Weierstrass equations of $C$ and $C'$ are equivalent.
\end{corollary}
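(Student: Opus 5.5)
The plan is to reduce the corollary to the one-to-one correspondence of Theorem~\ref{thm:CP2}(2). Let $C\colon y^2=f(x)$ and $C'\colon Y^2=g(X)$ be Weierstrass equations of two $K$-isomorphic tame genus $2$ curves, with cluster pictures $(\Sigma_f,v(c_f))$ and $(\Sigma_g,v(c_g))$. By Definition~\ref{def:cpic-from-curve} both are admissible genus $2$ cluster pictures, so each lies in some equivalence class of Definition~\ref{def:equiv-cpics}. It therefore suffices to show that these two classes correspond to the same pair (MRM reduction type, potential stable type).

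The key step is that both invariants depend only on the $K$-isomorphism class of the curve, not on the chosen equation. The MRM reduction type is intrinsic because the minimal regular model of a curve of genus $\ge 1$ is unique up to isomorphism, so isomorphic curves have isomorphic minimal regular models and hence the same reduction type in the sense of Definition~\ref{def:reductiontype}. The potential stable type is intrinsic as well: a $K$-isomorphism $C\cong C'$ stays an isomorphism over any finite extension $F/K$ over which both are semistable, and stable models are unique, as already used in the proof of Theorem~\ref{thm:CP1}(2).

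By Theorem~\ref{thm:CP2}(2), the map from equivalence classes to pairs sends the class of $(\Sigma_f,v(c_f))$ to the MRM type and potential stable type of $C$. It is well defined, in particular independent of which curve realises the picture. It likewise sends the class of $(\Sigma_g,v(c_g))$ to the corresponding invariants of $C'$, and these coincide with those of $C$. Injectivity of the correspondence then forces the two classes to be equal, so the cluster pictures are equivalent.

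The only point needing care is bookkeeping with quintic models, since a Weierstrass equation of $C$ may have degree $5$. Definition~\ref{def:equiv-cpics} already allows odd-size pictures, via moves (3) and (4), and Theorem~\ref{thm:CP2} is stated for all admissible genus $g$ pictures, so no separate argument should be required. I would simply check that the correspondence in Theorem~\ref{thm:CP2}(2) was indeed proved for classes containing quintic representatives; this is immediate from Theorem~\ref{thm:CP3}(1). No further obstacle is expected.
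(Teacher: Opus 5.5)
Your proposal is correct and is essentially the paper's argument: $K$-isomorphic curves have the same MRM reduction type and potential stable type, and injectivity of the correspondence in Theorem~\ref{thm:CP2}(2) then forces the two equivalence classes to coincide. Your extra remarks, on why these invariants are intrinsic and on handling quintic models via Theorem~\ref{thm:CP3}(1), spell out points the paper leaves implicit.
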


\begin{proof}
Isomorphic curves have the same MRM reduction type and potential stable type. The result follows from Theorem \ref{thm:CP2}.
\end{proof}

\begin{remark}
Equivalently, for tame genus $2$ curves, the moves in Definition~\ref{def:equiv-cpics} generate all changes of cluster pictures that can occur between Weierstrass
equations of $K$-isomorphic curves.
\end{remark}

\begin{remark} 
  In \cite{SarahThesis}, Nowell introduced the notion of a \emph{metric open quotient BY tree}. These trees are in one-to-one correspondence with metric cluster pictures of hyperelliptic curves with tame reduction and top cluster depth $d_{\cR} \geq 0$ (\cite[Thm.\ 5.1.2]{SarahThesis}). Nowell defined an equivalence relation on these trees and showed that two curves are $K$-isomorphic if and only if their corresponding trees are equivalent. Thus, an alternative approach to proving Theorem \ref{thm:CP2} would be to compute the open quotient BY tree for each cluster picture and show that they are equivalent precisely when specified in our tables.
  
  In practice, it appears that this approach requires as much computation as the method presented here.  
\end{remark}

\section{N{\'e}ron component groups}\label{s:componentgroups}
Let $A/K$ be an abelian variety and $\mathcal{A}/R$ the N{\'e}ron model of $A$. The \textit{N{\'e}ron component group} of $A$ is
$$
\Phi_A = \mathcal{A}_k/\mathcal{A}_k^0,
$$
where $\mathcal{A}_k$ is the special fibre of $\mathcal{A}$ and $\mathcal{A}_k^0$ is its connected component.

A number of methods have been developed for controlling N\'eron component groups \cite{boschliu, edixhoven, lorenzini}. In the case where $A$ is the Jacobian of a curve $C$, Raynaud's theorem \cite[Thm. 1.1]{boschliu} shows that the component group can be recovered from the special fibre of a regular model of the curve. In particular, the N{\'e}ron component groups can be computed by manipulating the intersection pairing matrix, they only depend on the MRM reduction type of the curve, and hence it is a natural invariant attached it. In particular, for our classification problem, we have the freedom to choose our DVR and our curves.

In the present section we prove \Cref{thm:introcomponentgroups} and  use it to prove the classification of N\'eron component groups of Jacobians of genus 2 curves postulated in \Cref{thm:main}(2). A similar approach (without proofs) was used by Liu \cite{liu-models} to compute component groups associated to genus $2$ reduction types. This section recover's Liu's results and provides a (proved) framework for computing component groups more generally.

\subsection{Monodromy and Galois actions} 

Let $C \rightarrow \mathcal{D}$ be a smooth and proper family of complex curves of genus $g$ over a punctured disc $\mathcal{D}$. For any $t \in \mathcal{D}$, the smooth fibre $C_{t}$ is a compact Riemann surface. There is an automorphism of singular homology 
$$T : H_{1}(C_{t}, \Z) \rightarrow H_{1}(C_{t}, \Z) $$
induced by looping around $t=0$. Moreover, dualising we obtain an automophism
$$T^{\wedge} : H^{1}_{\text{sing}}(C_{t}, \Z) \rightarrow H^{1}_{\text{sing}}(C_{t}, \Z) $$
called the \textit{monodromy} of $C_{t}$. 

Algebraically, the generic fibre of $C$ can be viewed as a curve over $K = \C((t))$. The absolute Galois group $G_K = \Gal(\bar{K}/K)$ naturally acts on $H_{\text{{\'et}}}^1(C_{\Bar{t}}, \Z_{\ell})$ for every prime $\ell$. Artin's comparison theorem gives an isomorphism of $\Z_{\ell}$-lattices, see \cite[Ch. 3, Thm. 4.1]{etalcoh}, 
$$H_{\text{{\'et}}}^{1}(C_{\bar{t}}, \Z_{\ell}) \cong H^{1}_{\text{sing}}(C_{t}, \Z) \otimes \Z_{\ell}$$
and we show that the actions of monodromy and tame inertia coincide under this isomorphism. As we could not find any reference to this in literature, we give a proof for completeness.

\begin{theorem} \label{monodromyinertia}
    Let $C \rightarrow \mathcal{D}$ be a smooth and proper family of complex curves over a punctured disc $\mathcal{D}$ and $t \in \mathcal{D}$. For any prime $\ell$,  the action of monodromy on $H_{\text{sing}}^1(C_{t}, \Z) \otimes \Z_{\ell} $  conicides with the action of some  generator of tame inertia on $H_{\text{{\'et}}}^{1}(C_{\bar{t}}, \Z_{\ell})$ under the canonical isomorphism, up to conjugation. 
\end{theorem}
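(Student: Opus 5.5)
Given the family $C\to\mathcal{D}$, the plan is to compare the topological monodromy with Galois actions by passing through finite étale covers of the punctured disc, where both sides are computed by the same covering-space combinatorics. The key observation is that the absolute Galois group of $K=\C((t))$ is $\hat{\Z}$, topologically generated by the automorphisms $\sigma: t^{1/n}\mapsto \zeta_n t^{1/n}$ with $\zeta_n=e^{2\pi i/n}$ compatible. All of $G_K$ is tame inertia, since the residue field is $\C$. Its finite quotients $\Z/n$ correspond to the connected finite covers $\mathcal{D}_n\to\mathcal{D}$, $s\mapsto s^n$. Their deck groups are $\pi_1(\mathcal{D})/n\pi_1(\mathcal{D})$, generated by the image of the loop $\gamma$ around $0$, which acts as $s\mapsto \zeta_n s$. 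So under the identification $\widehat{\pi_1(\mathcal{D})}\cong \pi_1^{\text{ét}}(\operatorname{Spec} K)\cong G_K$ (Riemann existence applied to the disc, after algebraising), $\gamma$ maps to a topological generator $\sigma$.

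The main argument uses the sheaf-theoretic description of both actions. The monodromy $T^\wedge$ is the action of $\gamma$ on the stalk at $t$ of the local system $R^1\pi_*\Z$ on $\mathcal{D}$, where $\pi$ is the projection. Ehresmann's theorem shows that this is a locally constant sheaf, since the family is smooth and proper. Similarly, $H^1_{\text{ét}}(C_{\bar t},\Z/\ell^k)$ with its $G_K$-action is the stalk of the lisse étale sheaf $R^1\pi_*\Z/\ell^k$ on the generic point, with $G_K=\pi_1^{\text{ét}}$ acting. The comparison theorem for proper smooth morphisms (Artin; SGA4 XVI, the reference \cite[Ch.~3, Thm.~4.1]{etalcoh} in relative form) identifies $(R^1\pi_*\Z/\ell^k)^{\text{an}}$ with $R^1\pi^{\text{an}}_*\Z/\ell^k$ compatibly with base change. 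Locally constant constructible sheaves correspond to $\pi_1$-representations, and the étale ones correspond to continuous representations of the profinite completion. Hence the comparison isomorphism on stalks intertwines $\gamma$ with its image $\sigma$. Passing to the inverse limit over $k$ gives the statement with $\Z_\ell$ coefficients, using universal coefficients and the torsion-freeness of $H^1$ of a curve.

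The main obstacle is that $\mathcal{D}$ is analytic, not algebraic, while $K=\C((t))$ is a complete field rather than the function field of an algebraic curve, so the étale and analytic sides do not literally live on the same space. My first attempt would be to reduce to an algebraic family. By Artin approximation, or simply because the statement only depends on the curve over $K$, I would replace the family by one defined over a smooth algebraic curve $S\ni 0$, for example an affine neighbourhood in a spread-out of a model over $\C[t]_{(t)}^{h}$. I would then restrict to a small punctured disc around $0$ and argue that $\pi_1$ of the punctured disc maps to the inertia subgroup at $0$ of $\pi_1^{\text{ét}}(S\setminus 0)$. That inertia subgroup is identified with $G_{\C((t))}$ via the henselisation and completion, using the fact that $G_{K^h}\cong G_{\hat K}$. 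The ambiguity in choosing base points and compatible roots of unity is why the statement is only \emph{up to conjugation} and only for \emph{some} generator. I would make sure the argument only claims that much.
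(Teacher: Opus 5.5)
Your proposal is correct and follows essentially the same route as the paper. Both arguments realise the two actions as representations of $\pi_1(\mathcal{D})$ and of its profinite completion via the associated local systems, identify them by the comparison theorem, and then use $\widehat{\pi_1(\mathcal{D})}\cong G_K\cong\hat{\Z}=I_K^{\text{tame}}$ to send the loop $\gamma$ to a generator of tame inertia.

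You are in fact more careful than the paper about the analytic/algebraic mismatch: the paper simply identifies $\pi_1^{\text{{\'e}t}}(\mathcal{D})$ with $G_K$ by citing Szamuely. One caveat on your spreading-out step: reducing to an algebraic curve $S$ works for families coming from curves over $\C[t]$, which is the only case used later (Theorem~\ref{compgroupthm}). For a family with transcendental moduli, Artin approximation only gives approximations, not an isomorphic algebraic model, so there you would instead spread out over the finitely generated $\C$-algebra generated by the convergent coefficients.
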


\begin{proof}
  The singular cohomology groups $H^{1}_{\text{sing}}(C_t, \Z_\ell)$ form a local system of $\Z_\ell$-modules, which define a representation 
  $$\rho_1 : \pi_1(\mathcal{D}, t) \rightarrow  \Aut(H^1_{\text{sing}}(C_t, \Z_\ell))$$
  by \cite[Cor. 2.6.2]{szamuely}.
  The monodromy operator $T$ acting on $H^1_{\text{sing}}(C_t, \Z_\ell))$ is given by $T = \rho( [ \gamma])$, where $\gamma$ is a loop generating $ \pi_1(\mathcal{D}, t)$.

  Similarly, $H_{\text{{\'et}}}^{1}(C_{\bar{t}}, \Z_{\ell})$ corresponds to a locally constant {\'e}tale sheaf, and thus defines a representation 
  $$\rho_{2}:\pi_1^{\textup{{\'et}}}(\mathcal{D}, \bar{t}) \rightarrow \Aut ( H_{\text{{\'et}}}^{1}(C_{\bar{t}}, \Z_{\ell})).$$
 As $\mathcal{D}$ is connected $\pi_1^{\textup{{\'et}}}(\mathcal{D}, \bar{t}) \cong \widehat{\pi_{1}(\mathcal{D}, t)}$ by  \cite[Thm. 4.6.7]{szamuely}. Combining this  with the natural extension of $\rho_1$ to the pro-finite completion  $\widehat{\pi_{1}(\mathcal{D}, t)}$ and the comparison theorem, we obtain a commutative diagram 

 \[
\begin{tikzcd}
\widehat{\pi_{1}(\mathcal{D}, t)} \arrow{r}{\widehat{\rho_{1}}} \arrow[swap]{d}{\cong} & \Aut ( H_{\text{sing}}^{1}(C_{t}, \Z_{\ell})) \arrow{d}{\cong} \\
\pi_1^{\textup{{\'et}}}(\mathcal{D}, \bar{t}) \arrow{r}{\rho_{2}} & \Aut( H_{\text{{\'et}}}^{1}(C_{\bar{t}}, \Z_{\ell})). \end{tikzcd}
\]
Recall that   $\pi_1^{\textup{{\'et}}}(\mathcal{D}, \bar{t}) \cong \Gal(\Bar{K}/K)$ by \cite[Prop. 4.6.1 and Def. 4.6.3]{szamuely} and  $G_K\cong \hat{\Z} \cong I_K^{\text{tame}}.$
Let $[ \gamma ]$ be the generator of $ \widehat{\pi_{1}(\mathcal{D}, t)} $ which maps to monodromy under $\hat{\rho}_1$. The image of this under the vertical isomorphism on the left in the above diagram is a generator of $\pi_1^{\textup{{\'et}}}(\mathcal{D}, \bar{t})$, and hence a generator of the tame inertia subgroup under the previous identifications. As the diagram commutes the claim follows

\end{proof}

\begin{corollary}  \label{mondromyandtatemod}
Let $C \rightarrow \mathcal{D}$ be a smooth and proper family of complex curves over a punctured disc $\mathcal{D}$. For any $t_0 \in \mathcal{D}$ and prime $\ell$, the action of monodromy on $H_1(C_{t_0}, \Z)$ and the action of a generator of tame inertia on the Tate module of the Jacobian of $C_{t_0}$ coincide, up to conjugation. 
\end{corollary}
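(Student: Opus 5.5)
We deduce this from Theorem \ref{monodromyinertia} by dualising and identifying the relevant lattices. Let $J=\Jac(C_{t_0})$, viewed via the generic fibre as the Jacobian of a curve over $K=\C((t))$.

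First, recall the standard identification $T_\ell J \cong \mathrm{Hom}(H^1_{\text{\'et}}(C_{\bar t},\Z_\ell),\Z_\ell) \cong H_1^{\text{\'et}}(C_{\bar t},\Z_\ell)$, and that this is $G_K$-equivariant for the contragredient action. On the complex side, the universal coefficient theorem gives $H^1_{\text{sing}}(C_{t_0},\Z)\cong \mathrm{Hom}(H_1(C_{t_0},\Z),\Z)$, since $H_0$ is free. The monodromy $T^\wedge$ on cohomology is by definition the dual of $T$ on homology. Tensoring with $\Z_\ell$ and using Artin's comparison isomorphism gives
$$H_1(C_{t_0},\Z)\otimes\Z_\ell \cong \mathrm{Hom}(H^1_{\text{sing}}(C_{t_0},\Z)\otimes\Z_\ell,\Z_\ell)\cong \mathrm{Hom}(H^1_{\text{\'et}}(C_{\bar t},\Z_\ell),\Z_\ell)\cong T_\ell J.$$

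Second, Theorem \ref{monodromyinertia} says that under the comparison isomorphism $T^\wedge\otimes\Z_\ell$ agrees with $\rho_2(\sigma)$ for some generator $\sigma$ of tame inertia, up to conjugation. Dualising both sides, the transpose of $T^\wedge$ is $T$, and the contragredient action of $\sigma$ on the dual is $\sigma^{-1}$ acting naturally. So the two actions agree up to inverse, and $\sigma^{-1}$ is again a topological generator of $I_K^{\text{tame}}\cong\hat\Z$. The conjugation ambiguity is preserved because dualising an intertwining isomorphism produces an intertwining isomorphism. Since $G_K$ is abelian here, conjugation only refers to the choice of isomorphism. This choice of generator matches the phrase ``a generator'' in the statement.

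The main subtlety is keeping track of duality conventions. These include whether $T^\wedge$ is defined as the transpose or the inverse transpose, and whether the Galois action on the dual is the contragredient one. Any discrepancy only replaces the generator by its inverse, which the statement allows. The other point to check is that the passage from $H_1(C_{t_0},\Z)$ to $T_\ell J$ is compatible with the Abel–Jacobi identification $J(\C)\cong H^0(\Omega^1)^\vee/H_1(C_{t_0},\Z)$. This gives $T_\ell J\cong H_1(C_{t_0},\Z)\otimes\Z_\ell$ directly and agrees with the étale identification by the comparison theorem, so I would simply cite it.
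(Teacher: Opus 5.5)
Your proposal is correct and takes the same approach as the paper, whose proof simply notes that both actions are obtained by dualising the cohomological actions compared in Theorem~\ref{monodromyinertia}. Your bookkeeping of the duality conventions (transpose versus contragredient, which at worst replaces the generator of tame inertia by its inverse) expands that one-line argument faithfully.
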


\begin{proof}
This follows from the fact that both the action of monodromy on $H_1(C_{t_0}, \Z)$ and tame inertia on the Tate module of the Jacobian are obtained by dualising the actions on cohomology from \Cref{monodromyinertia}.
\end{proof}

\subsection{Computing N{\'e}ron component groups}
To compute component groups of genus $2$ curves, we use the matrices of the monodromy action classified by Namikawa and Ueno, alongside the following result. 

\begin{lemma}[{\cite[Lemma 2.1]{lorenzini}}] \label{genoftame}
Let $\ell\neq p$ be a prime. Let $C/K$ be a curve and $M_{\tau}$ the matrix representing the action of the generator of the tame inertia group acting on $T_\ell(\text{Jac}(C))$. 
Then the $\ell$-primary part of $\Phi_C$ is isomorphic to the torsion subgroup of the co-invariants of $M_{\tau}$. Explicitly,  
$$
\Phi_C [\ell^{\infty} ] \cong \Z_{\ell}/e_1 \Z_{\ell} \times \dots \Z_{\ell} /e_s \Z_{\ell},
$$
where $e_1, \ldots, e_s$ are the non-zero diagonal entries of the Smith normal form of $M_{\tau} -I_{2g}$.
\end{lemma}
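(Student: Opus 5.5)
The plan is to follow Grothendieck's description of the $\ell$-part of the component group in terms of inertia invariants of the $\ell$-divisible group, then read off the answer with the snake lemma. Let $A=\Jac(C)$ with N\'eron model $\mathcal{A}/R$, and write $T=T_\ell A$, $V=T\otimes\Q_\ell$, so $A[\ell^\infty]\cong V/T$. Since $k$ is algebraically closed, $G_K$ is the inertia group. I will assume, as in the intended applications ($K=\C((t))$, or tame curves), that the wild inertia acts trivially on $T$. Then the action factors through tame inertia and $G_K$-invariants are the same as $\tau$-invariants, where $\tau$ is a topological generator. In general one would first pass to invariants under the pro-$p$ wild inertia, which cuts out a direct summand of $T$ because its order is prime to $\ell$.

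\emph{Step 1: $\Phi[\ell^\infty]$ via torsion points.} I would use the exact sequence $0\to\mathcal{A}^0(R)\to A(K)\to\Phi\to0$. The group $\mathcal{A}^0_k(k)$ is the $k$-points of a connected commutative algebraic group over an algebraically closed field, so it is $\ell$-divisible. Since multiplication by $\ell$ on $\mathcal{A}^0$ is \'etale, Hensel's lemma shows that $\mathcal{A}^0(R)$ is $\ell$-divisible too. Any $\ell$-power torsion element of $\Phi$ therefore lifts to an $\ell$-power torsion point of $A(K)$. This gives
\[
\Phi[\ell^\infty]\cong A(K)[\ell^\infty]/\mathcal{A}^0(R)[\ell^\infty].
\]
Because $\Phi$ is finite and $\mathcal{A}^0(R)[\ell^\infty]$ is divisible, the latter is exactly the maximal divisible subgroup of $A(K)[\ell^\infty]=(V/T)^{\tau}$. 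Hence
\[
\Phi[\ell^\infty]\cong (V/T)^\tau/\bigl((V/T)^\tau\bigr)_{\mathrm{div}}.
\]
This step is the main obstacle: the careful part is checking the divisibility of $\mathcal{A}^0(R)$ and that nothing else divisible sits in $(V/T)^\tau$.

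\emph{Step 2: snake lemma.} I would apply $N=\tau-1$ to the sequence $0\to T\to V\to V/T\to0$. The snake lemma gives the exact sequence
\[
0\to T^\tau\to V^\tau\to (V/T)^\tau\xrightarrow{\partial} T/NT\to V/NV.
\]
The kernel of $\partial$ is the image of $V^\tau$, which is a divisible $\ell$-torsion group, and it is the full divisible part because the image of $\partial$ is finite. The image of $\partial$ is $\ker(T/NT\to V/NV)$. The group $T/NT$ is a finitely generated $\Z_\ell$-module whose free part injects into $V/NV$, so this kernel is the torsion subgroup of the coinvariants $T_\tau=T/(\tau-1)T$. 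Combining this with Step 1 gives the first assertion of the lemma.

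\emph{Step 3: explicit form.} Put $M_\tau-I_{2g}$ in Smith normal form over $\Z_\ell$ (or over $\Z$ when the matrix is integral, as for monodromy matrices via Corollary~\ref{mondromyandtatemod}). The cokernel is then $\bigoplus_i\Z_\ell/e_i\Z_\ell\oplus\Z_\ell^{r}$, so its torsion is $\prod\Z_\ell/e_i\Z_\ell$ over the non-zero diagonal entries $e_i$. Factors with $e_i$ a unit are trivial, so they do no harm.
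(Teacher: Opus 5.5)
Your proof is correct. The paper gives no argument of its own for this lemma; it simply quotes Lorenzini. So your proposal is a self-contained derivation where the paper has only a citation, and it follows the standard route behind that citation: Grothendieck's description (SGA~7, Exp.~IX, \S11).

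You identify $\Phi[\ell^\infty]$ with $A(K)[\ell^\infty]=(V/T)^{I}$ modulo its maximal divisible subgroup. You then apply the snake lemma for $\tau-1$ on $0\to T\to V\to V/T\to 0$, which identifies that quotient with the torsion of $T/(\tau-1)T$. Equivalently, this is $H^1(I,T)_{\mathrm{tors}}$, since tame inertia is procyclic.

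The step you flagged as the main obstacle is sound. The $\ell$-divisibility of $\mathcal{A}^0(R)$ follows from two facts: $[\ell]$ is surjective on $\mathcal{A}^0_k(k)$, because it is \'etale with open and closed image in a connected group; and $k$-points of the \'etale fibre $[\ell]^{-1}(P)$ lift to $R$-points by Hensel. The $\ell$-primary torsion of an $\ell$-divisible group is again divisible, which gives the identification with the maximal divisible subgroup.

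Compared with the bare citation, your version makes explicit the hypothesis hidden in the statement. The formulation with $M_\tau-I_{2g}$ only makes sense when wild inertia acts trivially on $T_\ell$. In the wild case the correct object is the direct summand $T_\ell^P$. Applying a lift of $\tau$ to all of $T_\ell$ could pick up spurious torsion from the complement of $T_\ell^P$.

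This does not affect the paper, which only uses the lemma over $\C((t))$ in Theorem~\ref{compgroupthm}. Your remark that, for integral monodromy matrices, the Smith normal form over $\Z$ yields all $\ell$-parts at once is exactly how the lemma is used in Theorem~\ref{thm:g2_comp_gp}.
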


\begin{theorem} \label{compgroupthm} 
Let $C$ be a curve of genus $g$ over $K$, and let $\Tilde{C}$ be a curve of genus $g$ over $\C[t]$. Suppose $C$ and $\Tilde{C}_{t=0}$ have the same MRM reduction type. Then the N{\'e}ron component group of $C$ is isomorphic to the torsion subgroup of the co-invariants of the monodromy of $\Tilde{C}$.
\end{theorem}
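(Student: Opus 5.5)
The argument has two steps. First, by Raynaud's theorem \cite[Thm.~1.1]{boschliu}, the Néron component group $\Phi_C$ depends only on the intersection matrix of the special fibre of a regular model of $C$. Concretely, $\Phi_C$ is the torsion subgroup of the cokernel of the map $\Z^{X}\to\Z^{X}$ given by the intersection pairing, restricted suitably using the multiplicities. The MRM reduction type (Definition~\ref{def:reductiontype}) records exactly this data, so $\Phi_C\cong\Phi_{\tilde C}$. Here $\tilde C$ is regarded as a curve over the completion $\C((t))$ of $\C[t]$ at $t=0$; its minimal regular model is obtained by base change, and the Néron component group is unchanged by completion. This reduces the statement to the case $C=\tilde C$ over $K=\C((t))$.

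Second, for $\tilde C/\C((t))$ the residue characteristic is $0$. So for every prime $\ell$ we may apply Lemma~\ref{genoftame}. It gives that $\Phi_{\tilde C}[\ell^\infty]$ is the torsion subgroup of the coinvariants of $M_\tau$ acting on $T_\ell\Jac(\tilde C)$, computed via the Smith normal form of $M_\tau-I_{2g}$ over $\Z_\ell$. By Corollary~\ref{mondromyandtatemod}, after restricting $\tilde C$ to a small punctured disc around $t=0$ where the family is smooth and proper, $M_\tau$ is conjugate to $T\otimes\Z_\ell$. Here $T$ is the monodromy on $H_1(\tilde C_{t_0},\Z)$, and $T_\ell\Jac\cong H_1(\tilde C_{t_0},\Z)\otimes\Z_\ell$ compatibly. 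Since conjugation does not change the Smith normal form, $\Phi_{\tilde C}[\ell^\infty]$ is the $\ell$-primary part of the torsion of $\operatorname{coker}(T-I)$ on $H_1(\tilde C_{t_0},\Z)$. This uses that the Smith normal form over $\Z$ specialises to the one over $\Z_\ell$: $e_i\Z_\ell$ only sees the $\ell$-part of $e_i$.

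Finally, $\Phi_{\tilde C}$ is a finite abelian group, so it is the product of its $\ell$-primary parts over all $\ell$. The same holds for the torsion of the coinvariants, which is finite because $H_1$ is finitely generated. Matching the $\ell$-parts prime by prime gives the isomorphism.

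The main obstacle is the first step, the precise use of the hypothesis ``same MRM reduction type'' over different DVRs. I would check that Raynaud's description uses only the multiplicities and the intersection matrix, which is valid over any complete DVR with algebraically closed residue field; this requires $\C$ algebraically closed and $K$ with perfect residue field, so no gcd-of-multiplicities issue arises. A secondary point is that the family $\tilde C$ over $\C[t]$ must be smooth over a punctured disc near $0$ so that monodromy is defined. This holds since the generic fibre is smooth, so only finitely many fibres are singular.
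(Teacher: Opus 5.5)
Your proposal is correct and follows essentially the same route as the paper. Both arguments use Raynaud's theorem to identify $\Phi_C$ with the component group of $\tilde C$ over $\C((t))$, then apply Lemma~\ref{genoftame} for every prime $\ell$ together with Corollary~\ref{mondromyandtatemod} on a punctured disc; you additionally make explicit the prime-by-prime assembly (Smith normal form over $\Z$ versus $\Z_\ell$, and finiteness of the torsion of the coinvariants), which the paper leaves implicit.
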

\begin{proof}
First we note that the N{\'e}ron component group is determined by the intersection pairing matrix (by Raynaud's theorem \cite[Thm. 1.1]{boschliu}) and hence it depends only on the MRM reduction type. Thus, by assumption, the N{\'e}ron component groups of $C$ and $\Tilde{C}_{t=0}$ coincide. 

To compute the component group of $\Tilde{C}_{t=0}$, we view this as a curve over $\C((t))$. As the residue characteristic is $0$, it follows from \Cref{genoftame} that we can recover the component group from the action of the generator of tame inertia on $T_{\ell}(\Jac(C)) $ for all primes $\ell$. Base changing $\Tilde{C}$ to a punctured disc, our claim follows by \Cref{mondromyandtatemod}.  

\end{proof}

\begin{theorem}\label{thm:g2_comp_gp}
Let $C/K$ be a genus $2$ curve. The N{\'e}ron component group $\Phi_C$ is determined by the MRM reduction type of $C$ and it is given in Tables \ref{table:potentiallygood} -\ref{table:potentiallymultiplicativex2}. 
\end{theorem}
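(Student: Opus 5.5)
The plan is to reduce everything to \Cref{compgroupthm} and then carry out a finite, parametrised matrix computation for each family. First, the fact that $\Phi_C$ depends only on the MRM reduction type comes straight from Raynaud's theorem \cite[Thm.~1.1]{boschliu}. That result computes $\Phi_C$ from the intersection matrix of the special fibre of a regular model together with the multiplicities of its components, and both pieces of data are part of the reduction type in Definition~\ref{def:reductiontype}. It therefore remains to identify the group explicitly for each family listed in Tables~\ref{table:potentiallygood}--\ref{table:potentiallymultiplicativex2}.

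For each Namikawa--Ueno family we take the explicit example over $\C[t]$ given in \cite{namikawaueno}. By Proposition~\ref{prop:NUclusterpictures}, its special fibre at $t=0$ has the MRM reduction type heading the corresponding column of the tables. Those examples exhaust all MRNC types, and hence all MRM types by Corollary~\ref{cor:genus2MRNCtoMRM}, so every MRM type of a genus 2 curve over any $K$ is covered. \Cref{compgroupthm} then gives that $\Phi_C$ is the torsion subgroup of the coinvariants $\Z^4/(M-I_4)\Z^4$, where $M\in \mathrm{Sp}_4(\Z)$ is the monodromy matrix. Namikawa and Ueno tabulate $M$ for each type as a function of the family parameters (such as $l,m,n,t$). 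Concretely, one computes the Smith normal form of $M-I_4$ over $\Z$ as in \Cref{genoftame}, discards the zero diagonal entries, and reads off $\bigoplus \Z/e_i\Z$.

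The key steps, in order, are as follows. Begin by recording the parametrised monodromy matrices from \cite{namikawaueno}, grouped by potential stable type. Then, for each family, perform integer row and column operations on $M-I_4$ that are valid uniformly in the parameters, reducing it to a block form whose invariant factors can be read off. Typical outcomes are $\mathrm{diag}(l,m,0,0)$ for I$_{l-m-0}$, and gcd-type expressions such as $\Z/\gcd(l,m,n)\times\Z/\frac{lm+mn+nl}{\gcd(l,m,n)}$ for I$_{l-m-n}$, in line with Liu's list \cite{liu-models}. Finally, compare the results with the tables, cross-checking the ``elliptic'' product types against Kodaira's groups for elliptic curves.

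The main obstacle is the uniformity in parameters. Smith normal form is not polynomial in the entries, so the row reductions must be organised so that the pivots are units independently of the parameters, or else the invariant factors must be expressed via determinantal divisors: $e_1\cdots e_i=\gcd$ of the $i\times i$ minors. I would try the determinantal-divisor approach first for the multi-parameter families (types I$_{l-m-n}$ and II$_{l-m}$), since there the gcds of $2\times2$ minors give the answer directly. For the finitely many potentially good types with finite-order monodromy, the computation is a direct numerical check. A smaller subtlety is the duplicated families II$^*_{t-m}$ and 2I$_m$-$t$: they have the same MRM type, so by the first paragraph they must produce the same group, and this serves as a consistency check.
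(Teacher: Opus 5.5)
Your proposal is essentially the paper's proof: Raynaud's theorem gives dependence on the MRM type, and \Cref{compgroupthm} reduces everything to the Smith normal form of $M-I_4$ for the Namikawa--Ueno monodromy matrices, with the paper merely shortcutting some families (dual-graph homology and \cite[Table~8]{M2D2} for the semistable ones, splitting $M$ into two $2\times 2$ elliptic blocks for Tables~\ref{table:potentiallygoodxpotentiallymultiplicative} and~\ref{table:potentiallymultiplicativex2}, and \cite[Thm.~4.1.14]{padma} to discard chain-length parameters) and treating the rest with explicit unimodular $A,B$ and parity case splits, much as you anticipate. One caution: the printed Namikawa--Ueno matrix for II${}^*$-II${}_n^*$ has a typo (the entry $-n$ should be $-(n+1)$), so for that family you must work with the corrected matrix rather than treat the comparison with the tables as a mere consistency check.
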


\begin{proof} By \Cref{compgroupthm}, computing the component group reduces to determining the Smith normal form of $1-M$, where $M$ is the monodromy matrix classified by Namikawa-Ueno\footnote{The monodromy matrix for type $\text{II}^*-\text{II}_n^*$ stated in \cite{namikawaueno} has a typo, the $-n$ entry should instead be $-(n+1)$.}. Note that this recovers the entire component group since the residue field has characteristic $0$.  When computing the component group of a reduction type family, if the entries of the monodromy matrix have no parameters then computing the Smith normal form is a straightforward computation. This is the case for all families in Tables \ref{table:potentiallygood} and \ref{table:potentiallygoodx2Table2}. 

For families in Tables \ref{table:potentiallygoodxpotentiallymultiplicative} and \ref{table:potentiallymultiplicativex2}, the monodromy matrices are equivalent to matrices consisting of two $2 \times 2$ blocks, each representing the monodromy of the elliptic curves appearing in the reduction type label. Therefore, the component groups of such families are the products of the component groups of the two elliptic curves appearing in the Namikawa-Ueno label. 

The component groups of the semistable families $\text{I}_{l-0-0}$, $\text{I}_{l-m-0}$ and $\text{I}_{l-m-n}$ were computed using the homology of the dual graph and the local invariants given in \cite[Table 8]{M2D2}.

The special fibres of the MRM reduction types $\text{I}_{l-0-0}^*$, $\text{I}_{l-m-0}^*$, $\text{I}_{l-m-n}^*$, $\text{III}^*$-$\text{II}_l^*$, $\text{IV}^*$-$ \text{II}_l^* $, $\text{II}_{(l+1) -0}^*$, $\text{IV}$-$ \text{II}_{l+1}^*$, $\text{III}$-$ \text{II}_{l+1}^*$, $ \text{III}_l^*$  all contain a chain of $\mathbb{P}^1$s of length dictated by the family parameter. To reduce the amount of computations, we observe that by \cite[Theorem 4.1.14]{padma}, the length of this chain (modulo multiplicities) does not affect the component group. This eliminates parameters from the monodromy matrix and the component group is straightforward to compute from these as before. 

The computation of the relevant Smith normal forms for all remaining reduction type families are presented in the following table. 
 
\end{proof}

\newpage
{
    \centering
    \begingroup
    \setlength\extrarowheight{2pt}

    \begin{longtable}{|>{\centering\arraybackslash}m{2cm}|>{\centering\arraybackslash}m{3cm}|>{\centering\arraybackslash}m{3cm}| >{\centering\arraybackslash}m{7cm}|}
    \caption*{Smith normal forms of monodromy matrices.}
    \label{tab:exceptions} \\
    \hline 
        Reduction Type  & Monodromy matrix, $M$ & Smith normal form, $N$ &  $A,B $ such that  $A(M -I_2)B = N$ \\[1em]
         \hline 

$\text{III}-\text{II}_l$ & $ \matr 0010 011{l+1} {-1}00{-1} 0001$ & $\matr 1000 0100 00{2l+1}0 0000$ & 
$\matr {-1}100 0100 {-1}210 0001$, $\matr 10{-(l+1)}0 0001 01{-(l+1)}0 0010$ \\[1em]
\hline 
$ \text{IV}-\text{II}_l$ & $
\matr 0010 01{-1}{l+1} {-1}0{-1}1 0001$ & $\matr 1000 0100 00{3l+2}0 0000$ 
& $\matr 02{-1}0 12{-1}0 13{-1}0 0001$, $ \matr 10{-(2l+1)}0 0001 01{-(2l+1)}0 0010$\\[1em]
\hline 

$\text{II}_{(l+1)-0}$ & $\matr {-1}000 {-1}10{l + 1} 00{-1}{-1} 0001$  & $ \matr 1000 0100 00{4l + 4}0 0000$& $\matr {-1}100 00{-1}0 {-1}2{2(l+1)}0 0001$, $\matr 10{-(l+1)}0 0001 01{-(l+1)}0 0010$ \\[1em] 
\hline 

$ \text{IV}^*-\text{II}_{l+1}$& $\matr {-1}0{-1}{-1} {-1}10{l + 1} 1 000 0001$ & $\matr 1000 0100 00{3l+4}0 0000$ & $\matr {-1}210 {-1}200 {-1}310 0001$, $\matr 10{-(2l + 3)}0 0001 01{-(2l + 3)}0 0010 $ \\[1em]
\hline 
$\text{III}^*-\text{II}_{l+1}$ &
$ \matr 00{-1}1 110{l+1} 1000 0001$
& 
$\matr 1000 0100 00{2l+3}0 0000$ & $ \matr 0100 01{-1}0 12{-1}0 0001$, 
$\matr 10{-(l+1)}0 0001 01{-(l+1)}0 0010 $ \\[1em] 
\hline

$\text{III}_l\text{(p182)}$ $l \text{ even}$ & $\matr 0{-1}10 10l{-1} 000{-1} 0010 $ &  $ \matr 1000 0100 0020 0002 $ & $\matr 1000 0100 {-1}{-1}10 {-1}{-1}01$, $\matr {-1}0 {\frac{l}{2} -1} {-\frac{l}{2}} 00{-\frac{l}{2}}{\frac{l}{2} +1} 00{-1}1 {-1}{-1}{-1}{-1}$ \\[1em]
\hline 
$\text{III}_l\text{(p182)}$ $l \text{ odd}$ & $\matr 0{-1}10 10l{-1} 000{-1} 0010$ & $ \matr 1000 0100 0010 0004$ & $\matr 1000 0100 0010 {-2}{-2}{-3}1$ ,  $\matr {-\frac{l+1}{2}}{-\frac{l-1}{2}}{-\frac{l+1}{2}}{-l} {\frac{l+1}{2}}{\frac{l+1}{2}}{\frac{l+3}{2}} {l+2} 1112 {-1}{-1}{-2}{-2}$ \\[1em] 
\hline 
$ \text{II}_{l-m} \text{(p182)}$ $lm \text{ even}$ & $ \matr {-1}0{-m}{-1} 011l 00{-1}0 0001  $ &  $\matr 1000 0100 00{4l}0 0000 $ & $ \matr 0100 {-1}{-m}00 {-2l}{-(2lm+2)}{-1}0 0001 $, $ \matr 0{\frac{lm}{2}}{-(lm-1)}0 0001 1{-l}{2l}0 01{-2}0 $  \\[1em]
\hline 

$\text{II}_{l-m} \text{(p182)}$ $lm \text{ odd} $ &$ \matr {-1}0{-m}{-1} 011l 00{-1}0 0001  $ & $ \matr 1000 0200 00{2l}0 0000 $ 
& $\matr 0100 {-1}{-m}00 0210 0001$,  $\matr 01{\frac{lm -1}{2}}0 0001 10{-l}0 0010$ \\[1em] 
\hline 

$\text{II}_{l-m}\text{(p183)} $ & $\matr {-1}0{-m}0 11{l}{m} 00{-1}1 0001 $ & $ \matr 1000 0100 00{4 l+m }0 0000$  & $\matr 0010 01{-l}0 12{-2 l}0 0001$,  $\matr 0 1{-m-2l}0 0001 0010 1020$  \\[1em] \hline

$\text{II}_{l-m}^*$ &
 $
\matr 10{-m}0  {-1}{-1}m{-l} 001{-1} 000{-1} 
 $
& $\matr 1000 0100 00m0 0000$ & $\matr 00{-1}0 0{-1}{l}0 {-1}000 00{-2}1$, 
$\matr 01{m}{-2} 0001 0010 1000 $\\[1em]
\hline 

$2\text{I}_l-t$ \ \ \ $l\text{ even}$ & $\matr 010{l} 1000 0001 0010 $ & $ \matr 1000 0100 00{l}0 0000$ &$\matr 1000 0010 {l -1}{-1}{l}0 0011$, $ \matr {\frac{l -2}{2}}{\frac{l}{2}}{- \frac{l}{2}}1 {-\frac{l}{2}}{- \frac{l}{2}}{\frac{l}{2}}1 10{-1}0 11{-1}0 $\\[1em]
\hline 

$2\text{I}_l-t$ \ \ \ $l\text{ odd}$ & $\matr 010{l} 1000 0001 0010 $ & $\matr 1000 0100 00{l}0 0000$ &$\matr 1000 0010 {l-1}{-1}l0 0011$ , $\matr {\frac{l -1}{2}}{\frac{l-1}{2}}{-\frac{l-1}{2}}1 {-\frac{l-1}{2}}{- \frac{l +1 }{2}}{\frac{l + 1}{2}}1 10{-1}0 11{-1}0 $ \\[1em]
\hline 

$ 2\text{I}_l^*-t$ \ \ \ $l\text{ even}$ & $\matr 0{-1}0{-l} 1000 000{-1} 0010$ & $ \matr 1000 0100 0020 0002$ & $\matr 1000 0010 1{-1}20 00{-1}1 $,  $ \matr 0{\frac{l +2}{2}}{-1}{\frac{l}{2}} {-1} {\frac{l -2}{2}}1{\frac{l}{2}} 0001 0{-1}0{-1}$ \\[1em]
\hline 

$2\text{I}_l^*-t$ \ \ \ $l\text{ odd}$ & $\matr 0{-1}0{-l} 1000 000{-1} 0010$ & $\matr 1000 0100 0010 0004 $ & $\matr  1000 0100 0010 {-2}{-2}{-3}1 $, $ \matr {\frac{l -1}{2}}{\frac{l +1 }{2}}{l} {l} {\frac{l -1}{2}}{\frac{l -1}{2}}{\ell}{\ell} 1112 {-1}{-1}{-2}{-2}$\\[1em]
\hline 
    \end{longtable}
\endgroup}

\section{Conductors, discriminants and holomorphic volume forms}\label{s:discriminant}

In this section we address the conductor, minimal discriminant and the Birch--Swinnerton-Dyer ``fudge factor'' $v(\omega_{\text{min}}/\omega^0)$ that appear in \Cref{thm:main}(3,5,7). We make use of the fact that these terms, and the number of components on the special fibre of the MRM model, are connected by a generalisation of Ogg's formula (\Cref{oggformula}). We also use a classification of Ueno \cite{ueno} of discriminants of N{\'eron} minimal Weierstrass models (see \Cref{def:neronminimal}).

\subsection{Conductor exponent}
Let $C$ be a curve over $K$. 
Recall that the \textit{conductor exponent} $n_C$ is divided into two parts, the \textit{tame} and \textit{wild} conductors (see \cite{serreconductor} for precise definitions), 
$$
n_{C} = n_{C, \text{tame}} + n_{C, \text{wild}}.
$$ 

When the curve has tame reduction, the wild part of the conductor is zero. 
The tame conductor exponent is straightforward to compute from the special fibre of a regular normal crossings model of $C$ as follows. 
\begin{lemma} \label{tameconductorformula}
 Let $C/K$ be a curve 
 and let $\mathcal{C}/R$ be a regular normal crossings model of $C$. The tame conductor exponent $n_C$ is determined  by the special fibre $\mathcal{C}_s$ of $\mathcal{C}$.  More precisely,  
 $$n_{C, \textup{tame}} = 2g -2a -t $$
 where $g$ is the genus of the curve and 
 \begin{itemize}
     \item $a$ is the \textit{abelian rank}, equal to the sum of geometric genera of components of $\mathcal{C}_s$;
     \item $t$ is the \textit{toric rank}, equal to the number of loops in the dual graph of  $\mathcal{C}_s$. 
 \end{itemize}
\end{lemma}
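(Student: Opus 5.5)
The plan is to go through the $\ell$-adic Tate module and compute dimensions of inertia invariants. Fix a prime $\ell\neq p$ and let $V=V_\ell(\Jac C)=T_\ell(\Jac C)\otimes\Q_\ell$, which has dimension $2g$. By definition, the tame part of the conductor exponent is
\[
n_{C,\textup{tame}}=\dim V-\dim V^{I_K}=2g-\dim V^{I_K}.
\]
So it suffices to show that $\dim V^{I_K}=2a+t$, where $a$ and $t$ are read off from the special fibre $\mathcal{C}_s$ of the regular normal crossings model $\mathcal{C}$.

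First, identify $V^{I_K}$ with the Tate module of the N\'eron model. Let $\mathcal{A}/R$ be the N\'eron model of $\Jac C$. By the N\'eron mapping property and Grothendieck's description of inertia invariants (SGA7, Exp.~IX), we have $V^{I_K}\cong V_\ell(\mathcal{A}_k)=V_\ell(\mathcal{A}^0_k)$. Here we use that the component group $\Phi$ is finite, so it contributes no $\ell$-adic rank. Write $\mathcal{A}^0_k$ as an extension of an abelian variety $B$ by a torus $T$ and a unipotent group $U$. Since $U$ is killed by a power of $p$ (or is a vector group) and $\ell\neq p$, it contributes nothing. Hence
\[
\dim V^{I_K}=2\dim B+\dim T .
\]

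Second, identify $\dim B$ and $\dim T$ with $a$ and $t$. By Raynaud's theorem \cite{raynaud} (see also \cite[Thm.~1.1]{boschliu} and \cite[\S9.5]{neronmodels}), $\mathcal{A}^0_k\cong\mathrm{Pic}^0_{\mathcal{C}_s/k}$ for any regular model $\mathcal{C}$; this needs $\gcd$ of the multiplicities equal to $1$, which holds since $k$ is algebraically closed and so $C(K)\neq\emptyset$. For a proper curve $X=\mathcal{C}_s$ over $k$, the standard structure theory of $\mathrm{Pic}^0$ applies. The abelian part of $\mathrm{Pic}^0_X$ is the product of the Jacobians of the normalisations of the reduced components, of dimension $\sum g(\Gamma_i)=a$. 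The torus part of $\mathrm{Pic}^0_{X_{\mathrm{red}}}$ has dimension $b_1$ of the dual graph, since the singularities of $X_{\mathrm{red}}$ are ordinary double points by the normal crossings hypothesis. Finally, passing from $X_{\mathrm{red}}$ to $X$ only adds a unipotent kernel. Combining these gives $\dim V^{I_K}=2a+t$ and hence the formula.

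The main obstacle is the last point, which is where the normal crossings hypothesis enters. For a non-reduced fibre, the kernel of $\mathrm{Pic}^0_X\to\mathrm{Pic}^0_{X_{\mathrm{red}}}$ is unipotent because it is filtered by vector groups $H^1(\mathcal{O})$-type pieces, so it does not affect $\ell$-adic ranks. For the reduced curve, one must check that the toric rank equals the number of loops. For this I would use the exact sequence
\[
1\to \mathbb{G}_m\to\prod\mathbb{G}_m\to \cdots
\]
coming from normalisation at the ordinary double points: the units over the nodes modulo the units over the components give a torus of rank equal to (number of edges) $-$ (number of vertices) $+1$. With normal crossings, edges correspond to intersection points and to any nodes on components, so the count gives $b_1$. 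Alternatively, one can cite Saito \cite{saito} or \cite[Thm.~10.4.47]{Liu_book}, where this formula for the tame conductor in terms of an RNC model is essentially stated.
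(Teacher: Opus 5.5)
Your route is the paper's. The paper also writes $n_{C,\mathrm{tame}}=2g-\dim V^{I_K}$ and then simply cites \cite[Ch.~9, 10]{neronmodels} for $\dim V^{I_K}=2a+t$. Your unpacking of that citation is sound, with one exception: Serre--Tate for $V^{I_K}\cong V_\ell(\mathcal{A}^0_k)$, the Chevalley decomposition, Raynaud's identification $\mathcal{A}^0_k\cong\mathrm{Pic}^0_{\mathcal{C}_s/k}$, and the structure of $\mathrm{Pic}^0$ of a proper curve whose reduction has only ordinary double points are all fine.

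The exception is your justification of the hypothesis in Raynaud's theorem. An algebraically closed residue field does \emph{not} force $C(K)\neq\emptyset$. Take $K=\C((t))$ and an elliptic curve $E$ with good reduction. Then $E(K)$ is divisible, being an extension of $E_0(\C)$ by the formal group, which is uniquely divisible in characteristic $0$. Hence $H^1(K,E)[m]\cong H^1(K,E[m])\cong(\Z/m\Z)^2\neq 0$, so there are genus one curves over $K$ without $K$-points. Their regular models have multiple special fibres (e.g.\ of type ${}_m\mathrm{I}_0$). So the gcd $d$ of the multiplicities exceeds $1$, and $\mathcal{A}^0_k\cong\mathrm{Pic}^0_{\mathcal{C}_s/k}$ cannot be invoked as you do. The formula still holds in that example; it is your argument that fails to cover it.

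For genus $2$, which is the only case the paper uses, you can repair the step directly. Write $\mathcal{C}_s=dF'$. Then $F'\cdot\Gamma=0$ for every component $\Gamma$, so $F'^2=0$, and adjunction gives $K_{\mathcal{C}/R}\cdot F'=2p_a(F')-2$. Hence $2g-2=K_{\mathcal{C}/R}\cdot\mathcal{C}_s=d\,(2p_a(F')-2)$, so $d\mid g-1$, and $g=2$ forces $d=1$. For the lemma in its stated generality (arbitrary curves, where $d>1$ can occur), you would need a separate argument or reference for the case $d>1$. The paper's one-line citation is silent on this point too.
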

\begin{proof}
By definition of tame conductor exponent  $n_{C, \text{tame}} = 2g - \dim V^{I_K}$, where $I_K$ is the inertia subgroup and $V= T_\ell(\text{Jac}(C))\otimes_{\Z_\ell}\Q_\ell$ the $\ell$-adic representation associated to $C/K$. The fact that $\dim V^{I_K} = 2a +t$ and the relation between the special fibre and abelian and toric ranks are both proved in \cite[Ch. 9,10]{neronmodels}. 
\end{proof}

\begin{corollary}\label{cor:g2_conductor}
Let $C/K$ be a genus $2$ curve and assume $C$ has tame reduction. The conductor exponent of the Jacobian of $C$ is determined by the MRM reduction type of $C$ and it is stated in Tables \ref{table:potentiallygood} -\ref{table:potentiallymultiplicativex2}. 

\end{corollary}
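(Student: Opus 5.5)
Since $C$ has tame reduction, the wild part of the conductor vanishes, so $n_C = n_{C,\textup{tame}}$. By Lemma \ref{tameconductorformula}, $n_{C,\textup{tame}} = 2g - 2a - t$ with $g=2$. Here $a$ is the sum of the geometric genera of the components and $t$ is the first Betti number of the dual graph, both read off from the special fibre of any regular normal crossings model. The plan is to apply this to the MRNC model. Both $a$ and $t$ are visible in the MRNC reduction type of Definition \ref{def:reductiontype}: the genera are recorded directly. The dual graph is recovered from the intersection pairing between distinct components together with the list of singular points, where self-nodes of a component contribute loops via the function $f$. By Corollary \ref{cor:genus2MRNCtoMRM} the MRM reduction type determines the MRNC reduction type, so it determines $n_C$. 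This gives the first assertion.

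For the table entries, I would go through the 106 families one at a time, using the explicit MRM$\leftrightarrow$MRNC correspondence of Proposition \ref{prop:NUclusterpictures}. For each MRNC type in \cite[Table G2]{timreduction} I would compute $a$ as the total geometric genus and $t$ as $\#\text{edges}-\#\text{vertices}+1$ of the dual graph, counting each node on a single component as a loop. This is uniform in the family parameters, because chain lengths do not change $a$ or $t$. Organising by potential stable type gives a check: $2a+t$ is at most the potential abelian rank doubled plus the potential toric rank. For example, in Table \ref{table:potentiallygood} one has $t=0$, and $a\in\{0,1,2\}$ according to whether genus $1$ or $2$ components survive.

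As an independent cross-check, I would note that $2a+t=\dim V^{I_K}$. For Namikawa–Ueno examples over $\C((t))$, this dimension is the rank of the invariants of the monodromy matrix $M$. By Corollary \ref{mondromyandtatemod}, it is $4-\operatorname{rank}(M-I)$, which can be read off the Smith normal forms already computed in the proof of Theorem \ref{thm:g2_comp_gp}. The table of Smith forms gives it immediately for the parametrised families. For instance, III-II${}_l$ has rank $3$, so $n_C=3$. The main obstacle is bookkeeping rather than theory: I must correctly count loops coming from self-intersecting components (genus $0$ components with nodes) in the MRNC dual graph. The monodromy rank computation resolves any ambiguity in these cases.
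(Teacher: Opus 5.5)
Your proposal is correct and is essentially the paper's argument: the wild part vanishes by tameness, Lemma~\ref{tameconductorformula} gives $n_C=4-2a-t$ from the MRNC special fibre, Corollary~\ref{cor:genus2MRNCtoMRM} passes from the MRNC type to the MRM type, and the table entries come from evaluating $4-2a-t$ family by family. Your monodromy cross-check, $n_C=\operatorname{rank}(M-I)$ read off the Smith normal forms, is a valid extra consistency check that the paper does not use.
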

\begin{proof}
As the curve has tame reduction, the wild conductor exponent is zero.  By \Cref{tameconductorformula} the tame conductor exponent depends only on the MRNC reduction type, and hence, by \Cref{cor:genus2MRNCtoMRM}, only on the MRM reduction type. We determine the conductor exponent associated to each MRM reduction type in Tables \ref{table:potentiallygood}-\ref{table:potentiallymultiplicativex2} by applying the explicit formula of \Cref{tameconductorformula} to the special fibre of the corresponding MRNC reduction type.
\end{proof}

\subsection{Minimal Weierstrass models}
Let $C/K$ be a curve of genus $g$. For the rest of this section we assume that the residue characteristic of $K$ is odd.

Any holomorphic differential $\omega$ on $C$ defines an invariant $1$-form on $J = \Jac(C)$, and a basis $\omega_1, \ldots, \omega_g$ of $H^0(C, \Omega_C^1)$ defines   
$$
\omega_1 \wedge \ldots \wedge \omega_g, 
$$
a global holomorphic $g$-form on $J$, since $H^0(J, \Omega_{J}^g) \cong \wedge^g_{i=0} H^0(J, \Omega^1_{J})$.

\begin{defn}\label{def:neronexteriorform}
The \textit{N{\'e}ron exterior form} of $J$ is a generator of $H^0( \mathcal{J}, \Omega^g_{\mathcal{J}})$, denoted $\omega^0$. It is well-defined up to scaling by a unit. 
\end{defn}

For the rest of this section, $C/K$ will denote a curve of genus $2$.

\begin{defn}\label{def:hol1form}
Let $y^2 =f(x)$ be a Weierstrass model of $C/K$. Define the \textit{holomorphic volume form}
$$ \omega(f) = \frac{dx}{y} \wedge \frac{xdx}{y} \in H^0(J, \Omega_{J}^2)$$
and the \textit{discriminant}  
$$\Delta(f) = \disc (f(x)) \in K  $$
where $\disc(f(x))$ is the discriminant of the polynomial $f(x) \in K[x]$.
\end{defn}

\begin{lemma}[{\cite[\S 1.3]{Liu}}] \label{relativevaluation}
Let $C/K$ be a curve of genus $2$ and $y^2 = f(x)$ a Weierstrass model for $C$.  Then  $\Delta(f) \left( \frac{dx}{y} \wedge \frac{xdx}{y} \right)^{\otimes 10}$ remains invariant under M\"obius transformations.
\end{lemma}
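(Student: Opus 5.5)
The plan is to reduce to generators of the group of changes of Weierstrass equation and check invariance on each by direct computation. Two Weierstrass models $y^2=f(x)$ and $Y^2=g(X)$ of the same genus $2$ curve are related by
$$X=\frac{ax+b}{cx+d},\qquad Y=\frac{e\,y}{(cx+d)^3},$$
with $\begin{pmatrix} a&b\\ c&d\end{pmatrix}\in \mathrm{GL}_2(K)$ and $e\in K^\times$. Here $f$ is regarded as a binary sextic of degree $\le 6$, so that quintic models correspond to a root at infinity. Every such transformation is a composite of the following:
\begin{enumerate}
\item scalings $y\mapsto e y$;
\item translations $x\mapsto x+b$;
\item dilations $x\mapsto ax$;
\item the inversion $x\mapsto 1/x$ together with $y\mapsto y/x^3$.
\end{enumerate}
It therefore suffices to show that $\Delta(f)\,\omega(f)^{\otimes 10}$ is unchanged under each. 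To handle quintics uniformly, I will define $\Delta$ as the discriminant of the binary sextic $F(x,z)=z^6f(x/z)$. This agrees with the paper's $\disc(f)$ up to a harmless normalisation in degree $5$, namely $\disc(\text{sextic with } a_6=0)=a_5^2\disc(\text{quintic})$.

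First I will record the two transformation laws. For the discriminant of a binary form of degree $n=6$, under $F\mapsto \lambda\, F\circ M$ with $M\in\mathrm{GL}_2$, one has
$$\disc \mapsto \lambda^{2n-2}(\det M)^{n(n-1)}\disc = \lambda^{10}(\det M)^{30}\disc .$$
This follows from the root-product expression $\disc(F)=a_n^{2n-2}\prod_{i<j}(r_i-r_j)^2$ and a computation on each generator. Under $y\mapsto ey$ we have $g=e^2 f$, so $\Delta$ is multiplied by $e^{20}$. Next I will compute the volume form. The pair $(dx/y,\; x\,dx/y)$ transforms through the standard representation of $\mathrm{GL}_2$ twisted by $e^{-1}$ and a factor $\det M$ (one checks that $dX = \det M\, dx/(cx+d)^2$). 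Hence the wedge $\frac{dx}{y}\wedge\frac{x\,dx}{y}$ is multiplied by $e^{-2}(\det M)^{\pm 3}$ up to the appropriate sign conventions.

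Combining these, the tenth power of the form contributes $e^{-20}(\det M)^{\mp 30}$. This exactly cancels $e^{20}(\det M)^{30}$ from the discriminant, which gives the claimed invariance.

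The main obstacle is bookkeeping the exponents consistently: the weight of $\det M$ in the wedge, and the normalisation of $\Delta$ for quintic models and for the inversion step. I would verify the exponents on the inversion generator first, since translations and dilations are transparent. For translations, roots shift and $\Delta$ and $\omega$ are visibly unchanged, because $x\,dx$ changes by a multiple of $dx$, which dies in the wedge. For dilations $x\mapsto ax$, the leading coefficient scales by $a^{-6}$, root differences by $a$, and $\omega$ by $a^{3}$; these should match $(\det M)^{30}$ against $(a^3)^{10}$.
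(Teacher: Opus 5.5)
Your strategy is the standard one (the paper gives no argument of its own and simply cites Liu). However, the computation that constitutes the whole proof is wrong as written. The ``$\pm$, up to sign conventions'' is hiding an error, not a convention.

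Since $g(X)=e^2f(x)/(cx+d)^6$ and $cx+d=\det M/(a-cX)$, the binary sextics satisfy
\[
G(X,Z)=e^2(\det M)^{-6}\,F(dX-bZ,\,-cX+aZ)=e^2\,F\bigl(M^{-1}(X,Z)\bigr).
\]
So $G=e^2\,F\circ M^{-1}$, not $e^2\,F\circ M$: the roots of $g$ are the images under $M$ of those of $f$. Your transformation law for binary forms is correct, but applied properly it gives $\Delta(g)=e^{20}(\det M)^{-30}\Delta(f)$.

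For the form there is no sign ambiguity. We have $dX/Y=(\det M/e)(cx+d)\,dx/y$ and $X\,dX/Y=(\det M/e)(ax+b)\,dx/y$, so
\[
\frac{dX}{Y}\wedge\frac{X\,dX}{Y}=\Bigl(\frac{\det M}{e}\Bigr)^{2}(ad-bc)\,\frac{dx}{y}\wedge\frac{x\,dx}{y}=e^{-2}(\det M)^{3}\,\frac{dx}{y}\wedge\frac{x\,dx}{y}.
\]
The factors $e^{20}(\det M)^{-30}$ and $e^{-20}(\det M)^{30}$ cancel, which proves the lemma. With your claimed discriminant factor $e^{20}(\det M)^{30}$ and the true wedge factor, you would instead be left with $(\det M)^{60}$. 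The cancellation you assert only appears because the wedge exponent was chosen to compensate.

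Your own dilation data already shows the correct sign. The leading coefficient changes by $a^{-6}$ and the root differences by $a$, so $\Delta$ changes by $a^{-60}\cdot a^{30}=a^{-30}$. It is this that cancels $(a^3)^{10}$, not $(\det M)^{30}=a^{30}$.

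With the exponents fixed, the reduction to generators is unnecessary, since the general computation above is direct. Checking generators is still a valid proof, because changes of model compose like $(M_1,e_1)(M_2,e_2)=(M_1M_2,e_1e_2)$. But you must actually carry out the inversion: there $\Delta$ is unchanged and the wedge changes sign, which disappears in the tenth power.

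Finally, your sextic normalisation for quintic models is not merely harmless but necessary. With the literal degree-$5$ polynomial discriminant, invariance fails by a factor $a_5^2$, for example under $x\mapsto 1/x$, $y\mapsto y/x^3$ applied to a quintic with $f(0)\neq 0$. The paper implicitly uses the sextic convention as well: its cluster-picture discriminant formula has $10\,v(c_f)$ also for quintic models.
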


\begin{defn}\label{def:mindisc}
 Let $C/K$ be a curve of genus $2$. A Weierstrass model $y^2 =f(x)$ of $C$ is \textit{minimal} if $f(x) \in R[x]$ and $v_K(\Delta(f))$ is minimal amongst all such models. We write $v(\Delta_{\textit{min}})$ and $ \omega_{\text{min}}$ for the discriminant and volume form associated to a minimal Weierstrass model of $C$. 

\end{defn}

\begin{remark}
The $2$-form $ \omega_{\text{min}}$ is independent of the choice of minimal Weierstrass model (up to multiplication by a unit) by \Cref{relativevaluation}.   
\end{remark}

\begin{defn}\label{def:neronminimal}
 Let $C/K$ be a curve of genus $2$. A Weierstrass model $y^2 =f(x)$ of $C$ is called \textit{N{\'e}ron minimal} if $\omega(f)$ is the N{\'e}ron exterior form of $\Jac(C)$. (This model is not necessarily integral.)
 We write $ v(\Delta_{u})$ for $v(\Delta(f))$ where $y^2 =f(x)$  is any N{\'e}ron minimal Weierstrass model for $C$. This model is not necessarily integral. 
\end{defn}

\begin{remark}
 The definition of $ v(\Delta_{u})$ is independent of the choice of minimal N{\'e}ron model by \cite[p.56]{Liu}.
\end{remark}

\begin{remark} \label{remarkondiscsinequality}
By \cite[Prop. 2]{Liu} $v(\Delta_{u}) \le v(\Delta_{\min})$. 
\end{remark}

\subsection{The generalised Ogg's formula} 
 
For an elliptic curve $E/K$, Ogg's formula relates the conductor exponent $n_E$ of $E$ to the valuation of the  minimal discriminant via 
$$n_E =v(\Delta_{\text{min}}) + 1 -m   $$
where $m$ is the number of irreducible components of the special fibre of the  N{\'e}ron model of $E$. 
In the case of genus $2$ curves, a similar formula due to Ueno \cite{ueno}, Saito \cite{saitodiscriminants} and Liu \cite{Liu} relates the conductor exponent, the number of irreducible components in the special fibre of the minimal regular model, the minimal discriminant and~$v(\omega_{\text{min}}/\omega^0)$.

\begin{lemma} \label{oggformula2}
 Let $C/K$ be a genus $2$ curve and assume $p \ne 2$. Then 
$$ 
v(\Delta_{\min}) = v(\Delta_{u}) - 10 v(\omega_{\text{min}}/\omega^0).
$$
\end{lemma}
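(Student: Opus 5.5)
The plan is to compare a minimal Weierstrass model with a N\'eron minimal one using the invariance in Lemma \ref{relativevaluation}. Fix a minimal Weierstrass model $y^2=f(x)$ of $C$, so $\Delta(f)=\Delta_{\min}$ and $\omega(f)=\omega_{\min}$. Also fix a N\'eron minimal Weierstrass model $y^2=g(x)$. By Definition \ref{def:neronminimal}, $\omega(g)=\omega^0$ up to a unit, and $v(\Delta(g))=v(\Delta_u)$.

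Any two Weierstrass models of the genus 2 curve $C$ over $K$ are related by a change of variables of the form $x\mapsto \phi(x)$ with $\phi\in\PGL_2(K)$ and $y\mapsto \mu\cdot(\text{linear form})^{3}y$. Such changes are exactly the M\"obius changes of Lemma \ref{lem:mobius-change}, extended to allow odd degree models as in part (2) of that lemma. The reason is that the hyperelliptic involution, and hence the degree 2 map to $\mathbb{P}^1$, is canonical. Applying Lemma \ref{relativevaluation} to this change gives
$$\Delta(f)\,\omega(f)^{\otimes 10}=\Delta(g)\,\omega(g)^{\otimes 10}.$$
One should check that the invariance also covers the scaling of $y$. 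This is immediate: scaling $y$ by $\mu$ multiplies $\Delta$ by $\mu^{20}$ (since $\Delta$ has weight $2\cdot 5\cdot 2$ in $c_f$ for sextics, after normalising with the appropriate power of $c_f$) and multiplies $\omega$ by $\mu^{-2}$. I will take this as part of the cited lemma.

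Next write $\omega_{\min}=\lambda\,\omega^0$ with $\lambda\in K^\times$, so $v(\lambda)=v(\omega_{\min}/\omega^0)$. Substituting into the displayed identity gives $\Delta_{\min}\lambda^{10}(\omega^0)^{\otimes10}=\Delta(g)(\omega^0)^{\otimes 10}$. Since $(\omega^0)^{\otimes 10}$ is a nonzero section, we get $\Delta_{\min}\lambda^{10}=\Delta(g)$ up to a unit. Taking valuations yields $v(\Delta_{\min})+10v(\omega_{\min}/\omega^0)=v(\Delta_u)$, which is the claimed formula.

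The main obstacle is the justification that every isomorphism between Weierstrass models is of the M\"obius type covered by Lemma \ref{relativevaluation}. This must include the case where one of the two models is quintic, i.e.\ has a root at infinity. I would handle it by noting that $\disc$ for a quintic is defined via the sextic with a root at infinity, which keeps the invariance consistent. The other point to confirm is that the hypothesis $p\neq 2$ ensures the equations $y^2=f(x)$ (with no $h(x)y$ term) suffice. With $p\neq2$ this holds because one can complete the square.
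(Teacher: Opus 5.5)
Your proposal is correct and follows the paper's argument. The paper likewise applies Lemma \ref{relativevaluation} to a minimal and a N\'eron minimal Weierstrass model to get $\Delta_{\min}\omega_{\min}^{\otimes 10}=\Delta_u(\omega^0)^{\otimes 10}$ and then takes valuations; your extra checks (the scaling of $y$, the degree-$6$ discriminant convention for quintics, and completing the square when $p\neq 2$) are details the paper leaves implicit in the cited lemma.
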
 

\begin{proof}
By \Cref{relativevaluation}, it follows that $\Delta_{\min} \omega_{\min}^{\otimes 10} = \Delta_u \left( \omega^0 \right)^{\otimes 10}$, by comparing a minimal Weierstrass model to a N{\'e}ron minimal Weierstrass model our claim follows. 
\end{proof}

\begin{theorem}\label{oggformula}
Let $C/K$ be a genus $2$ curve and assume $p \ne 2$. Then 
$$ 
n_C = v(\Delta_{\min}) + 1 -m + 11 v(\omega_{\min}/\omega^0)
$$
where $m$ is the number of components in the special fibre of the \textit{minimal regular model} of $C$ and  $n_C$ is the conductor exponent of the Jacobian of $C$.
\end{theorem}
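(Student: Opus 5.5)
The plan is to combine Lemma \ref{oggformula2} with the genus 2 Ogg-type formula of Saito and Liu expressed in terms of the N\'eron minimal discriminant. Saito \cite{saitodiscriminants} proved a general Ogg formula: $-\mathrm{Art}(\mathcal{C}/R) = v(\Delta_{\mathcal{C}})$, where $\Delta_{\mathcal{C}}$ is the Deligne discriminant of the minimal regular model. The Artin conductor is $\mathrm{Art}(\mathcal{C}/R) = \chi(\mathcal{C}_{\bar K}) - \chi(\mathcal{C}_k) - \delta$. Here $\delta$ is the Swan part, and it is absorbed into $n_C$ since $n_C$ includes the wild conductor. For genus $2$, this unpacks to
\[
-\mathrm{Art}(\mathcal{C}/R) = n_C + m - 1 .
\]
To see this, use $\chi(\mathcal{C}_{\bar K}) = 2-2g$ and $\chi(\mathcal{C}_k) = 2 - b_1(\mathcal{C}_k) = 2 - 2a - t$ for the $\ell$-adic cohomology of the special fibre, together with $b_2 = m$. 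Then $n_{C,\textup{tame}} = 2g - 2a - t$ by Lemma \ref{tameconductorformula}.

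Next we relate Saito's discriminant to Weierstrass data. Liu \cite[Thm.~1, Prop.~1]{Liu} showed that for $p\neq 2$ the Deligne discriminant satisfies
\[
v(\Delta_{\mathcal{C}}) = v(\Delta_{\min}) - v(\omega_{\min}/\omega^0),
\]
or, equivalently, $v(\Delta_{\mathcal{C}}) = v(\Delta_u) - 9\,v(\omega_{\min}/\omega^0)$ modulo the exact normalisation. Using $\Delta_{\min}\omega_{\min}^{\otimes 10} = \Delta_u(\omega^0)^{\otimes 10}$ from Lemma \ref{oggformula2}, both forms are consistent. The precise statement I would cite is Liu's Theorem 1: $n_C = v(\Delta_{\min}) - m + 1 + 11\,v(\omega_{\min}/\omega^0)$ is equivalent to $-\mathrm{Art} = v(\Delta_{\min}) + 11\,v(\omega_{\min}/\omega^0)$, under Liu's comparison $v(\Delta_{\mathcal C}) = v(\Delta_{\min}) + 11\,v(\omega_{\min}/\omega^0)$. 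Substituting this into Saito's formula and rearranging gives the claim.

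The main obstacle is the constant $11$. It arises from comparing the Deligne discriminant, a section of $(\det R\pi_*\omega)^{\otimes 12}\otimes\ldots$ via Mumford's isomorphism $\lambda_2^{\otimes 10}\cong\ldots$, with $\Delta(f)\,\omega(f)^{\otimes 10}$. It also requires $\omega^0$ to generate $\det \pi_*\omega_{\mathcal{C}/R}$, which holds since the N\'eron exterior form corresponds to $\det H^0(\mathcal{C},\omega_{\mathcal{C}/R})$ via Raynaud's identification $\mathrm{Lie}(\mathcal{J})\cong H^1(\mathcal{C},\mathcal{O})$. Rather than reproving this, I would quote Liu \cite{Liu}, who carried out this comparison in odd residue characteristic, and check the normalisation on one example. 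Example \ref{ex:bsd} gives $4+4+11=18+1$, which confirms the sign and coefficient of the $v(\omega_{\min}/\omega^0)$ term.
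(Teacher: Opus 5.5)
Your first step is sound. Saito's formula together with $-\mathrm{Art}(\mathcal C/R)=n_C+m-1$ is correct, with two minor repairs. You want $\chi(\mathcal C_{\bar k})=1-b_1+m$, not $2-b_1$. You also want $b_1(\mathcal C_{\bar k})=\dim V^{I_K}$, which holds for any regular model, rather than \Cref{tameconductorformula}, which is stated for normal crossings models.

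After this reduction, however, the theorem is \emph{equivalent} to
\[ v(\Delta_{\mathcal C}) = v(\Delta_{\min}) + 11\,v(\omega_{\min}/\omega^0), \]
or, by \Cref{oggformula2}, to $v(\Delta_{\mathcal C})=v(\Delta_u)+v(\omega_{\min}/\omega^0)$. This is exactly what your proposal does not establish. The three comparisons you write down are not equivalent. Put $w=v(\omega_{\min}/\omega^0)$. Then \Cref{oggformula2} turns $v(\Delta_{\min})-w$ into $v(\Delta_u)-11w$, not $v(\Delta_u)-9w$. Your three versions are $v(\Delta_{\min})-w$, $v(\Delta_{\min})+w$ and $v(\Delta_{\min})+11w$, and they agree only when $w=0$. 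Feeding your first version into Saito's formula would give $n_C=v(\Delta_{\min})+1-m-w$, a different statement. You then settle on the right coefficient by attributing the target formula itself to Liu. That is circular, because Liu's formula is not phrased in terms of $\omega_{\min}/\omega^0$ but in terms of his own constant $c(C)$.

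Nor can the example fix the normalisation. A single instance can only pin down a constant in a formula whose shape is already known, and here the check is not even independent. The entry $-v(\omega_{\min}/\omega^0)=1$ for VIII-3 in Table~\ref{table:potentiallygood} is computed in \Cref{bsdfactorcalc} from this very theorem.

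The missing idea is the identification of Liu's constant.
\begin{itemize}
\item Liu \cite[Thm.~2, Prop.~1]{Liu} gives $n_C=v(\Delta_{\min})+1-m-11c(C)$ and $v(\Delta_{\min})-v(\Delta_u)=10c(C)$.
\item Comparing the latter with $v(\Delta_{\min})=v(\Delta_u)-10\,v(\omega_{\min}/\omega^0)$ from \Cref{oggformula2} gives $c(C)=-v(\omega_{\min}/\omega^0)$.
\item Substituting this into the first formula produces the coefficient $11$.
\end{itemize}
This is the paper's argument. In your Saito-based framework it amounts to needing $v(\Delta_{\mathcal C})=v(\Delta_u)-c(C)$ together with the same identification. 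So the detour through the Deligne discriminant and Mumford's isomorphism adds nothing unless you actually carry out that comparison yourself.
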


\begin{proof}
Combining \cite[Thm.~2 and Prop.~1]{Liu} we obtain
$$ 
n_C = v(\Delta_{\min}) + 1 -m -11( c(C) )
$$
where $c(C)$ is the constant appearing in Liu's formula.
By \cite[Thm.~2]{Liu}
$v(\Delta_{\min})\! -\! v(\Delta_u) \!=\! 10c(C)$,
and combined with \Cref{oggformula2} gives $c(C)\! =\! - v(\omega_{\min}/\omega^0)$, which completes our proof.  
\end{proof}

\subsection{Minimal discriminants} 
The discriminant $v(\Delta_{u})$ was classified for all genus $2$ curves by Ueno \cite{ueno}. In this section we determine $v(\Delta_{\min})$. As pointed out to us by Qing Liu, a classification of $v(\Delta_{\min})$ can also be extracted from Ueno's work, as it effectively classifies the values of the term $c(C)$ used in the proof of Theorem \ref{oggformula}; see \cite{Liu} page 52 and Remarque 6. We take a different approach, as we also wish to describe the cluster picture of a Weierstrass model realising the minimal discriminant.


Cluster pictures encode both the discriminant and whether the underlying Weierstrass equation can be made integral by a shift of the variable. (See \S\ref{subsec:examples} for relevant definitions and notation and \Cref{fieldofdefnremark} for a discussion of the field of the definition.)

\begin{lemma}[{\cite[Thm. 16.2]{M2D2}}] \label{clusterpicdisc}
Let $C/K$ be a hyperelliptic curve of genus $g$, $p \ne 2$ and suppose $ y^2 = f(x)$ is a  Weierstrass model of $C$.  Then
$$
 v(\Delta(f)) = v(c_f)(4g+2) + |\mathcal{R}|(|\mathcal{R}|-1)d_\cR + \displaystyle \sum_{\mathfrak{s}\neq\mathcal{R}  \ \text{proper}} |\mathfrak{s}|(|\mathfrak{s}|-1)\delta_{\mathfrak{s}},
$$
where $\delta_{\mathfrak{s}}$ is the relative depth of $\mathfrak{s}$.
\end{lemma}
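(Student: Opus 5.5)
The plan is to compute $v(\Delta(f))$ directly from the factorisation of the discriminant into root differences, and then group the differences according to the smallest cluster containing both roots. Write $n=|\cR|$, so $n=2g+1$ or $2g+2$. We have
$$\Delta(f)=c_f^{2n-2}\prod_{r\neq r'}(r-r'),$$
with the product taken over ordered pairs of distinct roots (up to sign). This is the normalisation under which the leading coefficient contributes $v(c_f)(4g+2)$ when $n=2g+2$. For the odd degree case I would check which normalisation of $\disc$ the paper intends, i.e.\ the one that treats $\infty$ as a root. Alternatively, I would reduce to $n=2g+2$ via the root-adding move of Proposition \ref{prop:cpic-ABC}(3) combined with the invariance in Lemma \ref{relativevaluation}. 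I expect pinning down this convention for quintic models to be the main subtle point, since the displayed formula has exponent $4g+2$ on $v(c_f)$ independently of $n$.

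Next, fix an unordered pair $\{r,r'\}$ and let $\fs$ be the smallest cluster containing both roots. Then $v(r-r')=d_{\fs}$. This holds because $r$ and $r'$ lie in different children of $\fs$; otherwise a smaller cluster would contain both. Two roots in distinct children of $\fs$ are at distance exactly $d_\fs$: the distance is at least $d_{\fs}$ since both lie in the disc, and if it were strictly larger, the disc of that radius around $r$ would cut out a proper subcluster of $\fs$ containing both. Hence
$$v\Big(\prod_{r\ne r'}(r-r')\Big)=\sum_{\fs\ \text{proper}} N_\fs\, d_\fs,$$
where $N_\fs$ is the number of ordered pairs whose smallest common cluster is $\fs$.

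Finally, rewrite each depth as a telescoping sum of relative depths: $d_\fs=d_\cR+\sum_{\fs\subseteq\ft\subsetneq\cR}\delta_\ft$. Substituting and exchanging the order of summation, the coefficient of $\delta_\ft$ becomes the number of ordered pairs whose smallest common cluster lies inside $\ft$. That is exactly the number of ordered pairs of distinct roots in $\ft$, namely $|\ft|(|\ft|-1)$. Similarly, the coefficient of $d_\cR$ is $|\cR|(|\cR|-1)$. Adding back $(2n-2)v(c_f)$ gives the formula.

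Apart from the convention issue for odd $n$, the argument is a routine combinatorial regrouping. Clusters of size one never appear, because every pair has a proper smallest common cluster.
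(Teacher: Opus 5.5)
Your argument is correct, and it is the standard one. The paper gives no proof of its own here, only the citation \cite[Thm.~16.2]{M2D2}. The formula there comes from exactly your computation: write the discriminant as a product of root differences, use $v(r-r')=d_{\fs}$ for the smallest cluster $\fs$ containing $r,r'$, and telescope the depths into relative depths.

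On the convention you flag, your first option is the right one. The formula holds as displayed precisely when $\Delta$ is taken to be $c_f^{4g+2}\prod_{r\neq r'}(r-r')$ up to sign. This is the normalisation of \cite{M2D2}, whose discriminant is $16^g c_f^{4g+2}\disc(f/c_f)$. The hypothesis $p\neq 2$ serves only to make $16^g$ a unit, and your computation never uses it.

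For $|\cR|=2g+1$ this discriminant is $c_f^{2}\disc(f)$, the discriminant of $f$ viewed as a binary form of degree $2g+2$. Your regrouping then goes through verbatim with $c_f^{4g+2}$ in place of $c_f^{2n-2}$. With the literal $\disc(f)$, a model of odd degree would instead give $4g\,v(c_f)$.

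Your proposed alternative, via the root-adding move and Lemma~\ref{relativevaluation}, would not settle this, for two reasons:
\begin{enumerate}
\item That M\"obius invariance already presupposes the degree $2g+2$ binary-form discriminant. For instance, the change of model $Y=\mu y$ multiplies the degree-$5$ discriminant by $\mu^{16}$ but $\omega^{\otimes 10}$ by $\mu^{-20}$.
\item It is only stated for $g=2$.
\end{enumerate}
So it would assume the convention rather than decide it.
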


\begin{theorem}[{\cite[Def.~13.2 and Thm.~13.2]{M2D2}}]\label{thm:M2D2_13}
 Let $y^2 = f(x)$ be a Weierstrass model of a  tame hyperelliptic curve $C/K$ and assume $p \ne 2$.   
 Then $f(x-z) \in R[x]$ for some $z \in K$ if and only if one of the following conditions~hold:
 \begin{itemize}
     \item $v(c_f) \ge 0$ and $d_{\mathcal{R}} \ge 0$, or 
     \item there exists a $G_K$-stable proper cluster $\mathfrak{s}$ with $d_{\mathfrak{s}} \le 0$ and 
     $$ 
     v(c_f) + (\vert \mathfrak{s} \vert - \vert \mathfrak{t} \vert) d_{\mathfrak{s}} + \sum_{r \notin \mathfrak{s}} d_{r \wedge \mathfrak{s}} \ge 0 
     $$
     for some $\mathfrak{t}$ which is either empty or a $G_K$-stable child $\mathfrak{t} < \mathfrak{s}$ with either $\vert \mathfrak{t} \vert =1 $ or $ d_{\mathfrak{t}} \ge 0$.
\end{itemize}
In particular, in this case $v(\Delta_{ \min}) \le v(\Delta(f))$. 
\end{theorem}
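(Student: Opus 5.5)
The plan is to reduce the integrality of $f(x-z)$ to a valuation count over the roots, and then to evaluate that count using the cluster structure. First I would use Gauss's lemma for the extended valuation. For a polynomial $g=c\prod(x-\alpha_i)$ over $K$, the minimum of the valuations of its coefficients equals $v(c)+\sum_i \min(0,v(\alpha_i))$; this holds because the Gauss norm is multiplicative, and it may be computed in $K(\cR)$. The roots of $f(x-z)$ are $r+z$, so writing $w=-z$ we get
$$
f(x-z)\in R[x] \iff v(c_f)+\sum_{r\in\cR}\min\bigl(0,v(r-w)\bigr)\ge 0 .
$$
The problem therefore becomes: for which data does some $w\in K$ make the function $F(w)=\sum_r \min(0,v(r-w))$ at least $-v(c_f)$? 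The final assertion $v(\Delta_{\min})\le v(\Delta(f))$ is then immediate. Translation does not change the discriminant, so an integral model with discriminant $\Delta(f)$ exists and is a competitor in the minimum.

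Next I would describe $v(r-w)$ through the cluster tree. Given $w$, let $\fs$ be the smallest cluster whose disc $\{v(x-z_\fs)\ge d_\fs\}$ contains $w$ (taking $\fs=\cR$ if $w$ lies in no such disc, in which case all $v(r-w)=v(w-z_\cR)<d_\cR$). Then $v(r-w)=d_{r\wedge\fs}$ for $r\notin\fs$. Further, $v(r-w)=d_\fs$ for $r$ in every child of $\fs$ except possibly one child $\ft$ whose smaller disc $w$ approaches, and for $r\in\ft$ we have $v(r-w)\ge d_\fs$, with any prescribed value up to $d_\ft$ attainable. Because $w\in K$ is Galois-fixed, the disc of $\fs$, and hence $\fs$, is $G_K$-stable. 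Likewise $\ft$, if it is nonempty in the relevant sense, is a $G_K$-stable child. Conversely, Lemma~\ref{lem:rational-centre} provides centres in $K$ of $G_K$-stable clusters. I would then perturb by $\pi^k u$ with a unit $u$ chosen outside finitely many residue classes, exactly as in the proof of Proposition~\ref{prop:cpic-ABC}(5A),(5C); this realises each such configuration by a $w\in K$.

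The case analysis then reads off the two bullets. If $d_\cR\ge0$, take $w=z_\cR\in K$. All terms $\min(0,v(r-w))$ vanish, and moving $w$ outside the disc only makes them more negative, so the criterion becomes $v(c_f)\ge0$. If integrality holds with $\fs$ as above and $d_\fs>0$, we are in the first bullet anyway, since then $d_\cR\ge0$ would be forced or the condition can be moved up the tree. So assume $d_\fs\le0$. Maximising $F$ over the configurations attached to $\fs$ gives the following contributions: $d_{r\wedge\fs}$ for $r\notin\fs$ (all $\le d_\fs\le0$), $d_\fs$ for $r\in\fs\setminus\ft$, and $0$ for $r\in\ft$. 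The last value can be reached exactly when $w$ can be placed in $\ft$ at nonnegative depth, i.e.\ $|\ft|=1$ (a $K$-rational root) or $d_\ft\ge0$. This gives precisely $v(c_f)+(|\fs|-|\ft|)d_\fs+\sum_{r\notin\fs}d_{r\wedge\fs}\ge0$, with $\ft=\emptyset$ as the fallback.

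The main obstacle is the rationality step. I must show that an optimal $w$ can be taken in $K$ with the valuations prescribed simultaneously. I must also show conversely that the best rational $w$ never does better than these configurations, which is where tameness and the $G_K$-stability of $\fs$ and $\ft$ enter. For the first part I would lean on Lemma~\ref{lem:rational-centre} together with the uniqueness of $G$-stable children \cite[Rmk.~A.8]{bisatt}. I would also check that $d_\fs$ need not be integral when $\fs$ has an orphan, since only the orphan child can carry $w$.
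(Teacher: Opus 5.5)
The paper does not prove this statement; it is quoted from \cite{M2D2}. Judged on its own, your skeleton is right. The Gauss-norm identity turns integrality of $f(x-z)$ into $v(c_f)+F(w)\ge 0$, where $F(w)=\sum_{r\in\cR}\min(0,v(r-w))$ and $w=-z\in K$. The final clause follows because translation preserves $\Delta(f)$.

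The ``if'' direction is also fine, and simpler than you make it. For a pair $(\fs,\ft)$ as in the second bullet, take $w=z_\fs$, or $w=r_0$ if $\ft=\{r_0\}$, or $w=z_\ft$, with centres in $K$ by Lemma~\ref{lem:rational-centre}. Only lower bounds on $v(r-w)$ are needed, so no residue-class perturbation is required. This is the only place tameness enters.

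The ``only if'' case analysis has two steps that fail as written.

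\emph{First step.} Suppose the smallest cluster $\fs$ whose disc contains $w$ has $d_\fs>0$. You land in the first bullet only if $d_\cR\ge 0$, and $d_\cR<0$ is perfectly possible; neither $d_\cR\ge 0$ nor the first bullet is forced. The correct move is as follows.
\begin{itemize}
\item Take the smallest cluster $\fs'$ in the chain of clusters whose discs contain $w$ with $d_{\fs'}\le 0$, and let $\ft'$ be its child on that chain.
\item Both are $G_K$-stable, because their discs can be centred at $w\in K$, and $d_{\ft'}>0$.
\item Then $F(w)$ is exactly the second-bullet expression for $(\fs',\ft')$. This is what the option ``$d_\ft\ge 0$'' in the statement is for, and it lands you in the second bullet, not the first.
\end{itemize}

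\emph{Second step.} Your ``$\ft=\emptyset$ fallback'' does not cover the case where the distinguished child $\ft=\{r\in\fs: v(r-w)>d_\fs\}$ is proper with $d_\ft<0$.
\begin{itemize}
\item Since $w\notin D_\ft$, each $r\in\ft$ contributes $v(w-z_\ft)\in(d_\fs,d_\ft)$.
\item So $F(w)$ strictly exceeds the $(\fs,\emptyset)$ value, and $v(c_f)+F(w)\ge 0$ does not yield the inequality for $(\fs,\emptyset)$.
\item You must descend instead: $F(w)<\sum_{r\notin\ft}d_{r\wedge\ft}+|\ft|\,d_\ft$, which is the expression for the pair $(\ft,\emptyset)$.
\item Here $\ft$ is proper with $d_\ft\le 0$, and it is $G_K$-stable because it is cut out by a condition on $v(r-w)$ with $w\in K$.
\end{itemize}

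With these two repairs, the converse needs neither tameness nor perturbation, only upper bounds. So you have the roles reversed: the ``rationality obstacle'' lives entirely in the easy direction. Your closing worry about integrality of $d_\fs$ is irrelevant, since $v(r-w)$ is computed in $K(\cR)$.
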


The above results allow us to deduce the following. 

\begin{theorem} \label{bsdfudgeanddiscMRMfns}
 Let $C/K$ be a genus $2$ curve, $p \ne 2$ and suppose $C$ has tame reduction. Then $v(\Delta_{\min})$ and $v(\omega_{\text{min}}/\omega^0)$ are determined by the MRM reduction type and the potential stable type of $C$.    
\end{theorem}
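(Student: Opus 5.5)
We first show that $v(\Delta_{\min})$ is determined by the equivalence class of cluster pictures, and then deduce the statement about $v(\omega_{\min}/\omega^0)$ from Theorem~\ref{oggformula}.

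By Theorem~\ref{thm:CP2}(2), the pair (MRM reduction type, potential stable type) of a tame genus $2$ curve $C/K$ determines an equivalence class $\mathcal{E}$ of admissible cluster pictures. By Theorem~\ref{thm:CP2}(1), every picture in $\mathcal{E}$ is realised by some Weierstrass equation of $C$. Conversely, by Corollary~\ref{cor:isom-implies-equivalent}, every Weierstrass equation of $C$ has cluster picture in $\mathcal{E}$.

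Hence the set of Weierstrass models of $C$ sweeps out exactly the cluster pictures in $\mathcal{E}$. We then use two facts, both of which depend only on the abstract cluster picture:
\begin{itemize}
\item Lemma~\ref{clusterpicdisc} expresses $v(\Delta(f))$ in terms of $v_c$, $d_{\cR}$ and the relative depths.
\item Theorem~\ref{thm:M2D2_13} decides whether some translate $f(x-z)$ is integral. Its $G_K$-stability conditions are read off from the depths via Bisatt's description (Remark~\ref{rem:K-indep}).
\end{itemize}
Translation by $z\in K$ does not change the discriminant. So $v(\Delta_{\min})$ is the minimum of $v(\Delta)$ over those pictures in $\mathcal{E}$ satisfying the integrality criterion of Theorem~\ref{thm:M2D2_13}. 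We may replace a realising model by its integral translate, which has the same cluster picture.

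This minimum is therefore a function of $\mathcal{E}$ alone, provided it exists. It exists because discriminants of integral models are non-negative integers and some integral model of $C$ exists. This gives the claim for $v(\Delta_{\min})$.

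For the fudge factor, we rearrange Theorem~\ref{oggformula} to get
$$11\,v(\omega_{\min}/\omega^0)=n_C-v(\Delta_{\min})-1+m.$$
Here $m$ is read off from the MRM reduction type. The conductor $n_C$ is determined by the MRM type by Corollary~\ref{cor:g2_conductor}, since $C$ is tame. Together with the first part, this determines $v(\omega_{\min}/\omega^0)$.

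The main obstacle is making the minimisation effective, that is, actually computing the minimum over each (infinite) family in the tables. For the existence statement this is not needed, but the tables must record the value, and the $\star$ entries must be justified. The plan here is as follows. Moves (1) and (2) shift $v_c$ and depths linearly, so within each column one writes $v(\Delta)$ and the integrality inequality as explicit linear functions of the free parameters ($n$, $m$, $h$, $j$, and so on). One then minimises column by column.

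The delicate point is the $v_c$ parity and the orphan/$G$-stability hypotheses in Theorem~\ref{thm:M2D2_13}, which must be checked for each shape. As a consistency check, one verifies that the resulting $v(\omega_{\min}/\omega^0)$ is a non-negative integer. One also compares the result against Ueno's $v(\Delta_u)$ via Lemma~\ref{oggformula2} and Remark~\ref{remarkondiscsinequality}.
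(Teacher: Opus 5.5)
Your proposal is correct and follows the paper's own argument: Theorem~\ref{thm:CP2} shows that the pair (MRM type, potential stable type) pins down the set of cluster pictures of Weierstrass models of $C$, and you helpfully make the converse inclusion explicit via Corollary~\ref{cor:isom-implies-equivalent}. Lemma~\ref{clusterpicdisc} and Theorem~\ref{thm:M2D2_13} then make $v(\Delta_{\min})$ a function of that set, and Theorem~\ref{oggformula} together with the conductor determines $v(\omega_{\min}/\omega^0)$. The only slip is in your closing consistency check: by Remark~\ref{remarkondiscsinequality} and Lemma~\ref{oggformula2}, it is $-v(\omega_{\min}/\omega^0)$, not $v(\omega_{\min}/\omega^0)$, that is a non-negative integer.
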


\begin{proof}
The discriminant of a Weierstrass model can be determined from the associated cluster picture  by \Cref{clusterpicdisc}. As the MRM reduction type and potential stable type determine all possible cluster pictures (\Cref{thm:CP2}(2)) and one can determine which cluster pictures are defined by integral Weierstrass models (\Cref{thm:M2D2_13}, and \Cref{rem:K-indep} to pin down the $G_K$-action), our claim for $v(\Delta_{\min})$ follows. 
For $ v(\omega_{\min}/\omega^0)$, the result follows from \Cref{oggformula} and \Cref{tameconductorformula}.   
\end{proof}

To determine $v(\Delta_{\min})$ explicitly, we use the following three criteria for minimal Weierstrass models.

\begin{theorem}[{\cite[Thm.~17.1]{M2D2}}]  \label{semistabledisc}
    Let $C/K$ be a hyperelliptic curve, $p \ne 2$ and $y^2 = f(x)$ a  Weierstrass model for $C$. Suppose that $f(x) \in R[x]$, $d_{\mathcal{R}} = 0$, $v(c_f) = 0$ and there are no clusters $\mathfrak{s} \neq \mathcal{R}$ of size $|\mathfrak{s}| > g + 1$. Then $y^2 = f(x)$ is a  minimal Weierstrass model of $C$. 
\end{theorem}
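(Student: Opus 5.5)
The plan is to compare the given model $y^2=f(x)$ with an arbitrary integral Weierstrass model $Y^2=g(X)$ of $C$. We show $v(\Delta(g))\ge v(\Delta(f))$ by computing both discriminants from cluster data (Lemma~\ref{clusterpicdisc}) and tracking how this data moves under the change of variables.

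\textbf{Step 1 (reduction to M\"obius changes).} Any two Weierstrass models of $C$ are related by $X=\phi(x)$, $Y=\mu\,(\ldots)^{g+1}y$ with $\phi=\frac{ax+b}{cx+d}\in\PGL_2(K)$ and $\mu\in K^\times$, as in Lemma~\ref{lem:mobius-change}. Scaling $\phi$ we may take $a,b,c,d\in R$ with at least one a unit. If $g$ has odd degree, we first post-compose with a move as in Proposition~\ref{prop:cpic-ABC}(3), which changes neither integrality nor the discriminant. So we may assume $\deg g=|\cR|=2g+2$.

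\textbf{Step 2 (the discriminant as a function on discs).} By hypothesis $d_\cR=0$ and $v_c=0$, so Lemma~\ref{clusterpicdisc} gives
\[
v(\Delta(f))=\sum_{\fs\ne\cR\text{ proper}}|\fs|(|\fs|-1)\delta_\fs .
\]
For the new model, $v(\phi(r)-\phi(r'))=v(\det\phi)+v(r-r')-v(cr+d)-v(cr'+d)$ and $v(c_g)=v(\mu^2c_f)+\sum_r v(cr+d)$. Hence
\[
v(\Delta(g))-v(\Delta(f))=(4g+2)\,v(\mu^2\det\phi^{\,g+1}\cdots)-(4g+2)\sum_{r\in\cR}v(cr+d)+\ldots .
\]
This depends only on the closed disc $D=\phi^{-1}(\mathcal{O})$, where $\mathcal O$ is the closed unit disc, equivalently a vertex of the Bruhat--Tits tree, and on $v(\mu)$. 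The integrality of $g$ imposes a lower bound on $v(c_g)$ in terms of $D$, namely $v(c_g)\ge -\sum_{r}\min(0,\text{depth of }r\text{ relative to }D)$, in the spirit of Theorem~\ref{thm:M2D2_13}. Taking $v(\mu)$ minimal subject to integrality then gives an explicit function $F(D)$ with $F(D_\cR)=v(\Delta(f))$, where $D_\cR$ is the disc cutting out $\cR$ and corresponds to $f$ itself.

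\textbf{Step 3 (monotonicity along the tree).} We show that $F$ is non-decreasing along any geodesic leaving $D_\cR$. Consider moving a distance $t$ away from $D_\cR$, either towards a subdisc containing a set $\fs$ of $k$ roots or outward towards $\infty$, in which case $k=0$. Pairs of roots inside $\fs$ lose depth while all other pairs gain, and the leading-coefficient term changes linearly. The net derivative should come out proportional to $(2g+2-2k)\ge 0$, and it is nonnegative exactly when $k\le g+1$. This is precisely where the hypothesis on clusters of size $>g+1$ enters: any $\fs\ne\cR$ has $|\fs|\le g+1$, and inside a non-cluster disc the number of roots is also bounded this way. Since $F$ is piecewise linear, it suffices to check the sign of the slope on each segment and that slopes only increase at vertices. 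The latter holds because passing a cluster can only decrease $k$, which is a convexity statement.

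\textbf{Main obstacle.} The hard part is the bookkeeping in Step 2: getting the correct integrality constraint on $v(\mu)$ for a disc $D$ whose centre is not in $K$, and handling rational versus irrational depths. I would first try to use tameness and Lemma~\ref{lem:rational-centre} to restrict to $G_K$-stable discs with centres in $K$, since $\phi$ is $K$-rational and so $D$ is automatically $G_K$-stable. The remaining case where $D$ leaves $D_\cR$ towards $\infty$, with $k=0$, should then follow from the same slope computation.
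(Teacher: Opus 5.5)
The paper does not prove this statement; it quotes it from \cite[Thm.~17.1]{M2D2}. Your argument therefore has to stand on its own. Its core idea is the standard one and it works: compare an arbitrary integral model with $f$ through the Bruhat--Tits tree, and observe that the discriminant can only grow as you move away from the unit disc, because every disc of positive depth contains at most $g+1$ roots. As written, though, several steps rely on results that are not available here.

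Tameness is \emph{not} a hypothesis of the statement. So you cannot invoke Lemma~\ref{lem:rational-centre}, nor Proposition~\ref{prop:cpic-ABC}(3), which in any case also needs $d_{\cR}>0$ for the model it is applied to. Neither is needed, and your ``main obstacle'' disappears. Use the decomposition $\mathrm{GL}_2(K)=\mathrm{GL}_2(R)\,B(K)$, where $B$ denotes upper triangular matrices. A change of variables by an element of $\mathrm{GL}_2(R)$, acting on the degree $2g+2$ binary form, preserves integrality and changes $\Delta$ by a unit. Hence every integral model has the same $v(\Delta)$ as an integral model obtained by $x=\pi^nX+z$, $y=\lambda Y$ with $z\in K$ and $n\in\mathbb{Z}$. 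Centres are thus automatically $K$-rational. This also handles odd-degree models $g$, and lets you assume $\deg f=2g+2$.

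Your Step~2 display should be replaced by the exact computation. With $v(c_f)=0$, Lemma~\ref{clusterpicdisc} gives
\[
v(\Delta(g))-v(\Delta(f))=(4g+2)\bigl(v(c_g)-(g+1)n\bigr),
\qquad
v(c_g)\;\ge\; S(z,n):=\sum_{r\in\cR}\max\bigl(0,\,n-v(r-z)\bigr).
\]
The inequality is exactly your Gauss-norm integrality condition. The $\sum_r v(cr+d)$ terms in your display cancel once everything is written in terms of $\mu$.

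Step~3 then reduces to proving $S(z,n)\ge (g+1)n$.
\begin{enumerate}
\item For $n\le 0$ this is trivial.
\item For $n>0$ we have $S(z,n)\ge\int_0^n\#\{r:\ v(r-z)<t\}\,dt$. For $0<t\le n$, the disc $\{v(x-z)\ge t\}$ meets $\cR$ either in the empty set or in a cluster different from $\cR$, since $d_{\cR}=0$. By hypothesis it therefore contains at most $g+1$ roots, so the integrand is at least $g+1$.
\end{enumerate}
This is your slope $(4g+2)(g+1-k)$. Note two things. First, only non-negativity of the slopes is needed, not convexity. Second, you should compare with the real-valued lower bound $S$ rather than with your $F$: the parity and integrality constraints on $v(c_g)$ mean $F$ is not literally piecewise linear.

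The hypotheses $f\in R[x]$ and $v(c_f)=0$ are used only to make all roots integral, so that $f$ itself corresponds to $z=0$, $n=0$. With these repairs you have a complete, elementary proof.
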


\begin{lemma} \label{minimaldisclemma}
 Let $C/K$ be a genus $2$ curve, $p \ne 2$ and assume $C$ has tame reduction. Suppose $y^2 = f(x)$ is an integral Weierstrass equation for $C$ such that 
 $$ 
 v(\Delta(f)) < v(\Delta_{u}) + 110 \quad \text{ and }\quad  v(\Delta(f)) - m - n_C + 1 \equiv 0 \mod 11
 $$
 where $m$ is the number of components in the special fibre of a minimal regular model of $C$ and $n_C$ is the conductor exponent of the Jacobian. Then $ v(\Delta(f)) = v(\Delta_{min})$.   
\end{lemma}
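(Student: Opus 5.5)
The plan is to compare the given integral model with a minimal one through the invariance of $\Delta\,\omega^{\otimes 10}$ (Lemma~\ref{relativevaluation}) and the generalised Ogg formula (Theorem~\ref{oggformula}). Take $y^2=f(x)$ as in the statement and write $\omega(f)=\frac{dx}{y}\wedge\frac{xdx}{y}$. Lemma~\ref{relativevaluation}, applied to $f$ and to a N\'eron minimal model, gives
$$v(\Delta(f)) = v(\Delta_u) - 10\, v(\omega(f)/\omega^0).$$
Put $c_f:=-v(\omega(f)/\omega^0)$, so that $v(\Delta(f))=v(\Delta_u)+10c_f$. Lemma~\ref{oggformula2} gives in the same way $v(\Delta_{\min})=v(\Delta_u)+10c_{\min}$ with $c_{\min}=-v(\omega_{\min}/\omega^0)$. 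Both $c_f$ and $c_{\min}$ are integers: the forms differ by an element of $K^\times$, whose valuation is integral.

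Next I would use Ogg's formula to pin down $c_{\min}$ modulo $11$. Theorem~\ref{oggformula} reads $n_C=v(\Delta_{\min})+1-m-11c_{\min}$, so
$$v(\Delta_{\min})-m-n_C+1 = 11c_{\min}\equiv 0 \pmod{11}.$$
The hypothesis gives the same congruence for $v(\Delta(f))$, hence $v(\Delta(f))\equiv v(\Delta_{\min})\pmod{11}$. Substituting the two expressions above, $10(c_f-c_{\min})\equiv 0\pmod{11}$, and since $10$ is invertible modulo $11$ we get $c_f\equiv c_{\min}\pmod{11}$.

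Finally I would bound the difference to force equality. Since $f$ is integral, minimality gives $v(\Delta(f))\ge v(\Delta_{\min})$, so $c_f\ge c_{\min}$. Remark~\ref{remarkondiscsinequality} gives $v(\Delta_{\min})\ge v(\Delta_u)$, so $c_{\min}\ge 0$. The hypothesis $v(\Delta(f))<v(\Delta_u)+110$ gives $c_f<11$. Hence $0\le c_f-c_{\min}<11$, and with the congruence this forces $c_f=c_{\min}$, that is, $v(\Delta(f))=v(\Delta_{\min})$.

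The step needing the most care is the first one. One must check that Lemma~\ref{relativevaluation} really gives exact equality of valuations between arbitrary Weierstrass models, including the N\'eron minimal one, which need not be integral. One must also check that Theorem~\ref{oggformula} uses the same $m$, namely the component count of the minimal regular model, as the statement here. The tameness hypothesis enters only through the standing assumptions of Theorem~\ref{oggformula} and the formula for $n_C$.
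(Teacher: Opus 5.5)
Your proof is correct and follows essentially the same argument as the paper. The discriminants of the two models differ by a multiple of $10$, Ogg's formula together with the hypothesis makes that difference divisible by $11$ (hence by $110$), and the chain $v(\Delta_u)\le v(\Delta_{\min})\le v(\Delta(f))<v(\Delta_u)+110$ forces it to be $0$. Your integers $c_f$ and $c_{\min}$ are just a relabelling of the paper's integer multiplier of $10$, whose correct constraint is $\ge 0$; the paper's ``$n\ge 10$'' is a typo.
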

\begin{proof}
  Let $y^2 = g(x)$ be a minimal Weierstrass model of $C$. Then $v(\Delta(g)) = v(\Delta_{\min})$ by definition and $v(\Delta(f)) = v(\Delta(g)) + 10m$ for some integer $n \ge 10$, since changes of model modify the valuation of the discriminant by a factor of $10$. By the hypothesis and \Cref{oggformula}, $110 \vert v(\Delta(f)) - v(\Delta(g))$. As $ v(\Delta_{u}) \le v(\Delta(g))$ the claim follows. 
\end{proof}

\begin{lemma} \label{2Kdisc}
Let $C/K$ be a genus $2$, $p \ne 2$
and suppose $C$ has tame reduction. If the MRM reduction type of $C$ is 
one of 
\textup{2I}${}_0$-$t$,
\textup{2I}${}^*_0$-$t$,
\textup{2I}${}_\ell$-$t$,
\textup{2I}${}^*_\ell$-$t$,
\textup{2IV}${}$-$t$,
\textup{2IV}${}^*$-$t$,
\textup{2III}${}$-$t$,
\textup{2III}${}^*$-$t$,
\textup{2II}${}$-$t$ or
\textup{2II}${}^*$-$t$,
then $v(\Delta_{\text{min}})$ is as claimed in Table \ref{table:potentiallygoodx2Table2} or \ref{table:potentiallymultiplicativex2}.
\end{lemma}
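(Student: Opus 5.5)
For each listed family I would exhibit an explicit integral Weierstrass model whose cluster picture lies in the corresponding equivalence class, and then certify that it is minimal. All these types come from the doubled families (2I${}_\ell$-$t$, 2II-$t$, etc.). Their cluster pictures, as recorded in Tables \ref{table:potentiallygoodx2Table2} and \ref{table:potentiallymultiplicativex2}, are of the form $\cR=\fs_1\sqcup\fs_2$ with $|\fs_1|=|\fs_2|=3$, or the analogous configurations with two size-2 twins swapped by $G$. The Galois action swaps the two halves, and the top cluster $\cR$ has depth $\tfrac12$ or $d_{\cR}$ with a non-integral part. The parameter $t$ governs the relative depths.

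First I would choose the candidate model: the representative marked $\star$ in the table, with $v_c\in\{0,1\}$ and $d_{\cR}$ as small as admissibility permits. I would check integrality of this model using \Cref{thm:M2D2_13}. When $d_{\cR}\ge 0$ and $v_c\ge 0$ the first bullet applies immediately. Otherwise I would take the $G$-stable cluster $\fs=\cR$ in the second bullet and verify the inequality directly from the depths. I would then compute $v(\Delta(f))$ by \Cref{clusterpicdisc} as a linear expression in $t$ and $\ell$.

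Second, for minimality I would apply \Cref{minimaldisclemma}. This needs three inputs: $m$ (the number of components of the MRM, read off from the Namikawa--Ueno special fibre), $n_C$ (from \Cref{cor:g2_conductor}), and $v(\Delta_u)$ (from Ueno's table \cite{ueno}). With these, the congruence $v(\Delta(f))-m-n_C+1\equiv 0 \bmod 11$ and the bound $v(\Delta(f))<v(\Delta_u)+110$ become linear identities and inequalities in the parameters, which I would check family by family.

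The main obstacle is that \Cref{semistabledisc} does not apply to these types, since $d_{\cR}\neq 0$ or there are clusters of size $3>g+1$. Moreover, the congruence in \Cref{minimaldisclemma} may fail for the naive choice. This is presumably why these types are singled out. When it fails, I would first try to find another integral representative in the same equivalence class that has smaller discriminant, for example by applying move (5A)/(5C) with a different $j$ or $h$. If none exists, the fallback is a direct argument. Any change of model changes $v(\Delta)$ by a multiple of $10$, and by \Cref{thm:CP2} every model has a cluster picture in the class. So I would enumerate the cluster pictures in the class with $v(\Delta)$ smaller by $10k$ and show via \Cref{thm:M2D2_13} that none of them is integral. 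The $G$-swap of the two halves forbids $G$-stable proper clusters other than $\cR$, and this should make that check short.
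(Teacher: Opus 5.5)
Your main route through \Cref{minimaldisclemma} fails for these families, and not for the reason you give. The congruence is never the problem for the right candidate. By \Cref{oggformula}, $v(\Delta_{\min})-m-n_C+1=-11\,v(\omega_{\min}/\omega^0)$, so the congruence holds automatically for a minimal model. What fails is the bound. For these types $-v(\omega_{\min}/\omega^0)=t+1$, so \Cref{oggformula2} gives $v(\Delta_{\min})-v(\Delta_u)=10(t+1)$. Every integral model has $v(\Delta(f))\ge v(\Delta_{\min})$, so once $t\ge 10$ no integral model at all satisfies the hypothesis $v(\Delta(f))<v(\Delta_u)+110$. Two things together explain why these types get their own lemma. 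One is this unbounded gap between $\Delta_u$ and $\Delta_{\min}$. The other is that $d_\cR\equiv\tfrac12 \pmod 1$, which no integral shift can make $0$, so \Cref{semistabledisc} is ruled out. The size-$3$ clusters are not what blocks \Cref{semistabledisc}: its condition is $|\fs|>g+1=3$.

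So your ``fallback'' is in fact the whole proof, and it is exactly the paper's argument. Make it the main line and spell it out, as follows.
\begin{itemize}
\item $G$ swaps the two children of $\cR$, and with them the twins inside them when $l>0$. Hence $\cR$ has no $G$-stable child, so moves (5A)--(5C) of \Cref{def:equiv-cpics} are unavailable, and your idea of varying $j$ or $h$ is moot.
\item Only moves (1), (2) and (5D) apply, and these preserve all relative depths. By \Cref{cor:isom-implies-equivalent}, every Weierstrass model of $C$ therefore has the following picture: two swapped size-$3$ clusters of relative depth $t+\delta$, containing twins of relative depth $\tfrac{l}{4}$ when $l>0$; top depth $d_\cR=\tfrac12-k$ for some $k\in\Z$; and arbitrary $v_c$.
\item In \Cref{thm:M2D2_13}, the first bullet covers $k\le 0$. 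When $k\ge 1$, so that $d_\cR\le 0$, the only admissible choice in the second bullet is $\fs=\cR$, $\ft=\emptyset$. Hence a translate is integral if and only if either $k\le 0$ and $v_c\ge 0$, or $k\ge 1$ and $v_c\ge 6k-3$.
\item \Cref{clusterpicdisc} gives $v(\Delta(f))=10v_c+30(\tfrac12-k)+12(t+\delta)+l$. On the integral region this is at least $12(t+\delta)+l+15$, with equality at $k=v_c=0$.
\end{itemize}
This direct minimisation needs neither Ueno's values of $v(\Delta_u)$ nor any mod-$11$ check.
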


\begin{proof}
If $C / K$ has one of these MRM reduction types, then the cluster picture of any Weierstrass model for $C$ is given by 
$$\LBpmpmBb$$
where $k \in \Z$ and $l >0$ only for types $\textup{2I}_\ell$-$t$ and
$\textup{2I}{}^*_\ell$-$t$ (as visible on the third page of Table~\ref{table:potentiallygoodx2Table2} or Table~\ref{table:potentiallymultiplicativex2} and justified by Theorem \ref{thm:CP3}). Suppose $C \colon y^2  = f(x)$ has this cluster picture and leading coefficient $c_f$. Note that $G_K$ must swap the two clusters of size 3. By Theorem \ref{thm:M2D2_13}, 
$f(x)$ is integral if and only if either $v(c_f) \ge 0$ and $k \leq 0$, or $k \geq 1$ and $v(c_f) \geq 6k - 3$. Thus an integral equation has 
$$
v(\Delta(f)) = 10 v(c_f) + l +  12(t + \delta) + 30(-k + \frac{1}{2}) \geq  10\cdot\max(0, 6k-3) + l + 12(t + \delta) + 30(-k + \frac{1}{2}).
$$
This attains its minimum at $k = 0$ and $v(c_f) = 0$, giving $v(\Delta_{\min}) = 12(t + \delta) + l + 15$.
\end{proof}

We are now in a position to determine minimal Weierstrass models and discriminants in all cases.

\begin{theorem}\label{thm:g2_discriminant}
Let $C/K$ be a genus $2$ curve with tame reduction and assume $p \ne 2$. Then $v(\Delta_{\min})$ is given in Tables \ref{table:potentiallygood}-\ref{table:potentiallymultiplicativex2}. Moreover, entries marked with $\star$ (or $\diamond$ or $\diamond \diamond$) indicate a cluster picture realised by a minimal Weierstrass model of $C$; for these, the depth of the maximal cluster is taken to be the minimal possible value in $[0, 1)$ allowed by the parameters, and $v(c_f)$ the minimum of 0 or 1 as allowed by the indicated congruence.
\end{theorem}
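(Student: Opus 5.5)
The proof is a finite verification, organised by the three criteria already available for certifying minimality. By \Cref{bsdfudgeanddiscMRMfns}, $v(\Delta_{\min})$ depends only on the pair (MRM reduction type, potential stable type). By \Cref{thm:CP2}(2) and \Cref{thm:CP3}, this pair corresponds exactly to one specialised column of Tables~\ref{table:potentiallygood}--\ref{table:potentiallymultiplicativex2}. The cluster pictures in that column are precisely the degree $6$ Weierstrass models of $C$, up to integral depth shifts. So for each column I will pick the entry marked $\star$ (or $\diamond$, $\diamond\diamond$), normalised so that $d_{\cR}\in[0,1)$ is minimal and $v_c\in\{0,1\}$ as the congruence allows. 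I then need to check two things: that this picture is integral, and that no integral picture in the same class has smaller discriminant.

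Integrality of the marked picture follows from \Cref{thm:M2D2_13}. In almost all cases $d_{\cR}\ge 0$ and $v_c\ge 0$, so the first condition applies directly. Its discriminant is then read off from \Cref{clusterpicdisc} as $10v_c+30d_{\cR}+\sum_{\fs\ne\cR}|\fs|(|\fs|-1)\delta_{\fs}$, expressed in the family parameters; this gives the table entry. For minimality I will use the cheapest applicable criterion in the following order.
\begin{enumerate}
\item In the semistable and ``balanced'' cases, that is, $d_{\cR}=0$, $v_c=0$ and no cluster of size $>3$ other than $\cR$, \Cref{semistabledisc} gives minimality immediately.
\item For the families treated in \Cref{2Kdisc}, that lemma already gives the result.
\item For all remaining columns I apply \Cref{minimaldisclemma}. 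I take $v(\Delta_u)$ from Ueno's classification \cite{ueno}, $m$ from the Namikawa--Ueno special fibre, and $n_C$ from \Cref{cor:g2_conductor}. I then check the two conditions $v(\Delta(f))<v(\Delta_u)+110$ and $v(\Delta(f))-m-n_C+1\equiv 0 \bmod 11$ symbolically in the parameters.
\end{enumerate}

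If step 3 fails for some family (for instance because the inequality breaks for large parameters), I fall back on the argument of \Cref{2Kdisc}. Every model of $C$ has a cluster picture in the column. I parametrise the free integers in that column (the $j$, $h$, $k$ of moves (5A)--(5C) and the even shifts of $v_c$), impose the integrality inequalities of \Cref{thm:M2D2_13} using the $G_K$-action pinned down by \Cref{rem:K-indep}, and minimise the linear expression from \Cref{clusterpicdisc} over this polyhedral set. The minimum is attained at the marked vertex.

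The main obstacle will be this fallback minimisation in families with several parameters and with orphan-dependent moves, for example the potentially type IV and VII tables. There the integrality condition in \Cref{thm:M2D2_13} involves a choice of $G_K$-stable cluster $\fs$ and child $\ft$, so the feasible region is a union of several pieces. I expect to handle this by enumerating the $G_K$-stable clusters in each shape, which are few since the action is cyclic and determined by the depths, and checking each piece separately. The $v(\omega_{\min}/\omega^0)$ column then follows from \Cref{oggformula2}.
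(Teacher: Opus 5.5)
Your proposal is correct and is essentially the paper's proof. As in the paper, you compute $v(\Delta(f))$ for the marked picture from \Cref{clusterpicdisc} and get integrality from \Cref{thm:M2D2_13}. You then certify minimality by \Cref{semistabledisc} or \Cref{2Kdisc} for the starred entries of Tables \ref{table:potentiallygoodx2Table2}, \ref{table:potentiallygoodxpotentiallymultiplicative} and \ref{table:potentiallymultiplicativex2}, and by \Cref{minimaldisclemma} with Ueno's $v(\Delta_u)$ for everything else.

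The only difference is your fallback minimisation. The paper never needs it, because it asserts that the check in \Cref{minimaldisclemma} succeeds in every remaining case. Note also that with your normalisation $d_{\cR}\in[0,1)$, $v_c\in\{0,1\}$, the first condition of \Cref{thm:M2D2_13} holds in all cases, not just almost all.
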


\begin{proof}

The valuation of the discriminant $\Delta(f)$ is computed from the cluster picture indicated by $\star, \diamond$ or $\diamond\diamond$ using  \Cref{clusterpicdisc}. (Its value is displaying in the $v(\Delta_{\min})$ row in the tables.)

The entries in Tables \ref{table:potentiallygoodx2Table2}, \ref{table:potentiallygoodxpotentiallymultiplicative} and \ref{table:potentiallymultiplicativex2} that are indicated with a $\star$ are all seen to correspond to minimal Weierstrass equations by either \Cref{semistabledisc} or \Cref{2Kdisc}; in particular $\Delta(f)=\Delta_{\min}$.

For all other reduction types, 
the indicated cluster picture is realised by an integral Weierstrass model by \Cref{thm:M2D2_13}, as it has $v_c\ge 0$ and $d_\cR\ge 0$. Here $\Delta(f)$ and $\Delta_u$ satisfy the hypotheses of \Cref{minimaldisclemma}; the value of $\Delta_u$ is taken from the table in \cite[Section 5]{ueno}, where it is denoted by $\delta_x$. Hence these are also minimal Weierstrass equations, and $\Delta(f)=\Delta_{\min}$.

\end{proof}

\begin{corollary} \label{bsdfactorcalc} 
Let $C/K$ be a genus $2$ curve and $p \ne 2$. Then $v(\omega_{\text{min}}/\omega^0)$ is determined by the MRM reduction type and the potential stable type of $C$, and is explicitly given 
in Tables \ref{table:potentiallygood}-\ref{table:potentiallymultiplicativex2}.   
\end{corollary}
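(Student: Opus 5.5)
The plan is to read off $v(\omega_{\min}/\omega^0)$ from the generalised Ogg formula, \Cref{oggformula}, once the other three quantities in it are known. Rearranging gives
$$
11\, v(\omega_{\min}/\omega^0) = n_C - v(\Delta_{\min}) - 1 + m ,
$$
where $m$ is the number of components of the special fibre of the minimal regular model. Each term on the right should be a function of the MRM reduction type, together with the potential stable type where needed.

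\textbf{Tame case.} Suppose first that $C$ has tame reduction.
\begin{enumerate}
\item $m$ is part of the data of the MRM reduction type (Definition~\ref{def:reductiontype}).
\item $n_C$ equals $n_{C,\mathrm{tame}}$ and is given by \Cref{cor:g2_conductor}.
\item $v(\Delta_{\min})$ is given by \Cref{thm:g2_discriminant}; this is the only input that needs the potential stable type.
\end{enumerate}
The determination statement is then \Cref{bsdfudgeanddiscMRMfns}. The explicit table entry is obtained column by column by substituting into the displayed formula, checking along the way that the right-hand side is divisible by $11$, which serves as a consistency check on the tables.

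\textbf{Wild case (which forces $p=3$ or $5$).} The corollary does not assume tameness, so this case needs a separate route. Lemma~\ref{oggformula2} gives
$$
10\, v(\omega_{\min}/\omega^0) = v(\Delta_u) - v(\Delta_{\min}),
$$
and Ueno's table \cite{ueno} gives $v(\Delta_u)$ as a function of the Namikawa--Ueno type, hence of the MRM type by Lemma~\ref{lem:MRMtoNU}. Combining this with Ogg's formula eliminates $v(\Delta_{\min})$:
$$
v(\omega_{\min}/\omega^0) = n_C - v(\Delta_u) - 1 + m .
$$
Here $n_C$ now contains a wild part, which is not visibly determined by the reduction type. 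This is the main obstacle. The first thing I would try is to use that $c(C) = -v(\omega_{\min}/\omega^0)$ (proof of \Cref{oggformula}), and that Liu/Ueno (\cite{Liu}, p.~52 and Remarque~6) classify $c(C)$ purely in terms of the Namikawa--Ueno type in odd residue characteristic. If that is valid, it gives the result for all $p\neq 2$ directly, with the tame computation above serving as an independent check against the tables.

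Failing that, I would restrict the explicit-table claim to the cases covered by the tables. For $p\neq 2$ these include wild curves only through the potentially-good families with $p\in\{3,5\}$, and for those I would cite Ueno's classification of $c(C)$.
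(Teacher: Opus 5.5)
Your proposal is correct and is essentially the paper's proof: the determination (the only real issue being wild curves with $p=3,5$) comes from Liu's Remarque~6 together with $c(C)=-v(\omega_{\min}/\omega^0)$, and the explicit values come from \Cref{oggformula} with the already computed $m$, $n_C$ and $v(\Delta_{\min})$. The paper phrases this as ``since the value depends only on the type, we may assume the reduction is tame'', so your tame computation is what actually supplies the wild entries, not merely a consistency check. Your fallback is unnecessary, and its claim that for $p\neq 2$ wildness occurs only in potentially good families is false: for example, IV-II$_l$ in Table~\ref{table:potentialonenode} and II-I$_l$-$t$ in Table~\ref{table:potentiallygoodxpotentiallymultiplicative} are wild for $p=3$.
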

\begin{proof} 
The first claim follows from \cite{Liu} Remarque 6 and the fact that $c(C) = - v(\omega_{\min}/\omega_0)$, as in the proof of \Cref{oggformula}. In particular, to determine the term explicitly, we may assume that the reduction is tame. The classification now follows from \Cref{oggformula} and the previously computed invariants. 
\end{proof}

\section{Proof of \Cref{thm:main}} \label{mainthmproofsection}
We now complete the proof of our main result, Theorem \ref{thm:main}, on how the reduction type of the minimal regular model of a genus $2$ curve determines the data in Tables \ref{table:potentiallygood} to \ref{table:potentiallymultiplicativex2}. 

\begin{proof}[Proof of Theorem \ref{thm:main}]
The completeness of our classification follows from that of Namikawa and Ueno \cite{namikawaueno}, noting the changes in labels and parameters highlighted throughout the paper, see \Cref{rmk:MRM-NU}.

\begin{itemize}
    \item[(1)] In Corollary \ref{cor:genus2MRNCtoMRM} we proved the $1:1$ correspondence between MRM and MRNC reduction types; 
    \item[(2)] In Theorem \ref{thm:g2_comp_gp} we computed the N{\'e}ron component group for each reduction type family.
    \item[(3)] In  Corollary \ref{cor:g2_conductor} we determined the tame conductor exponent from the 
    MRM reduction type.
    \item[(4)] This follows from explicit verification by applying Saito's criterion (\cite[Thm~3.11]{saito}, \cite[Thm.~10.4.47]{Liu_book}) to each MRNC reduction type. 
    \item[(6)] Theorem \ref{thm:CP1}(2).
  \item[(5), (7)] In \Cref{bsdfudgeanddiscMRMfns} we proved that both $v(\omega_{\min}/ \omega^0)$ and $ v(\Delta_{\min})$ are invariants of the MRM reduction type, and computed them in  \Cref{thm:g2_discriminant} and \Cref{bsdfactorcalc}.
    \item[(8)] \Cref{thm:CP2} and \Cref{thm:CP3}. That we have exhausted all possible cluster pictures for tame genus $2$ curves with degree $6$ models is \Cref{thm:CP3}(1).
\end{itemize}
 
\end{proof}

\addtocontents{toc}{\protect\setcounter{tocdepth}{2}}

\section{Classification: Reduction types and associated invariants}\label{s:tables}

In this section and in Tables \ref{table:potentiallygood}--\ref{table:potentiallymultiplicativex2}, $C$ is a genus $2$ curve over a complete discrete valuation field $K$ with an algebraically closed residue field $k$. The tables enumerate the possible MRM reduction types of $C$, and list the associated invariants of $C$ in terms of this type. Tables \ref{table:potentiallygood}--\ref{table:potentiallymultiplicativex2} display the following data:

\noindent \underline{MRM Reduction type}: the Namikawa--Ueno \cite{namikawaueno} label for the special fibre of the minimal regular model. 
\begin{itemize}
  \item Labels II${}_{t-m}^*$ and 2I${}_m^*$-$t$ have the same MRM reduction type (including $m\!=\!0$). The distinction is used when $p\neq 2$ to indicate different potential stable types. (We claim no distinction between them for $p\!=\!2$.)
  \item \cite{namikawaueno} uses the label II$_{n-p}$ for two different reduction types; we refer to the page number in \cite{namikawaueno} to distinguish them. Similarly for III$_n$. 
  \item We follow Liu's convention \cite{Liu-errata} for type $2\text{I}_0$-$t$, so the number of components is $t\!+\!3$.
  \item We set $t\!=\!-1$ for the cases labelled as $-\alpha$ in \cite{namikawaueno}, e.g. we use I$_0^*$-III$^*$-$(-1)$ for their I$_0^*$-III$^*$-$\alpha$.

  \item We artificially duplicate reduction type I${}^*_l$-I${}_m$-$t$ as I$_l$-I${}^*_m$-$t$ in Table \ref{table:potentialtwonodes}, and similarly in Table \ref{table:potentiallymultiplicativex2}. This is for convenience of determining the MRM reduction type from the cluster picture.
  \item We draw the reader's attention to the errata to the Namikawa--Ueno data listed in \cite{Liu-errata}. 
\end{itemize}

\noindent \underline{Stable type (table title)}: The table name specifies the potential stable type for curve with the MRM types appearing in the table, provided $p\neq 2$ and reduction is tame. 
We use the notation (I)-(VII) from \cite{liu-stabletypes}; see also Table \ref{ss:changeofmodel}.

\vspace{0.5em}

\noindent \underline{MRNC Reduction type}: Label from \cite[Table G2]{timreduction} for the MRNC reduction type; it matches the labelling convention used in T. Dokchitser's online reduction type library \cite{timredlib}. 
(For types in Tables \ref{table:potentiallygoodx2Table2}, \ref{table:potentiallygoodxpotentiallymultiplicative} we only list the MRM label, as the two are virtually identical, e.g. $\text{I}_{\text{g}1}\!\underset{\raisebox{2pt}{$\scriptstyle \smash{t}$}}{-}\!\text{III}$ and I${}_0$-III-$t$.)

\vspace{0.5em}

\noindent \underline{$\#$ Components (MRM)}: the number of components in the special fibre of the minimal regular model. 

\vspace{0.5em}

\noindent \underline{Néron component group}: the component group of the Néron model of Jac$(C)$ over $K$. The shorthand notation $X_l, Y_l$ and $Z_{l,m}$ is defined at the top of the relevant page.

\vspace{0.5em}

\noindent \underline{Tame conductor exponent}: the tame part of the conductor exponent of the Jacobian of $C$.

\vspace{0.5em}

\noindent \underline{Wild reduction iff $p=$} : the residue characteristics for which the reduction type is wild. 

\vspace{0.5em}

\noindent \underline{$v(\Delta_{\min})$}:  valuation of the discriminant of a minimal Weierstrass model (\Cref{def:mindisc}). For each reduction type family, we indicate with $\star, \diamond$ or $\diamond\diamond$ a cluster picture whose underlying Weierstrass model is minimal. The minimal Weierstrass model has $v_c = 0$ or $1$ depending on indicated congruence (0 if either is allowed), and the depth of the maximal cluster the (minimal) value in $[0,1)$ allowed by the parameter values.

\vspace{0.5em}

\noindent \underline{$-v(\omega_{\min}/\omega^0)$}: $\omega_{\text{min}} = \frac{dx}{y} \wedge x\frac{dx}{y}$, for a minimal Weierstrass equation, and $\omega^0$ the Néron exterior form; here $p\!\neq\! 2$. (The quantity $v(\omega_{\min}/\omega^0)$ is negative by \Cref{remarkondiscsinequality} and \Cref{oggformula2}.)
The formula is usually also valid for $p\!=\!2$ with suitable definitions; see \cite{Liu} Remarque 6.

\vspace{0.5em}

\noindent \underline{Cluster pictures}:
These are drawn as explained in \S\ref{subsec:examples}. In our convention the cluster picture data includes $v_c$, the valuation of the leading coefficient of $f(x)$, where $C:y^2\!=\!f(x)$ is the underlying equation.
\begin{itemize}
    \item The depth of the cluster comprising all the roots of $f(x)$ is displayed modulo 1.
    \item Parameters $a,b,j,s\in\Z$ are arbitrary, constrained only by all relative depths being $>0$.
    \item All congruences are taken modulo $2$, e.g. $v_c\equiv k$ denotes $v_c\equiv k \mod{2}$. 
    \item The cluster pictures for quintic polynomials are not displayed separately: these are obtained by taking the cluster pictures that have a cluster of size 5 and removing the root outside that cluster, retaining the depths and $v_c$ (see Definition \ref{def:equiv-cpics}(4) and Proposition \ref{prop:cpic-ABC}(4)).
    \item The value in the cell in the row of a given cluster picture and column of an MRM reduction type is the value of the boxed parameter displayed in the cell to the right of the cluster picture; typically, this is the value of a relative depth that makes a curve with the given cluster picture attain the given MRM type. An unordered tuple is separated by a ``,'', an ordered tuple by a ``;''.
    \item The row for a given cluster picture is sometimes split into two sub-rows, according to the value of the underlined parameter given in the cell to the right of the cluster picture.
\end{itemize}

Table~\ref{ss:changeofmodel} is a guide to the cluster pictures which can occur for each potential stable type when $p\ne 2$ and also explains how the cluster pictures displayed in Tables~\ref{table:potentiallygood}--~\ref{table:potentiallymultiplicativex2} are related by changes of Weierstrass equation. It records only the underlying cluster shapes of size $6$, forgetting the depths and valuation of leading coefficient. When the cluster pictures arise from curves with tame reduction, the arrows and loops, labelled A, B, C, and D, record possible changes induced by the changes of Weierstrass equation in Proposition~\ref{prop:cpic-ABC}(5). Loops indicate changes which preserve the underlying cluster shape but alter the omitted data. 

If several labels appear on one arrow or loop, for example ``$A/C$'', this means that the same change of underlying cluster shape can arise from either of the indicated cases of Proposition~\ref{prop:cpic-ABC}(5), depending on the depth data. For a given cluster picture (with depths and $v_c$), the hypotheses of Proposition~\ref{prop:cpic-ABC} determine which of these changes can actually be performed. Moreover, the table also ignores integral shifts of all depths, and so the corresponding transformations in the tables below are understood up to composing with the shift in Proposition~\ref{prop:cpic-ABC}(2).

To infer the MRM type from the cluster picture requires the curve to be tame. One can use:

\begin{lemma}[Tame reduction criterion]
\label{lem:cluster_pic_tame}
Suppose $p\neq 2$. Let $C:y^2=f(x)$, with $d=\deg(f) =5\textup{ or }6$.
\begin{enumerate}
\item $C/K$ is tame if and only if the splitting field of $f(x)$ is tamely ramified.
\item In particular, if no element of order $p$ in $S_d$ preserves the cluster picture of $C$, then $C/K$ is tame.
\end{enumerate}
\end{lemma}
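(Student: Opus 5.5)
Part (1) rests on the standard fact that a hyperelliptic curve $y^2=f(x)$ with $p\neq 2$ attains semistable reduction over $K(\cR)(\sqrt{c_f})$, or over a field generated by a few more square roots of elements of $K(\cR)$. This can be read off from the semistability criterion of \cite{M2D2}: once all roots are rational and the relevant depths and leading coefficient valuations are suitably even, the curve is semistable. Square-root extensions are tame because $p\neq 2$. Hence if the splitting field $K(\cR)$ is tamely ramified, so is a field over which $C$ becomes semistable, and $C$ has tame reduction.

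For the converse, suppose $C$ becomes semistable over a tame extension $F/K$. Since $k$ is algebraically closed, $F/K$ is cyclic of degree $e$ prime to $p$, and wild inertia $P\subset G_K$ acts trivially on $F$. The step I expect to be the main obstacle is showing that $P$ also fixes the roots. I would argue that semistability over $F$ forces wild inertia to act trivially on $T_\ell(\Jac C)$, by Grothendieck's unipotence criterion together with the fact that a pro-$p$ group acting unipotently has trivial image. Next, $\Jac(C)[2]$ is identified, as a Galois module, with even subsets of $\cR$ modulo complement (for $d=5$, adjoin $\infty$). So $P$ acts trivially on $\Jac(C)[2]$. Finally, an element of $S_d$ acting trivially on this $\mathbb{F}_2$-module is trivial when $g=2$: the action on the $2$-element subsets $\{r,r'\}$ detects the permutation, because $\{r,r'\}\sim\{r,r'\}^c$ only identifies a pair with a $4$-set, not with another pair. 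Therefore $P$ fixes $\cR$ pointwise, and $K(\cR)/K$ is tame.

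For part (2), note that $\Gal(K(\cR)/K)$, as a subgroup of $S_d$, acts on $\cR$ preserving valuations of differences of roots. It therefore preserves the cluster picture, since $v_c$ is fixed trivially. If $K(\cR)/K$ were wildly ramified, its inertia group, which is all of $\Gal(K(\cR)/K)$ since $k$ is algebraically closed, would have order divisible by $p$. By Cauchy's theorem it would then contain an element of order $p$ in $S_d$ preserving the cluster picture, contradicting the hypothesis. So the splitting field is tame, and part (1) gives tameness of $C$.
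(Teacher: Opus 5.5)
Your argument is correct. For (2) it is exactly what the paper means by ``Clear'': $\Gal(K(\cR)/K)$ embeds in $S_d$, it preserves the cluster picture, and it equals its own inertia group. Cauchy's theorem then produces the forbidden element of order $p$.

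For (1), however, the paper gives no argument and simply cites \cite[Thm.~10.3(4)]{M2D2}, whereas you reconstruct the result:
\begin{itemize}
\item the forward direction from the M2D2 semistability criterion, applied over a square-root extension of $K(\cR)$;
\item the converse from Grothendieck's unipotence criterion with $\ell=2$ (a pro-$p$ group cannot act through nontrivial unipotents on a $\Z_2$-lattice), together with the faithfulness, for $g=2$, of the action of $\Gal(K(\cR)/K)$ on $\Jac(C)[2]\cong\{\text{even subsets of }\cR\}/\{\emptyset,\cR\}$.
\end{itemize}

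The citation is shorter. Your route makes visible where each hypothesis enters: $p\neq 2$ makes the auxiliary quadratic extensions tame and makes $\ell=2$ an admissible prime. The condition $g\ge 2$ is what makes the action on $\Jac(C)[2]$ faithful; for $g=1$ a pair and its complement are both pairs, and the Klein four-group acts trivially. Your forward direction still depends on M2D2 through the semistability criterion, so the genuine gain is in the converse.

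One small imprecision: $K(\cR)(\sqrt{c_f})$ alone need not suffice. A principal cluster of size $3$ and depth $1$, with all roots in $K$ and $c_f=1$, already has odd $\nu_{\fs}$. The clean statement is that $C$ is semistable over $K(\cR)(\sqrt{\varpi})$ for a uniformiser $\varpi$ of $K(\cR)$, where all depths and $v(c_f)$ become even. Your hedge about ``a few more square roots'' covers this, but it is worth stating precisely.
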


\begin{proof}
(1)  \cite[Thm.~10.3(4)]{M2D2}. (2) Clear. 
\end{proof}

\newpage

\thispagestyle{empty}

\textwidth 25.5cm
\oddsidemargin -2.3cm
\evensidemargin -2.3cm
\textheight 27cm
\topmargin -2.2cm

\newcolumntype{?}{!{\vrule width 1.5pt}}
\newcommand{\htline}{\noalign{\hrule height 1.5pt}}

\robustify{\potgoodpicture}

\begin{landscape}

\subsection{Table 1 (Stable Type I\phantom{VI} if tame and $p \neq 2$ 
\smash{\potgoodpicture})}

\label{table:potentiallygood}
\vphantom{blah}

\noindent {\small{Entries with a $\star$ indicate a minimal Weierstrass model (setting $s=0$ where relevant).}} 
$$
\hspace{-1cm}
\begin{array}{?c|cc?c|c|c|c|c|c|c|c|c|c|c|c|c|c|c|c|c|c?} 
\htline      
      \multicolumn{3}{?c?}{\text{MRM Reduction type}} %
  & \text{I}_{0-0-0} & \text{I}^*_{0-0-0} &\text{II} & \text{III}  & \text{IV}  &  \text{V} &  \text{V}^{*} & \text{VI}  &  \text{VII} & \text{VII}^{*}  & \text{VIII-}1  & \text{VIII-}2 & \text{VIII-}3  &  \text{VIII-}4 &  \text{IX-}1 &  \text{IX-}2 &  \text{IX-}3 &  \text{IX-}4   \\ \htline  
\multicolumn{3}{?c?}{\text{MRNC Reduction type}} & \textup{I}_{\textup{g}2} & 2^{1,1,1,1,1,1} & \textup{D}_{\textup{g}1} & 3^{1,1,2,2} & 6^{2,4,3,3} & 6^{1,1,4}& 6^{5,5,2}& 4^{1,3,2,2}& 8^{1,3,4}&8^{5,7,4}& 10^{1,4,5} & 10^{7,8,5}& 10^{3,2,5}& 10^{9,6,5}& 5^{1,2,2} & 5^{1,1,3}& 5^{2,4,4}& 5^{3,3,4}\\ 
\hline 
\multicolumn{3}{?c?}{{\text{\# Components (MRM) }} }
& 1 &7&3&7&6& 2 & 12  & 7  & 2 & 12 & 1 & 9 & 4 & 13 & 5 & 3 & 11 & 9 \\  
\hline
\multicolumn{3}{?c?}{\text{N\'eron component group}} 
    & (0) & \bigl(\frac{\Z}{2\Z}\bigr)^4 &  (0) 
    & \frac{\Z}{3\Z}\!\times\!\frac{\Z}{3\Z}
    & (0) & \frac{\Z}{3\Z} & \frac{\Z}{3\Z} 
    & \frac{\Z}{2\Z}\!\times\!\frac{\Z}{2\Z}
    & \frac{\Z}{2\Z} & \frac{\Z}{2\Z} & (0) & (0) & (0) & (0) & \frac{\Z}{5\Z} & \frac{\Z}{5\Z} & \frac{\Z}{5\Z} & \frac{\Z}{5\Z} \\ 
    \hline 
\multicolumn{3}{?c?}{\text{Tame conductor exponent}} &0&4&2&4&4&4&4&4&4&4&4&4&4&4&4&4&4&4 \\ \hline 
\multicolumn{3}{?c?}{\text{Wild reduction iff $p=$}} &-&2&2&3&2,3&2,3&2,3&2&2&2&2,5&2,5&2,5&2,5&5&5&5&5 \\ \hline
\multicolumn{3}{?c?}{-v(\omega_{\text{min}} / \omega^0) \text{ \hspace{1em} if $p \not=2$}} 
& 0 & 0 &1 & 0 & 1 & 0 & 0 & 0 & 0 & 0 & 0 & 0 & 1 & 0 & 0 & 0 & 0 & 0  \\ 

\htline 
  \multicolumn{21}{?c?}{\textup{Entries below this line only valid for curves with tame reduction and }p\neq 2} 
  
  \\  
\htline 
\multicolumn{3}{?c?}{v(\Delta_{\text{min}})}&0&10&15&10&20&5&15&10&5&15&4&12&18&16&8&6&14&12 \\
\htline
 \multicolumn{21}{?c?}{\textup{All possible cluster pictures for reduction type \rlap{\qquad\qquad(displayed value is $\delta$)}}} \\ \htline
\smash{\raisebox{-6pt}{\LBpgA}} & \smash{\raisebox{-10pt}{\boxed{\delta}}} & \underline{v_c \equiv 0} & 0^{\sstar} &  & \frac{1}{2}^{\sstar}  & \frac{1}{3}^{\sstar} \ \text{or} \ \frac{2}{3} &   & \frac{1}{6}^{\sstar} & \frac{5}{6} & & & &  & &  &  & \frac{3}{5} & \frac{1}{5}^{\sstar}& \frac{4}{5}  & \frac{2}{5}^{\sstar} \\
          & & \underline{v_c \equiv 1}  & & 0^{\sstar}  &  {\frac{1}{2}} &  & {\frac{1}{3}^{\sstar}  \textup{ or }   \frac{2}{3}} &{\frac{5}{6}}  & \frac{1}{6}^{\sstar} &&&& \frac{4}{5}& \frac{2}{5}&\frac{3}{5}& \frac{1}{5}^{\sstar}  &&&& \\ \hline    
\smash{\raisebox{-10pt}{\LBpgB}} &  \smash{\raisebox{-10pt}{\boxed{\delta}}} & \underline{v_c \equiv s} & 0 &  & & & & & &\frac{1}{2}^{\sstar} & \frac{1}{4}^{\sstar} \textup{ or } \frac{3}{4}& & \frac{1}{5}^{\sstar} & \frac{3}{5}^{\sstar} &  & & \frac{2}{5}^{\sstar} & \frac{4}{5}& & \\
& & \underline{v_c \not\equiv s } &  & 0 & & & & & &\frac{1}{2}  & & \frac{1}{4}^{\sstar} \textup{ or } \frac{3}{4}&  &  &  \frac{2}{5}^{\sstar} & \frac{4}{5} &  & & \frac{1}{5}^{\sstar} & \frac{3}{5} \\ 
\htline 
\end{array}
$$
\end{landscape}

\robustify{\potonenodepicture}

\newpage 
\thispagestyle{empty}

\textwidth 25.5cm
\oddsidemargin -2.3cm
\evensidemargin -2.3cm
\textheight 27cm
\topmargin -2.5cm

\begin{landscape}

\subsection{Table 2 (Stable Type II\phantom{V} if tame and $p\neq 2$ \potonenodepicture)}

\label{table:potentialonenode}

\vphantom{blah}

\noindent 
{\small{$X_l$ denotes $\frac{\Z}{2\Z}\!\times\! \frac{\Z}{2\Z}$ if $l$ is even, and $\frac{\Z}{4\Z}$ if $l$ is odd.}}\\
\noindent 
{\small{Entries with $\dagger$: if $l\!=\!-1$, decrease $\#$Components by 1, increase $v(\Delta_{\min})$ by 10, set $-v(\omega_{\text{min}}/\omega^0)\!=\!1$.}} \\
\noindent
{\small{Entries with $\star$ indicate a minimal Weierstrass model (setting $j=0$ and $s=1$ where relevant), superceded by $\diamond$ if $l\!=\!-1$ and $\diamond\diamond$ if $l\!=\!0$.}} 
$$
\hspace{-1cm}
\begin{array}{?c|cc?c|c|c|c|c|c|c|c|c|c|c|c?} \htline  
\multicolumn{3}{?c?}{\text{MRM Reduction type}} & \text{I}_{l-0-0} & \text{I}_{l-0-0}^* &\text{III-II}_l & \text{III}^*\text{-II}_l^*  & \text{IV-II}_l  &  \text{II}^*\text{-II}_l^* &  \text{II}_{(l+1)-0} & \text{II}_{(l+1)-0}^*  &  \text{IV}^*\text{-II}_{l+1} & \text{II-II}_{l+1}^*  & \text{III}^*\text{-II}_{l+1}  & \text{III-II}_{l+1}^*    
\\[-0.5em] 
\multicolumn{3}{?c?}{\textup{(family parameter)}} & l > 0 & l > 0 & l \ge 0& l \ge 0& l \ge 0& l \ge 0& l \ge 0& l \ge 0& l \ge -1 & l \ge -1 & l \ge -1& l \ge -1 
\\  \htline 
\multicolumn{3}{?c?}{\text{MRNC Reduction type}} & \text{I}_{l,\text{g}1} & \text{I}^{*}_{0, l\D} & \text{III}_{l} & \text{III}^{*}_{l\D} & \text{IV}_l & \text{II}^{*}_{l\D} & \text{I}_{0,l}^{*} & [2]\text{I}_{\text{g}1, (l+1)\D} &\text{IV}^{*}_{l} & \text{II}_{(l+1)\D} & \text{III}^{*}_{l } &  \text{III}_{(l+1)\D} \\
\hline
\multicolumn{3}{?c?}{{\text{\# Components (MRM) }}}
         & l & l + 7 & l+1 & l + 10 & l + 2 & l + 11 & l + 4 & l+4 & l+6^{\dagger} & l+5 & l+7^{\dagger} & l+6\\ \hline
\multicolumn{3}{?c?}{\text{N\'eron component group}}
         & \frac{\Z}{l\Z} & \frac{\Z}{2\Z} \times \frac{\Z}{2\Z} \times  X_l &  \frac{\Z}{(2l+1)\Z} &\frac{\Z}{8\Z} &\frac{\Z}{(3l+2) \Z}& X_l & \frac{\Z}{(4l+4)\Z} & (0) & \frac{\Z}{(3l+4) \Z} & X_l & \frac{\Z}{(2l+3) \Z} & \frac{\Z}{8 \Z} \\ \hline
         \multicolumn{3}{?c?}{\text{Tame conductor exponent}} &1&4&3&4&3&4&3&2&3&4&3&4 \\ \hline 
     \multicolumn{3}{?c?}{\text{Wild reduction iff} \ p=} & - & 2 & 2 & 2 & 3 & 2,3 & 2,3 & 2 & 3 & 2,3 & 2 & 2 \\ \hline
\multicolumn{3}{?c?}{-v(\omega_{\text{min}} / \omega^0) \text{ \hspace{1em} if $p \not=2$}} & 0 & 0 & 0 & 0 & 0 & 0 & 0 & 1 & 0^{\dagger} & 0 & 0^{\dagger} & 0 \\    
         \htline 
\multicolumn{15}{?c?}{\textup{Entries below this line only valid for curves with tame reduction and }p\neq 2}  \\  \htline

\multicolumn{3}{?c?}{v(\Delta_{\text{min}})}& l & l+10 & l+3& l+13 & l+4 & l+14 & l+6 & l+16 & l+8^{\dagger} & l+8 & l+9^{\dagger}& l+9 \\   
\htline 
\multicolumn{15}{?c?}{\textup{All possible cluster pictures for reduction type \rlap{\qquad\qquad(displayed value is $\delta$)}}}  \\  \htline
\smash{\raisebox{-10pt}{\LBonA}} &
\smash{\raisebox{-10pt}{\boxed{\delta}}} & \underline{v_c\equiv 0} & 0^{\sstar} & & & \frac{1}{4} & & & \frac{1}{2}&& &  & &\frac{3}{4}^{\sstar} \\
& & \underline{v_c\equiv 1} &  &0^{\sstar}  & \frac{1}{4} &  &&&& \frac{1}{2}& & & \frac{3}{4}^{\mathrlap{\diamond}} & \\ \hline  
\smash{\raisebox{-10pt}{\LBonB}} &
\smash{\raisebox{-10pt}{\boxed{\delta}}} & \underline{v_c\equiv l} & 0 & & \frac{1}{4} & &  \frac{1}{3}& & \frac{1}{2}&  & \frac{2}{3}^{\mathrlap{\diamond\diamond}} & & \frac{3}{4}^{\mathrlap{\diamond\diamond}} &   \\
& & \underline{v_c\not\equiv l} & & 0 & & \frac{1}{4}&  &\frac{1}{3} & & \frac{1}{2} & & \frac{2}{3} & &  \frac{3}{4}  \\ \hline
\smash{\raisebox{-10pt}{\LBonC}} &
\smash{\raisebox{-10pt}{\boxed{\delta}}} & \underline{v_c\equiv 0}  & 0 & & \frac{1}{4}^{\sstar} & &  \frac{1}{3}^{\sstar}& & \frac{1}{2}^{\sstar}& & \frac{2}{3}^{\sstar} & & \frac{3}{4}^\sstar &  \\
 & & \underline{v_c\equiv 1} & & 0 & &\frac{1}{4}^{\sstar} & & \frac{1}{3}^{\sstar}& & \frac{1}{2}^{\sstar} & &\frac{2}{3} & &\frac{3}{4} \\ \hline
\smash{\raisebox{-10pt}{\LBonD}} & \smash{\raisebox{-10pt}{\boxed{\delta}}} & \underline{v_c\equiv s}  & 0 & & & & & \frac{1}{3} & & & \frac{2}{3}^{\mathrlap{\diamond}} & & & \\
& & \underline{v_c\not\equiv s} & & 0 & & & \frac{1}{3} & & & & & \frac{2}{3}^{\sstar} & & \\ \hline   
\smash{\raisebox{-3pt}{\LBonE}} & \smash{\raisebox{-10pt}{\boxed{\delta}}} &
\underline{v_c \equiv s }& 0 & & \frac{1}{4} & &  \frac{1}{3}& & \frac{1}{2}& & \frac{2}{3} & & \frac{3}{4} &  \\
l \textup{ even} & & \underline{v_c \not\equiv s } & & 0 & & \frac{1}{4}& & \frac{1}{3} & & \frac{1}{2}& & \frac{2}{3} & & \frac{3}{4} \\ \htline 
\end{array}
$$
\end{landscape}

\robustify{\pottwonodespicture}

\newpage
\thispagestyle{empty}

\textwidth 25.5cm
\oddsidemargin -2.3cm
\evensidemargin -2.3cm
\textheight 27cm
\topmargin -2.5cm

\begin{landscape}
\subsection{Table 3 (Stable Type III\phantom{I} if tame and  $p\neq 2$ \pottwonodespicture)}
\label{table:potentialtwonodes}
\hfill 
{\small{$Z_{l,m}$ denotes $\frac{\Z}{4m\Z}$ if $l m$ is even and $\frac{\Z}{2 \Z} \!\times\! \frac{\Z}{2m \Z}$ if $lm$ is odd.}}

\noindent{\small{Entries with a $\star$ indicate a minimal Weierstrass model (setting $j=0$ and $s=0$ where relevant). \hfill $X_l$ denotes $\frac{\Z}{2\Z}\!\times\! \frac{\Z}{2\Z}$ if $l$ is even, and $\frac{\Z}{4\Z}$ if $l$ is odd.}}
$$
\hspace{-1cm}
\begin{array}{? c | cc  ? c| c| c| c| c  c?}\htline
\multicolumn{3}{?c?}{\text{MRM Reduction type }} & \mathrm{I}_{l-m-0} & \mathrm{I}^*_{l-m-0} & 2\mathrm{I}_l\text{-}0 & \mathrm{III}_{l} (p182) & \phantom{ha}\mathrm{II}_{m-l}(p182)\phantom{ha} & \phantom{ha}\mathrm{II}_{l-m}(p182)\phantom{ha} 
\\[-0.8em] 
\multicolumn{3}{?c?}{\textup{(family parameters)}} & l,m>0& l,m>0 & l>0 & l>0 & \multicolumn{2}{c?}{l,m>0} 
\\[-0.2em]  \htline
\multicolumn{3}{?c?}{\text{MRNC Reduction type}} & \mathrm{I}_{l,m}& \mathrm{I}_{l, m\D}^* = \mathrm{I}_{m, l\D}^*  & \D_{\{2-2 \}l} & 4_{l\D}^{1,3} &  \mathrm{I}_{l, m-1}^* & \mathrm{I}_{m,l-1}^* \\ \hline
\multicolumn{3}{?c?}{\# \text{ Components (MRM)}} & l + m -1  & l + m + 7   & l + 2 &  l + 7 & \multicolumn{2}{c?}{l + m + 3} \\[-0.2em] \hline
\multicolumn{3}{?c?}{\text{N\'eron component group}} & \frac{\mathbb{Z}}{l \mathbb{Z}} \times \frac{\mathbb{Z}}{m \mathbb{Z}} & X_l \times X_m & \frac{\mathbb{Z}}{l \mathbb{Z}} & X_l & Z_{l,m}& Z_{m,l}\\ \hline 
\multicolumn{3}{?c?}{\text{Tame conductor exponent}}  &2  & 4 &3 &4 & \multicolumn{2}{c?}{3} \\[-0.2em] \hline  
\multicolumn{3}{?c?}{\text{Wild reduction iff} \ p =}  &-&2&2&2& \multicolumn{2}{c?}{2} \\[-0.1em]
 \hline
\multicolumn{3}{?c?}{-v(\omega_{\text{min}} /   \omega^0 ) \text{ \hspace{1em} if $p \not=2$}} & 0  & 0 & 1 & 0  & \multicolumn{2}{c?}{0} \\[-0.2em] 
\htline
\multicolumn{9}{?c?}{\textup{Entries below this line only valid for curves with tame reduction and }p\neq 2}  \\[-0.1em]
\htline 
\multicolumn{3}{?c?}{v(\Delta_{\text{min}})}  & l + m  & l + m + 10 & l + 15 & l + 10 & \multicolumn{2}{c?}{l + m + 5}   \\ \htline 
\multicolumn{9}{?c?}{\textup{All possible cluster pictures for reduction type\rlap{\quad\qquad(displayed value is $v_c$ mod 2)}}}  \\ [-0.1em]
\htline 
\smash{\raisebox{-1pt}{\LBtwonAa}} & \boxed{v_c \text{ mod } 2}& & 0^{\sstar} & 1^{\sstar} & & & &  \\[0.5em] \hline 
\LBtwonAb & \boxed{v_c \text{ mod } 2}& & & & 0^{\sstar}\>\text{ or } 1 & & & \\ \hline 
\smash{\raisebox{-5pt}{\LBtwonBa}} & \smash{\raisebox{-8pt}{\boxed{v_c \text{ mod } 2}}}& \underline{m \equiv 0} & 0 & 1  & & & &  \\[-1em] 
& & \underline{m \equiv 1}& & & & & 0 & 1\\ \hline
\smash{\raisebox{-5pt}{\LBtwonBb}} & \smash{\raisebox{-8pt}{\boxed{v_c \text{ mod } 2}}}& \underline{m \equiv 0}&  &  & & & 1 & 0 \\[-1em] 
& & \underline{m \equiv 1}&  1 & 0& & & & \\ \hline
\smash{\raisebox{-1pt}{\LBtwonCa}} & \boxed{v_c \text{ mod } 2}& & 0 & 1 & & & & \\[0.8em] \hline
\smash{\raisebox{-1pt}{\LBtwonCb}} & \boxed{v_c \text{ mod } 2}& & & & & & 0^{\sstar} & 1 \\[0.8em] \hline 
\smash{\raisebox{-1pt}{\LBtwonDa}} & \boxed{v_c \text{ mod } 2}& & s & s+1  & & & & \\[0.8em] \hline 
\smash{\raisebox{-1pt}{\LBtwonDb}} & \boxed{v_c \text{ mod } 2}& & &  & & 0^{\sstar}\>\textup{ or }1 & & \\[0.8em] \hline 
\smash{\raisebox{-5pt}{\LBtwonE}} & \smash{\raisebox{-8pt}{\boxed{v_c \text{ mod } 2}}} & \underline{m \equiv 0}& s & s+1 & & & &  \\[-1em] 
& & \underline{m \equiv 1}& & & & & s & s+1 \\ \htline
\end{array}
$$
\end{landscape}


\robustify{\potthreenodespicture}

\newpage
\thispagestyle{empty}

\textwidth 25.5cm
\oddsidemargin -2.3cm
\evensidemargin -2.3cm
\textheight 27cm
\topmargin -2.5cm

\begin{landscape}
\subsection{Table 4 (Stable Type IV\phantom{I} if tame and $p\neq 2$ \smash{\raisebox{-3pt}{\potthreenodespicture}})}
\label{table:potentialthreenodes}

In this table the family parameters are $l,m,n>0$. \hfill {\small{$X_l$ denotes $\frac{\Z}{2\Z}\!\times\! \frac{\Z}{2\Z}$ if $l$ is even, and $\frac{\Z}{4\Z}$ if $l$ is odd.}}

\noindent{{\small{Entries with a $\star$ indicate a minimal Weierstrass model (setting $j=0$ where relevant). \hfill $Y_l$ denotes $\frac{\Z}{3\Z}\times \frac{\Z}{3\Z}$ if $3|l$, and $\frac{\Z}{9\Z}$ if $3\nmid l$.}}}
$$
\hspace{-1cm}
\begin{array}{?c | c ? c| c| c| c| c| c?}
\htline 
 \multicolumn{2}{?c?}{\text{MRM Reduction type }} & \text{I}_{l-m-n} & \text{I}^{*}_{l-m-n} & \text{II}_{l-m} (p183) & \text{II}^{*}_{l-m} & \text{III}_l (p184) & \text{III}_{l}^{*}  \\
     \htline 
\multicolumn{2}{?c?}{ \text{MRNC Reduction type }} & 
    \text{I} \underset{\raisebox{2pt}{$\scriptstyle \smash{l}$}}{-}\underset{\raisebox{2pt}{$\scriptstyle \smash{m}$}}{-}\underset{\raisebox{2pt}{$\scriptstyle \smash{n}$}}{-}  \text{I} & [2]\text{I}_{l\D, m\D, n\D} &
    \D\underset{\raisebox{2pt}{$\scriptstyle \smash{l\text{-}1}$}}{-}\overset{{\raisebox{2pt}{$\scriptstyle \smash{2\text{-}2}$}}}{\underset{\raisebox{2pt}{$\scriptstyle \smash{m}$}}{\relbar}}\D
    & [2]\text{I}_{m, l\D}
   & \text{T}\overset{{\raisebox{2pt}{$\scriptstyle \smash{3\text{-}3}$}}}{\underset{\raisebox{2pt}{$\scriptstyle \smash{l}$}}{\relbar}} \text{T} & [2]\text{T}_{\{ 6 \}l\D} \\    
     \hline 
\multicolumn{2}{?c?}{\text{\# Components (MRM) }} & l + m + n - 1  & l + m + n + 7 &  l + m + 3 & l +m + 2 & l+7 & l + 6   \\ 
     \hline
\multicolumn{2}{?c?}{\text{N\'eron component group}} & \frac{\mathbb{Z}}{\frac{lm + mn + nl}{\text{gcd}(l,m,n)}\mathbb{Z}} \times \frac{\mathbb{Z}}{\text{gcd}(l,m,n)\mathbb{Z}} & X_{\frac{lm+mn+nl}{\text{gcd}(l,m,n)}} \times X_{\text{gcd}(l,m,n)} &\frac{\mathbb{Z}}{(4l + m)\mathbb{Z} } & \frac{\mathbb{Z}}{m\mathbb{Z}} & Y_l & (0) \\
     \hline
     \multicolumn{2}{?c?}{\text{Tame conductor exponent}} & 2 & 4 & 3 &3 &4 &4 \\ \hline 
       \multicolumn{2}{?c?}{\text{Wild reduction iff} \ p=} & - & 2  & 2  & 2  & 3 & 2,3  \\  \hline
 \multicolumn{2}{?c?}{-v(\omega_{\text{min}}/\omega^0) \text{ \hspace{1em} if $p \not=2$}} & 0   & 0  & 0   & 1  & 0   & 1     \\ 
\htline      
\multicolumn{8}{?c?}{\textup{Entries below this line only valid for curves with tame reduction and }p\neq 2}  \\
     \htline 
\multicolumn{2}{?c?}{v (\Delta_{\text{min}})} & l + m + n & l + m + n + 10 & l+m+5 & l+m+15 & l+10 & l+20 \\ 
\htline
\multicolumn{8}{?c?}{\textup{All possible cluster pictures for reduction type\rlap{\qquad\qquad(displayed value is $v_c$ mod 2)}}}  \\
     \htline 
\LBthreenAa & \boxed{v_c \textup{ mod }2}  &  0^{\sstar}  & 1^{\sstar} & & & & \\
\hline 
\LBthreenAb & \boxed{v_c \textup{ mod }2} & & &   0 &   1 & & \\ \hline
\>\>\>\>\>\> \LBthreenAc & \boxed{v_c \textup{ mod }2} &  & & & &  0^{\sstar} &   1^{\sstar} \\ \hline
\>\LBthreenBa  & \boxed{v_c \textup{ mod }2} & l & l+1  & &  & &\\ \hline
\>\>\>\>\>\>\LBthreenBb & \boxed{v_c \textup{ mod }2} &&   & l+1 &   l& & \\  \hline 
\LBthreenCa & \boxed{v_c \textup{ mod }2} & 
       0 &   1 & & & & \\ \hline 
\LBthreenCb & \boxed{v_c \textup{ mod }2} & 
     & & 0^{\sstar} &   1^{\sstar} & & \\ \hline 
\begin{tabular}[x]{@{}c@{}} \LBthreenDa \\ [-0.8em] $l$ \textup{ even} \end{tabular}  
      &\boxed{v_c \textup{ mod }2}  &   s & s+1 & & & & \\[-0.4em] \hline
\begin{tabular}[x]{@{}c@{}} \LBthreenDb \\ [-0.8em] $l$ \textup{ odd} \end{tabular} & 
    \boxed{v_c \textup{ mod }2}  & & &  s & s + 1 & & \\ 
    \htline
\end{array}
$$
\end{landscape}


\robustify{\potgoodtimestwopicture}

\newpage 
\thispagestyle{empty}
\textwidth 25.5cm
\oddsidemargin -2.3cm
\evensidemargin -2.3cm
\textheight 27cm
\topmargin -1.65cm

\begin{landscape}
\subsection{Table 5 (Stable Type V\phantom{II} if tame and $p\neq 2$ \smash{\potgoodtimestwopicture})}
\label{table:potentiallygoodx2Table2}
(This table spans three pages.)

\noindent{\small{Entries with $\dagger$: if $t\!=\!-1$, decrease $\#$Components by 1, increase $v(\Delta_{\min})$ by 10, set $v(\omega_{\text{min}}/\omega^0)\!=\!0$.}} 

\noindent{\small{Entries with a $\star$ indicate a minimal Weierstrass model (superceded by $\diamond$ when $t=-1$).}}
$$
\hspace{-0.8cm}
\begin{array}{?c | c c? c|c|c|c|c|c|c|c|c|c|c|c|c|c|c|c|c| c?} \htline
\multicolumn{3}{?c?}{\text{MRM/MRNC Reduction type }}
 & 
\!\!\text{I}_0\text{-I}_0\text{-}t\!\! & 
\!\!\text{I}_0\text{-I}_0^{*}\text{-}t\!\! & 
\!\!\text{I}_{0}^{*}\text{-I}_{0}^{*}\text{-}t\!\! & 
\!\!\text{I}_{0}\text{-III}\text{-}t\!\! & 
\!\!\text{I}_{0}^{*}\text{-III}^{*}\!\!\text{-}t\!\! & 
\!\!\text{I}_{0}^{*}\text{-III}\text{-}t\!\! & 
\!\!\text{I}_{0}\text{-III}^{*}\!\!\text{-}t\!\! &  
\!\!\text{I}_{0}\text{-II}\text{-}t\!\! & 
\!\!\text{I}_{0}^{*}\text{-}\text{IV}^{*}\!\!\text{-}t\!\! & 
\!\!\text{I}_{0}^{*}\text{-}\text{II}\text{-}t\!\! & 
\!\!\text{I}_{0}\text{-}\text{IV}^{*}\!\!\text{-}t\!\! & 
\!\!\text{I}_{0}\text{-}\text{IV}\text{-}t\!\! & 
\!\!\text{I}_{0}^{*}\text{-}\text{II}^{*}\!\!\text{-}t\!\! & 
\!\!\text{I}_{0}^{*}\text{-}\text{IV}\text{-}t\!\! & 
\!\!\text{I}_{0}\text{-}\text{II}^{*}\!\!\text{-}t\!\! & 
\!\!\text{III-}\text{III-}t\!\! & 
\!\!\text{III}^{*}\!\text{-}\text{III}^{*}\!\!\text{-}t\!\! & 
\!\!\text{III-}\text{III}^{*}\!\!\text{-}t \!\! 
\\[-0.5em]
\multicolumn{3}{?c?}{\text{(family parameter)}} & t > 0 & t \ge 0& t \ge 0& t \ge 0& \!t \ge -1\! & t \ge 0 & t \ge 0 & t \ge 0& \!t\ge -1\! & t \ge 0 & t \ge 0 & t \ge 0& \!t \ge -1\! &t \ge 0 & t \ge 0 & t \ge 0 & \!t \ge -1\! & t \ge 0\\ \htline
 \multicolumn{3}{?c?}{\text{\# Components (MRM)}} & t\!+\!1 & t\!+\!5 & t\!+\!9 & t\!+\!2 & \!t\!+\!12^{\dagger}\! & t\!+\!6 & t\!+\!8 & t\!+\!1 & \!t\!+\!11^{\dagger}\! & t\!+\!5 & t\!+\!7 & t\!+\!3 & \!t\!+\!13^{\dagger}\! & t\!+\!7 & t\!+\!9 & t\!+\!3 & \!t\!+\!15^{\dagger}\! & t\!+\!9 \\
\hline
 \multicolumn{3}{?c?}{\text{N\'eron component group}} &
(0)&
\!\!\frac{\Z}{2\Z} \!\times\! \frac{\Z}{2\Z}\!\!&
\!\left( \frac{\Z}{2\Z} \right)^{\!4}\! &
\!\frac{\Z}{2\Z}\! &
\!\left( \frac{\Z}{2\Z} \right)^{\!3}\! &
\!\left( \frac{\Z}{2\Z} \right)^{\!3}\! &
\!\frac{\Z}{2\Z}\!&
(0)&
\!\!\frac{\Z}{2\Z} \!\times\! \frac{\Z}{6\Z}\!\!&
\!\!\frac{\Z}{2\Z} \!\times\! \frac{\Z}{2\Z}\!\!&
\frac{\Z}{3\Z}&
\frac{\Z}{3\Z}&
\!\!\frac{\Z}{2\Z} \!\times\! \frac{\Z}{2\Z}\!\!&
\!\!\frac{\Z}{2\Z} \!\times\! \frac{\Z}{6\Z}\!\!&
(0)&
\!\!\frac{\Z}{2\Z} \!\times\! \frac{\Z}{2\Z}\!\!&
\!\!\frac{\Z}{2\Z} \!\times\! \frac{\Z}{2\Z}\!\!& 
\!\!\frac{\Z}{2\Z} \!\times\! \frac{\Z}{2\Z}\!\! \\
\hline
   \multicolumn{3}{?c?}{\text{Tame conductor exponent}} &0&2&4&2&4&4&2&2&4&4&2&2&4&4&2&4&4&4 \\ \hline
   \multicolumn{3}{?c?}{\text{Wild reduction iff} \ p=} &-&2&2&2&2&2&2&2,3&2,3&2,3&3&3&2,3&2,3&2,3&2&2&2\\ \hline  
\multicolumn{3}{?c?}{-v(\omega_{\text{min}}/\omega^0) \text{ \hspace{1em} if $p \not=2$}} & t  & t  & t  & t & t^{\dagger}  & t  & t & t & t^{\dagger} & t & t  & t  & t^{\dagger} & t & t  & t  &t^{\dagger}   &  t\\
\htline
\multicolumn{21}{?c?}{\textup{Entries below this line only valid for curves with tame reduction and }p\neq 2}  \\
\htline
\multicolumn{3}{?c?}{v (\Delta_{\text{min}})}  &
\!\!\scriptstyle{12t}\!\!&
\!\!\scriptstyle{12t\!+\!6}\!\!&
\!\!\scriptstyle{12t\!+\!12}\!\!&
\!\!\scriptstyle{12t\!+\!3}\!\!&
\!\!\scriptstyle{12t\!+\!15}^{\dagger}\!\!&
\!\!\scriptstyle{12t\!+\!9}\!\!&
\!\!\scriptstyle{12t\!+\!9}\!\!&
\!\!\scriptstyle{12t\!+\!2}\!\!&
\!\!\scriptstyle{12t\!+\!14}^{\dagger}\!\!&
\!\!\scriptstyle{12t\!+\!8}\!\!&
\!\!\scriptstyle{12t\!+\!8}\!\!&
\!\!\scriptstyle{12t\!+\!4}\!\!&
\!\!\scriptstyle{12t\!+\!16}^{\dagger}\!\!&
\!\!\scriptstyle{12t\!+\!10}\!\!&
\!\!\scriptstyle{12t\!+\!10}\!\!&
\!\!\scriptstyle{12t\!+\!6}\!\!&
\!\!\scriptstyle{12t\!+\!18}^{\dagger}\!\!&
\!\!\scriptstyle{12t\!+\!12}\!\!\\
\htline 
\multicolumn{21}{?c?}{\textup{All possible cluster pictures for reduction type \rlap{\qquad\qquad(displayed values are $\delta, \epsilon$ or $\gamma,\eta$, as indicated)}}}  \\
\htline
\!\!\!\!\!\!\! \smash{\raisebox{-4pt}{\LBeeB}} \!\!\!\!\! & \!\!\smash{\raisebox{-10pt}{\boxed{\delta, \epsilon}}} & \!\!\!\!\! \underline{v_c\equiv 0} \!\!\! & 0,0^\sstar & 0,1^\sstar & 1,1^{\sstar} & 0, \frac{1}{2}^{\sstar} & 1, \frac{3}{2}^{\sstar} & 1, \frac{1}{2}^{\sstar} & 0, \frac{3}{2}^{\sstar} & 0, \frac{1}{3}^{\sstar} & 1, \frac{4}{3}^{\sstar} & 1,\frac{1}{3}^{\sstar} & 0,\frac{4}{3}^{\sstar} & 0, \frac{2}{3}^{\sstar} & 1, \frac{5}{3}^{\sstar} & 1,\frac{2}{3}^{\sstar} & 0,\frac{5}{3}^{\sstar} & \frac{1}{2}, \frac{1}{2}^{\sstar} & \frac{3}{2}, \frac{3}{2}^\sstar & \frac{1}{2}, \frac{3}{2}^{\sstar}  \\
t=a+b  & & \!\!\!\!\! \underline{v_c\equiv 1} \!\!\! &1,\text{-}1& 0,1 &0,2 &\text{-}1,\frac{3}{2}& 2,\frac{1}{2} & 0,\frac{3}{2} & 1,\frac{1}{2} &\text{-}1,\frac{4}{3} & 2,\frac{1}{3} & 0,\frac{4}{3}& 1,\frac{1}{3} & \text{-}1,\frac{5}{3} & 2,\frac{2}{3} & 0,\frac{5}{3}& 1,\frac{2}{3} & \text{-}\frac{1}{2},\frac{3}{2} & \frac{1}{2},\frac{5}{2}^{\aast} & \frac{1}{2},\frac{3}{2} \\
\hline 

\!\!\!\!\!\!\! \smash{\raisebox{-10pt}{\LBeeCb}} \!\!\!\!\! & \!\!\smash{\raisebox{-10pt}{\boxed{\gamma; \eta}}} &  \!\!\!\!\! \underline{v_c\equiv 0} \!\!\! &0;0& 1;0 & & \frac{1}{2};0 & &  & \frac{3}{2};0 & \frac{1}{3};0 & \frac{7}{3};\frac{2}{3}&  & \!\!\frac{4}{3}; 0\, \text{or}\, \frac{2}{3}\!\!\! &  \!\!\frac{2}{3};0\, \text{or}\, \frac{1}{3}\!\!\! & & \frac{5}{3};\frac{1}{3}& \frac{5}{3};0 & &&  \\
& & \!\!\!\!\! \underline{v_c\equiv 1} \!\!\! && 1;0 & 2;0 &  & \frac{5}{2};0^{\aast}& \frac{3}{2};0 & & \frac{1}{3};\frac{2}{3} & \frac{7}{3};0^{\aast} & \!\!\frac{4}{3};0 \,\text{or}\, \frac{2}{3}\!\!\!& & & \!\!\frac{8}{3};0^{\diamond} \,\text{or}\, \frac{1}{3}\!\!\!& \frac{5}{3};0 & \frac{5}{3},\frac{1}{3}& &&  \\
\hline 

\!\!\!\!\!\!\! \smash{\raisebox{-10pt}{\LBeeDb}} \!\!\!\!\!  & \!\!\smash{\raisebox{-10pt}{\boxed{\gamma; \eta}}}& \!\!\!\!\! \underline{v_c\equiv 0} \!\!\! & 0;0 & \!\!\!1;0\,\text{or}\,1\!\!\! & 2;1 & \!\!\frac{1}{2};0\,\text{or}\,\frac{1}{2}\!\!\! & \!\!\frac{5}{2};1\,\text{or}\,\frac{3}{2}\!\!\! & \!\!\frac{3}{2};1\,\text{or}\,\frac{1}{2}\!\!\! & \!\!\frac{3}{2};0\,\text{or}\,\frac{3}{2}\!\!\! & \frac{1}{3};0 & \frac{7}{3};1 & \frac{4}{3};1 & \frac{4}{3};0 & \frac{2}{3};0 & \frac{8}{3};1 & \frac{5}{3};1 & \frac{5}{3};0 & 1;\frac{1}{2} & 3;\frac{3}{2} & 2;\frac{3}{2}\\
& &  \!\!\!\!\! \underline{v_c\equiv 1} \!\!\! & 0;1&\!\!\! 1;0\,\text{or}\,1\!\!\!& 2;0&\!\!\frac{1}{2};1\,\text{or}\,\frac{3}{2}\!\!\!&\!\!\frac{5}{2};0\,\text{or}\,\frac{1}{2}\!\!\!&\!\!\frac{3}{2};0\,\text{or}\,\frac{3}{2}\!\!\!&\!\!\frac{3}{2};1\,\text{or}\,\frac{1}{2}\!\!\!&\frac{1}{3};1&\frac{7}{3};0&\frac{4}{3};0&\frac{4}{3};1&\frac{2}{3};1 &\frac{8}{3};0&\frac{5}{3};0&\frac{5}{3};1 & 1;\frac{3}{2} &3;\frac{1}{2} & 2;\frac{3}{2}   \\
\htline
\end{array}
$$
\end{landscape}


\newpage 
\thispagestyle{empty}

\textwidth 25.5cm
\oddsidemargin -2.3cm
\evensidemargin -2.3cm
\textheight 27cm
\topmargin -1.65cm

\begin{landscape}
\label{table:potentiallygoodx2Table3}
\noindent (Table 5 continued.)

\noindent{{\small{Entries with $\dagger$: if $t\!=\!-1$, decrease $\#$Components by 1, increase $v(\Delta_{\min})$ by 10, set $v(\omega_{\text{min}}/\omega^0)\!=\!0$.}}} 

\noindent{{\small{Entries with a $\star$ indicate a minimal Weierstrass model (setting $s=0$ where relevant), superceded by $\diamond$ when $t=-1$.}}}
$$
\hspace{-1cm}
\begin{array}{?c| c c ? c|c|c|c|c|c|c|c|c|c|c|c|c|c|c|c|c| c?} \htline
\multicolumn{3}{?c?}{\text{MRM/MRNC Reduction type}} & 
\!\!\text{II-III-}t\!\! & 
\!\!\text{IV}^{*}\!\!\text{-}\text{III}^{*}\!\!\text{-}t\!\! & 
\!\!\text{IV-}\text{III-}t\!\! & 
\!\!\text{II}^{*}\!\!\text{-}\text{III}^{*}\!\!\text{-}t\!\! & 
\!\!\text{IV-}\text{III}^{*}\!\text{-}t\!\!& 
\!\!\text{II}^{*}\!\!\text{-}\text{III-}t\!\! & 
\!\!\text{IV}^{*}\!\!\text{-}\text{III-}t\!\!& 
\!\!\text{II-}\text{III}^{*}\!\!\text{-}t\!\!& 
\!\!\text{II-II-}t\!\! & 
\!\!\text{IV}^{*}\!\!\text{-}\text{IV}^{*}\!\!\text{-}t\!\! & 
\!\!\text{II-IV-}t\!\! & 
\!\!\text{II}^{*}\!\!\text{-}\text{IV}^{*}\!\!\text{-}t\!\! &
\!\!\text{II-}\text{IV}^{*}\!\!\text{-}t\!\! & 
\!\!\text{IV-IV-}t\!\! & 
\!\!\text{II}^{*}\!\!\text{-}\text{II}^{*}\!\!\text{-}t\!\! & 
\!\!\text{IV-}\text{IV}^{*}\!\!\text{-}t\!\! & 
\!\!\text{II-}\text{II}^{*}\!\!\text{-}t\!\! & 
\!\!\text{II}^{*}\!\!\text{-}\text{IV-}t

\\[-0.5em] 
 \multicolumn{3}{?c?}{\text{(family parameter)}} & t\ge 0 & \! t \!\ge\! -1 \! &t \ge 0 &\! t \!\ge\! -1\! & \!t \!\ge\! -1 \!& \!t \!\ge\! -1\!& t \ge 0& t \ge 0 & t \ge 0& \! t \!\ge\! -1 \!& t \ge 0& \!t \!\ge\! -1\! & t \ge 0 & t \ge 0 & \!t \!\ge\! -1\!& t \ge 0& t \ge 0& \! t \!\ge\! -1 \! \\ \htline
\multicolumn{3}{?c?}{\text{\# Components (MRM)}} & t\!+\!2 & \!t\!+\!14^{\dagger}\! & t\!+\!4 & \!t\!+\!16 ^{\dagger}\!& \!t\!+\!10^{\dagger}\! & t\!+\!10^{\dagger} & t\!+\!8 & t\!+\!8 & t\!+\!1 & t\!+\!13^{\dagger} & t\!+\!3 &\! t\!+\!15^{\dagger}\! &\! t\!+\!7 \!& t\!+\!5 & \!t\!+\!17^{\dagger}\! & t\!+\!9 & t\!+\!9 &\! t\!+\!11^{\dagger}\! \\
\hline 
\multicolumn{3}{?c?}{\text{N\'eron component group}}  & \frac{\Z}{2\Z}& \frac{\Z}{6\Z} & \frac{\Z}{6\Z} & \frac{\Z}{2\Z} & \frac{\Z}{6\Z} &\frac{\Z}{2\Z} & \frac{\Z}{6\Z} & \frac{\Z}{2\Z} & (0) & \!\!\frac{\Z}{3\Z} \!\times\! \frac{\Z}{3\Z}\!\! & \frac{\Z}{3\Z} & \frac{\Z}{3\Z} &\frac{\Z}{3\Z} & \!\!\frac{\Z}{3\Z} \!\times\! \frac{\Z}{3\Z}\!\! & (0)  &  \!\!\frac{\Z}{3\Z} \!\times\! \frac{\Z}{3\Z} \!\!& (0) & \frac{\Z}{3\Z}\\
\hline 
\multicolumn{3}{?c?}{\text{Tame conductor exponent}} &4&4&4&4&4&4&4&4&4&4&4&4&4&4&4&4&4&4 \\ \hline 
\multicolumn{3}{?c?}{\text{Wild reduction iff} \ p=} &2,3&2,3&2,3&2,3&2,3&2,3&2,3&2,3&2,3&3&2,3&2,3&2,3&3&2,3&3&2,3&2,3\\ \hline 
\multicolumn{3}{?c?}{-v(\omega_{\text{min}} / \omega^0) \text{ \hspace{1em} if $p \not=2$}} & t & t^{\dagger} & t & t^{\dagger} & t^{\dagger} & t^{\dagger} & t & t & t & t^{\dagger} & t & t^{\dagger} & t & t & t^{\dagger} & t & t& t^{\dagger} \\ 
\htline 
\multicolumn{21}{?c?}{\textup{Entries below this line only valid for curves with tame reduction and }p\neq 2}  \\
\htline
\multicolumn{3}{?c?}{v (\Delta_{\text{min}})}
&\! \scriptstyle{12t\!+\!5} \! 
&\!  \scriptstyle{12t\!+\!17}^{\dagger}\!
&\! \scriptstyle{12t\!+\!7} \!
&\! \scriptstyle{12t\!+\!19}^{\dagger} \!
&\! \scriptstyle{12t\!+\!13}^{\dagger} \! 
&\! \scriptstyle{12t\!+\!13}^{\dagger} \! 
&\! \scriptstyle{12t\!+\!11}\!
&\! \scriptstyle{12t\!+\!11}\!
&\! \scriptstyle{12t\!+\!4} \!
&\! \scriptstyle{12t\!+\!16}^{\dagger} \! 
&\! \scriptstyle{12t\!+\!6} \!
&\! \scriptstyle{12t\!+\!18}^{\dagger} \!
&\! \scriptstyle{12t\!+\!10}\!
&\! \scriptstyle{12t\!+\!8} \!
&\! \scriptstyle{12t\!+\!20}^{\dagger}\!
&\! \scriptstyle{12t\!+\!12}\!
&\! \scriptstyle{12t\!+\!12}\!
&\! \scriptstyle{12t\!+\!14}^{\dagger}\! \\
\htline 
\multicolumn{21}{?c?}{\textup{All possible cluster pictures for reduction type \rlap{\qquad\qquad(displayed values are $\delta, \epsilon$ or $\gamma,\eta$, as indicated)}}}  \\
\htline
\!\!\!\!\!\!\! \smash{\raisebox{-4pt}{\LBeeB}}\!\!\!\!\! & \!\!\smash{\raisebox{-10pt}{\boxed{\delta, \epsilon}}} & \!\!\! \underline{v_c \equiv 0} \!\!& \frac{1}{2}, \frac{1}{3}^{\sstar} & \frac{3}{2}, \frac{4}{3}^{\sstar} & \frac{1}{2}, \frac{2}{3}^{\sstar} & \frac{3}{2}, \frac{5}{3}^{\sstar} & \frac{3}{2}, \frac{2}{3}^{\sstar} & \frac{1}{2}, \frac{5}{3}^{\sstar} & \frac{1}{2}, \frac{4}{3}^{\sstar} & \frac{3}{2}, \frac{1}{3}^{\sstar} & \frac{1}{3}, \frac{1}{3}^{\sstar} & \frac{4}{3}, \frac{4}{3}^{\sstar} & \frac{1}{3}, \frac{2}{3}^{\sstar} & \frac{4}{3}, \frac{5}{3}^{\sstar} & \frac{1}{3}, \frac{4}{3}^{\sstar} & \frac{2}{3}, \frac{2}{3}^{\sstar} & \frac{5}{3}, \frac{5}{3}^{\sstar} & \frac{2}{3}, \frac{4}{3}^{\sstar} & \frac{5}{3}, \frac{1}{3}^{\sstar} & \frac{2}{3}, \frac{5}{3}^{\sstar}  \\
{t = a+b \>\>\>\>\>} & & \!\!\! \underline{v_c\equiv 1} \!\!& 
\text{-}\frac{1}{2}, \frac{4}{3} & \frac{5}{2}, \frac{1}{3}^{\aast}& \text{-}\frac{1}{2}, \frac{5}{3} & \frac{5}{2}, \frac{2}{3}^{\aast}& \frac{1}{2}, \frac{5}{3} & \frac{3}{2}, \frac{2}{3} & \frac{3}{2}, \frac{1}{3} & \frac{1}{2}, \frac{4}{3} & \frac{4}{3}, \text{-}\frac{2}{3} & \frac{1}{3}, \frac{7}{3}^{\aast}& \frac{4}{3}, \text{-}\frac{1}{3}& \frac{7}{3}, \frac{2}{3}^{\aast}& \frac{4}{3}, \frac{1}{3}& \frac{5}{3}, \text{-}\frac{1}{3}&  \frac{8}{3}, \frac{2}{3}^{\aast}& \frac{5}{3}, \frac{1}{3} & \frac{2}{3}, \frac{4}{3} & \frac{5}{3}, \frac{2}{3} \\ \hline
 \!\!\!\!\!\!\! \smash{\raisebox{-10pt}{\LBeeCb}}\!\!\! & \!\!\smash{\raisebox{-10pt}{\boxed{\gamma; \eta}}} & \!\!\! \underline{v_c \equiv 0}\!\! & & \frac{17}{6}; \frac{2}{3} & \frac{7}{6}; \frac{1}{3}&  & \frac{13}{6}; \frac{1}{3}^{\aast} & & \frac{11}{6}; \frac{2}{3} &  & & \frac{8}{3}; \frac{2}{3} & 1; \frac{1}{3} & 3; \frac{2}{3} & \frac{5}{3}; \frac{2}{3}& \frac{4}{3}; \frac{1}{3} & & \!\!2; \frac{1}{3}\,\text{or}\,\frac{2}{3}\!\!\! & & \frac{7}{3}; \frac{1}{3}^{\aast} \\
& & \!\!\! \underline{v_c \equiv 1}\!\!& \frac{5}{6}; \frac{2}{3} &&& \frac{19}{6}; \frac{1}{3} && \frac{13}{6}; \frac{1}{3} && \frac{11}{6}; \frac{2}{3}& \frac{2}{3}; \frac{2}{3} && 1;\frac{2}{3} & 3;\frac{1}{3} & \frac{5}{3};\frac{2}{3} && \frac{10}{3}; \frac{1}{3} & & \!\!2; \frac{1}{3}\,\text{or}\,\frac{2}{3}\!\!\! & \frac{7}{3}; \frac{1}{3} \\ \hline
\!\!\!\!\!\! \smash{\raisebox{-10pt}{\LBeeDb}} \!\!\!\! & \!\! \smash{\raisebox{-10pt}{\boxed{\gamma; \eta}}}& \!\!\! \underline{v_c \equiv 0} \!\!& \frac{5}{6}; \frac{1}{2} & \frac{17}{6}; \frac{3}{2} & \frac{7}{6}; \frac{1}{2} & \frac{19}{6}; \frac{3}{2} & \frac{13}{6}; \frac{3}{2} & \frac{13}{6}; \frac{1}{2}^{\aast} &  \frac{11}{6};  \frac{1}{2} & \frac{11}{6}; \frac{3}{2}  &&&&&&&&&&\\
& & \!\!\! \underline{v_c \equiv 1} \!\!& \frac{5}{6};\frac{3}{2} & \frac{17}{6};\frac{1}{2} & \frac{7}{6};\frac{3}{2} & \frac{19}{6};\frac{1}{2} & \frac{13}{6};\frac{1}{2} & \frac{13}{6};\frac{3}{2} & \frac{11}{6};\frac{3}{2} & \frac{11}{6};\frac{1}{2} &&&&&&&&&&  \\
\htline 
\end{array}
$$
\end{landscape}


\newpage
\thispagestyle{empty}

\textwidth 25.5cm
\oddsidemargin -2.3cm
\evensidemargin -2.3cm
\textheight 27cm
\topmargin -2.5cm

\begin{landscape}
\label{table:potentiallygoodx2Table1}
\noindent (Table 5 continued.)

\vphantom{blah}

\noindent
$$
\hspace{-1cm}
\begin{array}{?c|c?c|c|c|c|c|c|c|c?} \htline
\multicolumn{2}{?c?}{\text{MRM Reduction type }} & 2\text{I}_{0}\text{-}t & 2\text{I}_0^{*}\text{-}t& 2\text{IV}\text{-}t  & 2\text{IV}^{*}\text{-}t & 2\text{III}\text{-}t & 2\text{III}^{*}\text{-}t & 2\text{II}\text{-}t & 2\text{II}^{*}\text{-}t  \\[-0.5em]
\multicolumn{2}{?c?}{\textup{(family parameter)}}
& t>0 & t\geq 0 & t\geq 0 & t\geq 0 & t\geq 0 & t\geq 0 & t\geq 0 & t\geq 0 \\
\htline 
\multicolumn{2}{?c?}{\text{MRNC Reduction type}}  & [2]\text{I}_{\text{g}1,{t\D}}& [2]\mathrm{I}^*_{0,t\D} & [2]\mathrm{IV}_{t\D}&  [2]\mathrm{IV}^*_{t\D}& [2]\mathrm{III}_{t\D} & [2]\mathrm{III}^*_{t\D} & [2]\mathrm{II}_{t\D} & [2]\mathrm{II}^*_{t\D} \\
\hline 
  \multicolumn{2}{?c?}{\text{\# Components (MRM) }} & t+3 & t+7 & t+5 & t+9 & t+4 & t+10 & t+3 & t+11 \\  \hline 
\multicolumn{2}{?c?}{\text{N\'eron component group}} & (0) & \frac{\Z}{2\Z} \times \frac{\Z}{2\Z} & \frac{\Z}{3\Z}& \frac{\Z}{3\Z}& \frac{\Z}{2\Z}& \frac{\Z}{2\Z}& (0)& (0) \\
\hline 
\multicolumn{2}{?c?}{\text{Tame conductor exponent}} & 2 & 4 & 4 & 4 & 4 & 4 & 4 & 4 \\ \hline 
\multicolumn{2}{?c?}{\text{Wild reduction iff} \ p=} & 2 & 2 & 2,3 & 2,3 &2  &2  & 2,3 & 2,3 \\ \hline
   \multicolumn{2}{?c?}{-v(\omega_{\text{min}} / \omega^0) \text{ \hspace{1em} if $p \not=2$}} & t+1 & t+1 & t+1 & t+1 & t+1 & t+1 & t+1 & t+1 \\ 
\htline 
\multicolumn{10}{?c?}{\textup{Entries below this line only valid for curves with tame reduction and }p\neq 2}  \\
\htline 
\multicolumn{2}{?c?}{v (\Delta_{\text{min}})} & 12t+ 15 & 12t+ 21 & 12t+ 19 & 12t+ 23 & 12t+ 18 & 12t+ 24 & 12t+ 17 & 12t+ 25 \\ 
   \htline 
   \multicolumn{10}{?c?}{\textup{All possible cluster pictures for reduction type \rlap{\qquad\qquad(displayed value is $\delta$)}}}  \\
\htline 
 \LBeeA &  \>\> \boxed{\delta} \>\>\>\>\> v_c \equiv 0 \text{ or } 1 & 0 &  \frac{1}{2} &  \frac{1}{3} &  \frac{2}{3} & \frac{1}{4} & \frac{3}{4} & \frac{1}{6} & \frac{5}{6} \\
\htline 
\end{array}
$$
\end{landscape}


\robustify{\potgoodtimespotmultpicture}

\newpage
\thispagestyle{empty}

\textwidth 25.5cm
\oddsidemargin -2.3cm
\evensidemargin -2.3cm
\textheight 27cm
\topmargin -1.75cm

\begin{landscape}
\subsection{Table 6 (Stable Type VI\phantom{I} if tame and $p\neq 2$ \smash{\potgoodtimespotmultpicture}\hspace{-3pt})}
\label{table:potentiallygoodxpotentiallymultiplicative} 
\hfill {\small{$X_l$ denotes $\frac{\Z}{2\Z}\!\times\! \frac{\Z}{2\Z}$ if $l$ is even, and $\frac{\Z}{4\Z}$ if $l$ is odd.}}
\\
\noindent 
{\small{Entries with a $\star$ indicate a minimal Weierstrass model (superceded by $\diamond$ when $t=-1$). \hfill Entries with $\dagger$: if $t\!=\!-1$, decrease $\#$Components by 1, increase $v(\Delta_{\min})$ by 10, set $v(\omega/\omega^0)\!=\!0$.}}
$$
\hspace{-1cm}
\begin{array}{?c|cc?c|c|c|c|c|c|c|c|c|c|c|c|c|c|c|c?} \htline
    \multicolumn{3}{?c?}{ \text{MRM/MRNC Reduction type}} &
 \!\text{I}_{0}\text{-I}_{l}\text{-}t \! & 
     \!\text{I}_{0}^*\text{-I}^{*}_{l}\text{-}t \! & 
     \!\text{I}_{0}\text{-I}^{*}_{l}\text{-}t \! & 
     \!\text{I}_{0}^*\text{-I}_{l}\text{-}t \! & 
     \!\text{III}\text{-I}_{l}\text{-}t \! &  
     \!\text{III}^{*}\text{-I}^{*}_{l}\text{-}t \! & 
     \!\text{III-I}^{*}_{l}\text{-}t \! & 
     \!\text{III}^{*}\text{-I}_{l}\text{-}t \! & 
     \!\text{II-I}_{l}\text{-}t \! & 
     \!\text{IV}^{*}\text{-I}^{*}_{l}\text{-}t \! & 
     \!\text{II-I}^{*}_{l}\text{-}t \! & 
     \!\text{IV}^{*}\text{-I}_{l}\text{-}t \! & 
     \!\text{IV-I}_{l}\text{-}t \! & 
     \!\text{II}^{*}\text{-I}^{*}_{l}\text{-}t \! &  
     \!\text{IV-I}^{*}_{l}\text{-}t \! & 
     \!\text{II}^{*}\text{-I}_{l}\text{-}t \!
    \\[-0.9em] 
\multicolumn{3}{?c?}{\text{(family parameter)}} & t > 0 & t \ge 0 & t \ge 0 & t \ge 0 & t \ge 0 & t \ge -1 & t \ge 0 & t \ge 0 & t \ge 0 & t \ge -1 & t \ge 0 & t \ge 0& t \ge 0 & t \ge -1 & t \ge 0& t \ge 0\\ \htline
 \multicolumn{3}{?c?}{\textup{\# Components (MRM)}} & \!l\!+\!t\!  & \!l\!+\!t\!+\!9\!   &  \!l\!+\!t\!+\!5\! & \!l\!+\!t\!+\!4\!  & \!l\!+\!t\!+\!1\! & \!l\!+\! t\!+\!12^\dagger\!\! & \!l\!+\!t\!+\!6\!  & \!l\!+\!t\!+\!7\!  & \!l\!+\!t\! & \!l\!+\!t\!+\!11^\dagger\!\! & \!l\!+\!t\!+\!5\!  & \!l\!+\!t\!+\!6\!  & \!l\!+\!t\!+\!2\!  & \!l\!+\!t\!+\!13^\dagger\!\! & \!l\!+\!t\!+\!7\!  & \!l\!+\!t\!+8\! \\
 \hline
\multicolumn{3}{?c?}{\text{N\'eron component group}}
    & \!\!\frac{\Z}{l \Z}\!\!  
    & \!\!\left(\frac{\Z}{2 \Z} \right)^{\!2}\!\!\times\! X_{l}\!\! 
    & X_{l}  
    &\!\!\left(\frac{\Z}{2 \Z}\right)^{\!2}\!\!\times\! \frac{\Z}{l \Z}\!\!
    & \!\!\frac{\Z}{2 \Z} \!\times\! \frac{\Z}{l \Z}\!\! 
    & \!\!\frac{\Z}{2 \Z} \!\times\! X_{l}\!\! 
    & \!\!\frac{\Z}{2 \Z} \!\times\! X_{l}\!\! 
    & \!\!\frac{\Z}{2 \Z} \!\times\! \frac{\Z}{l \Z}\!\! 
    & \!\!\frac{\Z}{l \Z} \!\! 
    & \!\!\frac{\Z}{3 \Z} \!\times\! X_{l} \!\!
    & X_{l} 
    & \!\!\frac{\Z}{3 \Z} \!\times\! \frac{\Z}{l \Z} \!\!
    & \!\!\frac{\Z}{3 \Z} \!\times\! \frac{\Z}{l \Z} \!\!
    & X_{l} 
    &\!\!\frac{\Z}{3 \Z} \!\times\! X_{l} \!\! 
    &\!\!\frac{\Z}{l \Z} \!\!\\ \hline
    \multicolumn{3}{?c?}{\text{Tame conductor exponent}} &1&4&2&3&3&4&4&3&3&4&4&3&3&4&4&3 \\ \hline 
       \multicolumn{3}{?c?}{\text{Wild reduction iff} \ p=} & - & 2& 2& 2& 2& 2& 2& 2& 2,3& 2,3& 2,3&3&3&2,3&2,3&2,3 \\ [-0.1em] \hline
\multicolumn{3}{?c?}{-v(\omega_{\text{min}} / \omega^0) \text{ \hspace{1em} if $p \not=2$}} & t & t  & t  & t  & t & t^\dagger  &  t  & t  & t  & t^\dagger  & t  & t  & t  & t^\dagger & t   & t   \\[-0.1em]
       \htline 
\multicolumn{19}{?c?}{\textup{Entries below this line only valid for curves with tame reduction and }p\neq 2}  \\[-0.1em] \htline
\multicolumn{3}{?c?}{v(\Delta_{\text{min}})} & 
\!\!\!{\scriptstyle{12t+l}}\!\!\! & 
\!\!\!{\scriptstyle{12t+l+12}}\!\!\!  &
\!\!\!{\scriptstyle{12t+l+6}}\!\!\!& 
\!\!\!{\scriptstyle{12t+l+6}}\!\!\!& 
\!\!\!{\scriptstyle{12t+l+3}}\!\!\!&
\!\!\!{\scriptstyle{12t+l+15}^\dagger}\!\!\!\!\!&
\!\!\!{\scriptstyle{12t+l+9}}\!\!\!& 
\!\!\!{\scriptstyle{12t+l+9}}\!\!\!& 
\!\!\!{\scriptstyle{12t+l+2}}\!\!\!&
\!\!\!{\scriptstyle{12t+l+14}^\dagger}\!\!\!\!\!&
\!\!\!{\scriptstyle{12t+l+8}}\!\!\!& 
\!\!\!{\scriptstyle{12t+l+8}}\!\!\!& 
\!\!\!{\scriptstyle{12t+l+4}}\!\!\!&
\!\!\!{\scriptstyle{12t+l+16}^\dagger}\!\!\!\!\!&
\!\!\!{\scriptstyle{12t+l+10}}\!\!\!&
\!\!\!{\scriptstyle{12t+l+10}}\!\!\! \\ 
\htline 
\multicolumn{19}{?c?}{\textup{ All possible cluster pictures for reduction type \rlap{\qquad\qquad(displayed values are $\delta, \epsilon$ or $\gamma$ or $\gamma,\eta$, as indicated)}}}  \\[-0.2em] \htline
\!\!\!\!\!\!\raisebox{-3pt}{\LBemA}\!\!\!
&\!\!\smash{\raisebox{-5pt}{\boxed{\delta;\epsilon}}}\!\!\! &  \!\!\underline{v_c\equiv 0}\!\!  & {0,0}^{\sstar} & {1,1}^{\sstar} & {1,0}^{\sstar} & {0,1}^{\sstar} & {0,\frac{1}{2}}^{\sstar} & {1,\frac{3}{2}}^{\sstar} & {1,\frac{1}{2}}^{\sstar} & {0,\frac{3}{2}}^{\sstar} & {0,\frac{1}{3}}^{\sstar} & {1,\frac{4}{3}}^{\sstar} & {1,\frac{1}{3}}^{\sstar} & {0,\frac{4}{3}}^{\sstar} & {0,\frac{2}{3}}^{\sstar} & {1,\frac{5}{3}}^{\sstar} & {1,\frac{2}{3}}^{\sstar} & {0,\frac{5}{3}}^{\sstar}
     \\[-1.1em]
\smash{\raisebox{-5pt}{$t=a+b$}} & & \!\! \underline{v_c\equiv 1} \!\!& \text{-}1,1 & 0,2 & 0,1 & 1,0 &1, \text{-}\frac{1}{2} & 2, \frac{1}{2} & 0, \frac{3}{2} & 1, \frac{1}{2} & 1, \text{-}\frac{2}{3} & 2, \frac{1}{3} & 0, \frac{4}{3} & 1, \frac{1}{3} & 1, \text{-}\frac{1}{3} & 2, \frac{2}{3} & 0, \frac{5}{3} & 1, \frac{2}{3} \\ 
\hline  

\!\!\!\!\!\! \smash{\raisebox{-5pt}{\LBemB}}\!\!\! &\smash{\raisebox{-5pt}{\boxed{\gamma}}} & \!\!\underline{v_c\equiv 0}\!\! &0 &  &  & 1& \frac{1}{2}& & & \frac{3}{2} & \frac{1}{3} & & & \frac{4}{3}& \frac{2}{3}& & & \frac{5}{3} \\[-1em]
&&\underline{v_c\equiv 1} & & 2 & 1 & & & \frac{5}{2}^{\aast} & \frac{3}{2} &  &  & \frac{7}{3}^{\aast} & \frac{4}{3} & & & \frac{8}{3}^{\aast} & \frac{5}{3} & \\ \hline

\!\!\!\!\!\!\smash{\raisebox{-5pt}{\LBemC}}\!\!\! &\!\!\smash{\raisebox{-5pt}{\boxed{\gamma; \eta}}}\!\!\!& \!\!\underline{v_c\equiv 0}\!\! &0,0& & 1, 0&& & & & & & \frac{7}{3}, \frac{2}{3}&& \frac{4}{3}, \frac{2}{3}& \frac{2}{3}, \frac{1}{3}& &\frac{5}{3}, \frac{1}{3}&  \\[-1em]
&&\!\!\underline{v_c\equiv 1}\!\!& & 2,0 & & 1,0 & & & & & \frac{1}{3},\frac{2}{3} & & \frac{4}{3}, \frac{2}{3} & & & \frac{8}{3}, \frac{1}{3} & & \frac{5}{3}, \frac{1}{3} \\ \hline   

\!\!\!\!\!\!\smash{\raisebox{-5pt}{\LBemD}}\!\!\! & \smash{\raisebox{-5pt}{\boxed{\gamma}}} &\!\!\underline{v_c\equiv l}\!\! & 0 &  &  & 1 & \frac{1}{2} & & & \frac{3}{2} & \frac{1}{3} & & & \frac{4}{3} & \frac{2}{3} & & & \frac{5}{3} \\[-1em]
& & \!\! \underline{v_c\not\equiv l} \!\!& & 2 & 1 & & & \frac{5}{2} & \frac{3}{2} & & & \frac{7}{3} & \frac{4}{3} & & & \frac{8}{3} & \frac{5}{3} & \\ \hline   
\!\!\!\!\!\!\smash{\raisebox{-5pt}{\LBemE}}\!\!\!
&\smash{\raisebox{-5pt}{\boxed{\gamma}}}&\!\!\underline{v_c\equiv 0}\!\! & 0 &  &  & 1 & \frac{1}{2} &  &  & \frac{3}{2} & \frac{1}{3} &  &  & \frac{4}{3} & \frac{2}{3} & & & \frac{5}{3} \\[-1em]
 &&\underline{v_c\equiv 1} \!\! &  & 2 & 1 &  &  & \frac{5}{2} & \frac{3}{2} &  &  & \frac{7}{3} & \frac{4}{3} & &  & \frac{8}{3} & \frac{5}{3} &  \\ \hline
\!\!\!\!\!\!\smash{\raisebox{-5pt}{\LBemFb}}\!\!\! &\!\!\smash{\raisebox{-5pt}{\boxed{\gamma; \eta}}}\!\!\!&\!\!\underline{v_c\equiv 0}\!\!
& 0,0  & 2,1 & 1,0 & 1,1 & \frac{1}{2},\frac{1}{2} & \frac{5}{2},\frac{3}{2} & \frac{3}{2},\frac{1}{2} & \frac{3}{2},\frac{3}{2}& & & & & & & &  \\[-0.7em]
&&\!\! \underline{v_c\equiv 1}\!\! & 0,1  & 2,0 & 1,1 & 1,0 & \frac{1}{2},\frac{3}{2} & \frac{5}{2},\frac{1}{2} & \frac{3}{2},\frac{3}{2} & \frac{3}{2},\frac{1}{2}& & & & & & & &
\\ \hline 
\!\!\!\!\!\!\smash{\raisebox{-5pt}{\LBemG}}\!\!\! & \smash{\raisebox{-5pt}{\boxed{\gamma}}} &  \!\!\underline{v_c\equiv s} \!\! &0 &  &  & 1& \frac{1}{2}& & & \frac{3}{2} & \frac{1}{3} & & & \frac{4}{3}& \frac{2}{3}& & & \frac{5}{3} \\[-1em]
&& \!\! \underline{v_c\not\equiv s}\!\! & & 2 & 1 & & & \frac{5}{2}& \frac{3}{2} &  &  & \frac{7}{3} & \frac{4}{3} & & & \frac{8}{3} & \frac{5}{3} & \\ \hline

\!\!\!\!\!\raisebox{-3pt}{\LBemH}\!\!\! & \smash{\raisebox{-5pt}{\boxed{\gamma}}} &  \!\!\underline{v_c\equiv s} \!\! &0 &  &  & 1& \frac{1}{2}& & & \frac{3}{2} & \frac{1}{3} & & & \frac{4}{3}& \frac{2}{3}& & & \frac{5}{3}
\\[-1.8em]
\smash{\raisebox{-8pt}{$l$ {\textup{ even}}}} & & \!\! \underline{v_c\not\equiv s}\!\! & & 2 & 1 & & & \frac{5}{2}& \frac{3}{2} &  &  & \frac{7}{3} & \frac{4}{3} & & & \frac{8}{3} & \frac{5}{3} & \\[0.2em] \htline  
\end{array}
$$
\end{landscape}

\robustify{\potmulttimestwopicture}

\newpage
\thispagestyle{empty}

\textwidth 25.5cm
\oddsidemargin -2.3cm
\evensidemargin -2.3cm
\textheight 27cm
\topmargin -2.5cm

\begin{landscape}
\subsection{Table 7 (Stable Type VII if tame and $p\neq 2$ \smash{\potmulttimestwopicture}\hspace{-3pt})}
\label{table:potentiallymultiplicativex2}
 \vphantom{blah}
  \noindent {\small{$X_l$ denotes $\frac{\Z}{2\Z}\!\times\! \frac{\Z}{2\Z}$ if $l$ is even, and $\frac{\Z}{4\Z}$ if $l$ is odd. \hfill Entries with $\star$ indicate a minimal Weierstrass model.}}
$$
\hspace{-1cm}
\begin{array}{? c |c c ? c| c| c c| c| c?}\htline
\multicolumn{3}{?c?}{\text{MRM Reduction type}} & \text{I}_l\text{-}\text{I}_m\text{-}t & \text{I}_l^*\text{-}\text{I}_m^{*}\text{-}t  & \phantom{ha} \text{I}_l^*\text{-}\text{I}_m\text{-}t \phantom{ha}& \phantom{ha}\text{I}_l\text{-}\text{I}_m^*\!-\!t \phantom{ha}& 2\text{I}_l\text{-}t & 2\text{I}_l^*\text{-}t \\[-0.9em]
\multicolumn{3}{?c?}{\textup{(family parameter)}}& t  >0 & t \ge 0& t \ge 0& t \ge 0& t> 0 & t \geq 0 \\ 
\htline
\multicolumn{3}{?c?}{\text{MRNC Reduction type}}  & 
    \text{I}_l\!\underset{\raisebox{2pt}{$\scriptstyle \smash{t}$}}{-}\!\text{I}_m\! & 
    \text{I}_l^*\!\underset{\raisebox{2pt}{$\scriptstyle \smash{t}$}}{-}\!\text{I}_m^{*}\!  & 
    \text{I}_l^*\!\underset{\raisebox{2pt}{$\scriptstyle \smash{t}$}}{-}\!\text{I}_m\! & 
    \text{I}_l\!\underset{\raisebox{2pt}{$\scriptstyle \smash{t}$}}{-}\!\text{I}_m^*\! & 
    [2]\text{I}_{l,t\D} & 
    [2]\text{I}_{l,t\D}^*  \\
\hline
\multicolumn{3}{?c?}{\textup{\# Components (MRM)}} & l\!+\!m\!+\!t\!-\!1   & l\!+\!m\!+\!t\!+\!9 & \multicolumn{2}{|c|}{l\!+\!m\!+\!t\!+\!4} &l\!+\!t\!+\!2 &  l\!+\!t\!+\!7 \\
\hline
\multicolumn{3}{?c?}{\text{N\'eron component group}} & \frac{\Z}{l\Z}\times \frac{\Z}{m\Z} & X_l\times X_m & X_l\times \frac{\Z}{m\Z} & X_m \times \frac{\Z}{l\Z} &  \frac{\Z}{l\Z} & X_l \\
\hline
\multicolumn{3}{?c?}{\text{Tame conductor exponent}} &2&4&\multicolumn{2}{|c|}{3}&3&4 \\
\hline 
\multicolumn{3}{?c?}{\text{Wild reduction iff } p =} &-&2&\multicolumn{2}{|c|}{2}&2&2 \\[-0.2em]
\hline
\multicolumn{3}{?c?}{-v(\omega_{\min}  / \omega^0) \text{ \hspace{1em} if $p \not=2$}} &t &t & \multicolumn{2}{|c|}{t} &t + 1 & t + 1\\
\htline
\multicolumn{9}{?c?}{\textup{Entries below this line only valid for curves with tame reduction and }p\neq 2}\\ \htline
\multicolumn{3}{?c?}{v(\Delta_{\textup{min}})}&\>\>12t\!+\!l\!+\!m\>\>\>&\>\>12t\!+\!l\!+\!m\!+\!12\>\>\>&   \multicolumn{2}{|c|}{12t\!+\!l\!+\!m\!+\!6}&{\>\>12t\!+\!l\!+15\>\>\>}&{\>\>12t\!+\!l\!+\!21\>\>\>} \\
\htline
\multicolumn{9}{?c?}{\textup{All possible cluster pictures for reduction type\rlap{\qquad(displayed values are $\delta,\epsilon$ or $\delta$ or $\gamma$, as indicated)}}}\\[-0.1em] \htline
\phantom{ha}
\smash{\raisebox{-4pt}{\LBpmpmA}} 
\phantom{ha}
& 
\phantom{ha}
\smash{\raisebox{-10pt}{\boxed{\delta, \epsilon}}} 
& 
\phantom{a}\>\>
\underline{v_c \equiv 0} 
\phantom{ha}
& \>\>0,0^{\sstar} & 1,1^{\sstar} & 1,0^{\sstar} & 0,1^{\sstar} & & \\
t\!=\!a\!+\!b & & \underline{v_c \equiv 1} & {\text{-}1,1} & {0,2} & {0,1} & {1,0} &  & \\
\hline
\smash{\raisebox{-10pt}{\LBpmpmB}} & \smash{\raisebox{-10pt}{\boxed{\delta}}} & \underline{v_c \equiv 0} &&&&& 0^{\sstar} & \frac{1}{2}^{\sstar}\\[-0.8em]
& & \underline{v_c \equiv 1}&&&&& 0 & {\frac{1}{2}}\\
\hline
\smash{\raisebox{-10pt}{\LBpmpmC}} & \smash{\raisebox{-10pt}{\boxed{\gamma}}} & \underline{v_c \equiv 0}  & 0&& 1&& &  \\[-0.8em] 
& & \underline{v_c \equiv 1} &&2&&1&&  \\ 
\hline
\smash{\raisebox{-8pt}{\LBpmpmD}} & \smash{\raisebox{-10pt}{\boxed{\gamma}}} & \underline{v_c \equiv m} & {0} &  & {1} &  &&\\[-0.8em]
& & \underline{v_c \not\equiv m} & & {2} & & {1} && \\
\hline
\raisebox{-3pt}{\LBpmpmE} & \smash{\raisebox{-10pt}{\boxed{\gamma}}} & \underline{v_c \equiv s}& {0}  & & {1} & && \\[-1.2em]
\smash{\raisebox{-4pt}{$m$\textup{ even}}}
& & \underline{v_c\not\equiv s}&& {2}&&{1}&& \\
\hline
\smash{\raisebox{-8pt}{\LBpmpmF}} & \smash{\raisebox{-10pt}{\boxed{\gamma}}} & \underline{v_c \equiv s}  & 0&& 1&& &  \\[-0.8em] 
& & \underline{v_c \not\equiv s} &&2&&1&&  \\ 
\hline
\smash{\raisebox{-8pt}{\LBpmpmG}} & \smash{\raisebox{-10pt}{\boxed{\gamma}}} & \underline{v_c \equiv 0}  & 0&& 1&& &  \\[-0.8em] 
& & \underline{v_c \equiv 1} &&2&&1&&  \\ 
\htline
\end{array}
$$
\end{landscape}


\newpage 

\textwidth 25.5cm
\topmargin 0cm
\oddsidemargin -2.3cm
\evensidemargin -2.3cm
\topmargin -1.75cm

\newsavebox{\picOneA}
\newsavebox{\picOneB}
\sbox{\picOneA}{\pLBpgA}
\sbox{\picOneB}{\pLBpgB}

\newsavebox{\picTwoTR}
\newsavebox{\picTwoML}
\newsavebox{\picTwoMR}
\newsavebox{\picTwoBL}
\newsavebox{\picTwoBR}
\sbox{\picTwoTR}{\pLBonE} 
\sbox{\picTwoML}{\pLBonA} 
\sbox{\picTwoMR}{\pLBonB} 
\sbox{\picTwoBL}{\pLBonD} 
\sbox{\picTwoBR}{\pLBonC}

\newsavebox{\picThreeDaTL}
\newsavebox{\picThreeAa}
\newsavebox{\picThreeDaBL}
\newsavebox{\picThreeBaTR}
\newsavebox{\picThreeBaMR}
\newsavebox{\picThreeBbBR}
\sbox{\picThreeDaTL}{\pLBtwonDa} 
\sbox{\picThreeAa}{\pLBtwonAa}   
\sbox{\picThreeDaBL}{\pLBtwonCa} 
\sbox{\picThreeBaTR}{\pLBtwonE} 
\sbox{\picThreeBaMR}{\pLBtwonBa} 
\sbox{\picThreeBbBR}{\pLBtwonBb}

\newsavebox{\picFourTL}
\newsavebox{\picFourTR}
\newsavebox{\picFourBL}
\newsavebox{\picFourBR}
\sbox{\picFourTL}{\pLBthreenAa} 
\sbox{\picFourTR}{\pLBthreenCa} 
\sbox{\picFourBL}{\pLBthreenBa} 
\sbox{\picFourBR}{\pLBthreenDa}

\newsavebox{\picFiveBL}
\newsavebox{\picFiveTL}
\newsavebox{\picFiveTR}
\sbox{\picFiveBL}{\pLBeeDb} 
\sbox{\picFiveTL}{\pLBeeB}  
\sbox{\picFiveTR}{\pLBeeCb}

\newsavebox{\picSixCenter}
\newsavebox{\picSixLeft}
\newsavebox{\picSixTop}
\newsavebox{\picSixRight}
\newsavebox{\picSixBottom}
\newsavebox{\picSixTopLeft}
\newsavebox{\picSixBottomLeft}
\newsavebox{\picSixTopRight}
\sbox{\picSixCenter}{\pLBemB} 
\sbox{\picSixLeft}{\pLBemA}   
\sbox{\picSixTop}{\pLBemD}    
\sbox{\picSixRight}{\pLBemC}  
\sbox{\picSixBottom}{\pLBemE}
\sbox{\picSixTopLeft}{\pLBemFa}
\sbox{\picSixTopRight}{\pLBemG}
\sbox{\picSixBottomLeft}{\pLBemH}

\newsavebox{\picSevenL}
\newsavebox{\picSevenC}
\newsavebox{\picSevenR}
\newsavebox{\picSevenRR}
\newsavebox{\picSevenT}
\newsavebox{\picSevenD}

\sbox{\picSevenL}{\pLBpmpmA}
\sbox{\picSevenC}{\pLBpmpmC}
\sbox{\picSevenR}{\pLBpmpmD}
\sbox{\picSevenRR}{\pLBpmpmE}
\sbox{\picSevenT}{\pLBpmpmF}
\sbox{\picSevenD}{\pLBpmpmG}

\thispagestyle{empty}
\begin{landscape}  

\addtocontents{toc}{\protect\setcounter{tocdepth}{1}}

\subsection{All possible cluster pictures for potential stable type if $p\neq 2$}\label{ss:changeofmodel}
\begin{longtable}{|l  l | l |}
\hline
\multicolumn{2}{|l|}{ {Potential stable type}} & 
{All possible cluster pictures for potential stable type if $p\neq 2$, and model transformations between them when tame (see Definition \ref{def:equiv-cpics}(5))} \\
\hline
{\textbf{I}} & {(smooth curve of genus 2)} & 
\scalebox{0.95}{%
$\begin{tikzcd}[column sep=0.6cm, ampersand replacement=\&]
  \vcenter{\hbox{\usebox{\picOneA}}} 
    \arrow[in=170, out=190, loop, min distance=6mm, "{D}" ]
    \arrow[r, "A", shift left] 
  \& \vcenter{\hbox{\usebox{\picOneB}}} 
    \arrow[l, "B", shift left]
    \arrow[out=350, in=10, loop, min distance=6mm, "A" swap]
\end{tikzcd}$%
} \\
\hline

{\textbf{II}} & {(elliptic curve with a node)} & 
\scalebox{0.95}{%
$\begin{tikzcd}[column sep=0.6cm, row sep=0.6cm, ampersand replacement=\&]
\vcenter{\hbox{\usebox{\picTwoML}}} 
    \arrow[r, "A",  shift left] 
    \arrow[d, "A", shift left]
    \&  \vcenter{\hbox{\usebox{\picTwoBR}}} 
    \arrow[l, "B", shift left] 
    \arrow[r, "B" shift left, yshift=2pt]
    \arrow[dl, "C", shift right=-0.5ex]
    \arrow[in=250, out=290, loop, min distance=4mm, "A"]
  \& \vcenter{\hbox{\usebox{\picTwoMR}}}
    \arrow[l, "A", shift left] 
    \arrow[d, "A", shift left] \\
    \vcenter{\hbox{\usebox{\picTwoBL}}} 
   \arrow[u, "A", shift left] 
    \arrow[ur, "C", shift right=-0.5ex]
    \arrow[in=173, out=187, loop, min distance=6mm, "A" ]
    \& 
  \& \vcenter{\hbox{\usebox{\picTwoTR}}} 
    \arrow[u, "A", shift left] 
    \arrow[in=350, out=10, loop, min distance=6mm, "A" ]
\end{tikzcd}$%
} \\
\hline

{\textbf{III}}  & {(genus 0 curve with two nodes)} & 
\scalebox{0.95}{%
$\begin{tikzcd}[column sep=0.6cm, row sep=0.6cm, ampersand replacement=\&]
    \vcenter{\hbox{\usebox{\picThreeDaTL}}} 
    \arrow[r, "A", shift left] 
    \arrow[in=173, out=187, loop, min distance=6mm, "A / C" ]
      \& \vcenter{\hbox{\usebox{\picThreeAa}}} 
    \arrow[l, "A", shift left] 
    \arrow[r, "A / B", shift left] 
    \arrow[in=70, out=110, loop, min distance=4mm, "D"]
      \& \vcenter{\hbox{\usebox{\picThreeDaBL}}} 
    \arrow[l, "A / B", shift left]
        \arrow[in=70, out=110, loop, min distance=4mm, "A"]
        \arrow[r, "A / B", shift left] 
    \& \vcenter{\hbox{\usebox{\picThreeBaMR}}}
    \arrow[l, "A", shift left]
    \arrow[r, "A", shift left]
     \& \vcenter{\hbox{\usebox{\picThreeBaTR}}}
    \arrow[l, "A", shift left] 
    \arrow[in=-10, out=10, loop, min distance=6mm, "A" ]
\end{tikzcd}
$%
} \\
\hline

{\textbf{IV}} & {\smash{\parbox[t]{5cm}{(two genus 0 curves intersecting at three points)}}} & 
\scalebox{0.95}{%
$\begin{tikzcd}[column sep=0.6cm, ampersand replacement=\&]
  \vcenter{\hbox{\usebox{\picFourTR}}} 
    \arrow[r, "A", shift left=1ex]     
    \arrow[in=173, out=187, loop, min distance=6mm, "A"]
  \& \vcenter{\hbox{\usebox{\picFourTL}}} 
    \arrow[l, "A", shift left=1ex] 
    \arrow[r, "B",  shift left=1ex] 
    \arrow[in=70, out=110, loop, min distance=4mm, "D"]
  \& \vcenter{\hbox{\usebox{\picFourBL}}} 
    \arrow[l, "B", shift left=1ex] 
    \arrow[r, "A", shift left=1ex] 
  \& \vcenter{\hbox{\usebox{\picFourBR} }}
    \arrow[l, "A", shift left=1ex]
        \arrow[in=-10, out=10, loop, min distance=6mm, "A" ]
\end{tikzcd}$%
} \\
\hline

{\textbf{V}} & {\smash{\parbox[t]{5cm}{(two elliptic curves intersecting at one point)}}} & 
\scalebox{0.95}{%
$\begin{tikzcd}[column sep=0.6cm, ampersand replacement=\&]
  \vcenter{\hbox{\usebox{\picFiveTR}}}
    \arrow[in=170, out=190, loop, min distance=6mm, "B" ]
    \arrow[r, "A", shift left=1ex,  yshift=-2pt]
    \arrow[d, "A", shift left=1ex]
  \& \vcenter{\hbox{\usebox{\picFiveTL}}} 
    \arrow[in=-10, out=10, loop, min distance=6mm, "A / D"]
    \arrow[dl, "C", shift right= -0.5ex]
    \arrow[l, "B", shift left= 1ex, yshift=2pt] \\
    \vcenter{\hbox{\usebox{\picFiveBL}}} 
    \arrow[ur, "C", shift right= -0.5ex]
    \arrow[u, "A ", shift left=1ex]
    \arrow[in=173, out=187, loop, min distance=6mm, "A / C"]
\end{tikzcd}$%
} \\
\hline


{\textbf{VI}}
&
{\smash{\parbox[t]{5cm}{%
(an elliptic curve and a genus 0 curve with a node intersecting at one point)%
}}}
&
\scalebox{0.95}{%
$\begin{tikzcd}[
  column sep=0.6cm,
  row sep=0.5cm,
  ampersand replacement=\&
]
  \vcenter{\hbox{\usebox{\picSixTopLeft}}}
    \arrow[d, "A", shift left=1ex]
    \arrow[dr, "C", shift right=-0.5ex]
    \arrow[in=173, out=187, loop, min distance=6mm, "A"]
  \&
  \vcenter{\hbox{\usebox{\picSixTopRight}}}
    \arrow[d, "A", shift left=1ex]
    \arrow[in=-10, out=10, loop, min distance=6mm, "A"]
  \&
  \vcenter{\hbox{\usebox{\picSixTop}}}
    \arrow[dl, "B", shift right=-0.5ex]
    \arrow[r, "A", shift left=1ex]
  \&
  \vcenter{\hbox{\usebox{\picSixBottomLeft}}}
    \arrow[l, "A", shift left=1ex]
    \arrow[in=-10, out=10, loop, min distance=6mm, "A"]
  \\

  \vcenter{\hbox{\usebox{\picSixRight}}}
    \arrow[u, "A", shift left=1ex]
    \arrow[r, "A", shift left=1ex]
  \&
  \vcenter{\hbox{\usebox{\picSixCenter}}}
    \arrow[l, "B", shift left=1ex]
    \arrow[u, "A", shift left=1ex]
    \arrow[ul, "C", shift right=-0.5ex]
    \arrow[ur, "B", shift right=-0.5ex]
    \arrow[r, "A", shift left=1ex]
    \arrow[d, "A", shift left=1ex]
  \&
  \vcenter{\hbox{\usebox{\picSixLeft}}}
    \arrow[l, "A", shift left=1ex]
    \arrow[in=-10, out=10, loop, min distance=6mm, "A"]
  \&
  \\

  \&
  \vcenter{\hbox{\usebox{\picSixBottom}}}
    \arrow[u, "A", shift left=1ex]
    \arrow[in=-10, out=10, loop, min distance=6mm, "A"]
  \&
  \&
\end{tikzcd}$%
}
\\
\hline

{\textbf{VII}}  & {\smash{\parbox[t]{5cm}{(two genus 0 curves with a node (each) intersecting at one point)}}} & 
\scalebox{0.95}{
$ \begin{tikzcd}[column sep = 0.6cm, row sep= 0.5cm, ampersand replacement=\&]
\vcenter{\hbox{\usebox{\picSevenD}}}
\arrow[dr, "A", shift left] 
\arrow[in=173, out=187, loop, min distance=6mm, "A"] \& \vcenter{\hbox{\usebox{\picSevenT}}}
\arrow[d, "A", shift left]
\arrow[in=-10, out=10, loop, min distance=6mm, "A" ]
\& \& \\
\vcenter{\hbox{\usebox{\picSevenL}}} 
\arrow[r, "A", shift left]
\arrow[in=173, out=187, loop, min distance=6mm, "A / D"] 
\& \vcenter{\hbox{\usebox{\picSevenC}}}
\arrow[u, "A", shift left]
\arrow[l, "A", shift left]
\arrow[r, "B", shift left]
\arrow[ul, "A", shift left]
\& \vcenter{\hbox{\usebox{\picSevenR}}} 
\arrow[l, "B", shift left]
\arrow[r, "A", shift left]
\& \vcenter{\hbox{\usebox{\picSevenRR}}} 
\arrow[l, "A", shift left]
\arrow[in=-10, out=10, loop, min distance=6mm, "A" ]\\
\end{tikzcd}
$} \\
\hline
\end{longtable}
\end{landscape}

\newpage 

\textwidth 17cm

\topmargin 0cm
\oddsidemargin 0cm
\evensidemargin 0cm

\end{document}